\DeclareMathAlphabet{\lscr}{T1}{calligra}{m}{n}
\DeclareFontShape{T1}{calligra}{m}{n}{<->s*[1.1]callig15}{}
\DeclareFontFamily{OMS}{rsfs}{\skewchar\font'60}
\DeclareFontShape{OMS}{rsfs}{m}{n}{<-5>rsfs5 <5-7>rsfs7 <7->rsfs10 }{}
\DeclareSymbolFont{rsfs}{OMS}{rsfs}{m}{n}
\DeclareSymbolFontAlphabet{\scr}{rsfs}
\newcommand{\sA}{\scr{A}}
\newcommand{\sB}{\scr{B}}
\newcommand{\sC}{\scr{C}}
\newcommand{\sG}{\scr{G}}
\newcommand{\sH}{\scr{H}}
\newcommand{\sI}{\scr{I}}
\newcommand{\sL}{\scr{L}}
\newcommand{\sM}{\scr{M}}
\newcommand{\sN}{\scr{N}}
\newcommand{\sO}{\scr{O}}
\newcommand{\sP}{\scr{P}}
\newcommand{\sR}{\scr{R}}
\newcommand{\sU}{\scr{U}}
\newcommand{\bC}{\mathbb{C}}
\newcommand{\bF}{\mathbb{F}}
\newcommand{\bQ}{\mathbb{Q}}
\newcommand{\bR}{\mathbb{R}}
\newcommand{\bT}{\mathbb{T}}
\newcommand{\bZ}{\mathbb{Z}}
\def\quickop#1{\expandafter\newcommand\csname #1\endcsname{\operatorname{#1}}}
\newcommand{\lofspace}{\hspace{0ex}}
\DeclareMathAlphabet{\mathpzc}{OT1}{pzc}{m}{it}
\DeclareMathAlphabet{\mathfrc}{T1}{frc}{m}{n}
\DeclareMathAlphabet{\mathitbf}{OML}{cmm}{b}{it}
\newcommand{\boxp}{\mathbin{\square}}
\newcommand{\oboxp}{\mathbin{\overline{\square}}}
\newcommand{\defeq}{\mathrel{\mathop:}=}
\newcommand{\ground}{\mathbbm{k}}
\newcommand{\rarticle}{a } 
\newcommand{\rmodule}{a $\ground$-module}
\newcommand{\rmod}{\ground\textrm{-mod}}
\newcommand{\spaces}{\textrm{Top}}
\newcommand{\gtop}{\G\textrm{-Top}}
\newcommand{\hogtop}{\textrm{ho-}\G\textrm{-Top}_*}
\newcommand{\mackey}{\lscr{Mf}\phantom{x}}
\newcommand{\htmf}{\underline{\pi}^\G}
\newcommand{\mapmf}[2]{\underline{[{#1},{#2}]}_G}
\newcommand{\op}{^{\text{op}}}
\newcommand{\fund}[1]{\Pi_\G{#1}}
\newcommand{\fundx}[1]{\Pi}
\newcommand{\fundhom}[2]{\fundx{}\!\left( {#1},{#2} \right)}
\newcommand{\chr}{\text{Ch}_\ground}
\newcommand{\fmap}[1]{(\alpha_{#1},[\gamma_{#1}])}
\newcommand{\cycp}{{C_p}}
\newcommand{\fp}{\bF_p}
\newcommand{\fq}{\bF_q}
\newcommand{\ftwo}{\bF_2}
\newcommand{\cyctwo}{\bZ/2}
\newcommand{\cpv}[1]{\bC P({#1})}
\newcommand{\ab}{\textbf{Ab}}
\newcommand{\bZm}{\bZ_{-}}
\newcommand{\repnoneq}{{H\ground}}
\newcommand{\repeq}{{H\! A}}
\newcommand{\paper}{thesis}
\newcommand{\myspan}[3]{\xymatrix@R=0.5pc@C=1pc{ & {\gset{#2}} \ar[dr] \ar[dl] & \\ {\gset{#1}} & & {\gset{#3}}\\}}
\newcommand{\inlinespan}[3]{{\gset{#1}}\longleftarrow{\gset{#2}}\longrightarrow{\gset{#3}}}
\newcommand{\contraspan}[3]{\xymatrix@R=0.5pc@C=1pc{ & {#1} \ar@{=}[dr] \ar[dl]_{#3} & \\ {#2} & & {#1}\\}}
\newcommand{\cospan}[3]{\xymatrix@R=0.5pc@C=1pc{ & {#1} \ar@{=}[dl] \ar[dr]^{#3} & \\ {#1} & & {#2}\\}}
\newcommand{\idspan}[1]{\xymatrix@R=0.5pc@C=1pc{ & {#1} \ar@{=}[dl] \ar@{=}[dr] & \\ {#1} & & {#1}\\}}
\newcommand{\minicontraspan}[3]{\xymatrix@R=0.2pc@C=.4pc{ & {\scriptstyle #1} \ar@{=}[dr] \ar[dl]_{\scriptstyle #3} & \\ {\scriptstyle #2} & & {\scriptstyle #1}\\}}
\newcommand{\minicospan}[3]{\xymatrix@R=0.2pc@C=.4pc{ & {\scriptstyle #1} \ar@{=}[dl] \ar[dr]^{\scriptstyle #3} & \\ {\scriptstyle #1} & & {\scriptstyle #2}\\}}
\newcommand{\miniidspan}[1]{\xymatrix@R=0.2pc@C=.4pc{ & {\scriptstyle #1} \ar@{=}[dl] \ar@{=}[dr] & \\ {\scriptstyle #1} & & {\scriptstyle #1}\\}}
\newcommand{\mf}[5]{\xymatrix{ {#1} \ar@/_2ex/[d]_{#4} \\ {#2} \ar@(d,dr)[]_{#5} \ar@/_2ex/[u]_{#3} \\ }}
\newcommand{\mymatrix}[1]{\left( \begin{smallmatrix}#1 \end{smallmatrix} \right)}
\newcommand{\tw}[2]{\makebox[0pt][r]{\phantom{x}}_{#2}{#1}}
\newcommand{\jg}[1]{\langle {#1} \rangle}
\newcommand{\mft}{{t_{\!\rho\,}}}
\newcommand{\mfr}{{r_{\!\rho\,}}}
\renewcommand{\r}{R} 
\renewcommand{\l}{L}
\newcommand{\rminus}{R_{-}}
\newcommand{\lminus}{L_{-}}
\newcommand{\trivo}{\bullet}
\newcommand{\freeo}{{\raisebox{-0.25ex}{\sun}}} 
\renewcommand{\G}{G}
\newcommand{\eqchains}[1]{C_*^\G({#1})}
\newcommand{\sh}{h_\G} 
\newcommand{\ulh}{\underline{H}_\G} 
\newcommand{\h}{{H}_\G} 
\newcommand{\hl}{{H}^\G} 
\newcommand{\ulhl}{\underline{H}^\G} 
\newcommand{\neqh}{H} 
\newcommand{\hr}{\makebox[0in][l]{$\widetilde{{H}}$}\phantom{H}_{\G}} 
\newcommand{\hlr}{\makebox[0in][l]{$\widetilde{{H}}$}\phantom{H}^{\G}} 
\newcommand{\ulhr}{\makebox[0in][l]{$\widetilde{\underline{H}}$}\phantom{H}_{\G}} 
\newcommand{\ulhrnoph}{\widetilde{\underline{H}}_{\G}} 
\newcommand{\ulhrcapt}{\widetilde{H}_{\G}} 
\newcommand{\ulhlr}{\makebox[0in][l]{$\widetilde{\underline{H}}$}\phantom{H}^{\G}} 
\newcommand{\coh}{\mathcal{H}}
\newcommand{\neqhr}{\widetilde{H}} 
\newcommand{\cohomplot}[9]{%
  \xymatrix@R=0.5pc@C=#9{
  &  &  &  &  &  & \makebox[0pt][c]{${#2}$} &  &  &  &  &  \\
{\phantom{\makebox[0pt][c]{${#8}$}}} &   & \makebox[0pt][c]{${#7}$} &   & \makebox[0pt][c]{${#7}$} &   & \makebox[0pt][c]{${#6}$} &   & . &   & . &   \\
   & . &   & . &   & . &   & . &   & . &   & . \\
{\phantom{\makebox[0pt][c]{${#8}$}}} &   & \makebox[0pt][c]{${#7}$} &   & \makebox[0pt][c]{${#7}$} &   & \makebox[0pt][c]{${#6}$} &   & . &   & . &   \\
   & . &   & . &   & . &   & . &   & . &   & . \\
    \ar@{<.>}[rrrrrrrrrrrr] &    & \makebox[0pt][c]{$#4$} &   & \makebox[0pt][c]{$#4$} &   & \makebox[0pt][c]{${#3}$} &   & \makebox[0pt][c]{$#5$} &   & \makebox[0pt][c]{$#5$} &   &  & \makebox[0pt][c]{${#1}$} \\
{\phantom{\makebox[0pt][c]{${#8}$}}} & . &   & . &   & . &   & . &   & \makebox[0pt][c]{${#7}$} &   & \makebox[0pt][c]{${#7}$} \\
{\phantom{\makebox[0pt][c]{${#8}$}}} &   & . &   & . &   & \makebox[0pt][c]{${#6}$} &   & . &   & . &   \\
{\phantom{\makebox[0pt][c]{${#8}$}}} & . &   & . &   & . &   & . &   & \makebox[0pt][c]{${#7}$} &   & \makebox[0pt][c]{${#7}$} \\
{\phantom{\makebox[0pt][c]{${#8}$}}} &   & . &   & . &   & \makebox[0pt][c]{${#6}$} &   & . &   & . &   \\
    &   &   &   &   &   & \ar@{<.>}[uuuuuuuuuu]  &   &   &   &   & \\
  } %
}
\newcommand{\shiftedpic}[8]{%
  \xymatrix@R=0.5pc@C=0.7pc{
  &  &  &  &  &  & \makebox[0pt][c]{${#2}$} &  &  &  &  &  \\
{\phantom{\makebox[0pt][c]{${#8}$}}} &   & \makebox[0pt][c]{${#7}$} &   & \makebox[0pt][c]{${#7}$} &   & \makebox[0pt][c]{${#6}$} &   & . &   & . &   \\
   & . &   & . &   & . &   & . &   & . &   & . \\
 &    & \makebox[0pt][c]{$#4$} &   & \makebox[0pt][c]{$#4$} &   & \makebox[0pt][c]{${#3}$} &   & \makebox[0pt][c]{$#5$} &   & \makebox[0pt][c]{$#5$} &   &  & \\
{\phantom{\makebox[0pt][c]{${#8}$}}} & . &   & . &   & . &   & . &   & \makebox[0pt][c]{${#7}$} &   & \makebox[0pt][c]{${#7}$} \\
{\phantom{\makebox[0pt][c]{${#8}$}}} &   & . &   & . &   & \makebox[0pt][c]{${#6}$} &   & . &   & . &   \\
{\phantom{\makebox[0pt][c]{${#8}$}}} & . &   & . &   & . &   & . &   & \makebox[0pt][c]{${#7}$} &   & \makebox[0pt][c]{${#7}$} \\
   \ar@{<.>}[rrrrrrrrrrrr] {\phantom{\makebox[0pt][c]{${#8}$}}} &   & . &   & . &   & \makebox[0pt][c]{${#6}$} &   & . &   & . &  & & \makebox[0pt][c]{${#1}$} \\
{\phantom{\makebox[0pt][c]{${#8}$}}} & . &   & . &   & . &   & . &   & \makebox[0pt][c]{${#7}$} &   & \makebox[0pt][c]{${#7}$} \\
{\phantom{\makebox[0pt][c]{${#8}$}}} &   & . &   & . &   & \makebox[0pt][c]{${#6}$} &   & . &   & . &   \\
    &   &   &   &   &   & \ar@{<.>}[uuuuuuuuuu]  &   &   &   &   & \\
  } %
}
\newcommand{\kernelpic}[8]{%
  \xymatrix@R=0.5pc@C=0.7pc{
  &  &  &  &  &  & \makebox[0pt][c]{${#2}$} &  &  &  &  &  \\
{\phantom{\makebox[0pt][c]{${#8}$}}} &   & . &   & . &   & . &   & . &   & . &   \\
   & . &   & . &   & . &   & . &   & . &   & . \\
 &    & \makebox[0pt][c]{$#4$} &   & \makebox[0pt][c]{$#4$} &   & \makebox[0pt][c]{${#3}$} &   & \makebox[0pt][c]{$#5$} &   & \makebox[0pt][c]{$#5$} &   &  & \\
{\phantom{\makebox[0pt][c]{${#8}$}}} & . &   & . &   & . &   & . &   & \makebox[0pt][c]{${#7}$} &   & \makebox[0pt][c]{${#7}$} \\
{\phantom{\makebox[0pt][c]{${#8}$}}} &   & . &   & . &   & . &   & . &   & . &   \\
{\phantom{\makebox[0pt][c]{${#8}$}}} & . &   & . &   & . &   & . &   & \makebox[0pt][c]{${#7}$} &   & \makebox[0pt][c]{${#7}$} \\
   \ar@{<.>}[rrrrrrrrrrrr] {\phantom{\makebox[0pt][c]{${#8}$}}} &   & . &   & . &   & . &   & . &   & . &  & & \makebox[0pt][c]{${#1}$} \\
{\phantom{\makebox[0pt][c]{${#8}$}}} & . &   & . &   & . &   & . &   & . &   & . \\
{\phantom{\makebox[0pt][c]{${#8}$}}} &   & . &   & . &   & . &   & . &   & . &   \\
    &   &   &   &   &   & \ar@{<.>}[uuuuuuuuuu]  &   &   &   &   & \\
  } %
}
\newcommand{\mydim}[1]{(|{#1}^\G|,|{#1}|)}
\newcommand{\signrep}{\zeta}
\newcommand{\rotrep}{\lambda}
\newcommand{\cxrep}{\phi}
\newcommand{\covproj}[1]{P_{\!#1}} 
\newcommand{\contraproj}[1]{P^{\!#1}} 
\newcommand{\covinj}[1]{I_{\!#1}} 
\newcommand{\contrainj}[1]{I^{\!#1}} 
\newcommand{\gset}[1]{\mathbf{#1}}
\newcommand{\sm}{\wedge}
\newcommand{\defword}[1]{\textbf{#1}}
\newcommand{\cxuniverse}{{\sU_{\bC}}}
\newcommand{\runiverse}{\sU_{\bR}}
\newcommand{\triv}{\{e\}}
\newcommand{\Ccell}{C^{\text{cell}}}
\newcommand{\Ccelleq}{\underline{C}^{\text{cell}}}
\newcommand{\floor}[1]{\lfloor {#1}\rfloor}
\newcommand{\pt}{pt}
\DeclareMathOperator{\proj}{proj}
\renewcommand{\to}{\longrightarrow}
\newcommand{\eqnref}[1]{(\ref{#1})}
\newtheorem{thm}{Theorem}[chapter]
\newaliascnt{cor}{thm} 
\newtheorem{cor}[cor]{Corollary}
\newaliascnt{prop}{thm}
\newtheorem{prop}[prop]{Proposition}
\newaliascnt{lem}{thm}
\newtheorem{lem}[lem]{Lemma}
\newaliascnt{conj}{thm}
\newtheorem{conj}[conj]{Conjecture}
\newaliascnt{fact}{thm}
\newtheorem{fact}[fact]{Fact}
\newaliascnt{axiom}{thm}
\newtheorem{axiom}[axiom]{Axiom}
\newaliascnt{axioms}{thm}
\newtheorem{axioms}[axioms]{Axioms}
\theoremstyle{definition}
\newaliascnt{defn}{thm}
\newtheorem{defn}[defn]{Definition}
\newaliascnt{exmp}{thm}
\newtheorem{exmp}[exmp]{Example} 
\theoremstyle{remark}
\newaliascnt{rem}{thm}
\newtheorem{rem}[rem]{Remark}
\let\c@equation\c@thm
\numberwithin{equation}{chapter}
\def\HyPsd@CatcodeWarning#1{}\makeatother
\begin{document}

\title{Equivariant Local Coefficients and the $RO(G)$-graded cohomology of
classifying spaces}
\author{Megan Elizabeth Shulman}
\date{June 2010}
\department{Mathematics}
\division{Physical Sciences}
\degree{Doctor of Philosophy}
\maketitle

\dedication
\begin{center}
        To Mom and Dad
\end{center}



\tableofcontents




\listoffigures





\topmatter{Abstract}
This \paper{} consists of two main parts.  In the second part,
we recall how a description of local coefficients that Eilenberg introduced
in the 1940s leads to spectral sequences for the computation of homology
and cohomology with local coefficients.  We then show how to construct new
equivariant analogues of these spectral sequences for $RO(G)$-graded Bredon
homology and cohomology.  Finally, we use these spectral sequences to
complete a sample calculation, in which we use the equivariant Serre
spectral sequence and the equivariant cohomology of complex projective
spaces to compute the cohomology of the equivariant classifying space
$B_{C_p} O(2)$.

However, to complete this sample computation, we need to know the
cohomology of the complex projective space $\cpv{\cxuniverse} = B_{C_p}
SO(2)$.  This calculation was done in \cite{lgl}, but relies on a theorem
whose proof as given was incorrect.  We spend the first part of this \paper{}
providing a correct proof and summarizing the results of \cite{lgl}.

\topmatter{Acknowledgements} 
I would like to thank my advisor, Peter May, for many years' worth of 
discussions about all things equivariant; John Greenlees, for several very
helpful discussions about the multiplicative structure on cohomology; and
Vigleik Angeltveit and Anna Marie Bohmann for proofreading drafts of this
work.  I would also like to thank Michael Shulman for many helpful
conversations about category theory and Day tensor products, and Gaunce
Lewis for writing so much about Mackey functors.

\mainmatter



\chapter{Introduction}

\section{Overview}

From a theoretical point of view, there are many reasons to think that
$RO(G)$-graded Bredon cohomology, and in particular Mackey functor valued
$RO(G)$-graded Bredon cohomology, is the correct equivariant analogue of
ordinary cohomology.  Recent results, such as \cite{kervaire} and
\cite{AGH}, provide additional reasons to want to be able to compute using
this theory.  Unfortunately, computations in $RO(G)$-graded Bredon
cohomology are extremely difficult, and even the few nontrivial
calculations in the literature are not well known.

There are two immediate problems when approaching computations in
$RO(G)$-graded Mackey functor valued Bredon cohomology.  First, the reservoir
of known low-level computations on which to build further results is
extremely small; as an example, \cite{kervaire} had to compute the
cohomology of a point for $G=C_{2^n}$, the cyclic group of order $2^n$,
because this was not previously known.  Second, even if there were such a
reservoir of computations, many familiar nonequivariant tools do not yet
have tractable equivariant analogues.  This \paper{} represents a first
step towards remedying both hurdles. 

We begin in \autoref{ch::background} by recalling some of the basic
definitions, which are unfamiliar to many algebraic topologists, and
discussing the computation by Stong and Lewis of the cohomology of a point
when $G=C_p$, a cyclic group of prime order.  We then summarize the results
of L.\ Gaunce Lewis in \cite{lgl} concerning the equivariant cohomology of
complex projective spaces.  The central result of \cite{lgl} rests on a
lemma whose proof as given is incorrect; a correction is included in
\autoref{ch::freeness}.

After establishing these basic calculations, we turn in \autoref{ch::ss} to
more sophisticated tools for computing $RO(G)$-graded cohomology.  Moerdijk
and Svensson in \cite{MS} developed a Serre spectral sequence in
integer-graded Bredon cohomology, and Kronholm extended this to a spectral
sequence for $RO(G)$-graded Bredon cohomology in \cite{bill}.  However,
this Serre spectral sequence has not yet been used for any computations, in
large part because it is impossible to ignore local coefficients when
working equivariantly with Bredon (co)homology:  in contrast to the
nonequivariant situation, almost no interesting examples reduce to trivial
local coefficients.  It is thus necessary to develop some tools for working
with homology and cohomology with local coefficients.

To this end, we recall a simple universal coefficient spectral sequence
which appears in \cite[p.\ 355]{CE}, but which, to the best of our
knowledge, has not previously been applied in conjunction with the Serre
spectral sequence.  Part of the reason is that the definition of local
coefficients that appears in the construction of the Serre spectral
sequence is not tautologically the same as the definition that gives the
cited universal coefficient spectral sequence. 

The connection comes from an old result of Eilenberg \cite{Eil},
popularized by Whitehead \cite[VI.3.4]{GW} and, more recently, by Hatcher
\cite[App3.H]{Hat}.  It identifies the local coefficients that appear in
the context of fibrations with a more elementary definition in terms of the
chains of the universal cover of the base space.  The identification makes
working with local coefficients much more feasible.  We shall first say
in \autoref{sec::noneq} how this goes nonequivariantly, and illustrate with
an example.  We then explain the equivariant generalization in
\autoref{sec::eq}.  The later parts of this \paper{} will review some necessary
background on equivariant classifying spaces and finally go through a
sample calculation in \autoref{ch::computations}.  This calculation
computes characteristic classes in $RO(G)$-graded Mackey functor valued
cohomology of equivariant real 2-plane bundles when $G$ is cyclic of prime
order.

Finally, we conclude by discussing the multiplicative structure on this
spectral sequence in \autoref{ch::mult}, and go through some nonequivariant
sample calculations in \autoref{sec::ex}.


\section{Other aspects of equivariant cohomology}

We have mentioned the lack of known computations in $RO(G)$-graded Bredon
cohomology, but perhaps a few words about the reasons are in order.

To begin, although it is clear that $G$-spectra and thus equivariant
cohomology theories should be graded on a richer structure than the
integers, it is possible that the real $G$-representations are not quite
the ``true'' indexing set.  As we discuss later in this \paper{}, the
Mackey functor valued Bredon cohomology $\ulh^*(X)$ of a $G$-space $X$
should be considered free if it is a direct sum of suspended copies
$\Sigma^\alpha \ulh^*(\pt)$ of the cohomology of a point.  However, as
remarked in \cite{FL}, there are nontrivially distinct sets $\{\alpha_i\}$
and $\{\beta_i\}$ of virtual representations such that $\bigoplus_i
\Sigma^{\alpha_i} \ulh^*(\pt)$ and $\bigoplus_i \Sigma^{\beta_i}
\ulh^*(\pt)$ are isomorphic free modules.  This makes it harder to
determine the ``correct'' $RO(G)$ dimensions of the generators of a free
$\ulh^*(\pt)$-module.

Additionally, there are several possible definitions of a $G$-CW-complex.
If we allow the spheres and disks of these complexes to have nontrivial
actions of $G$---by allowing them to be the unit spheres and disks of
representations of $G$, for example---then there is no equivariant
Whitehead theorem: a $G$-map between $G$-CW-complexes need not be
$G$-homotopic to a cellular map.  Thus, if we define our cohomology
theories using an appropriate equivariant analogue of cellular cohomology,
a non-cellular $G$-map need not induce a map on cellular cohomology.  In
particular, given two different $G$-CW-structures on a $G$-space $X$, the
identity map of $X$ may not induce a self-map of the corresponding cellular
cohomologies.

Some of the numerous other subtleties along these lines will become evident
during the course of this \paper{}.

\vspace{2ex}

Throughout this \paper{}, $\ground$ will refer to a fixed ground ring.
$G$ will be a finite group throughout, although a few results may apply
more generally to discrete groups or compact Lie groups.  Reduced
cohomology theories are denoted with $\widetilde{H}$, unreduced theories
with $H$.


\chapter[Background]{Background on Mackey functors and Bredon cohomology}\label{ch::background}
\section{Bredon cohomology and extension to $RO(G)$-grading}\label{sec::bredon}

A nonequivariant (reduced) cohomology theory $\neqhr^*(-)$ is defined to be a
sequence of contravariant functors from the homotopy category of based
spaces  to $\rmod$, the category of $\ground$-modules, together with suspension
natural isomorphisms ${\Sigma: \neqhr^n\to \neqhr^{n+1}}$,
satisfying the Eilenberg-Steenrod axioms.  If it satisfies the dimension
axiom as well, we say that the theory is ordinary.  If $X$ is an unbased
space, then its unreduced cohomology is given by $\neqh^*(X) =
\neqhr^*(X_+)$, the reduced homology of $X$ with a disjoint
basepoint adjoined.

Similarly, for a fixed group of equivariance $G$, an \defword{equivariant
(reduced) cohomology theory} indexed on the integers is a sequence of
contravariant functors from the $G$-homotopy category of based $G$-spaces
to $\rmod$, satisfying the obvious generalizations of the
Eilenberg-Steenrod axioms: weak equivalence, exactness, additivity, and
suspension.  Note that a basepoint of a $G$-space $X$ is required to be in
the $G$-fixed points $X^G$.  Also, a $G$-homotopy of $G$-maps $X\to Y$
consists of a $G$-map $X\times I\to Y$, where the interval $I$ is given the
trivial $G$ action and the product has the diagonal $G$ action.  
An equivariant cohomology theory is
\defword{ordinary} if it satisfies the dimension axiom: $\hr^n({G/K}_+) = 0$
for all nonzero integers $n\in\bZ$, $n\ne 0$, and all subgroups $K < G$.


There are some complications, of course.  The above axioms assume that
the collection of functors making up a cohomology theory, as in the
nonequivariant situation, is indexed by $\bZ$.  However, equivariant
theories are more naturally given by a collection of functors indexed on
the free abelian group generated by the irreducible representations of $G$.
For historical reasons, such a theory is called \defword{$RO(G)$-graded},
and we will continue this convention.  However, keep in mind that, with the
usual definition, the underlying abelian group of $RO(G)$ consists of
\emph{equivalence classes} of formal sums of representations.  We cannot
pay attention only to equivalence classes: the only way to make signs work
out is to remember the isomorphisms between isomorphic representations.
So, for the purposes of this \paper{}, we will define $RO(G)$ to be the free
abelian group on irreducible representations of $G$.  \label{rem::grading} 
Concretely, this means that an $RO(G)$-graded theory consists of a functor
$\hr^{\sum_i a_i \rotrep_i}\colon\hogtop\to\rmod$ for each formal sum
$\sum_i a_i\rotrep_i$ of $G$-representations, where $\{\rotrep_i\}_i$ runs
over the irreducible representations of $G$ and each $a_i$ is a possibly
negative integer.  There are some important bookkeeping details which
ensure that this is well-defined.  We will largely ignore those issues in
this \paper{}; see \cite{AK} or \cite{parametrized} for details.

At any rate, after replacing $n$ by $\alpha\in RO(G)$, the functors of an
equivariant cohomology theory satisfy the weak equivalence, exactness, and
additivity axioms, together with the following modified suspension axiom.

\begin{axiom}\label{ax::susp}
\defword{Equivariant suspension}: For each $\alpha \in RO(G)$ and actual
representation $V$, there is a natural isomorphism
\[ \Sigma^V \colon  \hr^\alpha(X) \to \hr^{\alpha+V}(\Sigma^V X) =
\hr^{\alpha+V}(S^V \sm X),\] 
where $S^V$ is the one-point compactification of the representation $V$.
The suspension isomorphisms for different representations are compatible;
for representations $V$ and $W$, $\Sigma^{V+W} \cong \Sigma^V \Sigma^W$.
\end{axiom}

Somewhat surprisingly, an $RO(G)$-graded theory is ordinary if it satisfies
the dimension axiom for the integer-graded part of the theory: that is,
$\hr^n({G/K}_+) = 0$ for nonzero \emph{integers}, but the dimension axiom
says nothing about $\hr^\alpha({G/K}_+)$ for $\alpha\notin\bZ$.  These
cohomology groups in ``off-integer'' dimensions are determined by the other
axioms, and are generally nonzero.  As a result, the $RO(G)$-graded
cohomology of a point is quite complicated.  This provides the first of
many hurdles to computing with equivariant cohomology.

The ordinary equivariant cohomology theories are usually called
\defword{Bredon cohomology theories}; ordinary theories graded on the
integers were introduced by Bredon in \cite{bredon}.  There are
corresponding notions of ordinary homology theories, satisfying the
expected modifications of the above axioms.

The integer-graded Bredon cohomology theories have a very concrete
description.  The Bredon cohomology $\hr^*(X)$ can be thought of as compiling
data about the nonequivariant cohomology rings $\neqhr^*(X^K)$ of the $K$-fixed
points, for each subgroup $K$ of $G$.  They do this using the language of
coefficient systems.

\begin{defn}
Fix a (finite) group of equivariance $G$.  The \defword{orbit category}
$\sO_G$ has objects the orbits $G/K$, for $K$ a subgroup of $G$; morphisms
are $G$-maps of $G$-sets.
\end{defn}

There is a map $G/K \to G/J$ if and only if $K$ is subconjugate to $J$ in
$G$.  If this is the case, then a $G$-map $\alpha\colon G/K\to G/J$ is
determined by what it does to the identity coset $eK$; $\alpha(eK) =
g_\alpha J$, for some $g_\alpha$ such that $g_\alpha^{-1} K g_\alpha
\subset J$.  It follows that the automorphism group of $G/K$ is the
\defword{Weyl group} ${W_G K = ({N_G K})/K}$, a quotient of the
normalizer of $K$.

\begin{defn}
A \defword{coefficient system} is a contravariant functor
$\sO_G\op\to\rmod$.
\end{defn}

As the name suggests, the coefficients in an integer-graded ordinary
equivariant cohomology theory are coefficient systems.  One way to define
Bredon cohomology on $G$-CW-complexes is as follows.

\begin{defn}\label{def::trivialCW}
A \defword{$G$-CW-complex with trivial cells} $X$ is a colimit of
$n$-skeleta $X_n$, formed as follows.  $X_0$ is a discrete $G$-set;
$X_{n+1}$ is formed from $X_n$ by attaching cells of the form $G/K \times
D^{n+1}$ along boundary $G$-maps $G/K \times S^n \to X_n$.  Here $D^{n+1}$
and $S^n$ have the trivial $G$ action, and $K$ denotes a subgroup of $G$;
different cells may have different $K$.
\end{defn}

A $G$-CW-complex with trivial cells $X$ may be thought of as an ordinary
CW-complex equipped with a cellular $G$ action $G\times X\to X$ such that,
for each $n$-cell $D^n$ and $g\in G$, the action map $\varphi_g\colon
x\mapsto gx$ either fixes $D^n$ pointwise or gives a homeomorphism from
$D^n$ to a distinct second $n$-cell.

We will later generalize this notion to $G$-CW-complexes where the cells
are allowed to have nontrivial $G$ actions.

\begin{exmp}
Let $G = {C_2}$, the cyclic group with two elements, acting on $X = S^2$ by
rotation by $180^\circ$.  We can give this a $G$-CW structure as follows.
\begin{itemize}
\item $X_0 = {C_2}/{C_2} \times S^0$, i.e.\ $S^0$ with the trivial $G$ action.
\item $X_1$ is given by attaching the single cell ${C_2}/\triv \times D^1$ along
the map \[{C_2}/\triv \times S^0 \to S^0 \colon (g,x)\mapsto x.\]  $X_1$ is thus
$G$-homeomorphic to the unit circle in the complex plane $\bC$, with
${C_2}$ acting by complex conjugation.  
\item Finally, $X = X_2$ is given by attaching
the single two-cell ${C_2}/\triv \times D^2$ along the obvious map
${C_2}/\triv \times S^1 \to X_1$.
\end{itemize}
\end{exmp}

Now recall that nonequivariant cellular cohomology of a CW-complex $X$
(which agrees with singular cohomology) is defined as the cohomology of the
\defword{cellular chain complex}.  This has
\[ \Ccell_n(X) = \neqh_n(X_n,X_{n-1};\ground).\]
We can use this to define a cellular chains coefficient system in the
equivariant situation.

\begin{defn}\label{def::cellchains}
The $n^\text{th}$ \defword{cellular chains coefficient system} for a
$G$-space $X$ is the functor $\Ccelleq_n\colon \sO_G\op\to\rmod$
given on objects by
\[ G/K \mapsto \Ccell_n(X^K).\]
A map $\alpha\colon G/{K_1} \to G/{K_2}$ given by $\alpha\colon  eK_1
\mapsto g_\alpha K_2$ defines a map
\[ X^{K_2}\to X^{K_1}\colon  x\mapsto g_\alpha x.\]
We use this to define the required map $\Ccell_n(X^{K_2}) \to
\Ccell_n(X^{K_1})$ on morphisms.
\end{defn}

Given a coefficient system $M$, we can then look at the set of natural
transformations $\Hom_{\sO_G}(\Ccelleq_n(X),M)$ for each $n$.  This has the
structure of \rmodule{}.  Further, $\Ccell_*(X^K)$ is a chain
complex.  This gives $\Ccelleq_*(X)$ the structure of a chain complex in
the category of coefficient systems, and makes $\Hom(\Ccelleq_*(X),M)$ a
(co)chain complex in the category of $\ground$-modules.

\begin{defn}
Given a group $G$, a $G$-CW-complex $X$, and a coefficient system $M\colon
\sO_G\op\to\rmod$, the unreduced \defword{Bredon cohomology} $\h^*(X;M)$ of
$X$ with coefficients in $M$ is the cohomology of the chain complex of
$\ground$-modules
\[ \Hom_{\sO_G}(\Ccelleq_*(X),M).\]
As usual, the reduced Bredon cohomology of a based space $X$ with basepoint
$x_0$ is the cohomology of the pair $\h^*(X,x_0;M)$.
\end{defn}

\noindent
It can be checked that each theory $\hr^*(-;M)$ is ordinary: for each $K <
G$,
\[ \hr^n({G/K}_+;M) = 
\begin{cases}
0 & n\in\bZ,\ n\ne 0 \\
M(G/K) & n = 0.
\end{cases}\]

\begin{rem}
The notation $\Hom_{\sO_G}$ is used because natural transformations are the
``hom of functors.''  There is also a notion of a tensor product of
functors $\otimes_{\sO_G}$, defined using a coend; the Bredon
\emph{homology} with coefficients in some covariant coefficient system
$N\colon \sO_G\to\rmod$ is the homology of the chain complex with
$n^\text{th}$ term
\[ \Ccelleq_n(X)\otimes_{\sO_G} N = \int^{G/K\in \sO_G}
\Ccelleq_n(X)(G/K)\otimes N(G/K);\]
see \cite{AK}.
\end{rem}

However, as mentioned above, equivariant cohomology is more naturally
graded on $RO(G)$\footnote{Recall our nonstandard definition of $RO(G)$ as
the free abelian group generated by the irreducible representations of
$G$.}.  This is necessary in order to have Poincar\'{e} duality, for
example.  We will also see later in this \paper{} that the cohomology of
complex projective spaces and Grassmannians is free (in the appropriate
sense) as a module over the cohomology of a point for $RO(G)$-graded
cohomology, but not for integer-graded cohomology.

The following theorem describes when the integer-graded theory can be
extended to an $RO(G)$-graded theory.  Mackey functors will be defined in
\autoref{sec::mackey}.

\begin{thm}[Lewis, May, McClure]\label{thm::rog}
For a group $G$ and $M\colon \sO_G\op\to\rmod$,  the ordinary
integer-graded cohomology theory $\hr^*(-;M)$ extends to an $RO(G)$-grading
if and only if $M$ is the underlying contravariant coefficient system of a
Mackey functor.  Likewise, if $N\colon \sO_G\to\rmod$ is a covariant
coefficient system, Bredon homology $\hl_*(-;N)$ with coefficients in $N$
extends to an $RO(G)$ grading if and only if $N$ is the underlying
coefficient system of a Mackey functor.
\end{thm}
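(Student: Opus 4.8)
The plan is to derive both implications from the representability of ordinary $RO(G)$-graded theories by equivariant Eilenberg--MacLane spectra indexed on the complete universe $\runiverse$, following Lewis--May--McClure. Throughout, the key tool is the Eilenberg--MacLane $G$-spectrum associated to a Mackey functor and the equivariant Brown representability theorem.

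For the ``if'' direction, suppose $M$ is the underlying contravariant coefficient system of a Mackey functor $\underline{M}$. I would invoke the genuine $G$-spectrum $H\underline{M}$, indexed on $\runiverse$, with $\htmf_n(H\underline{M}) \cong \underline{M}$ for $n=0$ and $0$ otherwise; its existence is the central construction of the Lewis--May--McClure package. Setting $\hr^\alpha(X) = [X,\Sigma^\alpha H\underline{M}]_\G$ for based $G$-CW-complexes $X$ and $\alpha \in RO(G)$, one gets functors satisfying weak equivalence, exactness, additivity and, since $\Sigma^V$ is smashing with $S^V$, the equivariant suspension axiom; the vanishing of $\htmf_n(H\underline{M})$ for $n\ne 0$ gives the dimension axiom, so the theory is ordinary. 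It then remains to identify the integer-graded part with $\h^*(-;M)$. Both are ordinary integer-graded $G$-cohomology theories, and on a $G$-CW-complex each is computed from the cellular chains coefficient system $\Ccelleq_*$ paired against the coefficient system $G/K \mapsto \hr^0(G/K_+) \cong M(G/K)$; the uniqueness of ordinary integer-graded Bredon cohomology (a standard consequence of the weak equivalence axiom together with comparison of cellular cochain complexes) then produces a natural isomorphism.

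For the ``only if'' direction, suppose $\h^*(-;M)$ extends to an $RO(G)$-graded theory obeying the axioms above. Because one is given compatible suspension isomorphisms $\Sigma^V$ for \emph{all} actual representations $V$---that is, a theory living over the complete universe---equivariant Brown representability yields a genuine $G$-spectrum $E$ on $\runiverse$ with $\hr^\alpha(X) \cong [X,\Sigma^\alpha E]_\G$. The dimension axiom forces $\htmf_n(E) = 0$ for $n\ne 0$, so $E$ is an equivariant Eilenberg--MacLane spectrum and $\underline{M} \defeq \htmf_0(E)$ is a Mackey functor. Its underlying contravariant coefficient system sends $G/K$ to $\pi_0(E^K) = [G/K_+,E]_\G = \hr^0(G/K_+) \cong M(G/K)$, and the restriction maps of $M$ correspond to the contravariant functoriality of $\htmf_0(E)$; hence $M$ is the underlying contravariant coefficient system of $\underline{M}$. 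The homology statement follows dually, using $\htmf_*(E \sm -)$ in place of $[-,E]_\G$ and the fact that a Mackey functor determines, and is determined by, its underlying covariant coefficient system together with the same transfer data.

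The hard part is the construction, for the ``if'' direction, of the genuine equivariant Eilenberg--MacLane spectrum $H\underline{M}$ realizing a prescribed homotopy Mackey functor over the complete universe: one must assemble a compatible system of spaces and structure maps, and it is exactly here that the \emph{full} Mackey functor structure on $M$---the transfers, not merely the restrictions---is essential, since a bare coefficient system cannot be deloooped against the representation spheres of $\runiverse$. A secondary point needing care is the application of equivariant Brown representability in the ``only if'' direction: one must check that the stated axioms, in particular compatible suspensions for all $V$, really do produce a $G$-spectrum indexed on a complete universe rather than only a naive one. (An alternative, more hands-on route for ``only if'' builds the transfers directly from Pontryagin--Thom collapse maps $S^W \to G_+ \sm_K S^{W}$ and the suspension isomorphisms, then verifies the Mackey axioms by hand; this avoids Brown representability but is considerably more laborious, and I would relegate it to a remark.)
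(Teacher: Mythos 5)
The paper states \autoref{thm::rog} as a citation to Lewis, May, and McClure and does not give a proof of its own; it is quoted as background for the rest of the thesis. There is therefore no in-paper argument to compare against, but your proposal can be assessed against the standard source.

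Your sketch follows the canonical Lewis--May--McClure strategy, and your division of labour is the right one. For the ``if'' direction, you correctly locate the essential content in the construction of the genuine Eilenberg--MacLane $G$-spectrum $H\underline{M}$ over the complete universe: the transfers in the Mackey structure are exactly what allow the space-level deloopings against representation spheres $S^V$ with $V^G=0$, and a bare coefficient system lacks that data. Checking the cohomology axioms for the represented theory and then identifying its integer-graded restriction with Bredon cohomology by the uniqueness of ordinary integer-graded theories is standard and correct. For the ``only if'' direction, the Brown-representability argument is the cleaner of the two available routes, and your remark about the more hands-on alternative via Pontryagin--Thom collapse maps is apt. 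One point you should make explicit when writing this up: after obtaining $E$ from Brown representability, it is not quite automatic that the transfers in the resulting Mackey functor $\htmf_0(E)$ are ``new'' structure on the original $M$ rather than something already implicit in the coefficient system. What makes the argument a genuine characterization is that those transfers arise from stable maps $\Sigma^\infty {G/K}_+ \to \Sigma^\infty {G/L}_+$ that exist only in the genuine ($\runiverse$-indexed) stable category via equivariant Atiyah duality or Wirthm\"uller; over the trivial universe they do not exist, which is precisely why a bare coefficient system can fail to extend. Spelling that out closes the loop on why the equivalence is non-vacuous. With that caveat, the proposal is correct and matches the literature; it just cannot be compared to a proof in this paper, since none is given there.
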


From now on, we will turn our attention from coefficient systems to Mackey
functors.

%

\section{A review of Mackey functors}\label{sec::mackey}

Recall that $G$ always denotes a finite group.


\begin{defn}\label{def::burnsidestable}
The \defword{Burnside category} $\sB_G$ is the full subcategory of the
equivariant stable category on objects $\Sigma^\infty (\gset{b}_+)$, where
$\gset{b}$ is a finite $G$-set.  Explicitly, the objects of $\sB_G$ are
finite $G$-sets and the morphisms are the stable $G$-maps.
\end{defn}

\begin{defn}\label{def::mf}
A \defword{Mackey functor} is a contravariant additive functor
from the Burnside category $\sB_G$ to the category $\rmod$.
\end{defn}

For obvious reasons, $\sB_G$ is sometimes called the ``stable orbit
category'' and Mackey functors ``stable coefficient systems.''  The
expansion from orbits to finite $G$-sets, which are disjoint unions of
orbits, is largely for later convenience; since Mackey functors are
additive functors, they are determined by their values on the orbits.

Although \autoref{def::burnsidestable} makes the connection between $\sO_G$
and $\sB_G$ clear, it is generally easier to work with the following
combinatorial definition.  It is shown in \cite{AK} that the two
definitions are equivalent.

\begin{defn}\label{def::preburnside}
The category $\sB_G^+$ is the category having:
\begin{itemize}
\item objects: the finite $G$-sets
\item morphisms: equivalence classes of spans \[ \myspan{b}uc \] 
with composition given by pullbacks.
\end{itemize}
\end{defn}

Two spans $\inlinespan{b}uc$ and $\inlinespan{b}vc$ are equivalent if there
is a commutative diagram as follows:
\[ \xymatrix@R=0.5pc@C=1pc{
  & \gset{u} \ar[dr] \ar[dl] \ar[dd]^{\cong} & \\
  \gset{b} & & \gset{c} \\
  & \gset{v} \ar[ur] \ar[ul] & \\
}\]
Each hom set $\sB_G^+(\gset{b},\gset{c})$ has the structure of an abelian
monoid.  We may take the sum of $\inlinespan{b}uc$ and $\inlinespan{b}vc$
to be the span
\[ \myspan{b}{u\amalg v}c \]
We may thus apply the Grothendieck construction to the hom sets of
$\sB_G^+$ to get an abelian group.

For convenience in the remainder of this \paper{}, we will additionally enrich
$\sB_G$ over $\rmod$, rather than abelian groups.  Doing so does not change the
Mackey functors ${\sB_G\op\to\rmod}$, but it will make the notation for
represented functors less cumbersome.

\begin{defn}\label{def::burnside}
The \defword{Burnside category} $\sB_G$ is the category enriched over
$\rmod$ having:
\begin{itemize}
\item objects: the objects of $\sB_G^+$
\item morphisms: $\sB_G(\gset{b},\gset{c})$ is the tensor product (over
$\bZ$) of the ground ring $\ground$ with the Grothendieck group of
$\sB_G^+(\gset{b},\gset{c})$.
\end{itemize}
\end{defn}

Mackey functors form a category $\mackey$, with morphisms given by natural
transformations of functors $\sB_G\op\to\rmod$.  Explicitly, this means
that a map of Mackey functors $M\to N$ consists of compatible maps
$M(\gset{b})\to N(\gset{b})$ for every finite $G$-set $\gset{b}$.
Since Mackey functors are additive functors, a Mackey functor $M$ over a
commutative ring $\ground$ is determined by its values on the orbits $G/K$, and
likewise a map $M\to N$ of Mackey functors is determined by the maps
$M(G/K)\to N(G/K)$.

The Burnside category $\sB_\G$ has several nice properties which will be
useful when simplifying the box product formula in \autoref{subs::boxp}.
Both are self-evident as long as \autoref{def::burnside} is used.

\begin{lem}\label{lem::selfdual}
$\sB_\G$ is self-dual, via the functor $D$ which is the identity on objects
and takes a span to its mirror image.  \hfill$\Box$
\end{lem}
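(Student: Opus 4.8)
The plan is to verify directly, using the combinatorial model of \autoref{def::preburnside} and \autoref{def::burnside}, that mirroring spans defines a well-defined contravariant functor $\sB_\G\to\sB_\G$ which squares to the identity; this makes $D$ an isomorphism of categories $\sB_\G\xrightarrow{\sim}\sB_\G\op$, which is what ``self-dual'' asks for. There are four things to check: compatibility with the equivalence relation on spans, additivity, preservation of identity morphisms, and reversal of composites.

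First, if $\inlinespan{b}uc$ and $\inlinespan{b}vc$ are equivalent via an isomorphism $\gset{u}\cong\gset{v}$ over $\gset{b}\times\gset{c}$, then that same isomorphism exhibits an equivalence between the mirrored spans $\inlinespan{c}ub$ and $\inlinespan{c}vb$, so $D$ is well-defined on morphisms of $\sB_\G^+$. Mirroring sends $\inlinespan{b}{u\amalg v}c$ to the disjoint union of the mirrored spans, so $D$ is a homomorphism of abelian monoids on each hom-set of $\sB_\G^+$; hence it descends to the Grothendieck groups and, after tensoring with $\ground$, to a $\ground$-linear map on each hom-module of $\sB_\G$. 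The identity morphism of $\gset{b}$ is represented by the span both of whose legs are $\Id_{\gset{b}}$, which is its own mirror image, so $D$ fixes identities.

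The one substantive point is compatibility with composition. Given composable spans $\alpha\colon\inlinespan{b}uc$ and $\beta\colon\inlinespan{c}vd$, the composite $\beta\alpha$ is represented by the span $\gset{b}\longleftarrow\gset{u}\times_{\gset{c}}\gset{v}\longrightarrow\gset{d}$ built from the pullback of the two maps into $\gset{c}$. Mirroring gives $\gset{d}\longleftarrow\gset{u}\times_{\gset{c}}\gset{v}\longrightarrow\gset{b}$, whereas the composite of $D(\alpha)\colon\inlinespan{c}ub$ with $D(\beta)\colon\inlinespan{d}vc$, taken in the order forced by contravariance, is $\gset{d}\longleftarrow\gset{v}\times_{\gset{c}}\gset{u}\longrightarrow\gset{b}$. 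The canonical symmetry isomorphism $\gset{u}\times_{\gset{c}}\gset{v}\cong\gset{v}\times_{\gset{c}}\gset{u}$ of the pullback identifies these two spans compatibly with their legs, so $D(\beta\alpha)=D(\alpha)D(\beta)$ in $\sB_\G$. Since mirroring a span twice returns the original span, $D\circ D=\Id$, and $D$ is therefore the claimed isomorphism of categories.

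I expect no real obstacle here: the only step with any content is compatibility with composition, and there the argument is entirely absorbed into the standard symmetry of pullbacks. (One could instead appeal to \autoref{def::burnsidestable} and the fact that each $\Sigma^\infty(\gset{b}_+)$ is canonically self-dual in the equivariant stable category, with $D$ being the induced map on morphisms; but the combinatorial check above is shorter and self-contained.)
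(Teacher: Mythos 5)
Your proof is correct, and it simply unpacks what the paper treats as routine: the text preceding the lemma declares it ``self-evident as long as \autoref{def::burnside} is used'' and supplies no argument beyond the $\Box$. Your four checks --- respect for the equivalence relation on spans, additivity through the Grothendieck construction and base change to $\ground$, fixing of identities, and compatibility with composition via the symmetry of pullbacks --- are exactly the verifications being implicitly invoked, with the pullback-symmetry step the only one carrying any content, just as you flag.
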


Additionally, the monoidal product $\times$ makes $\sB_\G$ a closed symmetric
monoidal category---and the internal hom is \emph{also} $\times$.  Note,
however, that although $\times$ denotes the Cartesian product in $\sB_G$, it is
not the categorical product.

\begin{lem}\label{lem::closed}
Let $D$ be the self-duality functor of \autoref{lem::selfdual}.  Then, for
any $\gset b\in\sB_\G$, the functor $-\times\gset b$ is left adjoint to
$\gset b\times -$:
\[
\sB_\G(\gset a\times \gset b,\gset c)
\cong
\sB_\G(\gset a,\gset b\times \gset c)
,\]
naturally in $\gset a$ and $\gset c$.  It is also natural in $\gset b$
if we correct variance issues by replacing one $\gset b$ with $D\gset
b$.\hfill$\Box$
\end{lem}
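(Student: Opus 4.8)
The plan is to unwind the combinatorial description of \autoref{def::preburnside}, reduce the isomorphism to a tautology about finite $G$-sets, and then spend the real effort on naturality, which amounts to chasing the pullback squares that compute composition in $\sB_\G$.

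First I would establish the bijection at the level of $\sB_\G^+$. A span $\gset{a}\times\gset{b}\longleftarrow\gset{u}\longrightarrow\gset{c}$ is exactly the data of a finite $G$-set $\gset{u}$ together with a $G$-map $\gset{u}\to\gset{a}\times\gset{b}\times\gset{c}$; a span $\gset{a}\longleftarrow\gset{u}\longrightarrow\gset{b}\times\gset{c}$ is the same data. So there is a tautological map $\sB_\G^+(\gset{a}\times\gset{b},\gset{c})\to\sB_\G^+(\gset{a},\gset{b}\times\gset{c})$ that simply regroups the three legs, leaving $\gset{u}$ untouched; since equivalence of spans is isomorphism over $\gset{a}\times\gset{b}\times\gset{c}$ in both cases, and since the abelian-monoid sum is $\amalg$ over that same triple product on both sides, this map is a well-defined isomorphism of abelian monoids. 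Applying the Grothendieck construction and then $-\otimes_{\bZ}\ground$ as in \autoref{def::burnside} yields the asserted $\ground$-module isomorphism $\sB_\G(\gset a\times \gset b,\gset c)\cong\sB_\G(\gset a,\gset b\times \gset c)$.

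Next I would check naturality in $\gset a$ and $\gset c$. By bilinearity it suffices to test on spans. For $\varphi\in\sB_\G(\gset a',\gset a)$ represented by $\gset a'\longleftarrow\gset w\longrightarrow\gset a$, the composite $\bigl(\gset a\times\gset b\longleftarrow\gset u\longrightarrow\gset c\bigr)\circ(\varphi\times\Id_{\gset b})$ is computed by pulling $\gset u\to\gset a$ back along $\gset w\to\gset a$ while carrying the $\gset b$- and $\gset c$-components along unchanged; that is precisely the pullback that computes $\bigl(\gset a\longleftarrow\gset u\longrightarrow\gset b\times\gset c\bigr)\circ\varphi$. Postcomposition with $\psi\in\sB_\G(\gset c,\gset c')$ likewise only modifies the $\gset c$-component, identically on the two sides. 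Hence both routes around the naturality square produce the same span up to the evident isomorphism over the triple product, so the isomorphism is natural in $\gset a$ and $\gset c$.

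For naturality in $\gset b$, the two occurrences of $\gset b$ lie on opposite sides of the adjunction, so a morphism out of $\gset b$ acts covariantly in the domain of $-\times\gset b$ but must act with the opposite variance on the target factor of $\gset b\times-$; by \autoref{lem::selfdual}, applying the self-duality functor $D$ (the identity on objects) to one copy of $\gset b$ is exactly the correction that matches the variances. With that replacement, the same pullback-chase as above, now applied to the $\gset b$-component, gives naturality; alternatively this case follows formally from the previous one together with the symmetry isomorphism $\gset b\times\gset c\cong\gset c\times\gset b$ and \autoref{lem::selfdual}. The main obstacle is entirely in this last bookkeeping: one must verify carefully that each pullback computing a composite touches only the factor being composed along, and that the coherence and variance conventions for $\sB_\G$ do not interfere; everything else is a repackaging of data, and indeed the whole statement is an instance of the general fact that forming spans of an extensive category with pullbacks turns the Cartesian product into a self-dual monoidal structure whose internal hom is the product, a structure preserved by group completion and $\ground$-linearization.
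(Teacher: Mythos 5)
Your argument is correct, and it fleshes out exactly what the paper leaves as ``self-evident'': the regrouping of a map $\gset u\to\gset a\times\gset b\times\gset c$ is the right tautology, the monoid structure and span-equivalence are preserved because everything is phrased over the triple product, and the pullback chase for naturality in $\gset a$ and $\gset c$ (and in $\gset b$ after inserting $D$ to fix variance) is the right bookkeeping. The paper offers no written proof, so there is nothing to contrast; your proposal is a faithful expansion of the intended one-line observation.
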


\subsection{Connection to coefficient systems}
The (unstable) orbit category embeds contravariantly and covariantly in
$\sB_G$.  Both embeddings are the identity on objects; a map of $G$-sets
$G/L\xrightarrow{\alpha} G/K$ is sent to either 
\[ \contraspan{G/L}{G/K}{\alpha} \hspace{10pt}\text{ or }\hspace{10pt} \cospan{G/L}{G/K}{\alpha} \]
as appropriate.  Furthermore, every morphism in $\sB_G$ can be written as
a composite of such morphisms:
\[ \xymatrix@R=0.5pc@C=1pc{
  & \gset{u} \ar[dl] \ar[dr] & \\
  \gset{b} & & \gset{c}\\}
 \hspace{10pt}=\hspace{10pt}
 \xymatrix@R=0.5pc@C=1pc{
  & \gset{u} \ar[dl] \ar@{=}[dr] & & \gset{u} \ar@{=}[dl] \ar[dr] & \\
  \gset{b} & & \gset{u} & & \gset{c}\\
}\]
Furthermore, a general span breaks up as the disjoint union of spans
involving orbits; disjoint union is both the categorical product and the
categorical coproduct on $\sB_G$.  Hence a Mackey functor $M$ determines
and is determined by a pair of contravariant and covariant coefficient
systems which agree on objects and which satisfy certain compatibility
diagrams encoding the composition in $\sB_G$.  

These embeddings $\sO_G\to\sB_G$ come up frequently, and so we fix some
notation.

\begin{defn}\label{def::restransfer}
Let $f:\gset b\to \gset c$ be a map of finite $G$-sets.  Then the two spans
\[ \cospan{\gset b}{\gset c}{f} \hspace{20pt}\text{and}\hspace{20pt}
\contraspan{\gset b}{\gset c}{f} \]
are called the \defword{restriction} and \defword{transfer}, respectively,
of $f$.  If a Mackey functor $M$ is implicit, denote its value on the
restriction by $r_f\colon M(\gset c)\to M(\gset b)$ and its value on the
transfer by $t_f\colon M(\gset b)\to M(\gset c)$.
\end{defn}

The notation comes from the language of representation theory, one of the
first places where Mackey functors were studied.  Alternatively, if we view
$\sB_\G$ as a subcategory of the stable category using
\autoref{def::burnsidestable}, restrictions and transfers correspond
exactly to the maps of those names on representation spheres.

We are now in a position to understand the statement of \autoref{thm::rog},
which says that equivariant homology and cohomology theories can be
extended to $RO(G)$-gradings if and only if they have coefficient systems
which extend to Mackey functors.

The subsequent subsections will address the issue of which Mackey functor
ought to be taken as the ``universal'' coefficient Mackey functor for
cohomology.


%

\subsection{Box products and maps out of box products}\label{subs::boxp}
The category $\mackey$ of Mackey functors is symmetric monoidal, with
tensor product $\boxp$ given by the Day tensor product\footnote{In the same
way, if graded $\ground$-modules are thought of as functors from the discrete
category $\bZ$ to $\rmod$, the usual notion of a tensor product of graded
$\ground$-modules is exactly the Day product on the functor category
$[\bZ,\rmod]$.}.  Explicitly, $\boxp$ is a left Kan extension.  Given
Mackey functors $M,N\colon \sB_G\op\to \rmod$, we can form the external
product 
\[ 
\begin{array}{l}
M\oboxp N \colon \sB_G\op\times\sB_G\op\to \rmod\\[3pt]
\phantom{M\oboxp N \colon} 
\makebox[49pt][c]{$(\gset{b},\gset{c})$}\longmapsto M(\gset{b})\otimes N(\gset{c}),
\end{array}
\]
and $M\boxp N$ is the left Kan extension of $M\oboxp  N$ along the
Cartesian product functor
$\times\colon \sB_G\times\sB_G \to \sB_G$:
\[ \xymatrix@C=3pc{
  \sB_G\times\sB_G \ar^{M\oboxp N}[r] \ar_{\times}[d] & \rmod \\
  \sB_G \ar@{.>}_{M\boxp N}[ur] \\
}\]
In other words, natural transformations from $M\boxp N$ to another Mackey
functor $P$ are the same as natural transformations from $M\oboxp N$ to
$P\circ\times$:
\[ [\sB_G\op,\rmod](M\boxp N,P) \cong
[\sB_G\op\times\sB_G\op,\rmod](M\oboxp N,P\circ \times).\]
It follows that a map $M\boxp N\to P$ is specified by giving maps
\[M(\gset b)\otimes N(\gset c)\to P(\gset b\times\gset c),\]
 for all $\gset b,\gset c\in\sB_\G$, natural with respect to $\gset b$ and
$\gset c$.  In fact, we can get away with less.\footnote{The fact that
Dress pairings are the same as maps out of box products is stated in
\cite{purple} and \cite{lgl}, but without any indication of a proof.  The
proof is formal, of course, but not entirely obvious.}

\begin{lem}\label{lem::dresspairing}
A map $\theta\colon M\boxp N\to P$ determines and is determined by a
collection of maps
\[\theta_{\gset b}\colon M(\gset b)\otimes N(\gset b) \to P(\gset b)\]
for $\gset b\in\sB_G$, such that the following diagrams commute for each map
of $G$-sets $f\colon\gset b\to \gset c$.\footnote{Recall the notation for
restrictions and transfers from \autoref{def::restransfer}.}
\[
\xymatrix{
  M(\gset c) \otimes N(\gset c) \ar[r]^-{\theta_{\gset c}} \ar[d]_{r_f\otimes r_f} & P(\gset c) \ar[d]^{r_f} \\
  M(\gset b) \otimes N(\gset b) \ar[r]^-{\theta_{\gset b}} & P(\gset b) \\
}
\hspace{40pt}
\xymatrix@R=1pc@C=1pc{
  & M(\gset b) \otimes N(\gset b) \ar[rr]^-{\theta_{\gset b}} & & P(\gset b) \ar[dd]^{t_f} \\
  \makebox[20pt][c]{$M(\gset b) \otimes N(\gset c)$} \ar[dr]_(0.35){t_f\otimes\id} \ar[ur]^(0.35){\id\otimes r_f} & & & \\
  & M(\gset c) \otimes N(\gset c) \ar[rr]^-{\theta_{\gset c}} & & P(\gset c) \\
}
\]
\[
\xymatrix@R=1pc@C=1pc{
  & M(\gset b) \otimes N(\gset b) \ar[rr]^-{\theta_{\gset b}} & & P(\gset b) \ar[dd]^{t_f} \\
  \makebox[20pt][c]{$M(\gset c) \otimes N(\gset b)$} \ar[dr]_(0.35){\id\otimes t_f} \ar[ur]^(0.35){r_f\otimes\id} & & & \\
  & M(\gset c) \otimes N(\gset c) \ar[rr]^-{\theta_{\gset c}} & & P(\gset c) \\
} \]
\end{lem}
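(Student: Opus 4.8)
The plan is to unwind the adjunction that defines $\boxp$ and then translate the resulting data — a natural transformation $M\oboxp N \to P\circ\times$, i.e.\ a compatible family of maps $M(\gset b)\otimes N(\gset c)\to P(\gset b\times \gset c)$ — into the Dress-pairing data $\theta_{\gset b}\colon M(\gset b)\otimes N(\gset b)\to P(\gset b)$. In one direction this is easy: given $\theta\colon M\boxp N\to P$, the corresponding natural transformation $\widetilde\theta\colon M\oboxp N\to P\circ\times$ has components $\widetilde\theta_{\gset b,\gset c}\colon M(\gset b)\otimes N(\gset c)\to P(\gset b\times\gset c)$, and I would set $\theta_{\gset b} \defeq \widetilde\theta_{\gset b,\gset b}$. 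The three displayed diagrams then follow by applying naturality of $\widetilde\theta$ to the appropriate morphisms in $\sB_G\times\sB_G$: the first to the pair of restrictions $(r_f,r_f)$ coming from $(f,f)\colon \gset b\to\gset c$ in each factor (this is the commuting square, since $r_f$ on $P$ over the diagonal $f\times f$ is $P$ applied to the span $\cospan{\gset b\times\gset b}{\gset c\times\gset c}{f\times f}$, which restricts along the diagonal to $r_{f}$ on $P(\gset b)$), and the two triangles to the mixed pairs $(t_f,\id)$ and $(\id, r_f)$ — respectively $(r_f,\id)$ and $(\id,t_f)$ — factoring the transfer of the diagonal $\gset b\to\gset b\times\gset b$ composed with $f$ through $\gset b\times\gset c$ or $\gset c\times\gset b$. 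Concretely, the triangles encode the identity in $\sB_G$ expressing the span for the diagonal of $\gset c$ precomposed with $(t_f,\id)$ (as a morphism $\gset b\times\gset c\to\gset c$) as the composite through $\gset b$; this is exactly the kind of decomposition of spans into restrictions and transfers recorded earlier in the excerpt.

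For the converse — reconstructing $\theta$, equivalently $\widetilde\theta$, from the $\theta_{\gset b}$ — the key point is that the diagonal $\Delta_{\gset b}\colon \gset b\to\gset b\times\gset b$ lets us recover the "off-diagonal" components. Given $\theta_{\gset b}$ and $G$-maps, I would define
\[
\widetilde\theta_{\gset b,\gset c}\colon M(\gset b)\otimes N(\gset c)\xrightarrow{\;\text{(to be described)}\;} P(\gset b\times\gset c)
\]
using that $M\boxp N$ is a left Kan extension: a map $M\boxp N\to P$ is, by the coend formula for the Kan extension together with \autoref{lem::closed} (closedness of $\times$ on $\sB_G$), determined by a compatible family indexed by orbits, and the triangles above are precisely the Beck–Chevalley/naturality constraints that make such a family glue. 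More explicitly, an element of $M(\gset b)\otimes N(\gset c)$ can be pushed into $M(\gset b\times\gset c)\otimes N(\gset b\times\gset c)$ via $r_{\proj_1}\otimes r_{\proj_2}$ and then hit with $\theta_{\gset b\times\gset c}$; I would check that this assignment is natural in $\gset b$ and $\gset c$ (using the restriction square) and that it does not lose information (using the transfer triangles, which let one go back via the diagonal). That the two constructions are mutually inverse is then a diagram chase: starting from $\theta$, forming the $\theta_{\gset b}$, and then re-forming a natural transformation returns $\widetilde\theta_{\gset b,\gset c} = \theta_{\gset b\times\gset c}\circ(r_{\proj_1}\otimes r_{\proj_2})$, which equals the original $\widetilde\theta_{\gset b,\gset c}$ by naturality of $\widetilde\theta$ applied to the projections $\gset b\times\gset c\to\gset b$ and $\gset b\times\gset c\to\gset c$; the other round-trip is immediate from $\Delta$-naturality since $\proj_i\circ\Delta_{\gset b}=\id$.

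The main obstacle I expect is the converse direction — verifying that the three commuting diagrams are not merely necessary but \emph{sufficient}, i.e.\ that they really do force the assembled family $\{\widetilde\theta_{\gset b,\gset c}\}$ to be natural with respect to \emph{arbitrary} morphisms of $\sB_G$ in each variable, not just restrictions and transfers. The resolution is the fact, recalled in the excerpt, that every span in $\sB_G$ factors as a transfer followed by a restriction (equivalently, $\sB_G$ is generated under composition by $\sO_G$ and $\sO_G\op$); so naturality for general spans reduces to the restriction and transfer cases, which are exactly diagrams one, two and three. One should be mildly careful that the two transfer triangles (for $t_f$ on the $M$-side and on the $N$-side) are genuinely both needed, since a span can be built from restrictions/transfers in either factor, and that the variance bookkeeping of \autoref{lem::selfdual} and \autoref{lem::closed} is applied consistently; but beyond this the argument is the expected formal manipulation.
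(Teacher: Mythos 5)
Your proposal follows essentially the same route as the paper's proof: in one direction you extract $\theta_{\gset b}$ from the Kan adjunct $\widetilde\theta$, and in the other you reconstruct $\widetilde\theta_{\gset b,\gset c}$ as $\theta_{\gset b\times\gset c}\circ(r_{\rho_1}\otimes r_{\rho_2})$ and verify naturality by reducing arbitrary spans to composites of restrictions and transfers — exactly what the paper does. One small slip: your definition $\theta_{\gset b}\defeq\widetilde\theta_{\gset b,\gset b}$ lands in $P(\gset b\times\gset b)$ rather than $P(\gset b)$; it should be the composite $r_\Delta\circ\widetilde\theta_{\gset b,\gset b}$ with the restriction along the diagonal $\gset b\to\gset b\times\gset b$, which you clearly have in mind in the rest of the argument but omitted at the point of definition.
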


\begin{defn}
The data in \autoref{lem::dresspairing} is called a \defword{Dress pairing},
after \cite{dress}.  When the commuting diagrams above are expressed in
terms of equations and elements, they are called the \defword{Frobenius
relations}.
\end{defn}

In particular, notice that the maps $\theta_{\gset b}$ are natural with
respect to restrictions, but not generally with respect to transfers!
However, when $f$ is an isomorphism, it is straightforward to check that
that the transfer associated to $f$ is equivalent, as a span in $\sB_G$, to
the restriction associated to $f^{-1}$.  It then follows that the
appropriate map $\theta_{\gset b}$ is natural with respect to $t_f$, and
the commuting pentagons become redundant.

\begin{proof}
To go from a map $\theta\colon M\boxp N\to P$ to a Dress pairing, consider
the maps
\[ \theta_{\gset b}\colon M(\gset b)\otimes N(\gset b)\to P(\gset b\times
\gset b) \xrightarrow{r_\Delta} P(\gset b),\]
where the first map comes from the Kan adjunction and $r_\Delta$ is the
restriction associated to the diagonal map of $G$-sets $\gset b\to \gset
b\times \gset b$.  We then get the commuting diagrams above by applying $P$
to commuting diagrams in $\sB_G$.

For the other direction, given the maps $\theta_{\gset b}$ subject to the
three commuting diagrams of the lemma, we obtain the maps making up
$\theta: M\boxp N\to P$ via the composites
\[ M(\gset b)\otimes N(\gset c) \xrightarrow{r_{\rho_1} \otimes r_{\rho_2}}
M(\gset b\times \gset c)\otimes N(\gset b\times \gset c)
\xrightarrow{\theta_{\gset b\times \gset c}} P(\gset b\times \gset c),\]
where $r_{\rho_1}$ and $r_{\rho_2}$ are the restrictions associated with
the projections of $\gset b\times \gset c$ onto its first and second factors.

We need to show that these maps are natural with respect to maps in
$\sB_G\times\sB_G$.  Since every span can be written as the composite of
a restriction and a transfer, it suffices to check naturality with respect
to restrictions and transfers.  Naturality with respect to restrictions
follows immediately from the commuting square in the lemma.  For
transfers, given maps $f\colon \gset b\to \gset b'$ and $g\colon \gset c\to
\gset c'$, the map ${t_f\otimes t_g\colon M(\gset b)\otimes N(\gset c) \to
M(\gset b')\otimes N(\gset c')}$ factors as
\[ M(\gset b)\otimes N(\gset c) \xrightarrow{t_f\otimes\id} M(\gset
b')\otimes N(\gset c) \xrightarrow{\id\otimes t_g} M(\gset b')\otimes
N(\gset c'),\]
so it suffices to consider naturality in the case $g=\id\colon \gset c\to
\gset c$.  The desired result follows from considering the diagram
\[ \xymatrix@R=2pc@C=1pc{
  M(\gset b)\otimes N(\gset c) \ar[rr]^-{r_{\rho_1}\otimes r_{\rho_2}}
  \ar[dr]^(0.6){r_{\rho_1}\otimes r_{\rho_2}} \ar@/_1pc/[dd]_{t_f\otimes\id} 
  && M(\gset b\times \gset c)\otimes N(\gset b\times \gset c) \ar[rr]^-{\theta_{\gset b\times \gset c}} &&
  P(\gset b\times \gset c) \ar[dd]^-{t_{f\times\id}}\\
  & \makebox[20pt][c]{$M(\gset b\times \gset c)\otimes N(\gset b'\times \gset c)$}
  \ar[ur]_(0.6){\ \id\otimes r_{f\times\id}} \ar[dr]^(0.6){t_{f\times\id}\otimes\id}
  &&& \\
  M(\gset b')\otimes N(\gset c) \ar[rr]^-{r_{\rho_1}\otimes r_{\rho_2}} && M(\gset b'\times
  \gset c)\otimes N(\gset b'\times \gset c) \ar[rr]^-{\theta_{\gset b'\times \gset c}} && P(\gset b'\times \gset c) \\
} \]
The pentagon commutes by definition of a Dress pairing; the upper triangle
comes from applying $N$ to a commuting triangle in $\sB_G$, thus commutes;
and the lower left quadrilateral comes from applying $M$ and $N$ to
commuting diagrams in $\sB_G$, hence commutes as well.  It follows that the
outer rectangle commutes, giving the desired naturality.

It is straightforward to check that, if the maps $\theta_{\gset b}\colon
M(\gset b)\otimes N(\gset b)\to P(\gset b)$ came from a natural
transformation $M\oboxp N\to P\circ\times$, then the construction described
above returns the original maps $M(\gset b)\otimes N(\gset c)\to P(\gset
b\times \gset c)$, and vice versa.
\end{proof}


\subsection{Explicit formula for the box product}\label{subs::formula}

There is also a general formula for left Kan extensions in terms of a coend
\cite{ML}, a particular type of colimit.  In our situation, it simplifies to
give an explicit formula for $\boxp$ in terms of coends; coends of the form
appearing below can be thought of as ``tensor products of functors.''

\begin{defn}\label{def::coend}
Let $\sC$ be a small category, and let $F\colon\sC\op\to\rmod$ be a contravariant
and $G\colon\sC\to\rmod$ a covariant functor.  The \defword{coend}
${\int^{\gset x\in\sC} F(\gset x)\otimes G(\gset x)}$, sometimes written as the tensor
product of functors ${F\otimes_\sC G}$, is the coequalizer of the diagram
of $\ground$-modules
\[ \xymatrix@C=4pc{
  \displaystyle\bigoplus_{f\colon \gset x\rightarrow \gset y} {F(\gset y)\otimes G(\gset x)}
  \ar@<2ex>[r]^{F(f)\otimes\id} \ar[r]_{\id\otimes G(f)} &
  \displaystyle\bigoplus_{\gset z\in \sC} {F(\gset z)\otimes G(\gset z)}.
}\]
That is, for each pair of objects $\gset x,\gset y$ of $\sC$, and for each
morphism $f\colon \gset x\rightarrow \gset y$ in $\sC$, we have a component
$F(\gset y)\otimes G(\gset x)$ in the left-hand direct sum.  The upper
arrow on this component $F(\gset y)\otimes G(\gset x)$ is the composite
\[ F(\gset y)\otimes G(\gset x)\xrightarrow{F(f)\otimes\id} F(\gset
x)\otimes G(\gset x) \xrightarrow{\text{inclusion}} \bigoplus_{\gset z\in
\sC} {F(\gset z)\otimes G(\gset z)},\]
while the lower arrow is
\[ F(\gset y)\otimes G(\gset x)\xrightarrow{\id\otimes G(f)} F(\gset
y)\otimes G(\gset y) \xrightarrow{\text{inclusion}} \bigoplus_{\gset z\in
\sC} {F(\gset z)\otimes G(\gset z)}.\]
\end{defn}

The coend ${\int^{\gset x\in\sC} F(\gset x)\otimes G(\gset x)}$ may thus be
thought of as a quotient of $\bigoplus_{\gset z\in \sC} {F(\gset z)\otimes
G(\gset z)}$ by appropriate equivalence relations induced by the morphisms
of $\sC$.

\begin{lem}\label{lem::boxformula}
Let $M,N$ be Mackey functors, and let $D$ be the self-duality functor of
\autoref{lem::selfdual}.  Then the value of $M\boxp N$ at $\gset c$ is
given by a coend
\[ (M\boxp N)(\gset c) \cong \int^{\gset a\in\sB_G} M(\gset a)\otimes
N(\gset c\times D\gset a).\]
For a morphism $\gset b\to \gset c$, $(M\boxp N)(\gset c)\to (M\boxp
N)(\gset b)$ is induced by $N(\gset c\times D\gset a)\to N(\gset b\times
D\gset a)$.
\end{lem}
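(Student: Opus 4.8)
The plan is to derive the coend formula directly from the universal property of the left Kan extension, using the general coend formula for left Kan extensions (as in \cite{ML}) and then rewriting it using the self-duality and closedness of $\sB_G$ established in \autoref{lem::selfdual} and \autoref{lem::closed}. First I would recall that the left Kan extension $M\boxp N = \operatorname{Lan}_\times(M\oboxp N)$ has the standard coend presentation
\[ (M\boxp N)(\gset c) \cong \int^{(\gset a,\gset a')\in\sB_G\times\sB_G} \sB_G(\gset a\times\gset a',\gset c)\otimes M(\gset a)\otimes N(\gset a'). \]
Here $\sB_G(\gset a\times\gset a',\gset c)$ is a $\ground$-module (since we enriched $\sB_G$ over $\rmod$ in \autoref{def::burnside}), so this tensor product makes sense and is the correct enriched coend.

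Next I would collapse one of the two coend variables using the co-Yoneda lemma. Using \autoref{lem::closed}, the adjunction $\sB_G(\gset a\times\gset a',\gset c)\cong\sB_G(\gset a,D\gset a'\times\gset c)=\sB_G(\gset a,\gset c\times D\gset a')$, natural in all three variables once variance is corrected via $D$. Substituting this in, the coend over $\gset a$ becomes $\int^{\gset a}\sB_G(\gset a,\gset c\times D\gset a')\otimes M(\gset a)$, which by the enriched co-Yoneda lemma is just $M(\gset c\times D\gset a')$. Wait — I need to be careful about which variable the Yoneda reduction eliminates: I want to be left with a coend over $\gset a'$ (renamed $\gset a$ in the statement) and the $M$-term evaluated on something. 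So instead I would first use $\sB_G(\gset a\times\gset a',\gset c)\cong\sB_G(\gset a',D\gset a\times\gset c) = \sB_G(\gset a', \gset c\times D\gset a)$ (using symmetry of $\times$ and self-duality), then run co-Yoneda on the $\gset a'$ variable against the covariant functor $N$, yielding $\int^{\gset a'}\sB_G(\gset a',\gset c\times D\gset a)\otimes N(\gset a') \cong N(\gset c\times D\gset a)$. This leaves
\[ (M\boxp N)(\gset c)\cong\int^{\gset a\in\sB_G} M(\gset a)\otimes N(\gset c\times D\gset a), \]
as claimed. For the functoriality statement, I would observe that the coend expression is visibly functorial in $\gset c$ through the second slot $N(\gset c\times D\gset a)$, since a morphism $\gset b\to\gset c$ in $\sB_G$ induces $\gset b\times D\gset a\to\gset c\times D\gset a$ and hence $N(\gset c\times D\gset a)\to N(\gset b\times D\gset a)$ (contravariance of $N\circ(-\times D\gset a)$ as a functor to $\rmod$ composed with the covariant Mackey functor $N$; note $N$ is covariant on $\sB_G$ but $D$ flips variance, and a careful check shows the net effect is as stated). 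Tracing this through the chain of isomorphisms, one confirms it agrees with the contravariant functoriality of the left Kan extension.

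The main obstacle I expect is bookkeeping the variance and naturality carefully enough that the co-Yoneda reduction is legitimate: $N$ is covariant on $\sB_G$, $M$ is contravariant, and the hom-adjunction of \autoref{lem::closed} is only natural in $\gset b$ after replacing $\gset b$ by $D\gset b$. So I must verify that the functor $\gset a\mapsto\sB_G(\gset a',\gset c\times D\gset a)$ has the right variance in $\gset a'$ to pair with $N$ in a coend (it should be covariant in $\gset a'$, which it is, since $\sB_G(-,\gset c\times D\gset a)$ is contravariant and we need... ) — in fact the cleanest route is to keep everything inside $\sB_G$-enriched coend formalism and invoke the enriched co-Yoneda lemma in the form $\int^{x}\sC(x,y)\otimes F(x)\cong F(y)$ for covariant $F$, and $\int^x F(x)\otimes\sC(y,x)\cong F(y)$ for contravariant $F$, matching the variances before substituting. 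Once the variances are pinned down the computation is purely formal. I would also remark (as the paper does for Dress pairings) that self-duality $D$ is what allows the internal-hom to be $\times$ itself, which is the crucial structural input making this simplification possible.
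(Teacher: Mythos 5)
Your approach is structurally the same as the paper's: write $M\boxp N$ using the general coend formula for left Kan extensions, rewrite the hom via the $\times$-adjunction and self-duality from \autoref{lem::selfdual} and \autoref{lem::closed}, and collapse one coend variable by co-Yoneda. However, there is a variance slip in your opening formula that you should fix before the argument is airtight. You write
\[
(M\boxp N)(\gset c) \cong \int^{(\gset a,\gset a')} \sB_G(\gset a\times\gset a',\gset c)\otimes M(\gset a)\otimes N(\gset a'),
\]
but since $M\oboxp N$ is a \emph{contravariant} functor on $\sB_G\times\sB_G$ and the Kan extension is along $\times\op\colon \sB_G\op\times\sB_G\op\to\sB_G\op$, the general coend formula actually produces the hom in the other direction:
\[
(M\boxp N)(\gset c) \cong \int^{(\gset a,\gset b)} M(\gset a)\otimes N(\gset b)\otimes \sB_G(\gset c,\gset a\times\gset b),
\]
which is the form the paper uses. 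This matters because in your version $\sB_G((-)\times(-),\gset c)$ is \emph{contravariant} in $(\gset a,\gset a')$, and so is $M(\gset a)\otimes N(\gset a')$ — meaning the coend, as literally written, is not well-formed (a coend pairs a contravariant slot with a covariant one). The same slip resurfaces when you invoke co-Yoneda ``against the covariant functor $N$'': Mackey functors are contravariant by \autoref{def::mf}, and what you actually need is the co-Yoneda isomorphism $\int^{\gset b} N(\gset b)\otimes\sB_G(\gset c\times D\gset a,\gset b)\cong N(\gset c\times D\gset a)$, pairing contravariant $N$ with the covariant representable $\sB_G(\gset c\times D\gset a,-)$. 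Because $D$ is an isomorphism $\sB_G\cong\sB_G\op$ that is the identity on objects, the underlying $\ground$-modules in your version and the paper's agree, so nothing is irreparably broken — but since you explicitly flagged variance bookkeeping as the main risk, this is exactly the correction you would need to make.
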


\begin{proof}
There is a general formula for left Kan extensions in terms of a coend
\cite{ML};  in our situation, it gives
\[ (M\boxp N)(\gset{c}) = \int^{(\gset{a},\gset{b})\in\sB_\G\times\sB_\G}
M(\gset{a})\otimes N(\gset{b}) \otimes
\sB_\G(\gset{c},\gset{a}\times\gset{b}).\]
That is, the value of $M\boxp N$ at $\gset c$ is given by the tensor
product of functors of $M\oboxp N$ with $\sB_\G(\gset c, -\times -)$.

Since tensor products commute with colimits, we may rewrite the left Kan
extension formula as
\begin{align*}
  (M\boxp N)(\gset c)  & = \int^{(\gset a,\gset b)} M(\gset a)\otimes N(\gset
  b) \otimes \sB_\G(\gset c,\gset a\times\gset b) \\
  & \cong \int^{\gset a} \int^{\gset b} M(\gset a) \otimes N(\gset b) \otimes
  \sB_\G(\gset c,\gset a\times\gset b) \\
  & \cong \int^{\gset a} M(\gset a) \otimes \int^{\gset b} N(\gset b) \otimes
  \sB_\G(\gset c,\gset a\times\gset b) \\
  & \cong \int^{\gset a} M(\gset a) \otimes \int^{\gset b} N(\gset b) \otimes
  \sB_\G(\gset c\times D\gset a,\gset b) \\
  & \cong \int^{\gset a} M(\gset a) \otimes N(\gset c\times D\gset a).\\
\end{align*}
The last isomorphism comes from the Yoneda lemma, which tells us that
tensoring $N$ with a representable functor $\sB_\G(\gset c\times D\gset
a,-)$ is the same as evaluating ${N(\gset c\times D\gset a)}$.

The naturality of coends gives the final statement.
\end{proof}

\begin{rem}
It is immediate that the coend in the general formula for left Kan
extensions satisfies the desired universal property.  Note, however, that
\autoref{lem::dresspairing}, which introduces Dress pairings, does not
immediately follow from the coend in \autoref{lem::boxformula}.
\end{rem}

Using the fact that tensoring with a represented functor is the same as
evaluating at the representing object, \autoref{lem::boxformula} shows that
the represented Mackey functor $\sB_\G(-,G/G)$ is the unit for the $\boxp$
monoidal structure.  More generally,
\[ \left(\sB_\G(-,\gset b) \boxp M\right)(\gset c) = M(\gset b\times\gset
c);\]
this example will be discussed further in \autoref{subs::newmf}.
The functor represented by $G/G$ is usually called the \defword{Burnside
ring Mackey functor}.

\begin{defn}
The \defword{Burnside ring Mackey functor} $\makebox[0pt]{}_G A$, abbreviated $A$ when the
group $G$ is implicit, is the represented functor
$\sB_G(-,G/G)$\footnote{Although we have enriched $\sB_\G$ over $\rmod$,
it is more standard to enrich it over abelian groups.  In that case, the
Burnside ring Mackey functor is given by $\ground\otimes \sB_G(-,G/G)$.}.
\end{defn}

\begin{lem}
$(\sB_G,A,\boxp)$ is a symmetric monoidal category.\hfill $\Box$
\end{lem}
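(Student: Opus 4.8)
We read the statement as: the category $\mackey$ of Mackey functors, equipped with the tensor $\boxp$ and unit $A=\sB_\G(-,G/G)$, is a symmetric monoidal category. The plan is to recognise $\boxp$ as an instance of Day's convolution product and to invoke the general fact that the Day convolution of a small symmetric monoidal enriched category, with values in a cocomplete symmetric monoidal closed base such as $\rmod$, is again symmetric monoidal closed (see \cite{ML} for the coend machinery this rests on). Granting that, the only work is (i) to verify that $\sB_\G$ is itself a symmetric monoidal $\rmod$-enriched category under the Cartesian product $\times$, and (ii) to verify that $\boxp$ and its unit land in the full subcategory $\mackey$ of additive functors.

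For (i): the associativity, unit, and symmetry constraints of $(\sB_\G,\times,G/G)$ are induced by the evident natural isomorphisms of finite $G$-sets $(\gset a\times\gset b)\times\gset c\cong\gset a\times(\gset b\times\gset c)$, $G/G\times\gset b\cong\gset b$, and $\gset a\times\gset b\cong\gset b\times\gset a$, viewed as invertible spans in $\sB_\G$; since sending a $G$-map to its restriction span is functorial, the pentagon and hexagon in $\sB_\G$ follow from those for $G$-sets. The one thing to check is that $\times$ is $\rmod$-bilinear on hom-modules: because $(\gset u\amalg\gset u')\times\gset d\cong(\gset u\times\gset d)\amalg(\gset u'\times\gset d)$ and pullbacks of $G$-sets commute with $-\times\gset d$, the functor $-\times\gset d$ is additive on the hom-monoids of $\sB_\G^+$, hence $\ground$-linear on the hom-modules of $\sB_\G$ after group-completion and tensoring with $\ground$; similarly for $\gset d\times-$. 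This is essentially the content already used in \autoref{lem::selfdual} and \autoref{lem::closed}.

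Granting (i), Day's theorem puts a symmetric monoidal closed structure on $[\sB_\G\op,\rmod]$ whose tensor is Day convolution along $\times$ --- exactly the left Kan extension defining $\boxp$ in the diagram before \autoref{lem::dresspairing} --- and whose unit is the functor represented by the monoidal unit $G/G$, namely $A$. The unit isomorphism $A\boxp M\cong M$ is the case $\gset b=G/G$ of the identity $(\sB_\G(-,\gset b)\boxp M)(\gset c)=M(\gset b\times\gset c)$ recorded just above, combined with $G/G\times\gset c\cong\gset c$; the symmetry $M\boxp N\cong N\boxp M$ comes from the symmetries of $\otimes$ on $\rmod$ and of $\times$ on $\sB_\G$; and the associator together with all coherence polygons are part of the conclusion of Day's theorem.

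For (ii): the unit $A$ is additive since a representable functor sends $\gset c_1\amalg\gset c_2$, which is also the product in $\sB_\G$, to $\sB_\G(\gset c_1,G/G)\oplus\sB_\G(\gset c_2,G/G)$. And $M\boxp N$ is additive when $M,N$ are: by the coend formula of \autoref{lem::boxformula} (with $D$ the self-duality functor), $(M\boxp N)(\gset c_1\amalg\gset c_2)=\int^{\gset a}M(\gset a)\otimes N\bigl((\gset c_1\amalg\gset c_2)\times D\gset a\bigr)$, and since $(\gset c_1\amalg\gset c_2)\times D\gset a\cong(\gset c_1\times D\gset a)\amalg(\gset c_2\times D\gset a)$, additivity of $N$ together with the fact that $M(\gset a)\otimes-$ and $\int^{\gset a}$ preserve direct sums gives $(M\boxp N)(\gset c_1\amalg\gset c_2)\cong(M\boxp N)(\gset c_1)\oplus(M\boxp N)(\gset c_2)$. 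All the structure isomorphisms on $[\sB_\G\op,\rmod]$ are natural, so they restrict to $\mackey$, and the coherence axioms there are inherited. I do not expect a genuine obstacle: the only delicate point is the bookkeeping around ``Mackey functor $=$ additive functor,'' which is what forces one to establish both the enrichment of $\sB_\G$ in (i) and the closure of $\mackey$ under $\boxp$ in (ii); no monoidal coherence diagram needs to be verified by hand.
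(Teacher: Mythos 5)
Your proposal is correct and takes essentially the same approach the paper intends: the paper leaves the lemma unproved (the $\Box$ immediately follows the statement) precisely because it is the standard Day-convolution argument, which the paper has already set up by defining $\boxp$ as a left Kan extension along $\times$. You correctly read the ``$\sB_G$'' in the statement as a typo for $\mackey$, verify the two facts that Day's theorem does not hand you for free --- that $(\sB_\G,\times,G/G)$ is symmetric monoidal as an $\rmod$-enriched category, and that $\boxp$ and $A$ stay inside the full subcategory of additive functors --- and correctly observe that the coherence isomorphisms then restrict by naturality, so nothing further needs checking.
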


\begin{rem}\label{lem::unitiso}
For any Mackey functor $M$, the canonical isomorphism $A\boxp M\to M$
is adjoint to the map $A\oboxp M\to M\circ\times$ which takes ${A(\gset
b)\otimes M(\gset c)\to M(\gset b\times \gset c)}$ via ${f\otimes x\mapsto
(f\times\id)^*x}$, for a span $f$ in $\sB_G(\gset b,G/G)$.
\end{rem}


\subsection{Graded Mackey functors}

In what follows, it will often be convenient to consider the category
$\mackey^*$ of \defword{$RO(G)$-graded Mackey functors}.  As expected, a
graded Mackey functor, written $M^*$ or $M_*$ depending on context,
consists of a Mackey functor $M^\alpha$ (or $M_\alpha$) for each $\alpha
\in RO(G)$.  The Day tensor product then allows us to define a graded box
product $\boxp_*$ from the ungraded box product $\boxp$.  It can be
explicitly described by
\[ (M^* \boxp_* N^*)^\alpha \cong \bigoplus_{\beta_1+\beta_2=\alpha}
(M^{\beta_1} \boxp N^{\beta_2}).\]
$\boxp_*$ makes $\mackey^*$ into a symmetric monoidal category.  The unit
is the graded Mackey functor $A^*$ which has $A^0 = A$, the Burnside ring
Mackey functor, and $A^\alpha = 0$ for all $\alpha \ne 0$.


\subsection{Green functors and modules over Green functors}\label{subs::green}
The monoidal structure gives rise to the notion of monoids in
$\mackey$.  The monoids are Mackey functors $T$ together with a ``product
map'' ${\mu\colon T\boxp T\to T}$ and a unit map ${\eta\colon A\to T}$
satisfying the usual unit and associativity commuting diagrams\footnote{If
$f\colon M_1\to M_2$ and $g\colon N_1\to N_2$, then $f\oboxp g\colon
M_1\oboxp N_1\to M_2\oboxp N_2$ is the natural transformation with
components $f_{\gset{b}}\otimes g_{\gset{c}}\colon M_1(\gset{b})\otimes
N_1(\gset{c}) \to M_2(\gset{b}) \otimes N_2(\gset{c})$.  We may then use
the naturality in all variables of the adjunction
\[ [\sB_G\op,\rmod](M\boxp N,P) \cong
[\sB_G\op\times\sB_G\op,\rmod](M\oboxp N,P\circ\times) \]
to obtain a map $f\boxp g\colon M_1\boxp N_1 \to M_2\boxp N_2$.}:
\[
\xymatrix{
  A \boxp T \ar[r]^{\eta\boxp\id} \ar[dr]_{\cong} & T \boxp T \ar[d]^{\mu} & T \boxp A \ar[l]_{\id\boxp\eta}
  \ar[dl]^{\cong} \\
  & T & 
}
\hspace{5pt}\text{ and }\hspace{5pt}
\xymatrix{
  T\boxp T\boxp T \ar[r]^{\ \mu\boxp\id} \ar[d]_{\id\boxp\mu} & T\boxp T
  \ar[d]^{\mu} \\
  T\boxp T \ar[r]_{\mu} & T
}
\]
These monoids are known as \defword{Mackey functor
rings} or \defword{Green functors}.  The same definition, with $T$ replaced
by a graded Mackey functor $T^*$ and $\boxp$ replaced by $\boxp_*$, gives
the notion of a \defword{graded Green functor}.

\begin{rem}
\autoref{lem::dresspairing} tells us that specifying a product map $T\boxp
T\to T$ is the same as giving maps ${T(\gset b)\otimes T(\gset b)\to
T(\gset b)}$ for each $G$-set $\gset b$, satisfying the commuting square
and two commuting pentagons describing naturality with respect to maps in
$\sB_G$.  Further, by the Yoneda lemma, the unit map $\eta\colon A\to T$ is
determined by the image of the identity map $e \defeq \eta_\trivo(\id) \in
T(\trivo)$, where $\trivo$ is the terminal $G$-set, $\trivo = G/G$.  For
each $G$-set $\gset b$, we can consider the image $e_{\gset b} \defeq
r_{\gset b}(e)$ of $e$ under the restriction $r_{\gset b}$ associated to
the projection $\gset b\to \trivo$.  Then the Dress maps ${T(\gset
b)\otimes T(\gset b)\to T(\gset b)}$ make $T(\gset b)$ into a ring with
unit $e_{\gset b}$.  This follows from two facts.  First, the adjoint to
the unit diagram for the Green functor $T$ is the commuting triangle of
functors $\sB_G\op\times\sB_G\op\to\rmod$
\[ \xymatrix{
 A \oboxp T \ar[r]^{\eta\oboxp\id} \ar[dr] & T \oboxp T
 \ar[d]^{\overline{\mu}} \\
 & T\circ\times
} \]
Using \autoref{lem::unitiso}, we see that
$A\oboxp T\to T\circ\times$ takes $\id\oboxp x\mapsto x$ for any $x\in
T(\gset b)$.  It follows that $\eta_\trivo(\id)\oboxp x = e\oboxp x\mapsto x$ as
well.  We may then consider the diagram below.
\[ \xymatrix{
  T(\trivo)\otimes T(\gset b) \ar[r]^{\overline{\mu}} \ar[d]_{r_\gset
  b\otimes\id} & T(\gset b) \ar[d]_{r_{\text{proj}}} \ar@{=}[dr] \\
  T(\gset b)\otimes T(\gset b) \ar[r]^{\overline\mu} & T(\gset b\times
  \gset b) \ar[r]^{r_\Delta} & T(\gset b)
}\]
The square commutes because $T\oboxp T\to T\circ\times$ is a natural
transformation; the triangle comes from applying $T$ to a commuting diagram
in $\sB_G$.  Since the bottom composite is the Dress pairing, it follows
that $e_{\gset b} x = x$ under this pairing.
\end{rem}

\begin{rem}
The unit isomorphism $A\boxp A\to A$ makes the Burnside ring Mackey
functor $A$ into a Green functor.  It gets its name from the fact that,
when $\ground=\bZ$, and taking into consideration the levelwise ring structure
coming from the Dress pairing, its value $A(G/K)$ at the orbit $G/K$ is the
classical Burnside ring $A(K)$.
Likewise, $A^*$ is a graded Green functor.
\end{rem}

Once we have monoids in $\mackey$ and $\mackey^*$, it is a short step to
modules over the monoids.  If $T$ is a Green functor, then a $T$-module $M$
is a Mackey functor together with an action map $\phi\colon T\boxp M\to M$
satisfying the expected diagrams.
\[
\xymatrix{
  A \boxp M \ar[r]^{\eta\boxp\id} \ar[dr]_{\cong} & T \boxp M \ar[d]^{\phi} \\
  & M \\ 
}
\hspace{20pt}\text{ and }\hspace{20pt}
\xymatrix{
  T\boxp T\boxp M \ar[r]^{\ \mu\boxp\id} \ar[d]_{\id\boxp\phi} & T\boxp M
  \ar[d]^{\phi} \\
  T\boxp M \ar[r]_{\phi} & M
}
\]

By definition, every Mackey functor is a module over $A$, just as every
abelian group is a $\bZ$-module, and every graded Mackey functor is a
module over $A^*$.

Finally, we say that a Green functor $T$ is \defword{commutative} if the
multiplication ${\mu\colon T\boxp T\to T}$ satisfies the expected diagram
\[ \xymatrix{
T\boxp T \ar[rr]^{\text{switch}} \ar[dr]_\mu & & T\boxp T \ar[dl]^\mu \\
& T & \\
} \]
When we talk about ``commutative'' \emph{graded} Green functors, we will mean
``graded commutative,'' because cohomology theories give graded-commutative
graded Green functors.  Recall that a nonequivariant graded ring is graded
commutative if, for any two elements $x$ and $y$, ${yx =
(-1)^{\dim(x)\dim(y)} xy}$.  The relevance of $(-1)^{\dim(x)\dim(y)}$ here
is that $\pm 1$ are exactly the units of the ring $\bZ$, the universal
coefficients for ordinary nonequivariant cohomology.  Following the
analogous argument for equivariant cohomology, we see that changing the
order of multiplication in a graded Green functor $\ulhr^*(X;M)$ must
induce a ``sign,'' coming from the degree of a certain homotopy equivalence
between representation spheres.  This degree is necessarily a unit in the
Burnside Green functor $A^*$.  We will defer an explicit description of
signs until later, and make the following definition.  A graded Green
functor $T^*$ is \defword{commutative} if the diagram below commutes for
every $\alpha,\beta\in RO(G)$.  Here $u(\alpha,\beta)$ is a unit in the
Burnside Green functor $A^*$ which depends only on $\alpha$ and $\beta$.
\[ \xymatrix@C=3pc{
  T^\alpha\boxp T^\beta \ar[r]^{\text{switch}} \ar[d]_\mu & T^\beta\boxp T^\alpha \ar[d]_\mu \\
  T^{\alpha+\beta} \ar[r]^{u(\alpha,\beta)} & T^{\alpha+\beta} \\
} \]
We will later discuss the signs $u(\alpha,\beta)$ in more detail for $G =
\cycp$.

Since $A$ is the unit for $\boxp$, it is the analogue of $\bZ$ in abelian
groups and the natural choice of coefficient Green functor for cohomology.
In what follows, when coefficients are not specified, Burnside ring
coefficients are intended.

\subsection{Building new Mackey functors}\label{subs::newmf}

Various ways of producing new Mackey functors from old, or from objects in
other categories, will recur throughout the course of this \paper{}, and so
deserve some special mention now.

First, $\mackey$ is tensored over $\rmod$; given a Mackey functor $M$ and
\rmodule{} $X$, the Mackey functor $M\otimes X$ has $(M\otimes
X)(\gset{b}) = M(\gset{b})\otimes X$ with the appropriate morphisms.

\begin{defn}\label{def::unnamed}
Given a Mackey functor $M$ and $\gset{b}\in \sB_G$, the \defword{shifted}
Mackey functor $M_\gset{b}$ is the Mackey functor given on objects by
\[ M_\gset{b}(\gset{c}) = M(\gset{b}\times\gset{c}),\]
with the evident value on morphisms.  Alternatively, keeping in mind the
comment after \autoref{lem::boxformula}, we could define the shift of $M$
by $\gset b$ to be the Mackey functor $M\boxp A_{\gset b}$.
\end{defn}

Now consider a Mackey functor $M$.  For each subgroup $K<G$, the
automorphism group of the $G$-set $G/K$ is the Weyl group $W_G K = {N_G
K}/K$.  This acts on the left on $G/K$, viewed as an object of the Burnside
category via the covariant embedding of $\sO_G$ into $\sB_G$.  It follows
that the $\ground$-module $M(G/K)$ inherits the structure of \rarticle
$\ground[W_G K]$-module.  We therefore have a forgetful functor $\mackey\to
\ground[W_G K]\text{-mod}$  for each subgroup $K$.  When $K=\triv$, the
trivial subgroup, it turns out that this forgetful functor has both left
and right adjoints, which we will not describe explicitly here; explicit
descriptions for the simplified case $G=\cycp$ will be given later.

\begin{defn}\label{def::adjoints}
We write $\sL$ for the left adjoint to the forgetful functor $M\mapsto
M(G/\triv)$, and $\sR$ for the right adjoint.
\end{defn}

\begin{rem}\label{rem::j}
For $K\ne\triv$, we can generalize the construction of the right adjoint
$\sR$.  The generalized version will only become an adjoint
after restricting to a certain subcategory of $\mackey$, but the
construction is frequently useful.  This is discussed in detail in
unpublished works of Gaunce Lewis, such as \cite{purple}, where the right
adjoint is called $J_{G/K}$.
\end{rem}

\subsection{Free Mackey functors}\label{subs::free}

The question of which Mackey functors modules over a given Mackey functor
ring deserve to be called ``free'' will be very important later, when we
start considering whether various cohomology groups are free.  Certainly
any object called free should be projective, but not every projective is
free.

In a functor category, it is usual to define the free objects to be
coproducts of the representable functors.  The Yoneda lemma confirms that
these are projective.

\begin{prop}\label{prop::yoneda}
Let $\sC$ be a category enriched over $\rmod$.  Then for any object
$\gset{b}\in\sC$, the represented functor $A_\gset{b} = \sC(-,\gset{b})$ is
a projective functor $\sC\op\to\rmod$.
\end{prop}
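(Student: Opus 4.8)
The plan is to show that the functor $\Hom_{\sC}(A_\gset{b}, -) = \Hom_{\sC}(\sC(-,\gset{b}), -)$ is exact, since projectivity of a functor in a functor category is the statement that this corepresented Hom-functor preserves epimorphisms (and we already know it preserves kernels, being a right adjoint or simply a Hom-functor, so exactness is the point). The key tool is the (enriched) Yoneda lemma: for any functor $F\colon \sC\op\to\rmod$ we have a natural isomorphism
\[ \Hom_{\sC\op,\rmod}(\sC(-,\gset{b}), F) \cong F(\gset{b}). \]
Under this identification, the functor $F\mapsto \Hom_{\sC}(A_\gset{b}, F)$ is naturally isomorphic to the evaluation functor $F\mapsto F(\gset{b})$.

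First I would record that in the functor category $[\sC\op,\rmod]$, a sequence of natural transformations is exact if and only if it is exact when evaluated at every object of $\sC$; this is because limits and colimits, hence kernels and cokernels, in a functor category are computed objectwise. Consequently the evaluation functor $\ev_\gset{b}\colon F\mapsto F(\gset{b})$ is exact: given a short exact sequence $0\to F'\to F\to F''\to 0$ of functors, evaluating at $\gset{b}$ yields a short exact sequence of $\ground$-modules. In particular $\ev_\gset{b}$ sends epimorphisms to epimorphisms.

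Next I would combine these two observations. Given an epimorphism $F\twoheadrightarrow F''$ in $[\sC\op,\rmod]$, applying $\Hom_{\sC}(A_\gset{b}, -)$ and using the Yoneda isomorphism identifies the resulting map with $F(\gset{b})\to F''(\gset{b})$, which is surjective by the previous paragraph. Since $\Hom_{\sC}(A_\gset{b}, -)$ is additive and preserves the surjections witnessing lifting, it follows that $A_\gset{b}$ is projective: any map $A_\gset{b}\to F''$ lifts along $F\twoheadrightarrow F''$, because such a lift corresponds under Yoneda to a lift of an element of $F''(\gset{b})$ through the surjection $F(\gset{b})\twoheadrightarrow F''(\gset{b})$, which exists.

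The only point requiring any care — and the closest thing to an obstacle — is the enriched version of the Yoneda lemma: since $\sC$ is enriched over $\rmod$ rather than over $\mathbf{Set}$, one must use that $[\sC\op,\rmod]$ means $\rmod$-enriched functors and natural transformations, and invoke the $\rmod$-enriched Yoneda lemma, which holds because $\rmod$ is a closed symmetric monoidal category with unit $\ground$. This is entirely standard (see \cite{ML} or any reference on enriched category theory), so the proof is essentially a formal consequence of Yoneda plus the objectwise computation of (co)limits in a functor category.
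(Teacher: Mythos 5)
Your proof is correct and takes essentially the same route as the paper: both invoke the (enriched) Yoneda lemma to identify $\Hom(A_\gset{b},F)$ with $F(\gset{b})$, and both use the fact that epimorphisms of functors are objectwise surjections to produce the desired lift. The paper phrases this as ``every levelwise surjection onto $A_\gset{b}$ splits'' while you phrase it as ``$\Hom(A_\gset{b},-)$ is exact,'' but the underlying argument is identical.
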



\begin{proof}
By the Yoneda lemma, for any functor $F\colon \sC\op\to\rmod$, a map
$A_\gset{b}\to F$ is the same data as an element of $F(\gset{b})$.  The
correspondence is given by looking at the image of the identity map in
$\sC(\gset{b},\gset{b})$.

To show that $A_\gset{b}$ is projective, we must show that any levelwise
surjection ${\phi\colon F\to A_\gset{b}}$ admits a section $A_\gset{b} \to F$.
But, since $F(\gset{b})\to A_\gset{b}(\gset{b})$ is surjective, there is
some $y\in F(\gset{b})$ with $\phi_\gset{b}(y) = \id$.  The map
$A_\gset{b}\to F$ defined by taking $\id\mapsto y$ is then the required
section.
\end{proof}

\begin{exmp}
The category of $\ground[G]$-modules for a ring $\ground$ and group $G$ can be viewed
as the category of functors from $G$, viewed as a one-object category, to
$\rmod$.  Since $G$ has only one object $\trivo$, there is only one type of
free object, namely the free $\ground$-module on $G(-,\trivo)$, i.e.\ $\ground[G]$ itself.
So free objects are direct sums $\bigoplus \ground[G]$; we know from basic
algebra that every projective $\ground[G]$-module is a direct summand of such a
free object.
\end{exmp}

It follows that the Burnside ring Mackey functor $A = \sB_G(-,G/G)$ is
free, as it should be; but now there are other types of free objects as
well, namely $\sB_G(-,G/K)$ for each subgroup $K<G$.  As already mentioned,
the category $\sB_G$ is closed monoidal under the Cartesian product
$\times$, with internal hom also given by $\times$.  As a result, we can
give the name $A_{G/K}$ to $\sB_G(-,G/K) = \sB_G(-,G/K\times G/G)$ with no
conflict of notation with \autoref{subs::newmf}.  

For us, since the objects of $\sB_G$ are finite $G$-sets,
a coproduct of representable Mackey functors is again representable.  We
may thus make the following simplified definition.

\begin{defn}\label{def::free}
A Mackey functor $M$ is \defword{free} if it is of the form
\[ M = A_\gset{b} = \sB_G(-,\gset{b})\]
for some $\gset{b}\in\sB_G$.
\end{defn}

The free objects in $\mackey$ also tell us what the free objects in
$\mackey^*$ should be.  Writing $\Sigma^\alpha M^*$ to mean the Mackey
functor with $(\Sigma^\alpha M^*)^\beta \defeq M^{\beta-\alpha}$, the free
objects in $\mackey^*$ are coproducts of graded Mackey functors of the form
$\Sigma^\alpha A_{\gset b}^*$, for $\gset b\in \sB_G$ and $\alpha \in RO(G)$.

Finally, we will consider the question of which Mackey functor modules over
a Green functor $T$ are free.  The forgetful functor
$T\text{-mod}\to\mackey$ has a left adjoint $M\mapsto T\boxp M$.  We thus
make the following definition.

\begin{defn}\label{def::modfree}
Let $T$ be a Green functor, and $T\text{-mod}$ the category of Mackey
functor modules over $T$.  A \defword{free} $T$-module is one of the form
$T\boxp A_\gset{b} \cong T_\gset{b}$, for some $\gset{b}\in\sB_G$.
\end{defn}

Replacing $T$ by $T^*$ gives the analogous notion of a free $T^*$-module.

\section{Mackey functor valued cohomology theories}\label{sec::eqcoh}

At this point, we have informally described an $RO(G)$-graded theory
$\hr^*(-;M)$ for each Mackey functor $M$.  However, there is still some
structure arising from the $G$ action of which we are not yet taking full
advantage.  Specifically, for each subgroup $K < G$, we could consider the
theory {\renewcommand{\G}{K}$\hr^*(-;M|_K)$}.  Here $M|_K\colon
\sB_K\op\to\rmod$ is the Mackey functor with values on orbits given by
$M|_K(K/L) \defeq M(G \times_K K/L) \cong M(G/L)$.  For a fixed $\alpha$, these
theories fit together to form a Mackey functor.  This gives rise to a
Mackey functor valued cohomology theory.  We can view this as either a
collection of functors $\ulhr^\alpha(-;M)\colon \hogtop\to \mackey$, where
as before $\mackey$ is the category of Mackey functors, or as a single
functor \[\ulhr^*(-;M)\colon \hogtop\to \mackey^*\] to the category of
graded Mackey functors.  In fact, there is a cup product on $\ulhr^*$,
meaning that we can view it as a functor to the category of graded Green
functors, i.e.\ monoids in $\mackey^*$.


The extension from the $\rmod$ valued theory $\hr^*$ to the Mackey functor
valued theory $\ulhr^*$ works as follows.  On orbits, 
\[\ulhr^*(X;M)(G/K) \defeq 
\hr^*(G_+\sm_K X;M) \cong \hr^*({G/K}_+ \sm X;M) \cong
{\renewcommand{\G}{K}\hr^*(X;M|_K)}.\]
On morphisms of type
\[\cospan{\gset{b}}{\gset{c}}{}\]
the required map $\ulhr^*(X)(\gset{c})\to\ulhr^*(X)(\gset{b})$ is induced on
cohomology by the map of spaces $\gset{b}_+\sm X \to \gset{c}_+\sm X$.
On morphisms of type 
\[\contraspan{\gset{b}}{\gset{c}}{}\]
the map $\ulhr^*(X)(\gset{b})\to\ulhr^*(X)(\gset{c})$ is induced by an
appropriate transfer map $\Sigma^V\gset{c_+} \to \Sigma^V\gset{b_+}$ and
the suspension isomorphism on cohomology.  Hence $\ulhr^*$ ties together
information about the equivariant theories {\renewcommand{\G}{K}$\hr^*$}
for all subgroups $K<G$.  Also, it makes the dimension axiom look a little
cleaner.

\begin{axiom}
\defword{Mackey functor valued dimension}:
\[ \ulhr^\alpha(S^0;M) = \begin{cases}
M & \alpha = 0\\
0 & \alpha \in \bZ, \alpha\ne 0.
\end{cases}\]
For $\alpha\notin \bZ$, $\ulhr^\alpha(S^0;M)$ is not specified by the
dimension axiom but is uniquely determined by the cohomology axioms.
\end{axiom}

Note that we need only state the axiom for $S^0 = {G/G}_+$: by definition,
the Mackey functor $\ulhr^*({G/K}_+;M)$ is the shifted Mackey functor
${\ulhr^*(S^0;M)\boxp A_{G/K}}$.

\subsection{The equivariant stable category}
We could alternatively approach Mackey functor valued cohomology from a
stable point of view.  It is known that the homotopy category of
genuine\footnote{i.e.\ indexed on $RO(G)$} $G$-spectra is enriched over the
category of Mackey functors.  Specifically, given genuine $G$-spectra $D$
and $E$, we can define a Mackey functor $\mapmf{D}{E}$ on objects by
\[ \mapmf{D}{E} (\gset{b}) \defeq [D \sm \gset{b}_+ , E]_G = [D \sm
\Sigma^\infty \gset{b}_+, E]_G.\]
When $D = \Sigma^\alpha S$, a suspension of the sphere spectrum, this construction
gives the \defword{homotopy Mackey functor}
\[ \htmf_\alpha(E) \defeq \mapmf{\Sigma^\alpha S}{E}.\]
The following theorem, which appears in an appendix to \cite{LM}, shows
that the homotopy Mackey functor interacts nicely with the smash product on
the equivariant stable category.  Lewis and Mandell say the theorem was
folklore long before their paper.

\begin{thm}
The functor $\htmf_*$ is a lax symmetric monoidal functor from the
genuine equivariant stable category (with monoidal structure given by the
smash product) to the category of $RO(G)$-graded Mackey functors.  
\end{thm}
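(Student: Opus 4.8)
The plan is to exhibit the lax structure maps explicitly and then check the coherence axioms, using \autoref{lem::dresspairing} to reduce statements about maps out of box products to levelwise data satisfying the Frobenius relations. For the unit, recall that $S=\Sigma^\infty(G/G)_+$, so \autoref{def::burnsidestable} gives $\htmf_0(S)(\gset b)=[\Sigma^\infty\gset b_+,S]_G=\sB_G(\gset b,G/G)$; thus $\htmf_0(S)$ is canonically the Burnside ring Mackey functor $A$, and since $A^\alpha=0$ for $\alpha\ne 0$ the graded unit $\iota\colon A^*\to\htmf_*(S)$ is this identification in degree $0$ and zero in all other degrees. (Equivalently, $\iota$ is forced by the fact that the monoidal unit $G/G$ of $\sB_G$ maps to the monoidal unit $S$ of the stable category.)

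Next I would build the multiplication $\mu_{D,E}\colon\htmf_*(D)\boxp_*\htmf_*(E)\to\htmf_*(D\wedge E)$. By the formula $(M^*\boxp_* N^*)^\alpha\cong\bigoplus_{\beta_1+\beta_2=\alpha}M^{\beta_1}\boxp N^{\beta_2}$ it suffices to give maps $\htmf_{\beta_1}(D)\boxp\htmf_{\beta_2}(E)\to\htmf_{\beta_1+\beta_2}(D\wedge E)$, and by \autoref{lem::dresspairing} each of these is determined by a Dress pairing: levelwise maps $\htmf_{\beta_1}(D)(\gset b)\otimes\htmf_{\beta_2}(E)(\gset b)\to\htmf_{\beta_1+\beta_2}(D\wedge E)(\gset b)$ obeying the Frobenius relations. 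I would define the levelwise map to send representatives $f\colon\Sigma^{\beta_1}S\wedge\gset b_+\to D$ and $g\colon\Sigma^{\beta_2}S\wedge\gset b_+\to E$ to the external smash $f\wedge g$, reassociated (via the coherence isomorphisms of the smash product) into a map $\Sigma^{\beta_1+\beta_2}S\wedge(\gset b\times\gset b)_+\to D\wedge E$, and then precomposed with $\id\wedge\Delta_+$ for the diagonal $\Delta\colon\gset b\to\gset b\times\gset b$ --- this is exactly the ``external product, then restrict along the diagonal'' recipe from the proof of \autoref{lem::dresspairing}. Compatibility with restrictions is naturality of $\Delta$ in the category of $G$-sets; the two transfer pentagons, for a map of $G$-sets $\varphi\colon\gset b\to\gset c$, follow from the base-change (double coset) identity for stable transfers applied to the pullback square in which $\gset b\xrightarrow{(\id,\varphi)}\gset b\times\gset c$ sits over $\gset c\xrightarrow{\Delta}\gset c\times\gset c$ along $\varphi\times\id$, together with compatibility of transfers with smashing by a fixed $G$-set. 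Naturality of $\mu_{D,E}$ in $D$ and $E$ is immediate, since it is built from the fixed maps $\id\wedge\Delta_+$ by smashing and composing.

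It then remains to verify the three lax symmetric monoidal coherence diagrams. The associativity pentagon reduces, after the evident triple-product extension of \autoref{lem::dresspairing}, to coassociativity of $\Delta$ and associativity of $\wedge$: both composites send a triple of representatives to the map $\Sigma^{\beta_1+\beta_2+\beta_3}S\wedge\gset b_+\to D\wedge E\wedge F$ built from the iterated diagonal. The unit triangles reduce to the description of the isomorphism $A\boxp M\cong M$ in \autoref{lem::unitiso}: unwinding, $\mu\circ(\iota\boxp\id)$ sends $f\otimes x$ with $f\in\sB_G(\gset b,G/G)$ to $(f\times\id)^*x$, which is precisely that isomorphism. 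For the symmetry axiom, $\Delta$ is cocommutative, so the only discrepancy between $\mu_{E,D}\circ(\text{switch})$ and $\htmf_*(\text{switch})\circ\mu_{D,E}$ comes from the symmetry isomorphism $\Sigma^{\beta_1}S\wedge\Sigma^{\beta_2}S\cong\Sigma^{\beta_2}S\wedge\Sigma^{\beta_1}S$ of the smash product, whose degree as a self-equivalence of the relevant representation sphere is by definition the universal sign $u(\beta_1,\beta_2)$ in the Burnside Green functor; since that sign is exactly the one appearing in the graded symmetry on $\mackey^*$, the axiom holds by construction of the signs (whose explicit form is deferred to later in the \paper{}).

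I expect the Frobenius relations to be the main obstacle, since that is the one point where genuine structure of the equivariant stable category is used rather than pure category theory: one needs that stable transfers for maps of finite $G$-sets compose correctly (so that the embedding of \autoref{def::burnsidestable} is a functor at all), are natural, and satisfy base change along pullbacks of $G$-sets while being compatible with external smash products. These facts are standard --- indeed the paper notes the whole theorem is folklore --- but are not formal. One can alternatively repackage the argument: the smash product makes the embedding $\sB_G\hookrightarrow$ (genuine stable category) of \autoref{def::burnsidestable} a strong symmetric monoidal functor into a closed symmetric monoidal category, and then $\htmf_*$, obtained by applying $[\Sigma^\infty\gset b_+\wedge\Sigma^\alpha S,-]_G$ as $(\gset b,\alpha)$ ranges over $\sB_G$ and $RO(G)$, is automatically lax symmetric monoidal for Day convolution of presheaves, which restricts on additive functors to $\boxp_*$; but this merely relocates the same folklore input into the claim that the embedding is monoidal.
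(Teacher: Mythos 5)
The paper does not actually prove this theorem: it is quoted from the appendix of \cite{LM} and called folklore, so there is no in-paper argument against which to compare yours. Your sketch is the expected reconstruction of that folklore --- defining $\mu$ by the external-smash-then-restrict-along-the-diagonal recipe and running it through the Dress-pairing criterion of \autoref{lem::dresspairing}, with the non-formal content isolated in the Mackey base-change and projection formulas for stable transfers. That is exactly the input making \autoref{def::burnsidestable} a well-defined identification compatible with $\times$ and $\sm$, so you have located where genuine stable homotopy theory enters as opposed to pure category theory. The paper itself only elaborates the statement into the lax structure maps $\htmf_\alpha(D)\boxp\htmf_\beta(E)\to\htmf_{\alpha+\beta}(D\sm E)$ without argument, so your reconstruction is a genuine supplement.

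One place to tighten is the symmetry axiom. You correctly observe that the symmetry square commutes only up to the unit $u(\beta_1,\beta_2)\in A(G/G)$ given by the degree of the switch on $S^{\beta_1}\sm S^{\beta_2}$, but saying the axiom therefore ``holds by construction of the signs'' presupposes that the symmetry on $(\mackey^*,\boxp_*)$ has already been twisted by precisely those units. The paper only asserts that $\boxp_*$ makes $\mackey^*$ symmetric monoidal, which by default reads as the untwisted Day-convolution switch, under which the square commutes only up to $u(\beta_1,\beta_2)$, not on the nose. Either the target symmetry must be explicitly defined with the twist --- the same device that makes ring spectra land in \emph{graded-commutative} Green functors in \autoref{subs::green} and \autoref{subs::signs} --- or ``symmetric'' in the theorem must be read in that graded sense. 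That is a choice that should be stated, not a consequence that falls out automatically, and it is exactly the sort of bookkeeping that \cite{LM} treat with care.
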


That is, for $\alpha,\beta\in RO(G)$ and for spectra $D$ and $E$, there is
an associative, symmetric, and unital natural transformation
\[ \htmf_\alpha(D) \boxp \htmf_\beta(E) \to \htmf_{\alpha+\beta}(D\sm E).\]
These fit together to form a natural transformation
\[ \htmf_*(D) \boxp_* \htmf_*(E) \to \htmf_*(D\sm E)\]
with the same properties.

For each Mackey functor $M$, there is an \defword{Eilenberg-MacLane
$G$-spectrum} $HM$ which has the property that, as Mackey functors,
\[ \htmf_n(HM) \cong 
\begin{cases}
M  & n = 0\\
0  & n\in\bZ,\ n \ne 0.
\end{cases}\]
(See \cite{LMMc} or \cite{AK} for a proof.)  $HM$ represents
$RO(G)$-graded, Mackey functor valued Bredon cohomology, in the sense that 
\[ \ulhr^{\alpha}(X;M) \cong \mapmf{\Sigma^\infty X}{\Sigma^\alpha HM}\]
for a based space $X$.

Corresponding to these cohomology theories are homology theories, given by 
\[ \ulhl_\alpha(X;M) \cong \mapmf{\Sigma^\alpha S}{X_+\sm HM}.\]

\subsection{Summary of important facts about homology and cohomology}

Most of the arguments in this  \paper{} will rely on the following small
collection of facts about $RO(G)$-graded, Mackey functor valued cohomology.
They are collected here for easy reference later.

\begin{axioms}\label{ax::finalcoh}
A reduced ordinary equivariant cohomology theory $\ulhr^*$ indexed on
$RO(G)$ consists of a collection of functors $\hr^\alpha\colon \hogtop\to\mackey$
satisfying the following axioms.
\begin{enumerate}
\item \label{ax::we} \defword{Weak equivalence}: Weak $G$-equivalences induce
isomorphisms on $\hr^*$.

\item \label{ax::exact} \defword{Exactness}: if $i\colon  A\to X$ is a
$G$-cofibration with cofiber $X/A$, then the sequence \[ \ulhr^\alpha(X/A)
\to \ulhr^\alpha(X) \to \ulhr^\alpha(A) \] is exact in the abelian category
of Mackey functors for each $\alpha\in RO(G)$.

\item \label{ax::add} \defword{Additivity}: if $X = \bigvee_i X_i$ as based
spaces, then the inclusions $X_i\to X$ induce isomorphisms $\ulhr^*(X)\to
\prod_i \ulhr^*(X_i)$.

\item \label{ax::susp} \defword{Suspension}: for each $\alpha\in RO(G)$ and
representation $V$, there is a natural isomorphism
\[ \Sigma^V \colon  \ulhr^\alpha(X) \to \ulhr^{\alpha+V}(\Sigma^V X) =
\ulhr^{\alpha+V}(S^V \sm X).\]

\item\label{ax::dim}
\defword{Dimension}: $\ulhr^n(S^0) = 0$ for all integers $n\in\bZ$, $n\ne 0$.

\item\label{ax::bookkeeping}
\defword{Bookkeeping}: $\Sigma^0$ is the identity natural transformation,
and the suspension isomorphism ${\Sigma^V\colon \ulhr^\alpha(X)\to
\ulhr^{\alpha+V}(\Sigma^V X)}$ is covariantly natural in $V$ and
contravariantly natural in $X$.  Let $V$, $W$, and $W'$ be $G$
representations, and $f:S^W\to S^{W'}$ the stable homotopy class of the map
associated to a $G$-linear isometric isomorphism $W\to W'$.  Then
$\Sigma^V\Sigma^W \cong \Sigma^{V\oplus W}$ and the following diagrams commute:
\[ \xymatrix{
\ulhr^\alpha(X) \ar[r]^-{\Sigma^W} \ar[d]_{\Sigma^{W'}} & \ulhr^{\alpha+W}(\Sigma^W X) \ar[d]^-{\ulhrnoph^{\id+f}(\id)} \\
\ulhr^{\alpha+W'}(\Sigma^{W'} X) \ar[r]^{(\Sigma^f\id)} & \ulhr^{V+W'}(\Sigma^W X) \\
}
\hspace{30pt}
\xymatrix@C=.2pc{
\ulhr^\alpha(X) \ar[dr]_(0.4){\Sigma^{V+W}} \ar[rr]^{\Sigma^V} & & \ulhr^{\alpha+V}(\Sigma^V X) \ar[dl]^(0.37){\Sigma^W} \\
& \makebox[20pt]{$\ulhr^{\alpha+V+W}(\Sigma^{V+W} X)$} & \\
} \]
\end{enumerate}
\end{axioms}

Similar axioms hold for reduced homology $\ulhlr_*(-;M)$.

Note that, since $\ulhr^*(S^0;M) \ne M$ as a graded Green functor, saying
that something is a module over the cohomology of a point is very different
from saying it is a module over the coefficient Mackey functor $M$.

\begin{fact}
If $M$ is a Green functor (Mackey functor ring), then unreduced cohomology
$\ulh^*(-;M)$ takes values in graded commutative Green functors.  In
particular, for every space $X$ and $\alpha,\beta\in RO(G)$, we have maps
\[\ulhr^\alpha(X_+;M)\boxp \ulhr^\beta(X_+;M) \to \ulhr^{\alpha+\beta}(X_+;M).\]
\end{fact}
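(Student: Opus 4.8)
The plan is to construct the cup product from the representing Eilenberg-MacLane $G$-spectrum $HM$ and then read off the algebraic structure, rather than working on cochains. The one input I would take for granted is that a Green functor structure on $M$ makes $HM$ into a homotopy-associative, homotopy-commutative, unital ring $G$-spectrum. This is standard (see \cite{LMMc}); concretely, the Postnikov truncation $HM\sm HM\to H\htmf_0(HM\sm HM)$, the Künneth-type identification $\htmf_0(HM\sm HM)\cong M\boxp M$, and the Green functor multiplication $M\boxp M\to M$ compose to a map $\mu\colon HM\sm HM\to HM$, with unit $S\to HA\to HM$ given by the Postnikov truncation of the sphere followed by $H$ of the unit $\eta\colon A\to M$; since $[D,HM]_G\cong\Hom_{\mackey}(\htmf_0 D, M)$ for connective $D$, the associativity, commutativity, and unit diagrams for $\mu$ reduce, after applying $\htmf_0$, to the corresponding diagrams for $M$.

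With this in hand, fix a space $X$ and use the representing property $\ulhr^\alpha(X_+;M)(\gset b)\cong[\Sigma^\infty X_+\sm\gset{b}_+,\Sigma^\alpha HM]_G$. Given classes represented by $a\colon\Sigma^\infty X_+\sm\gset{b}_+\to\Sigma^\alpha HM$ and $a'\colon\Sigma^\infty X_+\sm\gset{b}_+\to\Sigma^\beta HM$, I form the composite
\[
\Sigma^\infty X_+\sm\gset{b}_+
\xrightarrow{\ \delta_X\sm\delta_{\gset b}\ }
\bigl(\Sigma^\infty X_+\sm\gset{b}_+\bigr)\sm\bigl(\Sigma^\infty X_+\sm\gset{b}_+\bigr)
\xrightarrow{\ a\sm a'\ }
\Sigma^\alpha HM\sm\Sigma^\beta HM
\xrightarrow{\ \mu\ }
\Sigma^{\alpha+\beta}HM,
\]
using the diagonals of $X$ and of the finite $G$-set $\gset b$; this is where unreducedness is essential. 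The result is a collection of maps $\theta_{\gset b}\colon\ulhr^\alpha(X_+;M)(\gset b)\otimes\ulhr^\beta(X_+;M)(\gset b)\to\ulhr^{\alpha+\beta}(X_+;M)(\gset b)$, and checking the three Frobenius relations of \autoref{lem::dresspairing} — which follows from naturality of the diagonals and of $\mu$ together with the compatibility of smash products with restrictions and transfers — promotes them to a map of Mackey functors $\ulhr^\alpha(X_+;M)\boxp\ulhr^\beta(X_+;M)\to\ulhr^{\alpha+\beta}(X_+;M)$. Assembling over $\alpha+\beta=\gamma$ yields the graded product $\ulhr^*(X_+;M)\boxp_*\ulhr^*(X_+;M)\to\ulhr^*(X_+;M)$, which in particular gives the maps displayed in the statement.

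It remains to check the graded Green functor axioms. Associativity follows from coassociativity of $\delta_X$ and $\delta_{\gset b}$ and associativity of $\mu$; the unit $A^*\to\ulhr^*(X_+;M)$ is induced by $\Sigma^\infty X_+\to S\to HM$ (the collapse followed by the ring unit), and unitality follows from the counit axiom for the diagonals and the unit axiom for $\mu$. For graded commutativity, interchanging $a$ and $a'$ in the composite above introduces the transpositions of the two copies of $\Sigma^\infty X_+$, of the two copies of $\gset{b}_+$, and of $\Sigma^\alpha HM$ with $\Sigma^\beta HM$; the first two are absorbed by cocommutativity of $\delta_X$ and $\delta_{\gset b}$, and the $HM\sm HM$ portion of the third by commutativity of $\mu$, leaving precisely the transposition $\tau\colon S^\alpha\sm S^\beta\to S^\beta\sm S^\alpha$ viewed as a self-equivalence of $S^{\alpha+\beta}$. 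Its class $u(\alpha,\beta)\in\htmf_0(S)=A$ is a unit (being an equivalence) depending only on $\alpha$ and $\beta$, and by construction it is exactly the unit appearing in the commutativity diagram in the definition of a graded-commutative graded Green functor.

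The only genuinely non-formal ingredient is the ring spectrum structure on $HM$ used in the first paragraph, which ultimately rests on the Künneth-type computation $\htmf_0(HM\sm HN)\cong M\boxp N$ together with the Postnikov universal property of $HM$. Everything afterwards is a diagram chase in the closed symmetric monoidal equivariant stable category; the most delicate point is the bookkeeping — handled by the bookkeeping axiom — required to identify the commutativity sign as the sphere transposition $u(\alpha,\beta)$, so as not to conflate the thesis's notion of a graded-commutative graded Green functor with strict commutativity.
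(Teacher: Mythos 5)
Your proof is correct and follows essentially the same route as the paper's own discussion of the cup product in \autoref{subs::signs}: represent cohomology via the Eilenberg--MacLane $G$-spectrum $HM$, use the ring spectrum structure on $HM$ that comes from the Green functor structure of $M$, combine it with the diagonal of $X$ to get the pairing, and identify the graded-commutativity unit $u(\alpha,\beta)$ as the class of the transposition of representation spheres. The only difference is one of packaging---you build the map out of the box product level-by-level from the Dress pairing of \autoref{lem::dresspairing} (including the diagonal of the $G$-set $\gset b$ and a check of the Frobenius relations), whereas the paper assembles the same map directly from the lax symmetric monoidal structure of $\htmf_*$; by \autoref{lem::dresspairing} these two constructions are equivalent.
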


\begin{fact}
If $M$ is a Green functor, each $\ulhr^*(X_+;M)$ is an algebra over the
cohomology $\ulhr^*(S^0;M)$ of a point, in the sense that there is a map of
Green functors
\[\ulhr^*(S^0;M)\boxp \ulhr^*(X_+;M)\to \ulhr^*(X_+;M)\]
satisfying the expected diagrams.
\end{fact}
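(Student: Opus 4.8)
The plan is to build the $\ulhr^*(S^0;M)$-action from the cup product on $\ulhr^*(X_+;M)$ constructed in the preceding Fact, together with the ring map induced by collapsing $X$ to a point. Let $c\colon X\to\pt$ be the unique $G$-map; adjoining disjoint basepoints gives a based $G$-map $c_+\colon X_+\to S^0$, and functoriality of $\ulhr^*(-;M)$ yields a morphism of graded Mackey functors $c^*\colon\ulhr^*(S^0;M)=\ulhr^*(\pt_+;M)\to\ulhr^*(X_+;M)$. I claim $c^*$ is a map of graded Green functors. The cup product of the preceding Fact is natural in the space, since the diagonal of $X$ is carried to the diagonal of $Y$ by any $G$-map $X\to Y$; hence $c^*$ intertwines the two cup products. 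And $c^*$ sends unit to unit, because the multiplicative unit of $\ulhr^*(Y_+;M)$ is always the class restricted along the collapse $Y\to\pt$ (equivalently, the image of the spectrum unit $S\to HM$). So $c^*$ is a morphism of graded Green functors.

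Next I would take the action map to be the composite
\[ \phi\colon\ulhr^*(S^0;M)\boxp\ulhr^*(X_+;M)\xrightarrow{\ c^*\,\boxp\,\id\ }\ulhr^*(X_+;M)\boxp\ulhr^*(X_+;M)\xrightarrow{\ \mu\ }\ulhr^*(X_+;M), \]
with $\mu$ the cup product. The module unit and associativity diagrams for $\phi$ follow directly from $c^*$ being a ring map and $\mu$ being unital and associative, so $\ulhr^*(X_+;M)$ becomes a module over the Green functor $\ulhr^*(S^0;M)$. The remaining content of ``algebra'' --- that $\phi$ is a map of graded Green functors, equivalently that $c^*$ lands in the center so that $\ulhr^*(X_+;M)$ is a monoid in $\ulhr^*(S^0;M)$-modules --- then reduces to the compatibility of $\phi$ with the cup products, which in turn is a formal consequence of the associativity of $\mu$ and of $c^*$ being multiplicative, once the graded-commutativity signs $u(\alpha,\beta)$ are correctly inserted.

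I do not expect a serious obstacle: the argument is formal, resting only on the preceding Fact and on functoriality. The delicate point is precisely the sign bookkeeping in the last step, together with verifying that the cup product and the unit of $\ulhr^*(-;M)$ really are natural in the space (which is what makes $c^*$ a ring map). A cleaner, sign-free alternative is to argue stably: since $M$ is a Green functor, $HM$ is a ring $G$-spectrum, so $\ulhr^\alpha(-;M)\cong\mapmf{\Sigma^\infty(-)}{\Sigma^\alpha HM}$ automatically takes values in modules over its own coefficient ring $\mapmf{S}{\Sigma^\bullet HM}\cong\ulhr^*(S^0;M)$, and the cup product upgrades this to an algebra structure. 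This agrees with the construction above because the composite $X\xrightarrow{\Delta}X\times X\xrightarrow{c\times\id}X$ is the identity, which identifies $(c^*\boxp\id)$ followed by $\mu$ with the coefficient action.
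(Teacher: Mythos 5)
The paper lists this as one of several background Facts stated without proof (it sits alongside others for which the paper points to external references such as \cite{LMS}), so there is no argument in the paper for yours to be compared against. That said, your derivation is correct and is the standard one: the collapse $c_+\colon X_+\to S^0$ induces a map $c^*$ of graded Green functors, because the cup product is natural in the space---the diagonal $\Delta$ is natural, so $\Delta_Y\circ f = (f\sm f)\circ\Delta_X$---and the multiplicative unit of $\ulhr^*(X_+;M)$ is exactly $c^*$ applied to the unit of $\ulhr^*(S^0;M)$; the composite $\phi=\mu\circ(c^*\boxp\id)$ then inherits unitality and associativity formally, and graded-commutativity of both Green functors supplies the centrality needed to promote this from a module structure to an algebra structure. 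Your stable reformulation---using that $HM$ is a ring $G$-spectrum, so represented cohomology automatically takes values in modules over the coefficient ring, and matching this to the unstable construction via the identity $(c\times\id)\circ\Delta=\id_X$---is precisely the argument the paper is implicitly leaning on, and it is the cleaner route since it avoids tracking the units $u(\alpha,\beta)$ by hand.
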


\begin{fact}
The homology theory $\ulhlr_*(-;A)$ has a Hurewicz map, i.e.\ a natural
transformation $\htmf_*(-) \to \ulhlr_*(-;A)$.
\end{fact}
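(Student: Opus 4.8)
The plan is to obtain the Hurewicz map as the effect on homotopy Mackey functors of smashing with a unit map of the Eilenberg--MacLane spectrum $HA$. Recall that for a based $G$-space $X$ one interprets $\htmf_*(X)$ as $\htmf_*(\Sigma^\infty X)$, so $\htmf_\alpha(X) = \mapmf{\Sigma^\alpha S}{\Sigma^\infty X}$, while the reduced homology is $\ulhlr_\alpha(X;A) \cong \mapmf{\Sigma^\alpha S}{\Sigma^\infty X\sm HA}$. It therefore suffices to produce a map of $G$-spectra $\Sigma^\infty X\to\Sigma^\infty X\sm HA$, natural in $X$, and then apply $\mapmf{\Sigma^\alpha S}{-}$, which is functorial in its spectrum variable and, by construction, lands in Mackey functors.

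First I would construct a unit map $\eta\colon S\to HA$. Since the sphere spectrum $S$ is connective and $\htmf_0(S)$ is classically identified with the Burnside ring Mackey functor $A$, one may take $HA$ to be the zeroth Postnikov section $\tau_{\le 0}S$ and let $\eta$ be the truncation map; it induces an isomorphism on $\htmf_0$. (Equivalently, as $A$ is the unit for $\boxp$ and a commutative Green functor, $HA$ is a homotopy-commutative ring $G$-spectrum and $\eta$ is its unit.) Then, for each based $G$-space $X$, I would smash $\eta$ with $\Sigma^\infty X$, using $\Sigma^\infty X\sm S\cong\Sigma^\infty X$, to get
\[ \Sigma^\infty X\;\cong\;\Sigma^\infty X\sm S\;\xrightarrow{\,\id\sm\eta\,}\;\Sigma^\infty X\sm HA,\]
and apply $\mapmf{\Sigma^\alpha S}{-}$ to obtain a map of Mackey functors $\htmf_\alpha(X)\to\ulhlr_\alpha(X;A)$; letting $\alpha$ range over $RO(G)$ assembles these into the desired graded natural transformation $\htmf_*(-)\to\ulhlr_*(-;A)$.

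The remaining checks are routine. Naturality in $X$ follows because $\Sigma^\infty(-)\sm HA$ is functorial in the space, $\mapmf{\Sigma^\alpha S}{-}$ is functorial in the spectrum, and the unit isomorphism $\Sigma^\infty X\sm S\simeq\Sigma^\infty X$ is natural. Compatibility with restrictions and transfers is automatic, since the whole construction is obtained by applying the Mackey-functor-valued functor $\mapmf{\Sigma^\alpha S}{-}$ to a single map of spectra. I would also remark that one cannot shortcut this by invoking the monoidal structure on $\htmf_*$ together with ``$\htmf_*(HA)\cong A^*$'', since $\htmf_\alpha(HA)$ is generally nonzero for $\alpha\in RO(G)\setminus\bZ$; the spectrum-level construction above sidesteps that point.

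The one genuinely nonformal ingredient is the identification of $HA$ with a connective truncation of $S$ --- equivalently, the existence of the unit $\eta\colon S\to HA$ --- which depends on the classical computation of $\htmf_0(S)$ as the Burnside ring Mackey functor and on the connectivity of $S$. Granting that, everything else is a diagram chase using functoriality of the homotopy Mackey functors, so this identification is the main (and essentially only) obstacle.
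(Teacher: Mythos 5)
Your proposal is correct, and it is the standard construction. Note, however, that the paper states this as a ``Fact'' in its summary section and does not actually supply a proof, so there is nothing in the text to compare against: the reader is expected to take it as known or to chase references such as \cite{LMS}. Your argument fills that gap cleanly. The essential content is exactly what you isolate: the existence of the unit $\eta\colon S\to HA$, which can be produced either as the unit of the Green-functor ring spectrum $HA$ or as the connective truncation of $S$, using the theorem of Segal and tom Dieck that $\htmf_0(S)\cong A$; once $\eta$ is in hand, smashing with $\Sigma^\infty X$ and applying $\mapmf{\Sigma^\alpha S}{-}$ gives the natural transformation, and naturality and Mackey-functor structure come for free since the entire construction lives at the spectrum level. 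Your remark that one cannot shortcut via the lax monoidal structure on $\htmf_*$ is a fair caution, though a slightly different (and also valid) route is to use that lax monoidal structure to send the unit class in $\htmf_0(S)\boxp\htmf_\alpha(X)$ into $\htmf_\alpha(HA\sm X)=\ulhlr_\alpha(X;A)$; the two constructions agree.
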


For the following, see \cite{LMS} or \cite{wirth}.

\begin{fact}
For any group $G$, the orbits ${G/K}_+$ satisfy equivariant
Spanier-Whitehead duality: there is an isomorphism in reduced (co)homology
\[ \ulhr^\alpha({G/K}_+;A) \cong \ulhlr_{-\alpha}({G/K}_+;A).\]
\end{fact}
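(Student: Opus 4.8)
The plan is to deduce this from the Spanier--Whitehead self-duality of orbit suspension spectra, using the representing-spectrum descriptions of Mackey functor valued homology and cohomology recorded above. First I would translate both sides into the genuine equivariant stable category. Evaluating the represented Mackey functors at a finite $G$-set $\gset b$, the formulas $\ulhr^\alpha(X;A)\cong\mapmf{\Sigma^\infty X}{\Sigma^\alpha HA}$ (for based $X$) and $\ulhlr_\alpha(Y_+;A)\cong\mapmf{\Sigma^\alpha S}{\Sigma^\infty(Y_+)\sm HA}$ (for unbased $Y$, i.e.\ reduced homology of $Y_+$) give
\[ \ulhr^\alpha({G/K}_+;A)(\gset b) = [\Sigma^\infty({G/K}_+)\sm \gset b_+,\, \Sigma^\alpha HA]_G,\]
\[ \ulhlr_{-\alpha}({G/K}_+;A)(\gset b) = [\Sigma^{-\alpha}S\sm \gset b_+,\, \Sigma^\infty({G/K}_+)\sm HA]_G.\]

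The key input is the equivariant Wirthm\"{u}ller isomorphism: since $G$ is finite, $G/K$ is a discrete finite $G$-set, so $\Sigma^\infty({G/K}_+)$ is a dualizable object of the genuine equivariant stable category whose functional dual is canonically $\Sigma^\infty({G/K}_+)$ itself, the twisting tangent representation being zero. This is exactly the content of the references cited immediately before the statement. Granting it, the argument is a formal manipulation of adjunctions: smashing with the dualizable spectrum $\Sigma^\infty({G/K}_+)$ is left adjoint to smashing with its dual, so
\begin{align*}
\ulhr^\alpha({G/K}_+;A)(\gset b)
 &= [\Sigma^\infty({G/K}_+)\sm \gset b_+,\, \Sigma^\alpha HA]_G \\
 &\cong [\gset b_+,\, \Sigma^\infty({G/K}_+)\sm \Sigma^\alpha HA]_G \\
 &\cong [\Sigma^{-\alpha}S\sm \gset b_+,\, \Sigma^\infty({G/K}_+)\sm HA]_G \\
 &= \ulhlr_{-\alpha}({G/K}_+;A)(\gset b),
\end{align*}
where the middle isomorphism uses the self-duality of the orbit and the last step uses the suspension isomorphism $\Sigma^{-\alpha}$ in the stable category (moving $\Sigma^\alpha$ past the smash factor and desuspending). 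Since the duality adjunction and the suspension isomorphism are natural transformations, these identifications are natural in $\gset b$; hence they assemble into an isomorphism of Mackey functors, and letting $\alpha$ range over $RO(G)$ yields the asserted isomorphism of graded Mackey functors.

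I expect the only genuine obstacle to be the self-duality of the orbit spectrum itself, i.e.\ the Wirthm\"{u}ller isomorphism with trivial twist in the finite-group case; once that is invoked, the remainder is routine adjunction-chasing, and the chain of isomorphisms above automatically respects restrictions and transfers because every step is natural. (A reader wishing to avoid the stable category could instead argue at the chain level, comparing $\Hom_{\sO_G}(\Ccelleq_*(G/K),A)$ with $\Ccelleq_*(G/K)\otimes_{\sO_G}A$ and exploiting the self-duality of $\sB_G$ from \autoref{lem::selfdual}; but the stable-category argument is cleaner and is what the cited sources already provide.)
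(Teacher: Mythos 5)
The paper offers no proof of this Fact at all, only the citation ``see \cite{LMS} or \cite{wirth}.''  Your proposal is correct and reproduces the standard argument underlying those references: rewrite both sides as stable maps using the representing spectra, invoke the self-duality of $\Sigma^\infty({G/K}_+)$ (the Wirthm\"{u}ller isomorphism with trivial twist, available because $G$ is finite) to move that factor across the brackets, shuffle the suspension by $\alpha$, and observe that every step is natural in the auxiliary $G$-set $\gset b$ so the levelwise isomorphisms assemble into an isomorphism of Mackey functors.  This is exactly the content of the cited sources, so there is nothing to correct; if anything, you have supplied the proof the paper chose to omit.
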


%

\section{Mackey functors for $G=\cycp$}\label{sec::modp}

\renewcommand{\G}{\cycp}

When $p$ is a prime, the cyclic group $\cycp$ has only two subgroups:
$\triv$ and $\cycp$ itself.  As a result, it is not difficult to give
explicit descriptions of concrete Mackey functors when the group $G$ of
equivariance is $\cycp$.  \cite{lgl}, and this \paper{}, are concerned with 
calculations in cohomology $\ulhr^*(-;A)$ for $G=\cycp$, and so we will
introduce here some explicit notation which will be useful later on.

\subsection{Diagrams for Mackey functors}

Recall from \autoref{def::mf} that a Mackey functor $M$, as an additive functor
$\sB_G\op\to\rmod$, is determined by its values $M(G/K)$ on orbits.  In
particular, when $G = \G$, Mackey functors are determined by their values
at $\G/\G$ and $\G/\triv$.  For convenience, we will write $\G/\G = \trivo$
(meant to suggest a single fixed point) and $\G/\triv = \freeo$ (meant to
suggest all of $\G$).

Rephrasing the definition, Mackey functors are additive $\sB_\G$-shaped
diagrams in $\rmod$.  The adjective ``additive'' means that we can restrict our
attention to the full subcategory of $\sB_\G$ on objects $\trivo$ and
$\freeo$.  Considering the maps, i.e.\ spans, between these objects, we see
that $\sB_\G$ can be thought of as the category enriched over $\rmod$
generated by the following diagram, modulo one relation.
\[ \xymatrix@R=4pc@C=4pc{
\trivo \ar@/^3ex/[d]^(0.2){\hspace{1ex}\minicontraspan{\freeo}{\trivo}{}} \\
\freeo \ar@(dr,d)[]^{\cycp} \ar@/^3ex/[u]^(0.8){\minicospan{\freeo}{\trivo}{}\hspace{9ex}} \\
} \hspace{8ex}\]
Explicitly, the automorphism group of $\trivo$ is a free $\ground$-module on two
generators, the identity span and the composite of the downward arrow
followed by the upward one.  $\sB_\G(\trivo,\freeo)$ and
$\sB_\G(\freeo,\trivo)$ are each free on one generator.  The automorphism
$\ground$-module of $\freeo$ is the free $\ground$-module on generators 
\[ \cospan{\freeo}{\freeo}{g\cdot}\]
for each $g\in\G$, and so has an action of $\G$.  The one relation comes
from the fact, straightforward to check explicitly, that the composite of
the upward followed by the downward arrow is equal to the sum of the $p$
generators of $\sB_\G(\freeo,\freeo)$.

The consequence of all of this is that we have an explicit description of
Mackey functors for $G=\G$.  A Mackey functor $M\colon \sB_\G\op\to\rmod$
consists of a diagram of the form
\[\mf{M(\trivo)}{M(\freeo)}{\mft}{\mfr}{\cycp}\]
where $\mft$ and $\mfr$ are the restriction and transfer associated to the
projection of $G$-sets $\rho\colon \freeo\to\trivo$; $M(\freeo)$ is
\rarticle $\ground[\G]$-module; $M(\trivo)$ is \rmodule{} which can be
viewed as \rarticle $\ground[\G]$-module with the trivial action; and all arrows
are maps of $\ground[\G]$-modules.  The one relation discussed above shows
that $\mfr\mft x = \sum_{g\in\G} gx = \trace(x)$ for each $x\in M(\freeo)$.
Conversely, any diagram satisfying these conditions ($M(\trivo)$ is
\rarticle $\ground[\G]$-module with the trivial action, $M(\freeo)$ is
\rarticle $\ground[\G]$-module, $\mfr\mft = \trace$) defines a Mackey
functor.

\subsection{Some important Mackey functors}\label{subs::favoritemf}

Since Mackey functors for $G=\G$ are so simple, we can give explicit
descriptions of all of the Mackey functors which will appear in our later
calculations.

We have already seen the Burnside ring Mackey functor $A =
\sB_\G(-,\trivo)$.  $\sB_\G(\trivo,\trivo)$ is generated by the two spans
\[ 
\xymatrix@R=0.5pc@C=1pc{ & \trivo \ar[dl] \ar[dr] & \\ \trivo & & \trivo\\}
\hspace{5pt}\text{ and }\hspace{5pt}
\xymatrix@R=0.5pc@C=1pc{ & \freeo \ar[dl] \ar[dr] & \\ \trivo & & \trivo\\}
\]
while $\sB_\G(\freeo,\trivo)$ is generated by the single span
\[ \cospan{\freeo}{\trivo}{}\]
It follows that $A(\trivo) \cong \ground\mu \oplus \ground\tau$, a free
module on generators $\mu$ corresponding to the identity span and $\tau$
corresponding to the non-identity span above.  $A(\freeo) \cong
\ground\iota$, a free module on the single generator $\iota$, with the
trivial $\G$ action.  The arrows in $A$ are given by precomposition with
appropriate spans; it follows that $\mft(\iota) = \tau \in A(\trivo)$,
$\mfr(\mu) = \iota$, and $\mfr(\tau) = \mfr(\mft(\iota)) = p\iota$.  Thus
the Mackey functor $A$ has diagram
\[\mf{\ground\oplus \ground}{\ground}{\mymatrix{0 & 1}}{\mymatrix{1\\p}}{\text{triv}}\]

We can modify this construction slightly to produce what Lewis calls
\defword{twisted Burnside} Mackey functors.
For any integer $d$, define $\tw{A}d$ to be the
Mackey functor given by\label{ex::twisted}
\[ \mf{\ground\oplus \ground}{\ground}{\mymatrix{0&1}}{\mymatrix{d\\p}}{\text{triv}}\]

\begin{lem}\label{lem::twequiv}
There is an isomorphism $\tw{A}{d_1} \cong \tw{A}{d_2}$ if and only if
there is a unit $u\in \ground$ and an element $x\in \ground$ such that $d_1 = u d_2 +
px$.
\end{lem}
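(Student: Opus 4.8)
The plan is to translate the isomorphism condition into an explicit system of equations over $\ground$ using the concrete description of Mackey functors for $G=\cycp$ recalled above, and then solve that system. By that description, a morphism of Mackey functors over $\cycp$ is exactly a pair $(\phi_{\trivo},\phi_{\freeo})$ consisting of a $\ground$-linear map $\phi_{\trivo}$ on the value at $\trivo$ and a $\ground[\cycp]$-linear map $\phi_{\freeo}$ on the value at $\freeo$, commuting with the restriction $\mfr$ and the transfer $\mft$; compatibility with the remaining structure (the self-map of $\trivo$ obtained by composing $\mfr$ and $\mft$, and the $\cycp$-action on $\freeo$, which is trivial here) is then automatic. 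For $\tw{A}{d_i}$, the map $\mft\colon\ground\to\ground\oplus\ground$ is $z\mapsto(0,z)$ and $\mfr\colon\ground\oplus\ground\to\ground$ is $(x,y)\mapsto d_ix+py$. Writing $\phi_{\freeo}$ as multiplication by $v\in\ground$ and $\phi_{\trivo}=\mymatrix{a & b\\ c & e}$, I would check that the transfer square forces $b=0$ and $e=v$, and that the restriction square then collapses to the single scalar equation $v\,d_1=a\,d_2+p\,c$. Thus a morphism $\tw{A}{d_1}\to\tw{A}{d_2}$ is precisely a triple $(v,a,c)\in\ground^3$ with $vd_1=ad_2+pc$.

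Next I would single out the isomorphisms. A morphism of Mackey functors is an isomorphism iff it is one levelwise, i.e.\ iff $\phi_{\freeo}$ and $\phi_{\trivo}$ are both invertible; the first holds iff $v$ is a unit, and since $\phi_{\trivo}=\mymatrix{a & 0\\ c & v}$ is lower triangular, the second holds iff $av$ is a unit, hence (over the commutative ring $\ground$) iff both $a$ and $v$ are units. Therefore $\tw{A}{d_1}\cong\tw{A}{d_2}$ iff there exist units $a,v$ and an element $c$ with $vd_1=ad_2+pc$. Multiplying through by $v^{-1}$ and setting $u=v^{-1}a$ (a unit, being a product of units) and $x=v^{-1}c$ gives $d_1=ud_2+px$ with $u$ a unit; conversely, given such $u$ and $x$, the triple $(v,a,c)=(1,u,x)$ exhibits an isomorphism. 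This is exactly the stated criterion.

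I do not expect a genuine obstacle here: the proof reduces to a two-line linear-algebra computation once the naturality squares for $\mfr$ and $\mft$ are written out. The only step needing a little care is the bookkeeping in the first paragraph — correctly determining which entries of $\phi_{\trivo}$ are forced to vanish and which are forced to be units — together with the elementary remark that in a commutative ring a product is a unit exactly when both factors are, which is what allows the passage between the "$vd_1=ad_2+pc$" form coming out of the computation and the "$d_1=ud_2+px$" form in the statement.
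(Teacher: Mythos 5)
Your proof is correct, and it is essentially the paper's argument, reorganized. Where the paper treats the two directions separately — exhibiting the explicit isomorphism with top-level matrix $\mymatrix{u & x\\0&1}$ for the ``if'' direction, and for the ``only if'' direction normalizing $\phi_\freeo=\id$, using one naturality condition to pin down a row of the top-level matrix, and then invoking invertibility of its determinant — you instead classify all morphisms $\tw{A}{d_1}\to\tw{A}{d_2}$ at once as triples $(v,a,c)$ with $vd_1=ad_2+pc$ and then isolate the isomorphisms via the unit condition on $a$ and $v$. The underlying linear algebra (the two naturality squares plus the determinant criterion) is identical, so this is the same proof, merely packaged more systematically.
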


\begin{proof}
Suppose that $d_1 = u d_2 + px$ for some unit $u$ and element $x$.  Then
the following is a map of Mackey functors.
\[ \xymatrix@R=3pc@C=7pc{
  {\ground\oplus \ground} \ar@/_2ex/[d]_{\mymatrix{d_1\\p}} \ar[r]^{\mymatrix{u&x\\0&1}} 
  & {\ground\oplus \ground} \ar@/_2ex/[d]_{\mymatrix{d_2\\p}} \\
  {\ground} \ar@(d,dr)[]_{\text{triv}} \ar@/_2ex/[u]_{\mymatrix{0&1}} \ar@{=}[r]
  & {\ground} \ar@(d,dr)[]_{\text{triv}} \ar@/_2ex/[u]_{\mymatrix{0&1}} \\ 
} \]
It is an isomorphism because the matrix $X$ at the top of the diagram has
determinant $u$.

For the other direction, observe that, in any isomorphism $\tw{A}{d_1}\cong
\tw{A}{d_2}$, the bottom horizontal map must be either the identity or
multiplication by a unit of $\ground$; we may assume without loss of generality
that it is the identity.  Since $X$ must then satisfy $\mymatrix{0&1}X =
\mymatrix{0&1}$, the bottom row of $X$ is determined.  Since the
determinant of $X$ must be invertible in $\ground$, the upper left entry must
be a unit $u$.  It follows that, in any isomorphism $\tw{A}{d_1}\cong
\tw{A}{d_2}$, we must have $d_2 = u d_1 + px$ for a unit $u$ and $x\in \ground$.
\end{proof}

Next, for any $\ground$-module $C$, we have the Mackey functor $\langle
C\rangle$ given by
\[ \mf{C}{0}{}{}{} \]
where all maps are the zero map.  This is the special case of the functor
$J_{\trivo}$ mentioned in \autoref{rem::j}.

Although the twisted Burnside Mackey functors are generally distinct
for different $d$, modulo the isomorphisms of \autoref{lem::twequiv},
adding a copy of $\jg{\ground}$ to two twisted Burnside Mackey functors
almost always produces isomorphic Mackey functors; we will make this
precise in \autoref{lem::twiso}.  This may be slightly
surprising at first, but it is also necessary in order for $RO(G)$-graded
cohomology to be well defined.

\begin{lem}\label{lem::twiso}
For any integers $d_1,d_2$ prime to $p$, there is an isomorphism
\[\tw{A}{d_1}\oplus \jg{\ground} \cong \tw{A}{d_2}\oplus \jg{\ground}.\]
\end{lem}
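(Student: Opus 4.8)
The plan is to exhibit an explicit isomorphism. First I would recall that, for $G=\cycp$, a morphism of Mackey functors $M\to N$ is nothing but a pair $(\phi_{\trivo},\phi_{\freeo})$ consisting of a $\ground$-linear map $M(\trivo)\to N(\trivo)$ and a $\ground[\cycp]$-linear map $M(\freeo)\to N(\freeo)$ that commutes with the restriction and transfer maps. Both $\tw{A}{d_1}\oplus\jg{\ground}$ and $\tw{A}{d_2}\oplus\jg{\ground}$ have value $\ground$, with trivial $\cycp$-action, at $\freeo$, so I would take $\phi_{\freeo}=\id$; then $\cycp$-equivariance holds automatically. Writing the $\trivo$-level of $\tw{A}{d}\oplus\jg{\ground}$ as $\ground^3$ --- the first two coordinates from $\tw{A}{d}$, the last from $\jg{\ground}$ --- its restriction is the row $\mymatrix{0&1&0}$ and its transfer is the column $\mymatrix{d\\p\\0}$. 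Thus the whole problem reduces to producing an invertible $3\times 3$ matrix $\phi$ over $\ground$ whose middle row is $\mymatrix{0&1&0}$ (this is compatibility with restrictions) and which satisfies $\phi\,\mymatrix{d_1\\p\\0}=\mymatrix{d_2\\p\\0}$ (compatibility with transfers); for then $(\phi,\id)$ is an isomorphism of Mackey functors.

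Next I would construct such a $\phi$, using that $d_1$ and $d_2$ are prime to $p$. Choose an integer $a$ with $a\,d_1\equiv d_2\pmod p$ and set $b=(d_2-a\,d_1)/p\in\bZ$. Since $d_1$ and $d_2$ are prime to $p$, so is $a$; choose an integer $g$ with $a\,g\equiv 1\pmod p$ and set $c=(a\,g-1)/p\in\bZ$. Then I would take
\[ \phi=\mymatrix{a & b & c\\ 0 & 1 & 0\\ p & -d_1 & g}. \]
A direct computation gives $\phi\,\mymatrix{d_1\\p\\0}=\mymatrix{a\,d_1+b\,p\\p\\p\,d_1-d_1\,p}=\mymatrix{d_2\\p\\0}$, and expanding the determinant along the middle row gives $\det\phi=a\,g-c\,p=1$, a unit in $\ground$. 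Hence $(\phi,\id)\colon\tw{A}{d_1}\oplus\jg{\ground}\to\tw{A}{d_2}\oplus\jg{\ground}$ is the desired isomorphism.

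The main obstacle --- and the reason the lemma has to add a copy of $\jg{\ground}$ --- is the simultaneous demand that $\phi$ be invertible and carry the first transfer column to the second. On the two-dimensional $\trivo$-level of $\tw{A}{d}$ alone this cannot be done in general: the analogous $2\times 2$ problem is solvable only when $d_1$ and $d_2$ are congruent modulo $p$ up to a unit of $\ground$ --- which is precisely \autoref{lem::twequiv} --- and this fails over, say, $\bZ$ whenever $d_1\not\equiv\pm d_2\pmod p$. Enlarging the $\trivo$-level to $\ground^3$ creates the extra column $\mymatrix{c\\0\\g}$ and the corner entry $p$, which supply exactly the slack needed to force the determinant to be $1$ without disturbing the transfer relation. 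Once $\phi$ is in hand the remaining checks are a routine matrix calculation, and as the statement is symmetric in $d_1$ and $d_2$, nothing further is required.
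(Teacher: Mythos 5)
Your proof is correct and follows essentially the same route as the paper's: reduce the lemma to exhibiting an invertible $3\times 3$ matrix over $\ground$ whose middle row is $\mymatrix{0&1&0}$ and which takes the column $\mymatrix{d_1\\p\\0}$ to $\mymatrix{d_2\\p\\0}$, then build such a matrix explicitly and arrange the last column so the determinant is $1$. Your particular matrix (built from a single $a$ with $ad_1\equiv d_2\pmod p$ and a $g$ with $ag\equiv 1\pmod p$) is a slightly leaner choice than the paper's, which uses B\'ezout data for both $d_1$ and $d_2$, but the idea and the verification are the same.
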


\begin{proof}
Since the $d_i$ are prime to $p$, we can find integers $a_i,b_i$ such that
$a_i d_i + b_i p = 1$.  Then it can be checked that the following map of
Mackey functors is an isomorphism.
\[ \xymatrix@R=3pc@C=12pc{
  {\ground\oplus \ground\oplus \ground} \ar@/_2ex/[d]_{\mymatrix{d_1\\p\\0}}
  \ar[r]^{\mymatrix{d_1 a_2 & d_1 b_2 & (b_1+b_2-b_1 b_2 p)\\0&1&0\\p&-d_2
  & -a_1 d_2}} & {\ground\oplus \ground\oplus \ground} \ar@/_2ex/[d]_{\mymatrix{d_2\\p\\0}} \\
  {\ground} \ar@(d,dr)[]_{\text{triv}} \ar@/_2ex/[u]_{\mymatrix{0&1&0}} \ar@{=}[r] &
  {\ground} \ar@(d,dr)[]_{\text{triv}} \ar@/_2ex/[u]_{\mymatrix{0&1&0}} \\ 
} \]
The matrix $X$ at the top of the diagram is constructed as follows: first,
$X$ should satisfy $X \mymatrix{d_2\\p\\0} = \mymatrix{d_1\\p\\0}$ and 
$\mymatrix{0&1&0}X = \mymatrix{0&1&0}$, which determines the second row and
the first two columns.  The entries in the final column are then chosen to
make the determinant equal to 1.
\end{proof}

\begin{rem}
\autoref{lem::twiso} holds more generally if we allow $d_1,d_2$ to be
elements of $\ground$ such that there exist $a_i,b_i\in \ground$ satisfying ${a_i d_i +
b_i p = 1}$.
\end{rem}

We are also in a position to describe the left and right adjoints $\sL$ and
$\sR$ to the forgetful functor $M\mapsto M(\freeo)$.  If $B$ is \rarticle
$\ground[\G]$-module, then $\sL(B)$ and $\sR(B)$ are given by
\[
\xymatrix@R=0.7pc{ \\ \sL(B) =  } 
\mf{B/\G}{B}{\proj}{\trace}{}
\hspace{30pt}
\xymatrix@R=0.7pc{ \\ \sR(B) =  } 
\mf{B^\G}{B}{\trace}{\text{inclusion}}{}
\]
The trace map takes $[x] \mapsto \sum_g {gx}$ in $\sL(B)$ and
$x\mapsto \sum_g {gx}$ in $\sR(B)$.  The cases $B = \ground$ with either the
trivial action or, when $p = 2$, the sign action, will come up frequently.
For $\ground$ with the trivial action, we have
\[
\xymatrix@R=0.7pc{ \\ \sL(\ground) =  } 
\mf{\ground}{\ground}{\id}{\cdot p}{\text{triv}}
\hspace{30pt}
\xymatrix@R=0.7pc{ \\ \sR(\ground) =  } 
\mf{\ground}{\ground}{\cdot p}{\id}{\text{triv}}
\]
If $p = 2$ and $\ground$ has no 2-torsion, let $\ground_{-}$ denote
$\ground$ with the sign action.  Then we have
\[
\xymatrix@R=0.7pc{ \\ \sL(\ground_{-}) =  } 
\mf{\ground/2}{\ground}{\proj}{0}{-1}
\hspace{30pt}
\xymatrix@R=0.7pc{ \\ \sR(\ground_{-}) =  } 
\mf{0}{\ground}{0}{0}{-1}
\]
Here and in the rest of this \paper{}, if $C$ is \rmodule{}, we write $C/p$
for the $\ground$-module $C/{pC} = C\otimes_\bZ \bZ/p$.  Since the
instances of $\sL$ and $\sR$ above come up so frequently, we will give them
their own names:
\[ \r \defeq \sR(\ground) \hspace{4ex}
   \rminus \defeq \sR(\ground_{-}) \hspace{4ex}
   \l \defeq \sL(\ground) \hspace{4ex}
   \lminus \defeq \sL(\ground_{-}) \hspace{4ex}
\]

Now that we have assigned names to the Mackey functors above, we state the
following immediate corollaries of \autoref{lem::twequiv}.  This explains why
we are usually only interested in $\tw{A}{d}$ when $d$ is not a multiple of
$p$.

\begin{cor}
If $d = px$ for some $x\in \ground$, then $\tw{A}{d}$ splits up as the direct sum
$\jg{\ground}\oplus \l$.
\end{cor}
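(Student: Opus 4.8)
The plan is to reduce to the case $d=0$ using \autoref{lem::twequiv} and then read the splitting directly off the diagram of a $\cycp$-Mackey functor. First I would invoke \autoref{lem::twequiv} with $d_1 = d = px$ and $d_2 = 0$: since $px = 1\cdot 0 + p\cdot x$ with $1$ a unit of $\ground$, that lemma provides an isomorphism $\tw{A}{d}\cong\tw{A}{0}$, so it suffices to decompose $\tw{A}{0}$.

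Next I would spell out $\tw{A}{0}$ on generators in exactly the way the Burnside Mackey functor $A$ was described earlier: take $\tw{A}{0}(\trivo) = \ground\mu\oplus\ground\tau$ and $\tw{A}{0}(\freeo) = \ground\iota$ with trivial $\cycp$-action, so that the transfer $\mft$ sends $\iota\mapsto\tau$ and the restriction $\mfr$ sends $\mu\mapsto 0$ and $\tau\mapsto p\iota$; in particular $\mfr\mft(\iota) = p\iota = \trace(\iota)$, as required. Now $\ground\mu$ at the fixed orbit $\trivo$ together with $0$ at the free orbit $\freeo$ is a sub-Mackey functor — nothing restricts into it and it transfers nowhere — which is visibly $\jg{\ground}$, while the complementary pair, $\ground\tau$ at $\trivo$ and $\ground\iota$ at $\freeo$ with $\mfr(\tau) = p\iota$ and $\mft(\iota) = \tau$, is exactly $\l = \sL(\ground)$ by the displayed formula for $\sL(\ground)$. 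These two summands exhaust both levels and all the structure maps (restriction, transfer, and the $\cycp$-action) are block diagonal with respect to the decomposition, so $\tw{A}{0}\cong\jg{\ground}\oplus\l$; composing with the isomorphism from the first step yields the claim.

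I do not expect any real obstacle here: the corollary is a direct inspection of the combinatorial model for $\cycp$-Mackey functors, of the same flavour as the proofs of \autoref{lem::twequiv} and \autoref{lem::twiso}. The only point worth an explicit sentence is checking that $(u,x) = (1,x)$ satisfies the hypothesis of \autoref{lem::twequiv}. If one preferred to avoid citing that lemma, one could instead write the isomorphism $\tw{A}{px}\to\jg{\ground}\oplus\l$ by hand — the identity map at the free orbit $\freeo$ and the matrix $\mymatrix{1 & 0 \\ x & 1}$ at the fixed orbit $\trivo$ — and then verify the restriction and transfer compatibility squares; this is slightly longer but equally routine, and works over an arbitrary $\ground$ since that matrix has determinant $1$.
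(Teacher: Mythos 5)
Your main argument is correct and is the route the paper has in mind (the paper gives no proof, calling the result an immediate corollary of \autoref{lem::twequiv}): you apply that lemma with unit $u=1$ to get $\tw{A}{px}\cong\tw{A}{0}$, and then read off $\jg{\ground}$ and $\l$ as complementary sub-Mackey functors of $\tw{A}{0}$, which is a direct inspection of the diagram.

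One small slip in your parenthetical alternative: in the paper's convention, visible in the proofs of \autoref{lem::twequiv} and \autoref{lem::twiso} where the compatibility constraints take the form $X\mymatrix{d_2\\p}=\mymatrix{d_1\\p}$ and $\mymatrix{0&1}X=\mymatrix{0&1}$, the explicit matrix at the $\trivo$ level realizing $\tw{A}{px}\to\tw{A}{0}$ should be $\mymatrix{1&x\\0&1}$, not $\mymatrix{1&0\\x&1}$; the latter fails the restriction-compatibility square. This does not affect your main argument, which does not rely on the explicit matrix.
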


\begin{cor}
If $p$ is invertible in $\ground$, then for any $d$, we have
\[\tw{A}{d} \cong A \cong \jg{\ground}\oplus \l.\]
\end{cor}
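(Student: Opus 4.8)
The plan is to obtain both isomorphisms in the statement as immediate consequences of \autoref{lem::twequiv} and of the corollary immediately preceding this one, using nothing beyond the fact that $p$ is now a unit of $\ground$. The first step is the purely bookkeeping observation that $A = \tw{A}{1}$: comparing the diagram defining the Burnside ring Mackey functor with the defining diagram of $\tw{A}{d}$, the two coincide on setting $d = 1$. So it suffices to produce isomorphisms $\tw{A}{d}\cong\tw{A}{1}$ and $\tw{A}{1}\cong\jg{\ground}\oplus\l$.

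For $\tw{A}{d}\cong\tw{A}{1}$ I would apply \autoref{lem::twequiv} with $d_1 = d$ and $d_2 = 1$. The lemma asks for a unit $u\in\ground$ and an element $x\in\ground$ with $d = u\cdot 1 + px$; I would take $u = 1$ and $x = p^{-1}(d-1)$, which is a legitimate element of $\ground$ precisely because $p$ is invertible. Then $u\cdot 1 + px = 1 + (d-1) = d$, so the lemma delivers the isomorphism.

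For $\tw{A}{1}\cong\jg{\ground}\oplus\l$ I would observe that $1 = p\cdot p^{-1}$ with $p^{-1}\in\ground$, so $\tw{A}{1}$ is of the form $\tw{A}{px}$ with $x = p^{-1}$; the preceding corollary then gives $\tw{A}{1}\cong\jg{\ground}\oplus\l$ at once. (Equivalently, feeding $x = p^{-1}d$ into that corollary yields $\tw{A}{d}\cong\jg{\ground}\oplus\l$ for every $d$ directly.) Chaining these gives $\tw{A}{d}\cong A\cong\jg{\ground}\oplus\l$.

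There is essentially no obstacle here: the mathematical content sits entirely in the earlier results, and the corollary is really just recording that once $p$ is invertible every twisting can be absorbed. The only point needing a moment's care is to confirm that \autoref{lem::twequiv} and the preceding corollary, as stated, genuinely permit the parameters $x = p^{-1}(d-1)$ and $x = p^{-1}$; they do, since both results were phrased allowing the relevant element to lie in $\ground$ rather than only in $\bZ$.
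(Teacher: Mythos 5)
Your argument is correct and matches the reasoning the paper leaves implicit by labeling this an ``immediate corollary'': identify $A$ with $\tw{A}{1}$, then use \autoref{lem::twequiv} (with $u=1$, $x=p^{-1}(d-1)$) for the first isomorphism and the preceding corollary (with $x=p^{-1}$, or $x=p^{-1}d$ for the direct version) for the second, both of which apply precisely because their hypotheses allow $x\in\ground$ rather than $x\in\bZ$. Nothing further is needed.
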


Finally, for any Mackey functor $M$, we defined the shifted Mackey functor
$M_\freeo$ in \autoref{def::unnamed}: $M_\freeo(\trivo) = M(\freeo)$ and
$M_\freeo(\freeo) = M(\freeo\times\freeo) \cong \oplus_{g\in\G} M(\freeo)$.
The choice of representatives for the orbits in $\freeo\times\freeo$
affects the choice of basis in the identification $M_\freeo(\freeo)\cong
\oplus_{g\in\G} M(\freeo)$; we will make the choice which yields the
diagram below.\label{def::pshifted}
\[
\mf{M(\freeo)}{M(\freeo)^{\oplus p}}{\text{fold}}{\text{diag}}{\text{perm}}
\]
In particular, the represented Mackey functor $A_\freeo$ has the following
diagram.
\[ \mf{\ground}{\ground^{\oplus p}}{\text{fold}}{\text{diag}}{\text{perm}} \]
We know from \autoref{subs::free} that a map of Mackey functors $A\to M$ is
determined by an element of $M(\trivo)$, and that a map $A_\freeo \to M$
is determined by an element of $M(\freeo)$.

\subsection{Box Products}\label{subs::modpbox}

We have already mentioned the box product $\boxp$, which gives the category
$\mackey$ of Mackey functors a monoidal structure.  There are two ways of
looking at this, in terms of the defining universal property and in terms
of an explicit formula; both will be useful at various times.

We know from \autoref{lem::boxformula} that, for Mackey functors $M$ and
$N$,
\[ (M\boxp N)(\gset c) \cong \int^{\gset a} M(\gset a)\otimes N_{\gset
c}(D\gset a) = M\otimes_{\sB_\G} (N_{\gset c}\circ D).\]

Since $\sB_\G$ is generated by the objects $\trivo$ and $\freeo$, we can
simplify the above formula even further.\footnote{Note that one of the
relations is accidentally omitted in \cite{lgl}.}

\begin{lem}\label{lem::pboxformula}
Given Mackey functors $M$ and $N$, their box product $M\boxp N$ is given by
the diagram 
\[ \mf
    {\left[ M(\trivo)\otimes N(\trivo) \oplus M(\freeo)\otimes
    N(\freeo)\right]/\sim}
    {M(\freeo)\otimes N(\freeo)}
    {\mft}{\mfr}{\G}\]
Using $\mfr$ and $\mft$ to also mean the restriction and transfer maps in
$M$ and $N$, the equivalence relation is generated by the relations
\[ {\setlength{\arraycolsep}{1pt}
\begin{array}{rll}
  x\otimes \mft y &\sim \mfr x\otimes y & \mathrm{for}\ x\in M(\trivo),\ y\in N(\freeo) \\
  \mft w \otimes z &\sim w\otimes \mfr z & \mathrm{for}\ w\in M(\freeo),\ z\in N(\trivo) \\
  g w \otimes y &\sim w\otimes g^{-1} y\hspace{2ex} & \mathrm{for}\ w\in M(\freeo),\ y\in N(\freeo),\ g\in\G\\
\end{array}} \]
The action of $\G$ on $(M\boxp N)(\freeo)$ is the diagonal action on
${M(\freeo)\otimes N(\freeo)}$, i.e. ${w\otimes y\longmapsto gw\otimes gy}$.
The transfer $\mft$ of the box product is induced by the inclusion
\[{M(\freeo)\otimes N(\freeo)} \to M(\trivo)\otimes N(\trivo) \oplus
M(\freeo)\otimes N(\freeo).\]
The restriction $\mfr$ of the box product is induced by $\mfr\otimes\mfr$
on $M(\trivo)\otimes N(\trivo)$ and by the trace on ${M(\freeo)\otimes
N(\freeo)}$, ${w\otimes y\longmapsto \sum_{g\in\G} gw\otimes gy}$.
\end{lem}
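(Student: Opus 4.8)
The plan is to unwind the coend for the box product provided by \autoref{lem::boxformula} and evaluate it at the two objects $\trivo$ and $\freeo$. Recall that $(M\boxp N)(\gset c)\cong\int^{\gset a\in\sB_\G}M(\gset a)\otimes N(\gset c\times D\gset a)$. Since $M$ and $N$ are additive and every finite $\G$-set is a disjoint union of copies of $\trivo$ and $\freeo$, this coend may be computed over the full subcategory of $\sB_\G$ on those two objects, so that $(M\boxp N)(\gset c)$ is the coequalizer of
\[ \bigoplus_{\gset x,\gset y\in\{\trivo,\freeo\}}\sB_\G(\gset x,\gset y)\otimes M(\gset y)\otimes N(\gset c\times D\gset x)\;\rightrightarrows\;\bigoplus_{\gset z\in\{\trivo,\freeo\}}M(\gset z)\otimes N(\gset c\times D\gset z), \]
where one arrow acts through $M$ (contravariantly) and the other through the covariant functor $\gset a\mapsto N(\gset c\times D\gset a)$. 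I would use that $D$ fixes $\trivo$ and $\freeo$, interchanges the two generating spans $\trivo\to\freeo$ and $\freeo\to\trivo$, and inverts the $\G$-action spans $\freeo\to\freeo$, so that $\gset a\mapsto N(\gset c\times D\gset a)$ is the shifted Mackey functor $N_{\gset c}$ precomposed with $D$; at $\gset c=\freeo$ the restriction, transfer, and $\G$-action of $N_{\freeo}$ are the diagonal, the fold, and the cyclic permutation of the $p$ summands of $N(\freeo)^{\oplus p}$, as recorded in \autoref{def::pshifted}. The single relation in the presentation of $\sB_\G$, that the restriction followed by the transfer of $\freeo\to\trivo$ equals $\sum_{g\in\G}g$, enters as the one compatibility between the two arrows. (One could alternatively verify directly, using \autoref{lem::dresspairing}, that the displayed diagram defines a Mackey functor carrying a universal Dress pairing on $(M,N)$, but the coend route is more direct.)

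Evaluating at $\gset c=\trivo$ is the easy case: since $\trivo\times D\gset a\cong D\gset a$, the target of the coequalizer is $M(\trivo)\otimes N(\trivo)\oplus M(\freeo)\otimes N(\freeo)$, and running the three generating spans through it produces exactly the three stated relations --- the span $\freeo\to\trivo$ gives $\mfr x\otimes y\sim x\otimes\mft y$, the span $\trivo\to\freeo$ gives $\mft w\otimes z\sim w\otimes\mfr z$, and the spans $g\colon\freeo\to\freeo$ give $gw\otimes y\sim w\otimes g^{-1}y$. This identifies $(M\boxp N)(\trivo)$ with the claimed quotient; one of these three is the relation dropped in \cite{lgl}, and this bookkeeping is the whole point of the lemma.

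The substantive case is $\gset c=\freeo$, where $\freeo\times D\trivo=\freeo$ but $\freeo\times D\freeo\cong\freeo^{\coprod p}$, so the target of the coequalizer is a priori $M(\trivo)\otimes N(\freeo)\oplus M(\freeo)\otimes N(\freeo)^{\oplus p}$, and the task is to see the relations collapse this onto a single copy $M(\freeo)\otimes N(\freeo)$. I would argue in three steps: (i) the relation from $\freeo\to\trivo$, which reads $\mfr_M x\otimes\vec y\sim x\otimes\sum_i y_i$, shows (taking $\vec y$ supported in one coordinate) that the $M(\trivo)\otimes N(\freeo)$ summand already lies in the image of $M(\freeo)\otimes N(\freeo)^{\oplus p}$; (ii) the permutation relations from $g\colon\freeo\to\freeo$ let one slide the $\G$-action across the tensor and move all the mass of any $w\otimes\vec y$ into the first coordinate, so $(M\boxp N)(\freeo)$ is a quotient of that single copy; (iii) no further identifications survive, because the two facts that $\mfr_M x$ is $\G$-fixed and that $\mfr_M\mft_M=\sum_g g$ turn both the $\freeo\to\trivo$ relation and the $\trivo\to\freeo$ relation $\mft_M w\otimes z\sim w\otimes(z,\dots,z)$ into tautologies once everything is pushed into coordinate one. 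The Mackey-functor structure on $M\boxp N$ is then read off from the naturality of the coend in $\gset c$ (the last sentence of \autoref{lem::boxformula}): the transfer $(M\boxp N)(\freeo)\to(M\boxp N)(\trivo)$ comes out as the inclusion of the $M(\freeo)\otimes N(\freeo)$ summand, the restriction $(M\boxp N)(\trivo)\to(M\boxp N)(\freeo)$ as $\mfr\otimes\mfr$ on $M(\trivo)\otimes N(\trivo)$ and the trace $w\otimes y\mapsto\sum_g gw\otimes gy$ on $M(\freeo)\otimes N(\freeo)$, and the $\G$-action on $(M\boxp N)(\freeo)$ as the diagonal action. I expect step (iii) of this collapse to be the main obstacle --- correctly tracking which summand of $N(\freeo)^{\oplus p}$ the permutation spans carry things into, and how this interacts with the internal $\G$-action, is fiddly, and it is exactly here that the argument in \cite{lgl} goes wrong. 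As a partial check, granting that the box product commutes with restriction to subgroups, restricting to the trivial subgroup (where $\boxp$ is the tensor product over $\ground$) recovers $(M\boxp N)(\freeo)\cong M(\freeo)\otimes N(\freeo)$ as $\ground$-modules immediately, though this confirms only the underlying module and not the remaining structure maps.
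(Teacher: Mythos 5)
Your approach --- unwinding the coend of \autoref{lem::boxformula} over the full subcategory of $\sB_\G$ on $\trivo,\freeo$ and the generating spans --- is exactly the paper's; the paper simply states this reduction and leaves the bookkeeping as ``an instructive exercise,'' and your $\trivo$-level computation carries it out correctly while your $\freeo$-level collapse is a plausible sketch. Two small remarks. First, your parenthetical that ``it is exactly here [the $\freeo$-level collapse] that the argument in \cite{lgl} goes wrong'' misattributes the error: the footnote to this lemma records a relation accidentally dropped from the \emph{list of relations} (your $\trivo$-level step), not a mistake in identifying $(M\boxp N)(\freeo)$. Second, the $\freeo$-level case, which you call the substantive one and whose step (iii) you flag as the main obstacle, can be dispatched entirely: the isomorphism $(M\boxp N)(\freeo)\cong M(\freeo)\otimes N(\freeo)$ is recorded in the remark following \autoref{ex::l} as holding for every finite group, and your ``partial check'' via restriction to the trivial subgroup is essentially that argument --- it is worth promoting from a sanity check to the actual proof for that level, after which the structure maps are read off from naturality of the coend as you indicate.
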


\begin{proof}
As remarked above, $(M\boxp N)(\gset c) \cong M\otimes_{\sB_\G} (N_{\gset
c}\circ D)$, which is given by the coequalizer of \autoref{def::coend}.
For this coequalizer, we may restrict our attention to the objects
$\trivo,\freeo$ and the generating spans
\[ \idspan{\trivo}
\hspace{10pt}\cospan{\freeo}{\trivo}{\rho}
\hspace{10pt}\contraspan{\freeo}{\trivo}{\rho}
\hspace{10pt}\cospan{\freeo}{\freeo}{g\cdot} \]
of $\sB_\G$.  It is then an instructive exercise to arrive at the explicit
formulas above.
\end{proof}


Similarly, \autoref{lem::dresspairing} gives us an explicit description of maps
out of $M\boxp N$.

\begin{prop}\label{lem::pboxmaps}
A map of Mackey functors ${f\colon M\boxp N\to P}$ is
determined by two maps
\[ {\setlength{\arraycolsep}{1pt}
\begin{array}{ccccc}
M(\trivo)& \otimes & N(\trivo) & \xrightarrow{f_\trivo} & P(\trivo) \\
M(\freeo)& \otimes & N(\freeo) & \xrightarrow{f_\freeo} & P(\freeo) \\
\end{array} }\]
such that the following four formulas, called the Frobenius relations, hold
for all $x\in M(\trivo)$, $x'\in M(\freeo)$, $y\in N(\trivo)$, and $y'\in
N(\freeo)$.\footnote{Note that \cite{lgl} forgets to mention the map $g$
when giving the compatibility relations.} 
\begin{align*}
f_\freeo (\mfr x \otimes \mfr y) & = \mfr f_\trivo(x\otimes y) \\
f_\trivo (\mft x' \otimes y) & = \mft f_\freeo(x'\otimes \mfr y) \\
f_\trivo (x\otimes \mft y') & = \mft f_\freeo(\mfr x \otimes y') \\
f_\freeo (g x' \otimes g y') & = g f_\freeo(x'\otimes y') \\
\end{align*}
\end{prop}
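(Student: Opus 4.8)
The plan is to read this off from \autoref{lem::dresspairing} specialized to $G=\cycp$. That lemma identifies a map $f\colon M\boxp N\to P$ with a family of maps $f_{\gset b}\colon M(\gset b)\otimes N(\gset b)\to P(\gset b)$, one for every finite $\cycp$-set $\gset b$, natural in $\gset b$ in the precise sense encoded by the commuting square and the two pentagons there. First I would use the fact, recorded in \autoref{sec::mackey}, that every functor in sight is additive, so a morphism of Mackey functors is determined by its values on the orbits $\trivo=\cycp/\cycp$ and $\freeo=\cycp/\triv$. Hence the whole family $\{f_{\gset b}\}$ is pinned down by $f_\trivo$ and $f_\freeo$, and these are exactly the two maps in the statement; conversely, any admissible family is reconstructed from its orbit values together with the naturality constraints, so it only remains to record which of those constraints survive.

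Next I would run the diagrams of \autoref{lem::dresspairing} through the generating maps of $\sB_\cycp$ displayed in \autoref{sec::modp}, namely the projection $\rho\colon\freeo\to\trivo$ and the automorphisms $g\cdot\colon\freeo\to\freeo$ for $g\in\cycp$. For $f=\rho$, the commuting square yields $f_\freeo(\mfr x\otimes\mfr y)=\mfr f_\trivo(x\otimes y)$; the first pentagon yields $f_\trivo(\mft x'\otimes y)=\mft f_\freeo(x'\otimes\mfr y)$; and the second pentagon yields $f_\trivo(x\otimes\mft y')=\mft f_\freeo(\mfr x\otimes y')$. For $f=g\cdot$ the pentagons are redundant (the transfer of an isomorphism is the restriction of its inverse, as noted right after \autoref{lem::dresspairing}), while the commuting square unwinds, once one remembers that the restriction along $g\cdot$ acts on $M(\freeo)$ as multiplication by $g^{-1}$ and that $g$ runs over all of $\cycp$, to $f_\freeo(gx'\otimes gy')=g f_\freeo(x'\otimes y')$. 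These four identities are precisely the stated Frobenius relations.

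For the converse I would check that a pair $(f_\trivo,f_\freeo)$ satisfying the four relations genuinely assembles into a map $M\boxp N\to P$, and here it is cleanest to cross-check against the explicit model of \autoref{lem::pboxformula}. Define the candidate $f$ to be $f_\freeo$ on $(M\boxp N)(\freeo)=M(\freeo)\otimes N(\freeo)$, and on the presentation of $(M\boxp N)(\trivo)$ to be $f_\trivo$ on the $M(\trivo)\otimes N(\trivo)$ summand and $\mft\circ f_\freeo$ on the $M(\freeo)\otimes N(\freeo)$ summand (this second value being forced by compatibility with the box-product transfer, which is the inclusion of that summand). Then relations (2) and (3) are exactly what make $f$ descend across the first two generating relations of \autoref{lem::pboxformula}, relation (4) both handles the third generating relation $gw\otimes y\sim w\otimes g^{-1}y$ and gives compatibility with the $\cycp$-action on $(M\boxp N)(\freeo)$, and relation (1) is compatibility with the box-product restriction on the first summand; compatibility on the second summand is automatic from (4) together with the Mackey identity $\mfr\mft=\trace$ in $P$.

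The main obstacle I expect is bookkeeping rather than anything conceptual: one must get the variance of restriction and transfer along the automorphisms $g\cdot\colon\freeo\to\freeo$ exactly right so that relation (4) emerges with $g$ rather than $g^{-1}$ (a distinction that is harmless only because $\cycp$ is a group, but that still has to be argued), and one must be sure that the reduction to the orbits in the first step really loses nothing — i.e.\ that no extra constraint hides in the cross-terms $M(\gset b_1)\otimes N(\gset b_2)$ of $f_{\gset b_1\amalg\gset b_2}$. That cross-term/additivity point is the delicate part of the argument, and is exactly the gap that the treatment in \cite{lgl} fell into by omitting the relation involving $g$.
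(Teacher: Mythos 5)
Your proof is correct and, for the forward direction, takes exactly the same route as the paper: specialize \autoref{lem::dresspairing} to $G=\cycp$, observe that the maps of $\cycp$-sets between orbits are generated by the projection $\rho\colon\freeo\to\trivo$ and the $\cycp$-automorphisms of $\freeo$, and use $t_g=(r_g)^{-1}$ to discard the pentagons for those automorphisms. Where you diverge is in the converse. The paper simply invokes the ``determines and is determined by'' already built into \autoref{lem::dresspairing}, whereas you construct the map directly on the presentation of $M\boxp N$ from \autoref{lem::pboxformula} and verify that the four relations make it well-defined and compatible with $\mfr$, $\mft$, and the $\cycp$-action. Your route is longer but also more robust: it entirely sidesteps the reduction-to-orbits worry you flag at the end, since the explicit model never sees $G$-sets beyond $\trivo$ and $\freeo$. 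One correction to that closing remark, however: the omission in \cite{lgl} is \emph{not} a symptom of the cross-term/additivity issue. The cross-terms of $\theta_{\gset b_1\amalg\gset b_2}$ are forced to vanish by the commuting square of \autoref{lem::dresspairing} applied to the inclusions $\gset b_i\hookrightarrow\gset b_1\amalg\gset b_2$, an argument that never mentions the automorphisms of $\freeo$; conversely, the $g$-equivariance relation (4) is a genuine extra constraint at the orbit level that survives even if one ignores disjoint unions entirely. What \cite{lgl} left out is simply that fourth relation itself, independently of any concerns about additivity.
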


\begin{proof}
By \autoref{lem::dresspairing}, a map ${f\colon M\boxp N\to P}$ is given by two
maps $f_\trivo$ and $f_\freeo$, as above, such that the square and two
pentagons in the definition of a Dress pairing commute for the projection
map of $\G$-sets ${\rho:\freeo\to\trivo}$ and the multiplication map
${g:\freeo\to\freeo}$, where $g$ is a generator of $\G$.  Since the map $g$
is an isomorphism, $t_g = (r_g)^{-1}$ as in the remark after
\autoref{lem::dresspairing}, and so the diagrams of \autoref{lem::dresspairing}
reduce to the four formulas shown.
\end{proof}

\begin{exmp}\label{ex::l}
For any Mackey functor $M$ and $\ground[\G]$-module $B$, we have
\[ \sL(B) \boxp M \cong \sL(B\otimes M(\freeo)).\]
This comes from the fact that $\boxp$ actually
makes $\mackey$ into a closed monoidal category.  That is, there is an
internal hom $\langle -,-\rangle$ such that
\[ \mackey(M\boxp N,P) \cong \mackey(M,\langle N,P\rangle),\]
naturally in all variables.  The claim then follows from the following
facts: $\boxp$ is a colimit; colimits commute with left adjoints; and
$\langle N,P\rangle (\freeo) = \rmod(N(\freeo),P(\freeo))$.
\end{exmp}

\begin{rem}
In fact, for any finite group $G$, if we let $\freeo = G/\triv$, we have 
\[ (M\boxp N)(\freeo) \cong M(\freeo)\otimes N(\freeo) \]
and
\[ \langle M,N\rangle (\freeo) \cong \rmod(M(\freeo),N(\freeo));\]
this much is not special to $\G$.
\end{rem}

\begin{rem}
$\langle -,-\rangle$ is a right Kan extension, given by an end dual to the
coend describing $\boxp$.  The analogue to \autoref{lem::boxformula} shows
that $\langle M,N\rangle (\gset c) \cong \Hom_{\sB_G} (M,N_{\gset c})$,
i.e.\ natural transformations from $M$ to $N_{\gset c}$.
\end{rem}

\begin{exmp}\label{ex::jg}
It is immediate from \autoref{lem::pboxformula} that for any Mackey functor
$M$ and $\ground$-module $C$,
\[ \jg{C} \boxp M \cong \jg{C\otimes (M(\trivo))/{(\text{image }\mft)}}.\]
\end{exmp}

\begin{exmp}
We see by explicit computation that, for integers $c$ and $d$ relatively
prime to $p$,
\[ \tw{A}{d}\boxp \tw{A}{c} \cong \tw{A}{cd}.\]
To show this using \autoref{lem::pboxformula}, we need to identify
\[{[\left(\tw{A}{d}(\trivo)\otimes \tw{A}{c}(\trivo)\right) \oplus
\left(\tw{A}{d}(\freeo)\otimes \tw{A}{c}(\freeo)\right)]/\!\sim}\]
with ${\tw{A}{cd}(\trivo) \cong \ground\oplus \ground}$.  One approach is to choose the
standard generators $\mu_d,\tau_d \in \tw{A}{d}(\trivo)$ and $\iota_d\in
\tw{A}{d}(\freeo)$ for $\tw{A}{d}$, i.e.\ the ones which give the displayed
diagram on page \pageref{ex::twisted}, and similarly for $\tw{A}{c}$ and
$\tw{A}{cd}$.  Then the map ${\tw{A}{cd}(\trivo)\to
(\tw{A}{d}\boxp\tw{A}{c})(\trivo)}$ sending $\mu_{cd}\mapsto \mu_d\otimes
\mu_c$ and $\tau_{cd}\mapsto \mft(\iota_d\otimes\iota_c)$ gives the desired
isomorphism.
\end{exmp}

\begin{exmp}\label{ex::altbasis}
Box products with the Mackey functors $\sR(B)$ are more complicated, since
right adjoints do not generally commute with colimits.  However, for $B =
\ground$, viewed as \rarticle $\ground[G]$-module with the trivial action,
we can calculate ${\r\boxp \tw{A}{d} \cong \r}$ and ${\r\boxp \r \cong \r}$
using \autoref{lem::pboxformula}.  To see the first isomorphism, choose
integers $a_d,b_d$ such that $a_d d+b_d p=1$, and define a new basis for
$\tw{A}{d}(\trivo)$ by $\sigma_d = a\mu_d + b\tau_d$, $\kappa_d = p\mu_d -
d\tau_d$.  Ambiguously using $1$ to denote the generator at each level of
$\r$, the isomorphism $\r\to \tw{A}{d}\boxp \r$ is given by sending
$1\mapsto \sigma_d\otimes 1$ at the $\trivo$ level.
\end{exmp}

\begin{exmp}
Similarly, \autoref{lem::pboxformula} tells us the following box products
with $\rminus$:
\[ {\setlength{\arraycolsep}{2pt}
\begin{array}{rclcl}
\rminus & \boxp & \r &\cong   &  \rminus \\
\rminus & \boxp & \rminus &\cong & \l \\
\end{array}} \]
In particular, even though $\rminus(\trivo) = 0$,
$(\rminus\boxp\rminus)(\trivo) \ne 0$!
\end{exmp}

\begin{exmp}
Using the description of $M_\freeo$, we have
\[ A_\freeo \cong \l_\freeo \cong \r_\freeo \cong {\lminus}_\freeo \cong
{\rminus}_\freeo.\]
\end{exmp}

Putting all of these examples together, we have the 
multiplication table in \autoref{fig::multtable} for the Mackey functors introduced in
\autoref{subs::favoritemf}. 
\begin{figure}
\begin{center}
{\renewcommand{\arraystretch}{1.2}
\begin{tabular}{c|cccccc}
 & $A_{\freeo}$ & $\tw{A}{d}$ & $\jg{X_1}$ & $\sL(B_1)$ & $\r$ & $\rminus$ \\
\hline
$A_{\freeo}$ & $A_{\freeo\times\freeo}$ & $A_\freeo$ & $\jg{0}$ & $\sL(B_1^{\oplus p})$ & $A_{\freeo}$ & $A_{\freeo}$ \\
$\tw{A}{c}$ & $A_\freeo$ & $\tw{A}{cd}$ & $\jg{X_1}$ & $\sL(B_1)$ & $\r$ & $\rminus$ \\
$\jg{X_2}$ & $\jg{0}$ & $\jg{X_2}$ & $\jg{X_1\otimes X_2}$ & $\jg{0}$ & $\jg{ {X_2}/p }$ & $\jg{0}$ \\
$\sL(B_2)$ & $\sL{(B_2^{\oplus p})}$ & $\sL{(B_2)}$ & $\jg{0}$ & $\sL{(B_1\otimes B_2)}$ & $\sL{(B_2)}$ & $\sL(B_1\otimes \ground_{-})$ \\
$\r$ & $A_{\freeo}$ & $\r$ & $\jg{ {X_1}/p }$ & $\sL(B_1)$ & $\r$ & $\rminus$ \\
$\rminus$ & $A_{\freeo}$ & $\rminus$ & $\jg{0}$ & $\sL(B_1\otimes \ground_{-})$ & $\rminus$ & $\l$ \\
\end{tabular}}
\end{center}
\caption[\lofspace{}Multiplication table]{Multiplication table for some relevant Mackey functors.  Here,
$c,d\in\bZ$ are relatively prime to $p$; $X_1$, $X_2$ are
$\ground$-modules; and $B_1$, $B_2$ are $\ground[\G]$-modules.  We continue
to write $X/p$ for ${X\otimes_\bZ \bZ/p}$.}
\label{fig::multtable}
\end{figure}

%

\section{Bredon Cohomology for $G = \G$}

Using the definition of Mackey functor valued $RO(\G)$-graded Bredon
cohomology $\ulhr^*(-;M)$ in terms of representing Eilenberg-MacLane
spectra in the case $G=\G$, we see that the $\alpha$ cohomology
$\ulhr^\alpha(X;M)$ and homology $\ulhlr_\alpha(X;M)$ of a $\G$-space $X$
is given by the diagrams
\[
\mf{\hr^\alpha(X;M)}{\neqhr^{|\alpha|}(X;M(\freeo))}{\text{transfer}^*}{\text{proj}^*}{}
\hspace{30pt}\text{and}\hspace{30pt}
\mf{\hlr_\alpha(X;M)}{\neqhr_{|\alpha|}(X;M(\freeo))}{\text{proj}_*}{\text{transfer}_*}{}
\]
The identification of $\ulhr^\alpha(X;M)(\freeo)$ with the nonequivariant
cohomology group shown uses the adjunction $[\G_+\sm X,HM]_\G \cong [X,HM]$
together with the fact that the underlying nonequivariant spectrum of $HM$
is a $H(M(\freeo))$.  The $\G$ action comes from the action on the
$\ground[\G]$-module $M(\freeo)$.

Many of the results in the remainder of this section will implicitly use
the structure of $RO(\G)$, the free abelian group with generators the
irreducible real representations of $\G$.  When $p=2$, $C_2$ has exactly
two irreducible representations: the one-dimensional trivial
representation, and the one-dimensional sign representation.

For $p>2$, there are no nontrivial one-dimensional representations of
$\G$.  Every nontrivial irreducible representation is the underlying
two-dimensional real representation of a one-dimensional complex
representation of $\G$.  There are $p-1$ nontrivial complex
representations, given by choosing a generator $g\in\G$ and specifying that
$g z = e^{2\pi i k/p} z$ for some integer $k$.  When we forget down to
the underlying real representation, we also forget about orientation, and
so the representations given by $gz = e^{2\pi i k/p} z$ and $g z = e^{2\pi i
(-k)/p} z$ are identified.  It follows that the irreducible real
representations of $\G$ are the one-dimensional trivial representation and
the $\frac{p-1}2$ two-dimensional ``rotation'' representations
described above.

\subsection{Cup product and graded commutativity}\label{subs::signs}

The cup product structure on cohomology is a map 
\[\ulhr^*(X;M)\boxp \ulhr^*(X;M)\xrightarrow{\ \smile\ } \ulhr^*(X;M)\]
making $\ulhr^*(X;M)$ into a graded
commutative Green functor.  As with nonequivariant cohomology, this cup
product arises from the diagonal $X\to X\sm X$ for a based $\G$-space $X$.
In \autoref{subs::green}, we deferred a discussion of signs until later,
and so we will discuss them now for $G=\G$.

From a represented point of view, the cup product pairing arises from
the composite
\begin{align*}
& \mapmf{X}{S^\alpha \sm HM}\boxp \mapmf{X}{S^\beta \sm HM} \xrightarrow{\ \sm\ } 
\mapmf{X\sm X}{S^\alpha \sm HM\sm S^\beta \sm HM} \to \\ 
& \to \mapmf{X\sm X}{S^\alpha \sm S^\beta \sm HM \sm HM} \to
\mapmf{X}{S^{\alpha+\beta} \sm HM}.
\end{align*}
The arrow marked $\sm$ comes from taking the smash product of two maps; the
middle arrow comes from switching the order of $HM$ and $S^\beta$; and the
final arrow is induced by the diagonal $X\to X\sm X$, the identification of
$S^\alpha\sm S^\beta$ with $S^{\alpha+\beta}$, and the map $HM\sm
HM\to HM$ arising from the Green functor structure of $M$.

We can then consider the commutativity of the cup product.  Using $\gamma$
to denote the switch map, this amounts to considering the commutativity of
\autoref{fig::cupproduct}, for given maps $f\colon X\to S^\alpha \sm HM$ and
$g\colon X\to S^\beta \sm HM$.
\begin{figure}
\[ \xymatrix@R=2pc@C=0pc{
  & X \ar[dr]^\Delta \ar[dl]_\Delta \ar@{}[d]|\circlearrowleft & \\
  X\sm X \ar[d]_{f\sm g} \ar[rr]^\gamma \ar@{}[drr]|\circlearrowleft & & X\sm X\ar[d]^{g\sm f} \\
  (S^\alpha\sm HM) \sm (S^\beta\sm HM) \ar[d]_{\id\sm\gamma\sm\id} \ar[rr]^\gamma \ar@{}[drr]|\circlearrowleft & &
  (S^\beta\sm HM) \sm (S^\alpha\sm HM) \ar[d]_{\id\sm\gamma\sm\id} \\
  S^\alpha \sm S^\beta \sm HM\sm HM \ar[rr]^{\gamma\sm\gamma} \ar[dr] \ar@{}[drr]|? & &
  S^\beta \sm S^\alpha \sm HM\sm HM \ar[dl] \\
  & S^{\alpha+\beta} \sm HM & \\
} \]
\caption[\lofspace{}$f\smile g$ and $g\smile f$]{$f\smile g$ (the left-hand composite) and $g\smile f$ for the representing maps of elements $f,g\in \ulhrnoph^*(X;M)$.}
\label{fig::cupproduct}
\end{figure}
The three faces marked with $\circlearrowleft$ commute by inspection.
Provided that $M$ is a commutative Green functor, 
\[ \xymatrix@R=1pc@C=.5pc{
  HM\sm HM \ar[rr]^\gamma \ar[dr] & & HM\sm HM \ar[dl] \\
  & HM &
} \]
commutes as well.  We are left with the diagram
\[ \xymatrix@R=1pc@C=.5pc{
  S^\alpha\sm S^\beta \ar[rr]^\gamma \ar[dr]_(0.4)\simeq & & S^\beta\sm S^\alpha \ar[dl]^(0.4)\simeq \\
  & S^{\alpha+\beta} \ar@{}[u]|? &
} \]
In order to identify the vertical equivalences, we need to deal with some
of the technicalities of the $RO(\G)$-grading discussed on page
\pageref{rem::grading}.  If $\{\rotrep_i\}$ are the irreducible
representations of $\G$, we can write
\begin{align*}
\alpha & = \sum_i a_i\rotrep_i, & S^\alpha & = \bigwedge_i S^{a_i\rotrep_i} \\
\beta & = \sum_i b_i\rotrep_i, & S^\beta & = \bigwedge_i S^{b_i\rotrep_i} \\
\alpha+\beta & = \sum_i (a_i+b_i)\rotrep_i, & S^{\alpha+\beta} & = \bigwedge_i S^{(a_i+b_i)\rotrep_i}
\end{align*}
Provided we are working in the stable category, these are well defined for
all integer coefficients $a_i$ and $b_i$.
The maps $S^\alpha\sm S^\beta \xrightarrow{\ \simeq\ } S^{\alpha+\beta}$
and $S^\beta\sm S^\alpha \xrightarrow{\ \simeq\ } S^{\alpha+\beta}$
come from interchanging the factors corresponding to different
representations $\rotrep_i$ and then comparing the maps
$S^{a_i\rotrep_i}\sm S^{b_i\rotrep_i}\to S^{(a_i+b_i)\rotrep_i}$ and
$S^{b_i\rotrep_i}\sm S^{a_i\rotrep_i}\to S^{(a_i+b_i)\rotrep_i}$ for each
$i$. 

We know that, in the stable category,
$\htmf_{(a_i+b_i)\rotrep_i}({S^{(a_i+b_i)\rotrep_i}}) \cong A$, the
Burnside ring Green functor.  If we canonically identify $S^{a_i
\rotrep_i}\sm S^{b_i \rotrep_i}$ with $S^{(a_i +b_i) \rotrep_i}$, the
switch map $S^{a_i \rotrep_i}\sm S^{b_i \rotrep_i}\to S^{b_i \rotrep_i}\sm
S^{a_i \rotrep_i}$ represents some element of $A(\trivo)$, and in
particular some unit of $A(\trivo)$, since it is a homotopy equivalence.

We are considering $G = \G$.  Let $1$ denote the unit of the ring
$A(\trivo) = A(\G)$.  If $p$ is odd, then the only units in
$A(\trivo)$ are $\pm 1$.  Since the underlying nonequivariant degree of
the switch map must be $(-1)^{a_i b_i |\rotrep_i|^2}$, the equivariant
degree is the same.  Explicitly, it is $-1$ when $a_i,b_i$ are both odd and
$\rotrep_i$ is the one-dimensional trivial representation, and $1$
otherwise.\footnote{All other irreducible representations have even
dimension.}  Since the nontrivial irreducible representations of $\G$ all
have trivial fixed-point sets, we may phrase this as follows.

\begin{prop}\label{prop::odddegree}
If $p$ is an odd prime, then the switch map
${S^\alpha\sm S^\beta \to S^\beta \sm S^\alpha}$
has degree $(-1)^{|\alpha^\G|\cdot |\beta^\G|}$.  \hfill$\Box$
\end{prop}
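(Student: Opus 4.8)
The plan is to build directly on the reduction set up just above this Proposition. After composing the switch ${S^\alpha\sm S^\beta\to S^\beta\sm S^\alpha}$ with the canonical identifications of both sides with $S^{\alpha+\beta}$, we obtain a self-equivalence of the representation sphere $S^{\alpha+\beta}$ in the stable category; by the identification ${\htmf_{\alpha+\beta}(S^{\alpha+\beta})\cong A}$ recalled above, this represents a unit of $A(\trivo)$, and since $p$ is odd the only such units are $\pm 1$. Moreover restriction to the trivial subgroup sends $1\mapsto 1$ and $-1\mapsto -1$, so this $\pm 1$ is detected by the underlying nonequivariant degree of the switch. Hence the whole problem reduces to a nonequivariant degree count, carried out one irreducible representation at a time.

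So, writing ${\alpha = \sum_i a_i\rotrep_i}$ and ${\beta = \sum_i b_i\rotrep_i}$ with $\rotrep_0$ the trivial representation, I would factor the switch into a shuffle that brings each block $S^{b_i\rotrep_i}$ next to the matching block $S^{a_i\rotrep_i}$, followed by the individual block interchanges ${S^{a_i\rotrep_i}\sm S^{b_i\rotrep_i}\to S^{b_i\rotrep_i}\sm S^{a_i\rotrep_i}}$. By the same reasoning as above, each factor has equivariant degree equal to its underlying degree. A step of the shuffle moves a block of real dimension $a_i|\rotrep_i|$ past one of real dimension $b_j|\rotrep_j|$ with $i\ne j$; since $\rotrep_0$ is the only odd-dimensional irreducible of $\G$, at least one of those dimensions is even, so that step has degree $+1$ and contributes nothing. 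The $i$-th block interchange has degree ${(-1)^{a_i b_i|\rotrep_i|^2}}$, which is $+1$ for $i\ne 0$ (since $|\rotrep_i|=2$) and equals ${(-1)^{a_0 b_0}}$ for $i=0$. Multiplying, the switch map has degree ${(-1)^{a_0 b_0}}$. Finally, ${\alpha^\G = a_0\rotrep_0}$ and ${\beta^\G = b_0\rotrep_0}$ because the nontrivial irreducibles of $\G$ have trivial fixed-point sets, so $|\alpha^\G| = a_0$ and $|\beta^\G| = b_0$, and the degree is ${(-1)^{|\alpha^\G|\cdot|\beta^\G|}}$. One can alternatively short-circuit the shuffle analysis by observing that ${|\alpha|\equiv|\alpha^\G|}$ and ${|\beta|\equiv|\beta^\G|\pmod 2}$, so that the classical nonequivariant switch sign ${(-1)^{|\alpha||\beta|}}$ already coincides with ${(-1)^{|\alpha^\G||\beta^\G|}}$.

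The one point that genuinely needs care is the initial reduction: turning the a priori homotopy class of a self-map of $S^{\alpha+\beta}$ into an honest unit of $A(\trivo)$ that is detected by its underlying degree. This rests on the identification of the zeroth homotopy Mackey functor of a representation sphere with $A$ together with the computation $A(\G)^\times=\{\pm1\}$ for $p$ odd, both available from the discussion above; what remains is the usual $RO(G)$-grading bookkeeping --- making the ``canonical'' identifications of the representation spheres precise and compatible, and checking that they have underlying degree $1$ --- which this \paper{} has chosen to suppress, so I would only gesture at it.
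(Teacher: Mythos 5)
Your proof is correct and takes essentially the same route the paper does: reduce to a unit of $A(\trivo)$, observe that for $p$ odd these are $\pm1$ and are detected by the underlying nonequivariant degree, then count signs block by block and note only the trivial (odd-dimensional) irreducible contributes. You handle the shuffle step more explicitly than the paper and add a nice parity shortcut ($|\alpha|\equiv|\alpha^\G|\pmod 2$), but the argument is the same.
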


If $p=2$, the signs are less trivial, and we must do some work to identify
them.  In the following, we will ambiguously use $\pi^\G_*$ to denote both
the stable and unstable homotopy groups, relying on context to distinguish
them; note that the homotopy Mackey functor $\htmf_*$ is only defined in
the stable context.

The following theorem of tom Dieck \cite[page 20]{tDstable} will be helpful
in identifying the homotopy class of the switch map.

{ \renewcommand{\G}{G}

\begin{thm}[tom Dieck]\label{prop::ghtpic}
For any group $G$, the map $\pi^\G_0(\Sigma^\infty S^0) \to \prod_{(K)}\bZ$
sending $f\colon S^V\to S^V$ to the tuple $(\deg f^K)$ is injective.
Here $\deg f^K$ is the degree of the restriction $f^K\colon (S^V)^K\to
(S^V)^K$ to the $K$-fixed points, and the product runs over conjugacy
classes of subgroups of $\G$.
\end{thm}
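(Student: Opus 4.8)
The plan is to identify $\pi^{\G}_0(\Sigma^\infty S^0)$ with the Burnside ring $A(\G)$ and to recognise the fixed-point-degree map as the classical mark homomorphism, whose injectivity is elementary. Since $\pi^\G_0(\Sigma^\infty S^0)=\colim_V [S^V,S^V]_\G$, every class is represented by an honest based $\G$-map $f\colon S^V\to S^V$ for a suitable representation $V$. For a finite $\G$-set $X$, pick an equivariant embedding $X\hookrightarrow V$ into a representation and form the Pontryagin--Thom collapse $\chi_X\colon S^V\to X_+\sm S^V\to S^V$; this assignment extends to a homomorphism $c\colon A(\G)\to\pi^\G_0(\Sigma^\infty S^0)$, $[X]\mapsto[\chi_X]$, because a disjoint union of $\G$-sets goes to the sum of the corresponding collapse maps. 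The one genuinely non-formal input I would invoke is that $c$ is \emph{surjective} --- one half of tom Dieck's theorem --- which follows either from the tom Dieck splitting of $\Sigma^\infty S^0$ or, more concretely, from equivariant obstruction theory applied to the orbit-type filtration of $S^V$; this is exactly the content of \cite{tDstable}.

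Granting surjectivity of $c$, the rest is combinatorial bookkeeping. A direct Pontryagin--Thom computation shows that for each subgroup $K<\G$ the restriction $\chi_X^K\colon (S^V)^K=S^{V^K}\to S^{V^K}$ has degree $|X^K|$: the $K$-fixed points $X^K$ lie in $V^K$, and the remaining points of $X$ sit in $\G$-orbits with no $K$-fixed points, so on $K$-fixed sets the collapse map only sees the disks around $X^K$. Hence the composite of $c$ with the fixed-point-degree map $\xi\mapsto(\deg\xi^K)_{(K)}$ is exactly the mark homomorphism $\phi\colon A(\G)\to\prod_{(K)}\bZ$, $[X]\mapsto(|X^K|)_{(K)}$, and it suffices to show $\phi$ injective. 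Order the conjugacy classes of subgroups $(K_1),(K_2),\dots$ so that $K_i$ subconjugate to $K_j$ forces $i\le j$. In the basis $\{[\G/K_i]\}$ of $A(\G)$ and the standard basis of $\prod_{(K)}\bZ$, the matrix of $\phi$ has $(j,i)$-entry $|(\G/K_i)^{K_j}|$, which vanishes unless $K_j$ is subconjugate to $K_i$ and has diagonal entries $|(\G/K_i)^{K_i}|=|W_\G K_i|\ne 0$. It is therefore triangular with nonzero diagonal, so it has nonzero determinant $\prod_i|W_\G K_i|$ and $\phi$ is injective.

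Injectivity of the fixed-point-degree map is now immediate: if $\xi\in\pi^\G_0(\Sigma^\infty S^0)$ has $\deg\xi^K=0$ for all $K$, write $\xi=c(a)$ with $a\in A(\G)$; then $\phi(a)=0$, so $a=0$ by the previous paragraph, hence $\xi=0$. (As a byproduct $c$ is injective as well, so $c$ is the tom Dieck isomorphism.) The main obstacle is precisely the surjectivity of $c$, i.e.\ that stable $\pi_0$ is generated by collapse maps of finite $\G$-sets; everything downstream of it is formal. If one prefers to sidestep the Burnside ring entirely, the same surjectivity argument can be run directly on a representative $f\colon S^V\to S^V$: use the nonequivariant Hopf theorem to null-homotope $f^\G$, then descend through the orbit-type lattice, the obstruction to trivialising $f^K$ relative to $\bigcup_{K'>K}(S^V)^{K'}$ being controlled by $\deg f^K$ together with higher obstructions that vanish in the stable range --- which is, once more, the proof of tom Dieck's theorem, so the cited reference is the honest home of the argument.
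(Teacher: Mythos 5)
The paper states this result without proof, citing it directly to tom Dieck's book; there is no argument in the paper to compare yours against. Your proof is the standard one and it is correct: identify $\pi^\G_0(\Sigma^\infty S^0)$ with the Burnside ring $A(\G)$ via Pontryagin--Thom collapse maps $c\colon[X]\mapsto[\chi_X]$, check that $\deg\chi_X^K=|X^K|$ so that the fixed-point-degree map pulls back along $c$ to the mark homomorphism $\phi$, and observe that the table of marks is triangular with nonzero diagonal entries $|W_\G K_i|$, hence $\phi$ is injective over $\bZ$. The deduction that injectivity of $\phi$ plus surjectivity of $c$ forces injectivity of the degree map is sound. You are also honest that the one genuinely non-formal input, the surjectivity of $c$ (equivalently the tom Dieck splitting in degree zero), is deferred to exactly the same reference the paper itself cites, which is the appropriate place for it; as you note, with that surjectivity in hand, the rest is elementary bookkeeping, and as a byproduct you recover that $c$ is the tom Dieck isomorphism $A(\G)\cong\pi^\G_0(\Sigma^\infty S^0)$ for finite $\G$.

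Two small remarks. First, in the degree computation $\deg\chi_X^K=|X^K|$ you should note that the tubular neighbourhood of $X\subset V$ can be chosen small enough that the disk about any $x\notin X^K$ misses $V^K$ entirely; this is what guarantees that only the disks about points of $X^K$ survive on $K$-fixed sets. Second, the theorem as stated in the paper says ``for any group $\G$,'' but your proof (and the paper's surrounding discussion) really concerns finite $\G$: the identification of $\pi^\G_0$ with the Burnside ring and the finite table-of-marks argument both use finiteness. For compact Lie groups the statement and proof are more delicate, and the correct general statement is the one in tom Dieck's book; this is a mild imprecision in the paper's phrasing rather than in your argument.
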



}

{\renewcommand{\G}{{C_2}}

In particular, when $G = \G$, two maps $f,g\colon S^V\to S^V$ are stably
$\G$-homotopic if and only if they have the same nonequivariant degree and
their restrictions $f^\G,g^\G$ to the $\G$-fixed points have the same
degree.  This will be very useful for establishing the equivariant degree
of self-maps of spheres.

For the following proposition, it may be helpful to recall the definition
of the group structure on the equivariant homotopy groups $\pi^\G_*$ for
spaces.  Suppose $V$ is an honest real representation containing at least
two copies of the real one-dimensional trivial representation.  If $|V| =
n+2$, write the elements of $V$ as $(n+2)$-tuples $(x_1,x_2,y_1,\ldots,y_{n})$,
where $\G$ acts trivially on $x_1$ and $x_2$, and view $S^V$ as the
quotient of the unit cube $[-1,1]^{n+2}$ by its boundary in $V$.  Then, given a
$\G$-space $X$ and maps $f,g\colon S^V\to X$, the inverse $-f$ of $f$ is
the continuous map given by the explicit formula
\[ (-f)(x_1,x_2,y_1,\ldots,y_{n}) = f(-x_1,x_2,y_1,\ldots,y_{n}).\]
Similarly, $f+g$ is given by
\[ (f+g)(x_1,x_2,y_1,\ldots,y_{n}) = \begin{cases}
f(2x_1+1,x_2,y_1,\ldots,y_{n}) & x_1 \le 0 \\
g(2x_1-1,x_2,y_1,\ldots,y_{n}) & x_1 \ge 0
\end{cases}
\]
We are ultimately interested in the stable homotopy groups
$\pi^\G_\alpha(\Sigma^\infty X)$ for a virtual representation $\alpha$.  To
get a handle on these, we can choose an honest representation $W$ such that
$\alpha+W$ is an honest representation containing at least two copies of
the trivial representation, and then consider $\pi^\G_{\alpha+W}(\Sigma^W
X)$; the stable homotopy group $\pi^\G_{\alpha}(\Sigma^\infty X)$ is by
definition $\colim_W \pi^\G_{\alpha+W}(\Sigma^W X)$.

As a final bit of notation, again let $1$ be the multiplicative identity of
the ring $A(\trivo)$, and set $\tau = \mft\mfr(1)$.  Then the units of
$A(\trivo)$ are $\pm 1$ and $\pm(1-\tau)$; recall that the Frobenius
relations tell us that $\tau^2 = p\tau = 2\tau$.  Since the switch map is a
representative of a unit in the Burnside ring Green functor, we need only
identify which of these four units it represents.

\begin{prop}\label{prop::eqdegree}
Let $\signrep$ be the one-dimensional real sign representation of
$C_2$.
If $\alpha = a_0 + a_1\signrep$ and $\beta = b_0+b_1\signrep$, then the
switch map $S^\alpha\sm S^\beta\to S^\beta\sm S^\alpha$ represents
$(-1)^{a_0 b_0} (1-\tau)^{a_1 b_1}$ in
$\pi^G_{\alpha+\beta}(\Sigma^\infty S^{\alpha+\beta}) \cong A(\trivo)$.
\end{prop}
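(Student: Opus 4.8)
The plan is to apply tom Dieck's injectivity result (\autoref{prop::ghtpic}) to reduce the problem to two degree computations, after first decomposing the switch map along the two irreducible representations of $\G$. Recall that the element of $A(\trivo)$ in question is, via the suspension isomorphism identifying $\pi^\G_{\alpha+\beta}(\Sigma^\infty S^{\alpha+\beta})$ with $\pi^\G_0(\Sigma^\infty S^0)$, a self-equivalence of $S^{\alpha+\beta}$, namely the conjugate of the switch $S^\alpha\sm S^\beta\to S^\beta\sm S^\alpha$ by the identifications $S^\alpha\sm S^\beta\xrightarrow{\cong}S^{\alpha+\beta}$ and $S^\beta\sm S^\alpha\xrightarrow{\cong}S^{\alpha+\beta}$. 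By \autoref{prop::ghtpic} such a unit is detected by the pair (underlying nonequivariant degree, degree on $\G$-fixed points), so it suffices to compute those two degrees. I will use that, under this pair, $1\mapsto(1,1)$, while $\tau=\mft\mfr(1)$ is the class of the free orbit $\freeo$ and so $\tau\mapsto(2,0)$; hence $1-\tau\mapsto(-1,1)$, $-(1-\tau)\mapsto(1,-1)$, and $(1-\tau)^2=1-2\tau+\tau^2=1$ by the Frobenius relation $\tau^2=2\tau$.

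First I would observe that, because the identification $S^\alpha\sm S^\beta\xrightarrow{\cong}S^{\alpha+\beta}$ is built by interchanging smash factors to collect the copies of each irreducible $\rotrep_j$ together and then applying the comparison maps $S^{a_j\rotrep_j}\sm S^{b_j\rotrep_j}\to S^{(a_j+b_j)\rotrep_j}$ (and similarly for $S^\beta\sm S^\alpha$), the coherence theorem for symmetric monoidal categories shows that conjugating the switch by these identifications yields exactly the smash product over $j$ of the switch maps $\epsilon_j\colon S^{a_j\rotrep_j}\sm S^{b_j\rotrep_j}\to S^{b_j\rotrep_j}\sm S^{a_j\rotrep_j}$, each read back as a self-equivalence of $S^{(a_j+b_j)\rotrep_j}$. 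For $\G=C_2$ the irreducibles are the trivial one $\rotrep_0$ and the sign representation $\rotrep_1=\signrep$, so the element we want is $\epsilon_0\cdot\epsilon_1$ in $A(\trivo)$ (everything interpreted stably so that $a_j,b_j$ may be negative).

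Then I would compute each factor. The map $\epsilon_0$ is the switch of $S^{a_0}$ and $S^{b_0}$ with trivial action, so both its nonequivariant and its $\G$-fixed degree equal $(-1)^{a_0b_0}$; hence $\epsilon_0=(-1)^{a_0b_0}$. For $\epsilon_1$, the switch of $S^{a_1\signrep}$ and $S^{b_1\signrep}$, the underlying degree is $(-1)^{a_1b_1}$, whereas $(S^{a_1\signrep})^\G=(S^{b_1\signrep})^\G=S^0$ (equivalently $\Phi^\G S^{a_1\signrep}=S^0$), so $\epsilon_1^\G$ is the switch of $S^0\sm S^0$, i.e.\ the identity, of degree $1$. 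By \autoref{prop::ghtpic} the unique element of $A(\trivo)$ with underlying degree $(-1)^{a_1b_1}$ and $\G$-fixed degree $1$ is $1$ if $a_1b_1$ is even and $1-\tau$ if $a_1b_1$ is odd; using $(1-\tau)^2=1$ this says $\epsilon_1=(1-\tau)^{a_1b_1}$. Multiplying, $\epsilon_0\cdot\epsilon_1=(-1)^{a_0b_0}(1-\tau)^{a_1b_1}$, as claimed.

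The hard part will be the coherence step: making precise exactly which reordering isomorphisms $S^\alpha\sm S^\beta\cong S^{\alpha+\beta}$ are meant and verifying that the various shuffle signs reassemble into $\epsilon_0\sm\epsilon_1$ with no leftover sign. This is the $RO(\G)$-grading bookkeeping flagged on page \pageref{rem::grading}, and one must use the conventions consistent with the setup of the cup product and \autoref{fig::cupproduct}. If one prefers to avoid the coherence argument, the alternative is a direct computation: restrict the conjugated switch map to $\{e\}$ and to $\G$-fixed points and track the signs of the coordinate permutations coming from the identifications. On fixed points only trivial-representation coordinates survive, so the permutation there contributes $(-1)^{a_0b_0}$, while the full underlying permutation contributes $(-1)^{(a_0+a_1)(b_0+b_1)+a_0b_1+a_1b_0}=(-1)^{a_0b_0+a_1b_1}$; a single application of \autoref{prop::ghtpic} then finishes it.
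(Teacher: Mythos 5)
Your proposal is correct and follows essentially the same strategy as the paper: reduce the general case to the switches on powers of the trivial and sign representations, then use tom Dieck's injectivity (Theorem~\ref{prop::ghtpic}) to identify the resulting units in $A(\trivo)$ by their underlying and fixed-point degrees. Your treatment is somewhat more explicit than the paper's about the reduction step (the paper simply computes the $a_i=b_i=1$ cases and says ``\ref{prop::eqdegree} will then follow''), and your closing direct-sign-count is a useful sanity check of the same calculation, but the core argument matches.
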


Note that we are implicitly using the fact that $\signrep$ and the trivial
representation are the only irreducible representations of $\G$.

\begin{proof}
It suffices to determine the equivariant degree of the switch maps on the
spaces $S^1\sm S^1$ and $S^\signrep\sm S^\signrep$;
\autoref{prop::eqdegree} will then follow.

First consider the switch map on $S^1\sm S^1 = S^2$.  Since the $\G$-action
is trivial, it is immediate that this map $S^2\to S^2$ agrees with $-1 \in
\pi^\G_{2+W}(S^{2+W})$ for every $W$, and hence in the stable group
$\pi^\G_2(\Sigma^\infty S^2)$.

For the switch map on $S^\signrep \sm S^\signrep \simeq S^{2\signrep}$, we
first need explicit descriptions of representatives for $1$ and $\tau$ in
the stable homotopy groups of spaces.  $1$ is represented by the identity
map.  $\tau$ is the stable class given by the restriction associated to
${\freeo\to\trivo}$ followed by the transfer associated to this map.  This
composite corresponds to the map on $S^{2\signrep+W}$ given by first
collapsing to a point everything outside a small neighborhood of two
antipodes not fixed by the action of $\G$, and then taking the equivariant
fold map.

At this point, we can consider the degrees of the units of $A(\trivo)$,
viewing them as self-maps of $S^{2\signrep+W}$ for $W$ containing at least
two copies of the trivial representation.  Since $\tau$ has underlying
nonequivariant degree $2$, $1-\tau$ has nonequivariant degree $-1$ and
$-(1-\tau)$ has nonequivariant degree 1.  Restricted to the fixed-point set
$S^2\subset S^{2+2\signrep}$, $1-\tau$ is the identity, which has degree
$1$.  On the other hand, using the definition of the group operation on
$\pi^\G_*$, we see that $-1$ has degree $-1$ on both the underlying
nonequivariant sphere $S^{2+|W|}$ and the fixed-point sphere $S^{|W^\G|}$.

Finally, consider the stable class of the switch map $S^{2\signrep}\to
S^{2\signrep}$.  This map has underlying nonequivariant degree $-1$, but,
restricted to the fixed-point space of $S^{2\signrep+W}$, is the identity.  
Thus, by \autoref{prop::ghtpic}, the switch map must represent $1-\tau$.
\end{proof}

}

At this point we may return to the diagram of \autoref{fig::cupproduct}.
We have shown that $f\smile g$ and $g\smile f$ differ, as maps $X\to
\Sigma^{\alpha+\beta} HM$, by composition with a switch map on
$S^{\alpha+\beta}$ of known equivariant degree.  This means that there are
known units $u(\alpha,\beta)\in A(\trivo)$ making the diagram below
commute.
\[ \xymatrix@R=1pc@C=0pc{
  \ulhr^\alpha(X;M)\boxp \ulhr^\beta(X;M) \ar[rr]^{u(\alpha,\beta)\gamma} \ar[dr] & & 
  \ulhr^\beta(X;M)\boxp \ulhr^\alpha(X;M) \ar[dl] \\
  & \ulhr^{\alpha+\beta}(X;M) &
} \]
We will thus define a \defword{graded commutative} Green functor to be a
graded Green functor $T^*$ whose multiplication makes all diagrams of the
above form commute, for the same units $u(\alpha,\beta)$ established in
\autoref{prop::odddegree} and \autoref{prop::eqdegree}.

\subsection{Reinterpreting the cup product}\label{subs::levelwisecup}

The cup product structure is given by a map out of a box product, and so we
know from \autoref{lem::pboxmaps} or \autoref{lem::dresspairing} that it
consists of ring maps
\[ {\setlength{\arraycolsep}{1pt}
\begin{array}{ccccc}
\hr^*(X;M)& \otimes & \hr^*(X;M) & \to & \hr^*(X;M) \\
\neqhr^{|*|}(X;M(\freeo))& \otimes & \neqhr^{|*|}(X;M(\freeo)) & \to & \neqhr^{|*|}(X;M(\freeo)) \\
\end{array} }\]
subject to some compatibility conditions.  The second map is exactly the
usual cup product on the nonequivariant cohomology of $X$, so finding the
cup product structure on $\ulhr^*(X;M)$ amounts to finding it on
$\hr^*(X;M)$.  

The units of the Burnside ring Green functor which determine
the graded commutativity of $\ulhr^*(X;M)$ necessarily determine the graded
commutativity of the rings $\hr^*(X;M)$ and
$\neqhr^{|*|}(X;M(\freeo))$, as follows.  If the switch map
$S^\alpha\sm S^\beta\to S^\beta \sm S^\alpha$ has degree
$u(\alpha,\beta)\in A(\trivo)$, then, using the structure of $\ulhr^*(X;M)$
as a module over $A$, we have formulas
\begin{align*}
y x & = u(\alpha,\beta) x y & & x\in \hr^\alpha(X;M),\ y\in \hr^\beta(X;M) \\
z w & = \mfr(u(\alpha,\beta)) w z & & w\in \hr^{|\alpha|}(X;M(\freeo)),\ z\in \hr^{|\beta|}(X;M(\freeo)) \\
\end{align*}
Since $\mfr(-1) = -1\in A(\freeo)$ and, when $p=2$, $\mfr(1-\tau) = -1$ as
well, the second formula is compatible with the usual graded commutativity
of the nonequivariant cup product.

The nonequivariant cup product also determines certain parts of the
multiplicative structure on $\hr^*(X;M) = \ulhr^*(X;M)(\trivo)$.
The Frobenius relations dictate products of the form $x \mft(w)$, for $x\in
\ulhr^*(X;M)(\trivo)$ and $w\in \ulhr^*(X;M)(\freeo)$; the fact that $\mfr$
is a ring homomorphism may provide some additional information as well.

\subsection{Burnside ring coefficients}

We now turn our attention to cohomology with coefficients in the Burnside
Green functor $A$.  Recall that our convention is to suppress $A$
coefficients in the notation.  Since $\ulhr^*(X)$ is a module over
$\ulhr^*(S^0)$ for every $\G$-space $X$, it is desirable to have an
explicit description of $\ulhr^*(S^0)$.  As previously remarked, the
complexities of the $RO(\G)$-graded dimension axiom imply that
$\ulhr^*(S^0)$ is \emph{not} just the coefficient Green functor $A$.

The calculation of $\ulhr^*(S^0)$ is due to Stong when $p=2$, and to
Stong and Lewis for odd primes.  This calculation makes extensive use of
the cofiber sequences
\[ \begin{array}{c}
\G_+ \to S(\rotrep)_+ \to \Sigma \G_+\\
S(\rotrep)_+ \to S^0 \to S^\rotrep\\
{C_2}_+ = S(\signrep)_+ \to S^0 \to S^\signrep
\end{array} \]
where $\rotrep$ is a nontrivial irreducible real representation of $\G$;
$\signrep$ is the nontrivial one-dimensional real representation of $C_2$;
$S(V)$ is the unit sphere in the representation $V$; and $S^V$ is the
one-point compactification of $V$.  This calculation appears in the
appendix to \cite{lgl}, and will not be reproduced here; we will simply
state the results without proof in \autoref{subs::point2} and
\autoref{subs::pointodd}.

\begin{rem} The cohomology of a point for $G=\G$ and any Mackey functor
coefficients is known; see Chapters 8 and 9 of \cite{FL}.  However, we will
only discuss the result for $A$ coefficients in this \paper{}.
\end{rem}

\subsection{The cohomology of a point for $p = 2$}\label{subs::point2}

{\renewcommand{\G}{ {C_2} }
We have already noted that the group $\G$ has exactly one nontrivial
irreducible real representation, namely the one-dimensional sign
representation $\signrep$.  Thus every virtual representation class has a
representative of the form $m+n\signrep$ for $m,n\in \bZ$, where we use $m$
to denote the real $m$-dimensional trivial representation.  If $\alpha =
m+n\signrep$, we can recover $m$ and $n$ from the dimensions $|\alpha| =
m+n$ and $|\alpha^{C_2}| = m$.  For compatibility with $p$ odd, we will
classify our virtual representations $\alpha$ by the ordered pair
$(|\alpha^G|,|\alpha|)$.

\begin{thm}[Stong]\label{thm::add2structure}
When $G=\G$, the additive structure of $\ulhr^*(S^0)$ is as follows.
\[ \ulhr^\alpha (S^0) \cong \begin{cases}
A & \mydim\alpha = (0,0) \\
\r & \mydim\alpha = (-2m,0) \text{ for $m\ge 1$} \\
\rminus & \mydim\alpha = (1-2m,0) \text{ for $m\ge 0$} \\
\l & \mydim\alpha = (2m,0) \text{ for $m\ge 1$} \\
\lminus & \mydim\alpha = (2m+1,0) \text{ for $m\ge 1$} \\
\jg{\ground} & \mydim\alpha = (0,n) \text{ for $n\in\bZ$} \\
\jg{\ground/2} & \mydim\alpha = (-2m,n) \text{ for $m\ge 1$, $n\ge 1$} \\
\jg{\ground/2} & \mydim\alpha = (2m+1,-n) \text{ for $m\ge 1$, $n\ge 1$} \\
0 & \text{otherwise}
\end{cases} \]
\end{thm}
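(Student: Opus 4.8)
The plan is to reduce the statement to computing $\ulhr^{m+n\signrep}(S^0)$ for every pair of integers $(m,n)$: since $\signrep$ is the only nontrivial irreducible and $\mydim{(m+n\signrep)} = (m,m+n)$, this covers all of $RO(\G)$ and matches the case list. Two inputs feed the computation. First, the dimension part of \autoref{ax::finalcoh} pins down the ``row'' $n=0$: $\ulhr^{m}(S^0)\cong A$ if $m=0$ and $0$ otherwise. Second, one needs the cohomology of the free orbit: because $\signrep$ restricts to a trivial representation of $\triv$ and $\ulhr^*(X)(\freeo)$ is the underlying nonequivariant cohomology, the groups $\ulhr^\alpha(\G/\triv_+)$ depend only on $|\alpha|$, giving $\ulhr^\alpha(\G/\triv_+)\cong A_\freeo$ when $|\alpha|=0$ and $0$ otherwise; here $A_\freeo$ is the represented Mackey functor whose diagram was recorded in \autoref{subs::favoritemf}.

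The engine is the long exact sequence obtained by applying $\ulhr^*(-)$ to the Puppe sequence of the cofiber sequence $\G/\triv_+ = S(\signrep)_+ \to S^0 \to S^\signrep$ and using the suspension isomorphism $\ulhr^\alpha(S^\signrep)\cong\ulhr^{\alpha-\signrep}(S^0)$. This is a Gysin-type sequence
\[ \cdots \to \ulhr^{\alpha-1}(\G/\triv_+) \to \ulhr^{\alpha-\signrep}(S^0) \xrightarrow{\,\cdot a_\signrep\,} \ulhr^{\alpha}(S^0) \to \ulhr^{\alpha}(\G/\triv_+) \to \ulhr^{\alpha+1-\signrep}(S^0) \to \cdots \]
whose middle map is multiplication by the Euler class $a_\signrep\in\ulhr^\signrep(S^0)$ and whose two ``error'' terms vanish unless $|\alpha|=1$ (first) or $|\alpha|=0$ (last). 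I would then run a double induction on $|n|$: for $n\ge 1$ the sequence computes row $n$ from row $n-1$, for $n\le-1$ it computes row $n$ from row $n+1$, with row $0$ as the base case. In the generic case both error terms vanish, so $\cdot a_\signrep$ is an isomorphism and the value is constant along that part of the row; the only places where something happens are the two lines $|\alpha|=0$ and $|\alpha|=1$, where one reads off a kernel or cokernel of an explicit map between $A_\freeo$ and a Mackey functor already known one row down.

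The remaining work is to identify those connecting maps and evaluate the kernels and cokernels using the explicit diagrams of \autoref{subs::favoritemf} together with the multiplication table of \autoref{fig::multtable}. The maps involved (the Euler-class map and the transfer-type map $A_\freeo\to\ulhr^{|\alpha|}(S^0)$ living on the antidiagonal) are natural and essentially unique up to a unit, so they can be pinned down by their underlying nonequivariant degree and their $\G$-fixed-point degree via \autoref{prop::ghtpic}; the kernel/cokernel Mackey functors are then computed by hand. The output is: along the antidiagonal $|\alpha|=0$ one gets $\r$ and $\rminus$ going ``up'' (the alternation coming from the permutation action on $A_\freeo(\freeo)$, and the $\cdot 2$ maps from the relation $\mfr\mft = p = 2$) and $\l$, $\lminus$ going ``down''; on the vertical axis $|\alpha^\G|=0$ one gets $\jg{\ground}$ (the cokernel of the transfer $A_\freeo\to A$); in the two ``interior'' regions the iterated connecting maps force $\jg{\ground/2}$, the $2$-torsion again arising from $\mfr\mft=2$; and everywhere else the group is $0$.

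The main obstacle is exactly this bookkeeping: keeping straight which of the two error terms is live at each $(m,n)$, fixing the connecting homomorphisms up to the correct unit, and confirming that each resulting extension of Mackey functors is the named one — in particular distinguishing $\r$ from $\l$ and $\rminus$ from $\lminus$ by the parity of the number of inductive steps, and seeing the $\jg{\ground/2}$ emerge in the interior. Reassuringly there is no genuine extension ambiguity: at every inductive step either one of the three nonzero terms vanishes, or the short exact sequence of Mackey functors is rigid enough (given the already-known restriction and transfer maps) that its middle term is determined, so the induction goes through cleanly.
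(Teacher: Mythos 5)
Your proposal is the approach the paper itself defers to: \autoref{thm::add2structure} is not proved in the text---the paper cites the appendix to \cite{lgl} and notes that the calculation ``makes extensive use of the cofiber sequences'' including ${C_2}_+ = S(\signrep)_+ \to S^0 \to S^\signrep$---and your Gysin-type long exact sequence together with the dimension axiom on the integer diagonal and the vanishing of $\ulhr^\alpha({C_2}_+)$ away from $|\alpha|=0$ is exactly that argument.  The details you acknowledge deferring (pinning down the connecting maps out of $A_\freeo$ up to a unit via their underlying and fixed-point degrees, and verifying case-by-case that no extension ambiguity actually arises at the exceptional lines $|\alpha|\in\{0,1\}$) are precisely what the cited appendix carries out.
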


\begin{proof}
See the appendix to \cite{lgl}.
\end{proof}

This is perhaps best visualized using \autoref{fig::mod2}.

\begin{figure}
\[ \hspace{-2ex}\xymatrix@R=0pc@C=0pc{
  &  &  &  &  &  &  & \makebox[0pt]{$|\alpha|$} &  &  &  &  &  &  \\
  &  & {\phantom{\jg{\ground/2}}} & {\phantom{\jg{\ground/2}}} & {\phantom{\jg{\ground/2}}} & {\phantom{\jg{\ground/2}}} & {\phantom{\jg{\ground/2}}} & {\phantom{\jg{\ground/2}}} & {\phantom{\jg{\ground/2}}} & {\phantom{\jg{\ground/2}}} & {\phantom{\jg{\ground/2}}} & {\phantom{\jg{\ground/2}}} & {\phantom{\jg{\ground/2}}} &  \\
  &  & . & \jg{\ground/2} & . & \jg{\ground/2} & . & \jg{\ground} & . & . & . & . & . &   \\
  &  & . & \jg{\ground/2} & . & \jg{\ground/2} & . & \jg{\ground} & . & . & . & . & . &   \\
  &  & . & \jg{\ground/2} & . & \jg{\ground/2} & . & \jg{\ground} & . & . & . & . & . &   \\
  &  & . & \jg{\ground/2} & . & \jg{\ground/2} & . & \jg{\ground} & . & . & . & . & . &   \\
  \ar@{<.>}[rrrrrrrrrrrrrr] & &  \rminus & \r_{\phantom{-}} & \rminus & \r_{\phantom{-}} & \rminus & A_{\phantom{-}} & \rminus & \l_{\phantom{-}} & \lminus & \l_{\phantom{-}} & \lminus &   &  \makebox[0pt][l]{$|\alpha^\G|$} \\
  &  & . & . & . & . & . & \jg{\ground} & . & . & \jg{\ground/2} & . & \jg{\ground/2} &   \\
  &  & . & . & . & . & . & \jg{\ground} & . & . & \jg{\ground/2} & . & \jg{\ground/2} &   \\
  &  & . & . & . & . & . & \jg{\ground} & . & . & \jg{\ground/2} & . & \jg{\ground/2} &   \\
  &  & . & . & . & . & . & \jg{\ground} & . & . & \jg{\ground/2} & . & \jg{\ground/2} &   \\
  &  &   &   &   &   &   &   &   &   &   &    &   & \\
  &  &   &   &   &   &   & \ar@{<.>}[uuuuuuuuuuuu]  &   &   &   &   &   &
}\]
\caption[\lofspace{}A plot of ${\ulhrcapt^*(S^0)}$ in the plane]{A plot of ${\ulhrnoph^*(S^0)}$ in the plane.  A dot represents the zero Mackey functor.}
\label{fig::mod2}
\end{figure}

As outlined in \autoref{subs::levelwisecup}, we will use the Dress pairing
to describe the maps ${\ulhr^\alpha(S^0)\boxp \ulhr^\beta(S^0) \to
\ulhr^{\alpha+\beta}}$ making up the cup product structure on
$\ulhr^*(S^0)$.
This provides a very concrete picture of the cup product structure,
although it has the disadvantage that it is difficult to tell from the
Dress pairing when a map is an isomorphism.\footnote{For most
$\alpha,\beta\in RO(\G)$, $\ulhr^{\alpha+\beta}(S^0)$ is either zero or
isomorphic to ${\ulhr^\alpha(S^0)\boxp \ulhr^\beta(S^0)}$.  When the latter
is true, the cup product map ${\ulhr^\alpha(S^0)\boxp \ulhr^\beta(S^0) \to
\ulhr^{\alpha+\beta}(S^0)}$ is always an isomorphism.}
We will begin by defining generators in the $\ground$-modules
$\ulhr^*(S^0)(\trivo)$ and $\ulhr^*(S^0)(\freeo)$, and then discuss their
products.

Consider first the Mackey functors which appear in $\ulhr^*(S^0)$: $A$,
$\l$, $\lminus$, $\r$, $\rminus$, and $\jg{C}$.  Once we
choose an element $\mu\in A(\trivo)$ which restricts to a generator of
$A(\freeo) \cong \ground$, $\{\mu,\mft\mfr\mu\}$ gives a basis for $A(\trivo)$,
and $\mfr \mu$ is a generator of $A(\freeo)$.  Likewise, if a Mackey
functor $M$ is equal to $\l$, $\lminus$, or $\rminus$, choosing a
generator $\iota$ of $M(\freeo)$ gives us the generator $\mft\iota$ of
$M(\trivo)$.  Finally, if $M = \r$ or $M = \jg{\ground}$ or $M = \jg{\ground/2}$,
then choosing a generator $\xi$ for $M(\trivo)$ gives the generator
$\mfr\xi$ of $M(\freeo)$.

Recall that we use $\signrep$ to denote the real one-dimensional sign
representation of $\G$.  For ease of reference with \autoref{fig::mod2},
note that the trivial representation has dimension $(1,1)$ and $\signrep$
has dimension $(0,1)$; so the virtual representation $1-\signrep$ has
dimension $(1,0)$.

\begin{defn}
Define generators $1$, $\iota$, $\iota^{-1}$, $\epsilon$, and $\xi$ for
certain of the Mackey functors $\ulhr^\alpha(S^0)$ as follows.
\begin{enumerate}
\item Let $1\in \ulhr^0(S^0)(\trivo)$ be the image of the identity in
$A(\trivo)$ under the unit map $A\to \ulhr^*(S^0)$ of the graded Green
functor $\ulhr^*(S^0)$.

\item The underlying nonequivariant space of $S^\signrep$ is $S^1$.  Recall
that $\ulhr^\alpha(X)(\freeo) \cong \neqhr^{|\alpha|}(X;\ground)$, the
nonequivariant cohomology.  As a result, fixing a nonequivariant
identification of $S^\signrep$ with $S^1$ induces isomorphisms
\[\ulhr^0(S^0)(\freeo) \cong \ulhr^1(S^1)(\freeo) \to
\ulhr^1(S^\signrep)(\freeo) \cong \ulhr^{1-\signrep}(S^0)(\freeo)\]
and
\[\ulhr^0(S^0)(\freeo) \cong \ulhr^\signrep(S^\signrep)(\freeo) \to
\ulhr^\signrep(S^1)(\freeo) \cong \ulhr^{\signrep-1}(S^0)(\freeo).\]
Suggestively, we give the names $\iota \in\ulhr^{1-\signrep}(S^0)(\freeo)$
and $\iota^{-1}\in \ulhr^{\signrep-1}(S^0)(\freeo)$ to the images of $\mfr
1$ under these composites.

\item Using the suspension isomorphism, the inclusion $S^0\hookrightarrow
S^\signrep$ induces a map
\[ {A\cong \ulhr^0(S^0) \cong \ulhr^\signrep(S^\signrep)
\to \ulhr^\signrep(S^0)\cong \jg{\ground}}.\]
Let $\epsilon$ be the image of $1\in \ulhr^0(S^0)(\trivo)$ under this map.

\item Since $\mfr$ in $\r$ is an isomorphism, there is a unique element
$\xi$ in $\ulhr^{2\signrep-2}(S^0)(\trivo)$ whose image under $\mfr$ is
$(\iota^{-1})^2$, i.e.\ the image of $\iota^{-1}\otimes\iota^{-1}$ under the
appropriate Dress pairing.
\end{enumerate}
\end{defn}

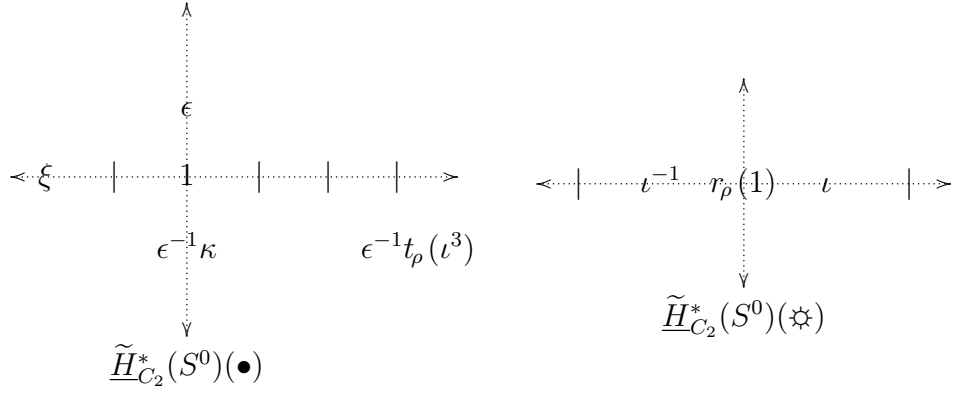
\begin{figure}
\[ 
\hspace{-20pt}
\xymatrix@R=.75pc@C=0pc{
  &   &   & \ar@{<.>}[dddddd] &   &   &   & \\
{\phantom{\epsilon^{-1}}}  &   &   &   &   &   &   & \\
{\phantom{\epsilon^{-1}}}  &   &   & \epsilon &   &   &   & \\
{\phantom{\epsilon^{-1}}}\ar@{<.>}[rrrrrrr]  & \xi & | & 1 & | & | & | & \\
{\phantom{\epsilon^{-1}}}  &   &   & \epsilon^{-1}\kappa &   &   &
{\phantom{\epsilon^{-1}}}  \makebox[0pt]{$\epsilon^{-1}\mft(\iota^3)$} & \\
  & \hspace{20pt} & \hspace{20pt} & \hspace{20pt} & \hspace{20pt} & \hspace{20pt} & \hspace{20pt} & \hspace{20pt} & \hspace{20pt} & \\
  &   &   & \makebox[0pt]{$\ulhr^*(S^0)(\trivo)$}  &   &   &   &   & \\
}
\hspace{-40pt}
\xymatrix@R=0.57pc@C=0pc{
{\phantom{\epsilon^{-1}}}  &   &   &   &   &   & \\
  & {\phantom{\mfr(1)}} & {\phantom{\mfr(1)}} & {\phantom{\mfr(1)}} \ar@{<.>}[dddd] & {\phantom{\mfr(1)}} & {\phantom{\mfr(1)}} & \\
{\phantom{\epsilon^{-1}}}  &   &   &   &   &   & \\
{\phantom{\epsilon^{-1}}}\ar@{<.>}[rrrrrr] & | & \iota^{-1} & \mfr(1)^{\makebox[0pt]{$\phantom{-1}$}} & \iota^{\makebox[0pt]{$\phantom{-1}$}} & | & \\
{\phantom{\epsilon^{-1}}}  &   &   &   &   &   & \\
  &   &   & \makebox[0pt]{$\ulhr^*(S^0)(\freeo)$}  &   &   & \\
} 
\] 
\caption[\lofspace{}The locations of the generators $\epsilon$, $\iota$, $\iota^{-1}$,
$\xi$, $\epsilon^{-1}\kappa$, and
$\epsilon^{-1}\mft(\iota^3)$]{The locations of the generators $\epsilon$, $\iota$, $\iota^{-1}$,
$\xi$, $\epsilon^{-1}\kappa$, and
$\epsilon^{-1}\mft(\iota^3)$.}
\label{fig::gens2}
\end{figure}

For the final classes of generators, we need some lemmas.  We have chosen the
basis $\{1,\mft\mfr 1\}$ of $\ulhr^0(S^0)(\trivo)$.  However, employing the
same trick as in the proof of \autoref{lem::twiso}, if we define $\kappa =
2-\mft\mfr 1$, then $\{1,\kappa\}$ is also a basis.  It has the advantage
that $\kappa$ generates the kernel of $\mfr$.

\begin{lem}[Lewis]
For each $m\ge 1$, there is a unique element $\epsilon^{-m}\kappa \in
\ulhr^{-m\signrep}(S^0)(\trivo)$ such that, under the Dress pairing maps,
$\epsilon^m (\epsilon^{-m}\kappa) = \kappa$.  Moreover,
$\epsilon^{-m}\kappa$ is a generator of $\ulhr^{-m\signrep}(S^0)(\trivo)$
for each $m$.
\end{lem}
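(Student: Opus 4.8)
The plan is to realise multiplication by $\epsilon$ as the map induced by the inclusion of fixed points $e\colon S^0\hookrightarrow S^\signrep$, and then to read off the needed isomorphisms from the cofiber sequence ${C_2}_+=S(\signrep)_+\xrightarrow{\,j\,}S^0\xrightarrow{\,e\,}S^\signrep$ together with \autoref{thm::add2structure}. By \autoref{thm::add2structure}, $\ulhr^{m\signrep}(S^0)\cong\jg{\ground}$ for every $m\ge1$ (this is the case $\mydim{m\signrep}=(0,m)$), so $\ulhr^{-m\signrep}(S^0)(\trivo)\cong\ground$ and $\ulhr^{-m\signrep}(S^0)(\freeo)=0$, while $\ulhr^0(S^0)=A$. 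Under the suspension isomorphism $\ulhr^*(S^\signrep)\cong\Sigma^\signrep\ulhr^*(S^0)$, the module $\ulhr^*(S^\signrep)$ is free of rank one over the graded Green functor $\ulhr^*(S^0)$ on the class $\sigma=\Sigma^\signrep 1$, and $e^*(\sigma)=\epsilon$ by the construction of $\epsilon$; since $e^*$ is a map of $\ulhr^*(S^0)$-modules, the composite $\ulhr^\gamma(S^0)\xrightarrow{\Sigma^\signrep}\ulhr^{\gamma+\signrep}(S^\signrep)\xrightarrow{e^*}\ulhr^{\gamma+\signrep}(S^0)$ is, up to a unit of $A(\trivo)$ coming from graded commutativity, multiplication by $\epsilon$. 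This unit will be irrelevant to everything below.

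First I would apply $\ulhr^*(-)$ to the cofiber sequence above and evaluate the resulting long exact sequence of Mackey functors at $\trivo$. Because $\ulhr^\alpha({C_2}_+)(\trivo)\cong\ulhr^\alpha(S^0)(\freeo)\cong\neqhr^{|\alpha|}(S^0;\ground)$ (the first identification being the shift $\ulhr^*({C_2/\triv}_+)\cong\ulhr^*(S^0)\boxp A_\freeo$), this group is $\ground$ when $|\alpha|=0$ and $0$ otherwise. Hence in the portion of the long exact sequence surrounding the map $\epsilon\cdot(-)\colon\ulhr^{(k-1)\signrep}(S^0)(\trivo)\to\ulhr^{k\signrep}(S^0)(\trivo)$, the two neighbouring terms $\ulhr^{k\signrep-1}({C_2}_+)(\trivo)$ and $\ulhr^{k\signrep}({C_2}_+)(\trivo)$ both vanish as soon as $k\le-1$, their underlying integer degrees being $k-1$ and $k$. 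So $\epsilon\cdot(-)$ is an isomorphism $\ulhr^{(k-1)\signrep}(S^0)(\trivo)\xrightarrow{\,\cong\,}\ulhr^{k\signrep}(S^0)(\trivo)$ for every $k\le-1$; composing these gives, for each $m\ge1$, an isomorphism $\ulhr^{-m\signrep}(S^0)(\trivo)\xrightarrow{\,\cong\,}\ulhr^{-\signrep}(S^0)(\trivo)$.

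The remaining step $k=0$ must be treated separately, since $\ulhr^0({C_2}_+)(\trivo)\cong\ground$ is nonzero. Here $\ulhr^{-1}({C_2}_+)(\trivo)=0$, so the long exact sequence at $\trivo$ begins
\[0\to\ulhr^{-\signrep}(S^0)(\trivo)\xrightarrow{\;\epsilon\cdot(-)\;}\ulhr^0(S^0)(\trivo)=A(\trivo)\xrightarrow{\;j^*\;}\ulhr^0({C_2}_+)(\trivo).\]
I claim $j^*$ is the restriction $\mfr\colon A(\trivo)\to A(\freeo)$ of the Mackey functor $\ulhr^0(S^0)=A$: we have $\ulhr^0({C_2}_+)(\trivo)\cong\ulhr^0(S^0)(\freeo)=A(\freeo)$, and by the definition of the Mackey functor structure on $\ulhr^*(S^0)$ the restriction associated to $\freeo\to\trivo$ is exactly the map induced by $j\colon{C_2}_+\to S^0$. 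Since $\kappa$ was chosen to generate $\ker\mfr$, which is the free $\ground$-module of rank one spanned by $\kappa$, it follows that $\epsilon\cdot(-)$ identifies $\ulhr^{-\signrep}(S^0)(\trivo)$ isomorphically with $\ground\kappa$.

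Combining the last two paragraphs, $\epsilon^m\cdot(-)\colon\ulhr^{-m\signrep}(S^0)(\trivo)\to A(\trivo)$ is, for every $m\ge1$, an isomorphism onto $\ground\kappa$. Therefore there is exactly one element of $\ulhr^{-m\signrep}(S^0)(\trivo)$, which we name $\epsilon^{-m}\kappa$, with $\epsilon^m\cdot(\epsilon^{-m}\kappa)=\kappa$; and since $\kappa$ generates $\ground\kappa\cong\ground$ while $\epsilon^m\cdot(-)$ is an isomorphism, this element generates $\ulhr^{-m\signrep}(S^0)(\trivo)$. The one genuinely nonformal ingredient is \autoref{thm::add2structure}, invoked to pin down $\ulhr^{\pm m\signrep}(S^0)$ and the cohomology of ${C_2}_+$; the step requiring care is the bookkeeping identifying $\epsilon\cdot(-)$ with $e^*$ and $j^*$ with $\mfr$. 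This is where the real content sits, because a purely formal (Yoneda-type) argument on box products would only show that $\epsilon^m\cdot(-)$ takes values in $\ground\kappa$, not that it maps onto $\ground\kappa$ — i.e. it would not see that $\epsilon$ acts injectively here, which is precisely what the cofiber sequence supplies.
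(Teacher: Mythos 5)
Your proof is correct. The paper does not supply its own proof of this lemma—it attributes the statement to Lewis and points the reader to the appendix of \cite{lgl}—but your argument, based on the cofiber sequence ${C_2}_+=S(\signrep)_+\to S^0\to S^\signrep$ and the computation of $\ulhr^*(S^0)$ from \autoref{thm::add2structure}, is exactly the style of argument the paper indicates \cite{lgl} uses ("This calculation makes extensive use of the cofiber sequences \dots"). The key steps all check out: for $k\le -1$ the neighbouring terms $\ulhr^{k\signrep-1}({C_2}_+)(\trivo)$ and $\ulhr^{k\signrep}({C_2}_+)(\trivo)$ vanish because the underlying nonequivariant degrees $k-1$ and $k$ are nonzero, so $\epsilon\cdot(-)$ is an isomorphism there; for $k=0$ injectivity of $\epsilon\cdot(-)$ and the identification of $j^*$ with $\mfr$ (with $\ker\mfr=\ground\kappa$ by the choice of $\kappa=2-\mft\mfr 1$) give the image exactly. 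The closing remark—that formal considerations alone show only that $\epsilon^m\cdot(-)$ lands \emph{in} $\ground\kappa$, and the cofiber sequence is what supplies injectivity—correctly locates where the nonformal content sits.
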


Similarly, the elements $\mft (\iota^{2n+1})$ for $n\ge 1$ are divisible by
$\epsilon$.

\begin{lem}[Lewis]
For each $m\ge 1$ and $n\ge 1$, there is a unique element
$\epsilon^{-m}\mft(\iota^{2n+1})$ in dimension $(2n+1,-m)$ such that, under
the Dress pairing maps,
\[ \epsilon^m (\epsilon^{-m}\mft(\iota^{2n+1})) = \mft(\iota^{2n+1}).\]
Moreover,  each $\epsilon^{-m}\mft(\iota^{2n+1})$ is a generator of
$\ulhr^{(2n+1)(1-\signrep)-m\signrep}(S^0)(\trivo)$.
\end{lem}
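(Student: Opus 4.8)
The plan is to realize cup product with $\epsilon$ as a map in a long exact sequence — the same strategy used for $\epsilon^{-m}\kappa$ in the previous lemma — and then pin down its kernel and cokernel degree by degree. Write $\gamma = (2n+1)(1-\signrep)$, so that $\mft(\iota^{2n+1})$ lives in $\ulhr^\gamma(S^0)(\trivo)$. For $0 \le k \le m$, the virtual representation $\gamma - k\signrep$ satisfies $|(\gamma - k\signrep)^{C_2}| = 2n+1$ and $|\gamma - k\signrep| = -k$, so \autoref{thm::add2structure} gives $\ulhr^\gamma(S^0) \cong \lminus$ and $\ulhr^{\gamma-k\signrep}(S^0) \cong \jg{\ground/2}$ for $1 \le k \le m$; in particular every group $\ulhr^{\gamma-k\signrep}(S^0)(\trivo)$ with $0 \le k \le m$ is a copy of $\ground/2$. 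Moreover $\iota$ is a generator of $\ulhr^{1-\signrep}(S^0)(\freeo) \cong \ground$, and since products at the $\freeo$-level are computed by the ordinary cup product on $\neqhr^*(S^0;\ground)$ (see \autoref{subs::levelwisecup}), the power $\iota^{2n+1}$ generates $\lminus(\freeo) \cong \ground$; as the transfer of $\lminus$ is the canonical surjection $\ground \to \ground/2$, the class $\mft(\iota^{2n+1})$ generates $\lminus(\trivo) = \ground/2$.

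Next I would use that $\epsilon = i^*(\Sigma^\signrep 1)$ for the inclusion $i \colon S^0 \hookrightarrow S^\signrep$, so that $z \smile \epsilon = i^*(\Sigma^\signrep z)$ for all $z$, and hence cup product with $\epsilon$ is the composite $\ulhr^{\beta-\signrep}(S^0) \xrightarrow{\Sigma^\signrep} \ulhr^\beta(S^\signrep) \xrightarrow{i^*} \ulhr^\beta(S^0)$. Feeding the cofibre sequence ${C_2}_+ = S(\signrep)_+ \to S^0 \xrightarrow{i} S^\signrep$ into the exactness and suspension axioms of \autoref{ax::finalcoh} then yields a long exact sequence of Mackey functors
\[
\cdots \to \ulhr^{\beta-1}({C_2}_+) \to \ulhr^{\beta-\signrep}(S^0) \xrightarrow{\smile\epsilon} \ulhr^\beta(S^0) \to \ulhr^\beta({C_2}_+) \to \cdots .
\]
Since $\ulhr^\beta({C_2}_+)$ is the shifted Mackey functor $\ulhr^\beta(S^0) \boxp A_\freeo$, its value at $\trivo$ is $\ulhr^\beta(S^0)(\freeo) \cong \neqhr^{|\beta|}(S^0;\ground)$; thus $\ulhr^\beta({C_2}_+)$ vanishes whenever $|\beta| \ne 0$, while for $|\beta| = 0$ its value at $\trivo$ is a copy of the ($2$-torsion-free) ground ring $\ground$.

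I would then view multiplication by $\epsilon^m$ as the $m$-fold composite
\[
\ulhr^{\gamma-m\signrep}(S^0) \to \ulhr^{\gamma-(m-1)\signrep}(S^0) \to \cdots \to \ulhr^\gamma(S^0),
\]
whose $k$-th arrow is $\smile\epsilon \colon \ulhr^{\gamma-k\signrep}(S^0) \to \ulhr^{\gamma-(k-1)\signrep}(S^0)$ (using associativity of the cup product, and noting that the source degree of $\smile\epsilon^m$ is indeed $\gamma - m\signrep$). By exactness, the kernel of the $k$-th arrow is the image of $\ulhr^{\gamma-(k-1)\signrep-1}({C_2}_+)$, which vanishes because $|\gamma-(k-1)\signrep-1| = -k \ne 0$; hence every arrow is injective. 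Its cokernel embeds in $\ulhr^{\gamma-(k-1)\signrep}({C_2}_+)$; for $2 \le k \le m$ this group vanishes (as $|\gamma-(k-1)\signrep| = 1-k \ne 0$), so those arrows are isomorphisms of Mackey functors, whereas for $k = 1$ the cokernel embeds in $\ulhr^\gamma({C_2}_+)$, whose value at $\trivo$ is torsion-free, while the cokernel is itself a quotient of $\ulhr^\gamma(S^0)(\trivo) = \ground/2$, hence $2$-torsion, and therefore vanishes at $\trivo$ as well. Consequently the composite $\smile\epsilon^m$ is injective and restricts on $\trivo$-values to an isomorphism $\ground/2 \xrightarrow{\sim} \ground/2$. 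As $\mft(\iota^{2n+1})$ generates the target $\ulhr^\gamma(S^0)(\trivo) = \ground/2$, it has a unique preimage, which I name $\epsilon^{-m}\mft(\iota^{2n+1}) \in \ulhr^{\gamma-m\signrep}(S^0)(\trivo)$; uniqueness is injectivity of $\smile\epsilon^m$, the identity $\epsilon^m\bigl(\epsilon^{-m}\mft(\iota^{2n+1})\bigr) = \mft(\iota^{2n+1})$ is its defining property, and since an isomorphism carries generators to generators, $\epsilon^{-m}\mft(\iota^{2n+1})$ generates $\ulhr^{\gamma-m\signrep}(S^0)(\trivo) = \jg{\ground/2}(\trivo)$. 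Taking $m = 1$ also proves the divisibility of $\mft(\iota^{2n+1})$ by $\epsilon$ asserted just before the lemma.

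The one genuinely delicate point is the surjectivity of the last arrow, $k = 1$, where the obstruction group $\ulhr^\gamma({C_2}_+)$ is nonzero; this is precisely where $2$-torsion-freeness of $\ground$ and the identification $\ulhr^\gamma({C_2}_+)(\trivo) \cong \neqhr^0(S^0;\ground)$ are needed. The rest is bookkeeping — tracking the pair $\bigl(|(\gamma-k\signrep)^{C_2}|, |\gamma-k\signrep|\bigr)$ through the suspension isomorphisms so that the right clause of \autoref{thm::add2structure} is applied at each stage, and checking that cup product with $\epsilon$ really is the composite of a suspension isomorphism with the restriction along $S^0 \hookrightarrow S^\signrep$.
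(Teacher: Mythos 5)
Your argument is correct, and since the paper itself only cites \cite{lgl} for this lemma rather than giving a proof, your blind proof fills a genuine gap. The strategy — realizing $\smile\epsilon$ as the composite of the suspension isomorphism $\Sigma^\signrep$ with restriction along $S^0\hookrightarrow S^\signrep$, then feeding the cofiber sequence ${C_2}_+\to S^0\to S^\signrep$ into the exactness axiom — is exactly the mechanism Stong and Lewis use to compute $\ulhr^*(S^0)$ in the first place (compare the list of cofiber sequences at the start of the subsection on Burnside ring coefficients), so it is certainly the intended machinery. All the bookkeeping checks out: for $1\le k\le m$ the boundary obstruction $\ulhr^{\gamma-(k-1)\signrep-1}({C_2}_+)$ has underlying nonequivariant degree $-k\ne 0$ and so vanishes, giving injectivity of each $\smile\epsilon$ as a map of Mackey functors; for $k\ge 2$ the cokernel obstruction $\ulhr^{\gamma-(k-1)\signrep}({C_2}_+)$ also vanishes, giving isomorphisms of Mackey functors; and for $k=1$ the cokernel at $\trivo$ is simultaneously a quotient of $\ground/2$ and a submodule of the $2$-torsion-free $\ground$, hence zero. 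One small notational wrinkle: you wrote $\ulhr^\gamma(S^0)(\trivo)=\ground/2$ flatly, but since $\ulhr^\gamma(S^0)\cong\lminus$, which is defined under the standing assumption that $\ground$ has no $2$-torsion, that identification is implicitly invoking the same hypothesis you later use explicitly for the $k=1$ step; it would be slightly cleaner to flag that hypothesis up front. Otherwise the argument is complete and correct, and also yields the preceding claim that $\mft(\iota^{2n+1})$ is divisible by $\epsilon$, as you note.
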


When dealing with the multiplicative structure of $\ulhr^*(S^0)$, it is
often helpful to recall the Frobenius relations, i.e.\ the equations given in
\autoref{lem::pboxmaps} and coming from the commuting diagrams in
\autoref{lem::dresspairing}.  For example, looking at the levelwise ring
structure of $A = \ulhr^0(S^0)$, these tell us that
\[(\mft \mfr 1)^2 = \mft( (\mfr 1) (\mfr \mft \mfr 1) ) = 2(\mft\mfr 1),\]
using $G=\G$.  It follows that $\kappa^2 = 2\kappa$ as well.

\begin{thm}[Lewis]\label{thm::mult2structure}
As an algebra over $A(\freeo) = \ground$, $\ulhr^*(S^0)(\freeo) \cong
\ground[\iota,\iota^{-1}]$, the polynomial algebra on generators $\iota$ in
dimension $(1,0)$ and $\iota^{-1}$ in dimension $(-1,0)$, subject to the
relation $\iota \cdot \iota^{-1} = \mfr(1)$, the unit of $\ulhr^*(S^0)$.

As an algebra over $A(\trivo) = A(\G)$, the classical Burnside ring,
$\ulhr^*(S^0)(\trivo)$ is graded commutative, as discussed in
\autoref{prop::eqdegree} and \autoref{subs::levelwisecup}.  It is generated by the elements 
\begin{align*}
\epsilon &\in \ulhr^\signrep(S^0)(\trivo) \cong \jg{\ground}(\trivo) \\
\xi &\in \ulhr^{2\signrep-2}(S^0)(\trivo) \cong \r(\trivo) \\
\epsilon^{-m}\kappa & \in \ulhr^{-m\signrep}(S^0)(\trivo) \cong \jg{\ground}(\trivo) & & \text{ for each }m\ge 1 \\
\mft (\iota^m) &\in \ulhr^{m(1-\signrep)}(S^0)(\trivo) & & \text{ for each } m\ge 2 \\
\epsilon^{-m}\mft(\iota^{2n+1}) &\in \ulhr^{(2n+1)(1-\signrep)-m\signrep}(S^0)(\trivo) \cong \jg{\ground/2}(\trivo) & & \text{ for each }m,n\ge 1\\
\end{align*}
subject to the following relations.  First, the underlying graded
$A(\trivo)$-module of $\ulhr^*(S^0)(\trivo)$ is as described in
\autoref{thm::add2structure}; this forces the vanishing of some products.
Further, the Frobenius relation ${\mft(x)\cdot y = \mft(x\cdot \mfr(y))}$
dictates relations involving products with $\mft(\mfr (1))$ and the
generators $\mft(\iota^m)$.  Finally, we have the following relations:
\begin{align*}
\epsilon (\epsilon^{-1} \kappa) & = \kappa \\
\epsilon (\epsilon^{-1} \mft(\iota^{2n+1})) & = \mft(\iota^{2n+1}) \\
\end{align*}
The generators of $\ulhr^*(S^0)(\trivo)$ and $\ulhr^*(S^0)(\freeo)$ are
related by $\mfr \xi = \iota^{-2}$.
\end{thm}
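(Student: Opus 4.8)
The plan is to argue levelwise, as the reformulation in \autoref{subs::levelwisecup} permits, using Stong's additive computation \autoref{thm::add2structure} as the backbone, the Frobenius relations of \autoref{lem::pboxmaps} to control products that involve transfers, and the two lemmas of Lewis stated just above to supply the ``divided'' classes $\epsilon^{-m}\kappa$ and $\epsilon^{-m}\mft(\iota^{2n+1})$. The $\freeo$ level is the easy half: since $\ulhr^\alpha(S^0)(\freeo)\cong\neqhr^{|\alpha|}(S^0;\ground)$ with the cup product at this level being the ordinary nonequivariant cup product, the only nonzero groups sit in the dimensions $(m,0)$ and each is a free rank-one $\ground$-module; iterating the identifications that define $\iota$ and $\iota^{-1}$ shows $\iota^m$ is a generator in dimension $(m,0)$ for every $m\in\bZ$, and compatibility of the suspension isomorphisms forces $\iota\cdot\iota^{-1}=\mfr(1)$. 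Hence $\ulhr^*(S^0)(\freeo)\cong\ground[\iota,\iota^{-1}]$, and then $\mfr\xi=(\iota^{-1})^2=\iota^{-2}$ is immediate from the defining property of $\xi$ together with this ring structure.

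For generation at the $\trivo$ level, \autoref{thm::add2structure} tells us each group $\ulhr^\alpha(S^0)(\trivo)$ is cyclic over $\ground$ except at the origin, where $\{1,\kappa\}$ is a basis, so it suffices to exhibit in each nonzero dimension a product of the listed generators that is itself a generator. For the ``positive'' dimensions this is a direct check: $\epsilon$ is the Euler class of the sign representation $\signrep$, so $\epsilon^n$ is the Euler class of $n\signrep$ and the cofiber sequence $S(n\signrep)_+\to S^0\to S^{n\signrep}$ (already used in deriving \autoref{thm::add2structure}) shows $\epsilon^n$ generates $\ulhr^{n\signrep}(S^0)(\trivo)\cong\jg{\ground}(\trivo)$; since $\mfr$ is an isomorphism on $\r$, the equation $\mfr\xi^m=\iota^{-2m}$ shows $\xi^m$ generates $\r(\trivo)$; since the transfer in $\l$ (resp.\ $\lminus$) is the identity (resp.\ the surjection $\ground\to\ground/2$), $\mft(\iota^m)$ generates $\l(\trivo)$ or $\lminus(\trivo)$; and the products $\xi^m\epsilon^n$ generate the wedge of copies of $\jg{\ground/2}$, using the identity $\r\boxp\jg{\ground}\cong\jg{\ground/2}$ read off from \autoref{fig::multtable} and checking that the relevant cup products are isomorphisms. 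For the ``fourth-quadrant'' dimensions there is nothing to do: by the two lemmas of Lewis, $\epsilon^{-m}\kappa$ and $\epsilon^{-m}\mft(\iota^{2n+1})$ are well defined and are already asserted there to be generators.

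For the relations: products landing in the groups recorded as $0$ in \autoref{thm::add2structure} vanish for that reason, which accounts for ``the vanishing of some products.'' Every product involving a transfer class is then forced by the Frobenius relation $\mft(x)\cdot y=\mft(x\cdot\mfr y)$ together with the ring $\ground[\iota,\iota^{-1}]$ at the $\freeo$ level --- for instance $\mft(\iota^k)\cdot\xi^m=\mft(\iota^{k-2m})$ and $\mft(\mfr 1)\cdot y=\mft(\mfr y)$; multiplication by $\epsilon$ in the remaining positive dimensions is pinned down by the cofiber sequence above and multiplication by $\xi$ by injectivity of $\mfr$; and the two genuinely new relations $\epsilon(\epsilon^{-1}\kappa)=\kappa$ and $\epsilon(\epsilon^{-1}\mft(\iota^{2n+1}))=\mft(\iota^{2n+1})$ are precisely the characterizing properties supplied by Lewis's lemmas. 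Completeness of the list then amounts to checking that, using only these relations, any product of generators can be rewritten as a $\ground$-multiple of a single module generator in the correct dimension.

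I expect the main obstacle to be exactly this last completeness claim in and along the boundary of the fourth quadrant: products such as $\xi^m\epsilon^n$, or $\xi\cdot(\epsilon^{-m}\mft(\iota^{2n+1}))$, must be shown to reduce to the stated generators or to zero, the divided classes $\epsilon^{-m}(-)$ being characterized only up to the $\epsilon$-torsion behaviour visible in \autoref{fig::mod2}, and the cup-product maps into the $\ground/2$-valued groups are not automatically isomorphisms. Pushing these cases through will require the box-product computations of \autoref{fig::multtable} and the long exact sequences of the defining cofiber sequences, rather than formal manipulation alone.
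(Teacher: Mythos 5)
The paper does not prove \autoref{thm::mult2structure} at all; it defers to the appendix of \cite{lgl}. There is therefore no internal argument to compare against, but your sketch does outline the strategy Lewis actually follows: work levelwise via the Dress pairing, exhibit module generators in each nonvanishing dimension, pin down transfer products by the Frobenius relation $\mft(x)\cdot y=\mft(x\cdot\mfr y)$, and obtain the fourth-quadrant classes from the divisibility lemmas. As a plan, that much is sound.

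The genuine gap is the one you flag at the end, and it is not just tidying: you need to know that products such as $\xi^m\epsilon^n$ are generators of $\jg{\ground/2}(\trivo)$, not merely that the source and target of the cup-product map happen to be abstractly isomorphic Mackey functors. The table entry $\r\boxp\jg{\ground}\cong\jg{\ground/2}$ from \autoref{fig::multtable} concerns the box product, not the cup product, and the footnote in \autoref{subs::point2} asserting that the cup product is an isomorphism ``when the box product is'' is itself something that has to be proved, dimension by dimension, from the long exact sequences of $S(n\signrep)_+\to S^0\to S^{n\signrep}$. The same issue affects your $\freeo$-level claim: each $\ulhr^{m(1-\signrep)}(S^0)(\freeo)$ is abstractly $\ground$, but showing that $\iota^m$ generates and that $\iota\cdot\iota^{-1}=\mfr(1)$ requires tracing the bookkeeping axiom of \autoref{ax::finalcoh} through the representing maps, not just counting ranks. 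So your proposal is an accurate sketch of the shape of the argument, but it defers exactly the multiplicative verifications that constitute the content of the appendix to \cite{lgl}.
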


\begin{proof}
See the appendix to \cite{lgl}.
\end{proof}

\begin{rem}
We know $\ulhr^*(S^0)$ is graded commutative.  Using the notation of
\autoref{prop::eqdegree}, $(1-\tau)$ acts trivially on each
$\ground$-module $\ulhr^{m\signrep}(S^0)(\trivo) \cong \jg{\ground}$, so
\[ \epsilon^{m_1+m_2} (\epsilon^{-m_1}\kappa)(\epsilon^{-m_2}\kappa) = \epsilon^{m_1}(\epsilon^{-m_1}\kappa) \epsilon^{m_2}(\epsilon^{-m_2}\kappa) = \kappa^2 = 2\kappa\]
and thus
\[ (\epsilon^{-m_1}\kappa)(\epsilon^{-m_2}\kappa) = 2\epsilon^{-(m_1+m_2)}\kappa.\]
Similar reasoning, combined with the Frobenius relations, shows that
\[ (\epsilon^{-m_1}\mft(\iota^{2n+1}))(\epsilon^{-m_2}\kappa) = 0 \]
and 
\[ \xi (\epsilon^{-m}\mft(\iota^{2n+1})) = \epsilon^{-m} \mft(\iota^{2n-1})
\text{ for } m\ge 0, n\ge 2.\]
\end{rem}

Instead of describing the underlying additive structure of $\ulhr^*(S^0)$
and using this to simplify the description of the multiplicative structure,
we could alternatively present $\ulhr^*(S^0)$ solely in terms of generators
and relations by explicitly listing all relations forced by the vanishing
of a target, e.g.\ $\mft(\iota) = 0$; $2\xi\epsilon = 0$; and, using the
Frobenius relations, 
\[ \mft(\iota^{m_1}) \mft(\iota^{m_2}) = \begin{cases}
0 & \text{ if $m_1$ or $m_2$ is odd;} \\
2\mft(\iota^{m_1+m_2}) & \text{ if both are even.}
\end{cases} \]
See \cite{lgl}, Theorem 4.3, for a complete presentation from this point of
view.

}

\subsection{The cohomology of a point for $p > 2$}\label{subs::pointodd}

As we did in \autoref{subs::point2}, we will give the additive and
multiplicative structure of $\ulhr^*(S^0)$ without proof.  Proofs for all
claims in this subsection can be found in \cite{lgl}.

Recall from the beginning of this section that $\G$ for $p>2$ has
$\frac{p+1}2$ irreducible representations: the one-dimensional trivial
representation, and $\frac{p-1}2$ two-dimensional representations
$\rotrep_1,\ldots,\rotrep_{(p-1)/2}$ on which $\G$ acts by rotation.  For
concreteness, we make the following definition.

\begin{defn}
Fix a generator $g$ of $\G$.  For each $1\le k \le \frac{p-1}2$, let
$\rotrep_k$ be the underlying real representation of the complex
one-dimensional representation given by $gz = e^{2\pi i k/p} z$.
\end{defn}

It turns out that the isomorphism type of the Mackey functor
$\ulhr^\alpha(S^0)$ depends only on the ordered pair of dimensions
$(|\alpha^\G|,|\alpha|)$ and a constant $d(\alpha)\in \fp^\times/\{\pm
1\}$, defined below.  In fact, if $|\alpha|$ and $|\alpha^\G|$ are not both
zero, the isomorphism type depends only on the ordered pair
$(|\alpha^\G|,|\alpha|)$.  As a result, we will continue to display
$\ulhr^*(S^0)$ pictorially on a two-dimensional grid with axes
corresponding to $|\alpha^\G|$ and $|\alpha|$.  Since the irreducible
representations have dimensions $(1,1)$ and $(0,2)$, the difference between
$|\alpha^G|$ and $|\alpha|$ is even for every virtual representation
$\alpha$ of $\G$; thus not all ordered pairs give the dimensions of a virtual
representation.

When $\alpha$ has dimension $(0,0)$, $\ulhr^*(S^0) \cong
\tw{A}{d(\alpha)}$ for the constant $d(\alpha)$ alluded to above.
Recall from \autoref{lem::twequiv} that, for integers $d$, the isomorphism
type of $\tw{A}{d}$ depends only on the image of $d$ in
$\fp^\times/\{\pm 1\}$\footnote{If $\ground$ has more units than just $\pm
1$, it may in fact depend only on a quotient of this.}; so it makes sense
to talk about $\tw{A}{d}$ when $d\in  \fp^\times/\{\pm 1\}$.  We define the
function $d:RO(G)\to \fp^\times/\{\pm 1\}$ as follows.

\begin{defn}\label{def::reald}
Suppose that $\alpha = a_0 + \sum_{j} a_j \rotrep_j$, for $a_0,a_j\in\bZ$.
Then we set
\[ d(\alpha) = 1^{a_1} 2^{a_2}\cdots \left( \frac{p-1}2 \right)^{a_{(p-1)/2}}.\]
\end{defn}

This is the function which appears in \autoref{thm::addpstructure}.

\begin{thm}[Lewis, Stong]\label{thm::addpstructure}
When $G=\G$, the additive structure of $\ulhr^*(S^0)$ is as follows.
\[ \ulhr^\alpha (S^0) \cong \begin{cases}
\tw{A}{d(\alpha)} & \mydim\alpha = (0,0) \\
\r & \mydim\alpha = (-2m,0) \text{ for $m\ge 1$} \\
\l & \mydim\alpha = (2m,0) \text{ for $m\ge 1$} \\
\jg{\ground} & \mydim\alpha = (0,2n) \text{ for $n\in\bZ$, $n\ne 0$} \\
\jg{\ground/p} & \mydim\alpha = (-2m,2n) \text{ for $m\ge 1$, $n\ge 1$} \\
\jg{\ground/p} & \mydim\alpha = (2m+1,-2n+1) \text{ for $m\ge 1$, $n\ge 1$} \\
0 & \text{otherwise}
\end{cases} \]
\end{thm}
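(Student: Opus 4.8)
The plan is to bootstrap from the one value $\ulhr^0(S^0)\cong A$ --- which is forced by the dimension axiom together with the Mackey-functor-valued dimension axiom, and which agrees with the asserted answer since $A=\tw{A}{1}$ and $d(0)=1$ --- by feeding the cofiber sequences
\[\cycp_+ \longrightarrow S(\rotrep_k)_+ \longrightarrow \Sigma\cycp_+,\qquad S(\rotrep_k)_+ \longrightarrow S^0 \longrightarrow S^{\rotrep_k}\]
into long exact sequences. Applying $\ulhr^*$ to the second sequence and rewriting $\ulhr^\alpha(S^{\rotrep_k})\cong\ulhr^{\alpha-\rotrep_k}(S^0)$ by the suspension axiom produces the exact sequence
\[\cdots\to \ulhr^{\alpha-1}(S(\rotrep_k)_+)\xrightarrow{\,\partial\,} \ulhr^{\alpha-\rotrep_k}(S^0)\to \ulhr^\alpha(S^0)\to \ulhr^\alpha(S(\rotrep_k)_+)\xrightarrow{\,\partial\,} \ulhr^{\alpha+1-\rotrep_k}(S^0)\to\cdots,\]
which lets me transport knowledge of $\ulhr^*(S^0)$ between the dimensions $\alpha$, $\alpha-\rotrep_k$ and $\alpha+1-\rotrep_k$ once the connecting maps and the free-space groups $\ulhr^*(S(\rotrep_k)_+)$ are known. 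Since $RO(\cycp)$ is the free abelian group on the trivial representation and the $\rotrep_k$, iterating these sequences --- always in the direction that leaves the still-unknown term furthest from the origin --- reaches every $\alpha$.

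First I would nail down the inputs $\ulhr^*(\cycp_+)$ and $\ulhr^*(S(\rotrep_k)_+)$. Both $\cycp_+$ and $S(\rotrep_k)_+$ are free $\cycp$-CW complexes, and on such complexes the $RO(\cycp)$-grading collapses to the integer grading --- $\ulhr^\alpha(X_+)\cong\ulhr^{|\alpha|}(X_+)$, since $S^V$ becomes equivalent to $S^{|V|}$ after smashing with a free $\cycp$-spectrum --- so these reduce to integer-graded computations from very short cellular complexes. One gets $\ulhr^\alpha(\cycp_+)\cong A_\freeo$ when $|\alpha|=0$ and $0$ otherwise; feeding this into the long exact sequence of the first cofiber sequence, whose connecting homomorphism is the cellular coboundary, i.e.\ (up to sign) multiplication by $1-g^k$ on $A_\freeo(\freeo)=\ground[\cycp]$, gives $\ulhr^\alpha(S(\rotrep_k)_+)\cong\r$ for $|\alpha|=0$, $\cong\l$ for $|\alpha|=1$, and $0$ otherwise. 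The isomorphism \emph{type} here is independent of $k$, because $\gcd(k,p)=1$ makes the kernel and cokernel of $1-g^k$ on $\ground[\cycp]$ the same as for $k=1$; only the identifying isomorphism remembers $k$, and that is what the hard case below must track.

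Next I would run the induction outward through the plane with coordinates $(|\alpha^\G|,|\alpha|)$. For $\alpha$ with $|\alpha^\G|\neq 0$ or $|\alpha|\neq 0$, the terms of the exact sequence flanking $\ulhr^\alpha(S^0)$ are either zero (accounting for the dimensions where the answer is $0$) or already computed one step nearer the origin, so it remains only to identify the connecting map $\partial$. This I would do levelwise: on $\freeo$, using $\ulhr^\beta(Y)(\freeo)\cong\neqhr^{|\beta|}(Y;\ground)$, the map $\partial$ is the purely nonequivariant connecting map of $S^1_+\to S^0\to S^2$, hence an isomorphism in the one relevant degree; on $\trivo$ it is pinned down by naturality with respect to $\mfr$ and $\mft$ together with the already-known part of $\ulhr^*(S^0)$. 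In each case this makes the kernel or cokernel exactly one of $\l$, $\r$, $\jg{\ground}$, $\jg{\ground/p}$, as listed; where a genuine short exact sequence remains rather than an isomorphism, I would fix the extension using $\mfr\mft=\trace$ and, where needed, the module structure of $\ulhr^*(S^0)$ over itself that the induction has already built.

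The one real obstacle is the case $\mydim\alpha=(0,0)$, i.e.\ $\alpha=\sum_j a_j\rotrep_j$ with $\sum_j a_j=0$, where the answer is the twisted Burnside functor $\tw{A}{d(\alpha)}$ with $d(\alpha)=\prod_j j^{a_j}\in\fp^\times/\{\pm 1\}$ (see \autoref{def::reald}). Here tracking isomorphism types does not suffice: one must follow the \emph{actual} connecting maps along a path from $0$ to $\alpha$ in the plane --- necessarily passing through the dimensions $(0,\pm2),(0,\pm4),\dots$, where the cohomology is $\jg{\ground}$ --- and each elementary step (a suspension by $\pm\rotrep_k$) contributes, through the attaching map $1-g^k$ of $S(\rotrep_k)$, a multiplicative factor recording the integer $k$; composing the steps multiplies these factors. \autoref{lem::twequiv} then both identifies the composite with $\tw{A}{d(\alpha)}$ and shows that the outcome is independent of the chosen path, as it must be. Carrying out this bookkeeping carefully --- keeping straight the $\{\pm1\}$-ambiguity in $\fp^\times$ and the $RO(\cycp)$-grading conventions flagged on page~\pageref{rem::grading} --- is where essentially all the work of the theorem lies; the remaining cases are routine by comparison.
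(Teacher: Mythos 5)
The paper does not prove this theorem itself but cites the appendix of \cite{lgl}, noting only that the Lewis--Stong calculation rests on the cofiber sequences $\G_+ \to S(\rotrep)_+ \to \Sigma\G_+$ and $S(\rotrep)_+ \to S^0 \to S^\rotrep$; your proposal is a sketch of exactly that induction, feeding these same sequences into long exact sequences starting from $\ulhr^0(S^0)\cong A$, computing the free-orbit groups from the cellular chain complex of $S(\rotrep_k)$, and tracking the $k$-dependence of the attaching map $1-g^k$ to recover the twist $d(\alpha)$ in dimension $(0,0)$. This is essentially the paper's approach (as outsourced to \cite{lgl}), and the sketch is sound, with the genuine bookkeeping you flag --- the path-independence of the composite connecting maps and the $\{\pm1\}$-ambiguity in $d(\alpha)$ --- being where Lewis's appendix spends its effort.
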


\begin{proof}
See the appendix to \cite{lgl}.
\end{proof}

This is depicted in \autoref{fig::modp}.

\begin{figure}
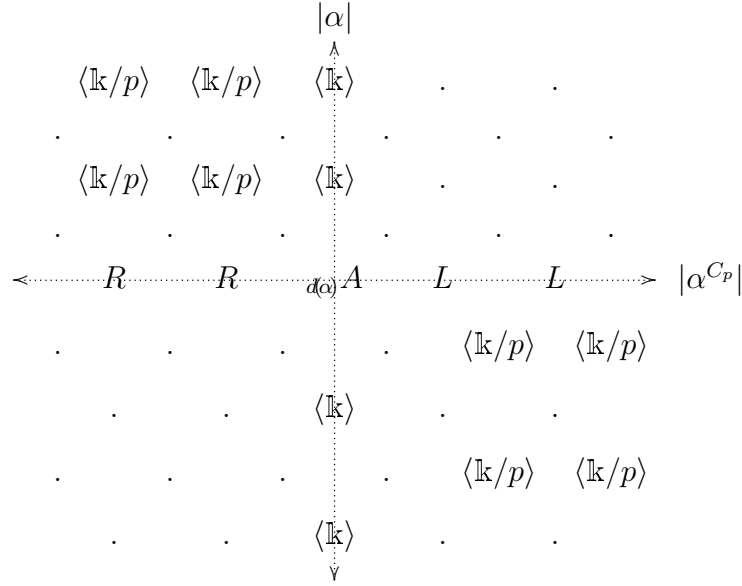

\[ \hspace{-2ex}
\cohomplot{|\alpha^\G|}{|\alpha|}{\tw{A}{d\!(\!\alpha\!)}}{\r}{\l}{\jg{\ground}}{\jg{\ground/p}}{}{1pc}
\]
\caption[\lofspace{}A plot of $\ulhrcapt^*(S^0)$ in the plane]{A plot of $\ulhrnoph^*(S^0)$ in the plane.  A
dot represents the zero Mackey functor.  A blank indicates that no virtual
representation has the given dimension.}
\label{fig::modp}
\end{figure}

We will finish this section by describing the cup product structure on
$\ulhr^*(S^0)$.   As with $p=2$, our approach will be to present
$\ulhr^*(S^0)(\trivo)$ and $\ulhr^*(S^0)(\freeo)$ in terms of generators
and relations.  Unfortunately, the presence of the twists $d(\alpha)$ for
$p>2$ significantly complicates this approach: choosing a generator
$\mu_\alpha$ of $\tw{A}{d(\alpha)}(\trivo)$ means choosing a $d(\alpha)\in
\ground$ rather than in $\fp^\times/\{\pm 1\}$, because $\mfr(\mu_\alpha)$
must be $d(\alpha)$ times the generator of $\tw{A}{d(\alpha)}(\freeo)$, and
in general there is no homomorphism $RO(\G)\to \ground$.  As a result, we
must choose a noncanonical set map $RO(\G)\to \ground$, and $\mu_\alpha
\mu_\beta$ will wind up being $\mu_{\alpha+\beta}$ plus an error term.
This complication will be compounded when we start thinking about algebras
over $\ulhr^*(S^0)$, e.g.\ $\ulhr^*(B_\G SO(2)) \cong
\ulhr^*(\cpv{\cxuniverse})$, the complex projective space on a complete
universe $\cxuniverse$\footnote{The $B_\G SO(2)$ indicates that this space
classifies real oriented two-plane bundles; see
\autoref{sec::eqclassifying}.}.

\begin{defn}\label{def::integerd}
Extend the $d$ of \autoref{def::reald} to a set map ${d\colon RO(\G)\to
\bZ}$ by reinterpreting the definition
\[ d(\alpha) = 1^{a_1} 2^{a_2}\cdots \left( \frac{p-1}2 \right)^{a_{(p-1)/2}}\]
as follows.  If $a_j \ge 0$, then
$j^{a_j}$ is as expected.  Otherwise, let $\text{inv}(j)$ be the smallest
positive integer such that $j\cdot \text{inv}(j) \equiv 1\pmod{p}$, and
interpret $j^{a_j}$ to mean $(\text{inv}(j))^{-a_j}\in\ground$.

We can similarly define a map of sets ${d\colon RO(\G)\to\ground}$ by using
$(j^{-1})^{-a_j}$ when $j$ is invertible in $\ground$, and
$(\text{inv}(j))^{-a_j}$ when it is not.
\end{defn}

We will usually rely on context to distinguish whether $d(\alpha)$ refers
to an element of $\fp$, an integer, or an element of $\ground$.

We are now in a position to define explicit generators of
$\ulhr^\alpha(S^0)$ for relevant $\alpha$.  The notation is intended to be
parallel to the notation given for $p=2$.  Note that the trivial
representation has dimension $(1,1)$, and each $\rotrep_j$ has dimension
$(0,2)$.  No virtual representation has dimension $(1,0)$, but
each $2-\rotrep_j$ has dimension $(2,0)$.

\begin{defn}\label{def::pgenerators}
For a generic $\alpha\in RO(\G)$, write $\alpha = a_0 +
\sum_{j=1}^{(p-1)/2} a_j\rotrep_j$.  Define generators $1$, $\mu_\alpha$,
$\iota_\alpha$, $\epsilon_\alpha$, and $\xi_\alpha$ in some of the Mackey
functors $\ulhr^\alpha(S^0)$ as follows.
\begin{enumerate}
\item Let $1\in \ulhr^0(S^0)(\trivo)$ be the image of $1\in A(\trivo)$
under the unit map $A\to \ulhr^*(S^0)$.  This is also the image of the
identity map $S^0\to S^0$ under the Hurewicz map.  For consistency with
\autoref{def::mualpha}, we write $\mu_0 = 1$.

\item \label{def::mualpha} If $\alpha\ne 0$ has dimension $(0,0)$, let
\[ W = \sum_{j \text{ s.t. } a_j<0} (-a_j)\rotrep_j.\]
Then $\alpha+W$ and $W$ are honest representations with $|\alpha+W| = |W| =
2m$ for some $m>0$, so each is the sum of $m$ nontrivial irreducible
representations.  Using our ordering of the $\rotrep_j$, we can then write
$\alpha = \sum_{j=1}^m \alpha_j$, where each $\alpha_j$ is the difference
of two distinct irreducible representations; that is, $\alpha_j =
\rotrep_{k_j} - \rotrep_{\ell_j}$ for distinct integers $1\le k_j,\ell_j
\le \frac{p-1}2$.  Then our explicit identification of the $\rotrep_j$
implies that the map $\bC^m\to\bC^m$ sending
\[ {(z_1,\ldots,z_m)\mapsto (z_1^{d(\alpha_1)},\ldots,z_m^{d(\alpha_m)})}\]
extends to one-point compactifications to give an equivariant map $S^W\to
S^{\alpha+W}$.  The image of this under the Hurewicz map is an element of
\[\ulhlr_{-\alpha}(S^0)(\trivo)\cong \ulhr^\alpha(S^0)(\trivo),\] which we
call $\mu_\alpha$. 

\item If $\alpha$ has dimension $(2n,0)$ for some integer $n$, then
$|\alpha| = 0$ and we can define $W$ as in \autoref{def::mualpha}.  The only
difference is that $\alpha$ may now have a nonzero number of trivial
representations, so $d(\alpha)$ no longer provides a way of constructing an
equivariant map $S^W\to S^{\alpha+W}$.  However, we can use our ordering of
the irreducible representations to produce a canonical nonequivariant
identification $S^W\to S^{\alpha+W}$.  This induces an isomorphism at the
$\freeo$ level
\[ \ulhr^0(S^0)(\freeo) \cong \ulhr^\alpha(S^\alpha)(\freeo)
\xrightarrow{\ \cong\ } \ulhr^\alpha(S^0).\]
Let $\iota_\alpha$ be the image of $\mfr(1)$ under this isomorphism.

\item If $\alpha$ has dimension $(0,2m)$ for some $m>0$, then we can write
$\alpha$ non-uniquely as $\alpha_0 + V$ for some honest representation $V$
of dimension $(0,2m)$ and $\alpha_0\in RO(\G)$ of dimension $(0,0)$.  The
inclusion $S^0\hookrightarrow S^V$ then induces a map on cohomology
${\ulhr^{\alpha_0}(S^0)\to \ulhr^\alpha(S^0)}$.  We define
$\epsilon_\alpha$ to be the image of $\mu_{\alpha_0}$ under this map.  It
is proved in \cite[Lemma A.11]{lgl} that $\epsilon_\alpha$ is independent
of the choice of $\alpha_0$ and $V$.

\item If $\alpha$ has dimension $(-2n,0)$ for $n>0$, then
$\ulhr^\alpha(S^0)\cong \r$, for which $\mfr$ is an isomorphism.  Let
$\xi_\alpha$ be $\mfr^{-1}(\iota_\alpha)$, so $\mfr(\xi_\alpha) =
\iota_\alpha$.  

\item Select any generator of $\ulhr^{3-2\rotrep_1}(S^0)(\trivo)$ and call
it $\nu_{3-2\rotrep_1}$.
\end{enumerate}
\end{defn}

For $p=2$, each copy of $\jg{\ground/2}$ in the
fourth quadrant had a unique generator at the $\trivo$ level.  Further, for
each such generator, multiplying by a power of $\epsilon$ gave a generator
of a copy of $\lminus(\trivo)$, which in turn was the image of a power of
$\iota$ under $\mft$.  Sadly, neither of these applies when $p>2$, since
there are no virtual representations of dimension $(2n+1,0)$.  It is
therefore necessary to make a noncanonical choice of generator in at least
one of the fourth-quadrant torsion groups; we have chosen
$\nu_{3-2\rotrep_1}$.  Fortunately, this one noncanonical choice is
sufficient.

\begin{prop}
Suppose $x\in \ulhr^{\alpha}(S^0)(\trivo)$ for some $\alpha$ in the fourth
quadrant, so $|\alpha| < 0$ and $|\alpha^G| > 0$.  Let $n,m\ge 0$.  For
every $\beta\in RO(\G)$ of dimension $(0,0)$, $\gamma$ of dimension
$(0,2m)$, and $\delta$ of dimension $(-2n,0)$, $x$ is uniquely divisible by
$\mu_\beta$, $\epsilon_\gamma$, and $\xi_\delta$.  That is, there are
elements $\mu_\beta^{-1}x$, $\epsilon_\gamma^{-1}x$, and $\xi_\delta^{-1}x$
with the expected properties.
\end{prop}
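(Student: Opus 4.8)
The plan is to reduce the assertion to a statement about maps of $\ground$-modules and then to dispatch the three families of generators separately; the point throughout is that a fourth-quadrant cohomology group has vanishing underlying ($\freeo$-level) part, so by the Frobenius relations of \autoref{lem::pboxmaps} transfers act trivially on it. Concretely: if $\alpha$ lies in the fourth quadrant then $\ulhr^\alpha(S^0)(\freeo)=\neqhr^{|\alpha|}(S^0;\ground)=0$ since $|\alpha|<0$, so $\ulhr^\alpha(S^0)$ has zero underlying level and, by \autoref{thm::addpstructure}, equals $\jg{\ground/p}$ when $|\alpha^\G|\ge 3$ is odd and $0$ otherwise; we may assume it is nonzero, in which case the three source degrees $\alpha-\beta$, $\alpha-\gamma$, $\alpha-\delta$ are again of this form. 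A map of Mackey functors with vanishing underlying levels is determined by its effect on $\trivo$-levels, so it suffices to prove that multiplication by each of $\mu_\beta$, $\epsilon_\gamma$, $\xi_\delta$ is an isomorphism of the relevant groups $\ulhr^\bullet(S^0)(\trivo)\cong\ground/p$. Moreover, if $x$ is in such a group then $\mfr x=0$, so by \autoref{lem::pboxmaps} $\mft(w)\cdot x=\mft(w\cdot\mfr x)=0$; thus multiplication by a class is unaffected if the class is changed by a transfer, and in particular $\tau:=\mft\mfr(1)$ acts as $0$.

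\emph{Divisibility by $\mu_\beta$.} As $\beta$ has dimension $(0,0)$ it is a virtual sum of rotation representations, so the honest representations $W$, $\beta+W$ occurring in the construction of $\mu_\beta$ in \autoref{def::pgenerators} (and $W'$, $-\beta+W'$ for $\mu_{-\beta}$) have trivial fixed-point spaces; hence the defining self-maps of representation spheres restrict to the identity on $\G$-fixed points, and $\mu_\beta\mu_{-\beta}\in\ulhr^0(S^0)(\trivo)=A(\trivo)$ has $\G$-fixed-point degree $1$. The quotient of $A(\trivo)$ by its transfer ideal is precisely the $\G$-fixed-point-degree homomorphism (see \autoref{prop::ghtpic}), so $\mu_\beta\mu_{-\beta}=1+\mft(w)$ for some $w$. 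By the reduction step, multiplication by $\mu_\beta\mu_{-\beta}$ — and likewise by $\mu_{-\beta}\mu_\beta$ — is the identity on every fourth-quadrant $\trivo$-level, so multiplication by $\mu_\beta$ and by $\mu_{-\beta}$ are mutually inverse isomorphisms $\ulhr^{\alpha-\beta}(S^0)(\trivo)\leftrightarrow\ulhr^{\alpha}(S^0)(\trivo)$, with $\mu_\beta^{-1}x=\mu_{-\beta}\cdot x$.

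\emph{Divisibility by $\epsilon_\gamma$.} Write $\gamma=\gamma_0+V$ with $V=\bigoplus_j v_j\rotrep_j$ an honest representation of dimension $(0,2m)$ and $\gamma_0$ of dimension $(0,0)$. By \autoref{def::pgenerators}, $\epsilon_\gamma$ is the image of $\mu_{\gamma_0}$ under the map $\ulhr^{\gamma_0}(S^0)\to\ulhr^{\gamma}(S^0)$ induced by $S^0\hookrightarrow S^V$, which is multiplication by the Euler class of $V$; by multiplicativity of Euler classes this equals $\prod_j\epsilon_{\rotrep_j}^{v_j}$, where $\epsilon_{\rotrep_j}$ is the Euler class of $\rotrep_j$ (the case $\gamma_0=0$, $V=\rotrep_j$). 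Hence $\epsilon_\gamma=\mu_{\gamma_0}\cdot\prod_j\epsilon_{\rotrep_j}^{v_j}$, and since multiplication by $\mu_{\gamma_0}$ is covered by the previous case, it remains to treat multiplication by a single $\epsilon_{\rotrep_j}$. Via the suspension isomorphism this is the map $\ulhr^{\beta-\rotrep_j}(S^0)\to\ulhr^{\beta}(S^0)$ induced by $S^0\hookrightarrow S^{\rotrep_j}$ in the cofiber sequence $S(\rotrep_j)_+\to S^0\to S^{\rotrep_j}$. As $S(\rotrep_j)$ is a free $\G$-space admitting a $\G$-CW structure with cells only in dimensions $0$ and $1$, the Mackey functor $\ulhr^\zeta(S(\rotrep_j)_+)$ is zero whenever $|\zeta|\notin\{0,1\}$, so for $\beta$ of underlying dimension $<0$ the long exact sequence shows multiplication by $\epsilon_{\rotrep_j}$ is an isomorphism. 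Every intermediate degree appearing when a fourth-quadrant class $x$ is divided by $\epsilon_\gamma$ has underlying dimension at most $|\alpha|\le -1$, so this applies at each step and yields a unique $\epsilon_\gamma^{-1}x$.

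\emph{Divisibility by $\xi_\delta$, and the main obstacle.} For indices of dimension $(-2n,0)$ the cohomology $\ulhr^\bullet(S^0)$ is $\r$, on which $\mfr$ is injective, and $\iota$ is multiplicative, so $\xi$ is multiplicative in its index. Any $\delta$ of dimension $(-2n,0)$ can be written $\sum_k(\rotrep_{j_k}-2)+\beta$ with $\beta$ of dimension $(0,0)$, whence $\xi_\delta=\bigl(\prod_k\xi_{\rotrep_{j_k}-2}\bigr)\cdot z_\beta$ for a class $z_\beta\in\ulhr^\beta(S^0)(\trivo)$ restricting to a generator (such $z_\beta$ exists because $\mfr$ is onto $\tw{A}{d(\beta)}(\freeo)$, $d(\beta)$ being prime to $p$). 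Multiplication by such a $z_\beta$ acts on a fourth-quadrant group as a unit of $\ground/p$ times multiplication by $\mu_\beta$ — its transfer part is killed by the Frobenius relation — hence is an isomorphism. So the whole problem reduces to showing that multiplication by a single $\xi_{\rotrep_j-2}$ is an isomorphism of fourth-quadrant $\trivo$-levels, and this is the hard part: there is no cohomology class to multiply back by (the product of $\xi_{\rotrep_j-2}$ with any class of dimension $(2,0)$ lies in the transfer ideal and therefore annihilates fourth-quadrant classes), and, unlike $\epsilon_{\rotrep_j}$, the class $\xi_{\rotrep_j-2}$ is not induced by an inclusion of fixed points, so no elementary cofiber sequence applies directly. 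The route I would take is to realize multiplication by $\xi_{\rotrep_j-2}$ as the map of $\ulhr^*(S^0)$-modules induced by the map of Eilenberg--MacLane spectra $HA\to\Sigma^{\rotrep_j-2}HA$ classified by $\xi_{\rotrep_j-2}$, compute the homotopy Mackey functors of its cofiber using the module structure of $\ulhr^*(S^0)$ from \autoref{thm::addpstructure}, and verify they vanish in the fourth-quadrant bidegrees; concretely this amounts to checking that $\xi_{\rotrep_j-2}\epsilon_{\rotrep_j}$ generates $\ulhr^{2\rotrep_j-2}(S^0)(\trivo)\cong\ground/p$ and then propagating the conclusion along the isomorphisms given by multiplication by $\epsilon_{\rotrep_j}$ obtained above. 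Controlling the non-canonical choices built into \autoref{def::pgenerators} — the integer lift of $d$ and the class $\nu_{3-2\rotrep_1}$ — is where the bulk of the care is needed.
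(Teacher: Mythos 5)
The paper does not actually prove this proposition: it is one of several results stated without proof and deferred to \cite{lgl} (the paper explicitly says it ``will simply state the results'' of that calculation). So there is no internal proof to compare against, and your proposal has to stand or fall on its own. Your opening reduction (use that $\mfr$ kills fourth-quadrant classes, hence by the Frobenius relation $\mft(w)\cdot x = \mft(w\cdot\mfr x) = 0$, so transfers act trivially and the problem is one of $\ground$-module maps $\ground/p\to\ground/p$) is a good one, and your two easy cases are correct. For $\mu_\beta$ the fixed-point-degree argument is clean: both $\mu_\beta$ and $\mu_{-\beta}$ come from Hurewicz images of maps $S^W\to S^{\beta+W}$ with $W^\G=0$, so the smash has $\G$-fixed-point degree $1$, forcing $\mu_\beta\mu_{-\beta} = 1 + b\tau$; and $\tau$ annihilates fourth-quadrant classes, so multiplications by $\mu_\beta$ and $\mu_{-\beta}$ are mutually inverse where it matters. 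For $\epsilon_\gamma$, factoring the Euler class and running the long exact sequence of $S(\rotrep_j)_+\to S^0\to S^{\rotrep_j}$ is exactly right: $\ulhr^\zeta(S(\rotrep_j)_+)$ vanishes unless $|\zeta|\in\{0,1\}$, so multiplication by $\epsilon_{\rotrep_j}$ is an isomorphism whenever the target degree has $|\zeta|<0$, and every intermediate degree you pass through satisfies this.

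The $\xi_\delta$ case, however, is where the content of the proposition lives, and your proposal does not close it. Your reduction to a single $\xi_{\rotrep_j-2}$ is fine (the auxiliary class $z_\beta$ acts as a unit times $\mu_\beta$ for the reasons you give, and $\mfr$ being an isomorphism on $\r$ pins $\xi_\delta$ down as the displayed product), but that is the easy part. You correctly identify why no cofiber sequence argument parallel to the $\epsilon$ case is available --- $\xi_{\rotrep-2}$ is not an Euler class, and there is no inverse class since anything in dimension $(2,0)$ is a transfer --- and then you propose to compute the cofiber of the map classified by $\xi_{\rotrep_j-2}$ and check its homotopy Mackey functors, or to verify that $\xi_{\rotrep_j-2}\epsilon_{\rotrep_j}$ generates $\ulhr^{2\rotrep_j-2}(S^0)(\trivo)\cong\ground/p$ and propagate. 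Neither of those is carried out, and the second one in particular is not obviously easier than the original claim: $\xi_{\rotrep_j-2}\epsilon_{\rotrep_j}$ lies in a second-quadrant group where $\mfr=0$ as well, so you have the same difficulty detecting whether the product is nonzero, and it is unclear how you intend to ``propagate along multiplication by $\epsilon_{\rotrep_j}$'' from a second-quadrant statement to a fourth-quadrant one (those multiplications keep the fixed-point degree constant, so they move within a column, not from one quadrant to the other). This is a genuine gap, not just an omitted routine verification; closing it is, in essence, the non-formal input from the cohomology-of-a-point computation in \cite{lgl}, which the paper is content to cite.
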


Also, $\mu_\alpha$ plays the desired role in $\ulhr^\alpha(S^0)$.

\begin{prop}
If $\alpha$ has dimension $(0,0)$, then $\mfr(\mu_\alpha) =
d(\alpha)\iota_\alpha$ in $\ulhr^\alpha(S^0)\cong \tw{A}{d(\alpha)}$.
\end{prop}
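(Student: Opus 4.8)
The plan is to verify the identity at the $\freeo$ level, where it becomes a purely nonequivariant degree computation. Since $\alpha$ has dimension $(0,0)$, the $\ground$-module $\ulhr^\alpha(S^0)(\freeo)$ is, via the adjunction $[\G_+\sm X,HM]_\G\cong[X,HM]$, identified with $\neqhr^{|\alpha|}(S^0;\ground)=\neqhr^0(S^0;\ground)\cong\ground$, and under this identification $\mfr$ is the forgetful map induced by the projection $\G_+\to S^0$. Recall that $\mu_\alpha$ is obtained from the equivariant map $f\colon S^W\to S^{\alpha+W}$ of \autoref{def::pgenerators} by applying the Hurewicz map $\htmf_*(-)\to\ulhlr_*(-;A)$ and then equivariant Spanier--Whitehead duality. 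Using the naturality of both of these with respect to $\mfr$---i.e.\ that $\mfr$ intertwines the equivariant Hurewicz map and duality with their nonequivariant counterparts applied to the underlying spectrum---I would conclude that $\mfr(\mu_\alpha)\in\ulhr^\alpha(S^0)(\freeo)$ is the image of the underlying nonequivariant map $f_e\colon S^W\to S^{\alpha+W}$ under nonequivariant Hurewicz and Spanier--Whitehead duality.

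Next I would describe $\iota_\alpha$ in the same terms. By construction $\iota_\alpha$ is the image of the unit $\mfr(1)$ under $\ulhr^0(S^0)(\freeo)\cong\ulhr^\alpha(S^\alpha)(\freeo)\xrightarrow{c^*}\ulhr^\alpha(S^0)(\freeo)$, where $c\colon S^W\to S^{\alpha+W}$ is the canonical nonequivariant identification; unwinding this, $\iota_\alpha$ is obtained from the degree-one map $c$ in exactly the way $\mfr(\mu_\alpha)$ is obtained from $f_e$. Hence $\mfr(\mu_\alpha)=\deg(f_e\circ c^{-1})\cdot\iota_\alpha$. Since $f_e$ is the one-point compactification of $(z_1,\dots,z_m)\mapsto(z_1^{d(\alpha_1)},\dots,z_m^{d(\alpha_m)})$, and each coordinate map $z\mapsto z^{d(\alpha_j)}$ has degree $d(\alpha_j)$ on $S^2$, this degree is $\prod_{j=1}^m d(\alpha_j)$, so $\mfr(\mu_\alpha)=\bigl(\prod_j d(\alpha_j)\bigr)\iota_\alpha$.

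Finally I would identify $\prod_{j=1}^m d(\alpha_j)$ with $d(\alpha)$. In the decomposition $\alpha=\sum_{j=1}^m\alpha_j$ with $\alpha_j=\rotrep_{k_j}-\rotrep_{\ell_j}$ of \autoref{def::pgenerators}, the $k_j$ enumerate with multiplicity the irreducibles occurring in $\alpha+W$ and the $\ell_j$ those occurring in $W$; since $d(\alpha_j)=k_j\cdot\text{inv}(\ell_j)$, the product is $\prod_{j:a_j>0}j^{a_j}\cdot\prod_{j:a_j<0}\text{inv}(j)^{-a_j}$, which is precisely $d(\alpha)$ as in \autoref{def::integerd}. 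The case $\alpha=0$ is the tautology $\mfr(1)=\iota_0$ with $d(0)=1$. As $\iota_\alpha$ is the image of the unit under an isomorphism it generates $\ulhr^\alpha(S^0)(\freeo)$, so the formula is exactly the description of $\mfr$ in the isomorphism $\ulhr^\alpha(S^0)\cong\tw{A}{d(\alpha)}$ supplied by \autoref{thm::addpstructure}.

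The step I expect to demand the most care is the compatibility claim in the first paragraph: that passing to the $\freeo$ level (i.e.\ applying $\mfr$) really does turn the equivariant Hurewicz map and equivariant Spanier--Whitehead duality into the nonequivariant ones applied to $f_e$. This is folklore, but the definitions of both $\mu_\alpha$ and $\iota_\alpha$ route through a chain of suspension isomorphisms together with the $\freeo$-level adjunction, and the argument only collapses to the clean degree computation once one checks that all of these are compatible with the forgetful map.
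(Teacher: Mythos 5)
Your argument is correct and is essentially the paper's proof, just spelled out in full. The paper's published proof amounts to two sentences: the map defining $\mu_\alpha$ has nonequivariant degree $\prod_j d(\alpha_j)$, which it declares equal to $d(\alpha)$ ``by definition,'' and $\iota_\alpha$ comes from the degree-one identification, so no sign appears. You make explicit two things the paper leaves implicit: (i) the compatibility of $\mfr$ with the equivariant Hurewicz map and Spanier--Whitehead duality, which justifies reducing the claim to a nonequivariant degree count, and (ii) the bookkeeping identity $\prod_{j=1}^m d(\alpha_j)=d(\alpha)$, which really does require unwinding \autoref{def::integerd} since the $k_j$ and $\ell_j$ enumerate the irreducibles of $\alpha+W$ and $W$ with multiplicity and the $\text{inv}(j)$'s must match up. Both fills are correct, and your worry about step (i) is appropriately placed --- that is the one place where the ``obvious by naturality'' claim genuinely needs the suspension isomorphisms and the $\freeo$-level adjunction to cooperate --- but the argument goes through. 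No gaps.
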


\begin{proof}
The map 
${(z_1,\ldots,z_m)\mapsto (z_1^{d(\alpha_1)},\ldots,z_m^{d(\alpha_m)})}$
of \autoref{def::mualpha} has nonequivariant degree
\[\prod_{j=1}^m d(\alpha_j) = d(\alpha),\]
by definition.  Since $\iota_\alpha$ come from the map
$(z_1,\ldots,z_m)\mapsto (z_1,\ldots,z_m)$, there is no sign introduced;
hence $\mfr(\mu_\alpha) = d(\alpha)\iota_\alpha$.
\end{proof}

If $\alpha$ has dimension $(0,0)$, \autoref{def::pgenerators} implicitly defines a basis
$\{\mu_\alpha,\mft\iota_\alpha\}$ for each $\ulhr^\alpha(S^0)(\trivo)$.
As we did for $p=2$, we will also define an alternative basis.  Write $\alpha = a_0 +
\sum_j a_j \rotrep_j$ as in \autoref{def::integerd}.  Then, by definition,
\[ d(-\alpha) d(\alpha) = \prod_j (j\cdot \text{inv}(j))^{a_j}.\]
By the definition of $\text{inv}(j)$, this product is congruent to 1 modulo
p, and thus there is an integer $b_\alpha$ such that $d(-\alpha) d(\alpha)
+ b_\alpha p = 1$.

\begin{defn}
Suppose $\alpha$ has dimension $(0,0)$, and set
$b_\alpha = \frac{1 - d(-\alpha) d(\alpha)}p$.
Then let $\{\kappa_\alpha,\sigma_\alpha\}$ be the basis of
$\tw{A}{d(\alpha)}$ given by
\begin{align*}
\kappa_\alpha & = p\mu_\alpha -d(\alpha) \mft(\iota_\alpha) \\
\sigma_\alpha & = d(-\alpha) \mu_\alpha + b_\alpha \mft(\iota_\alpha). 
\end{align*}
\end{defn}

As with $p=2$, the benefit is that $\mfr(\kappa_\alpha) = 0$ and
$\mfr(\sigma_\alpha) = \iota_\alpha$; with this basis
$\{\kappa_\alpha,\sigma_\alpha\}$, $\tw{A}{d(\alpha)}$ has diagram
\[ \mf{\ground\oplus\ground}{\ground}{\mymatrix{d(-\alpha) & p}}{\mymatrix{0\\1}}{\text{triv}} \]
We can also use $\kappa_\alpha$ to
define generators of the Mackey functors $\jg{\ground}$ in dimensions
$(0,-2m)$ for $m>0$.

\begin{prop}
If $\alpha$ has dimension $(0,0)$ and $\beta$ has dimension $(0,2m)$ for
$m>0$, then $\kappa_\alpha$ is divisible by $\epsilon_\beta$:  that is,
there is a unique element $\epsilon_\beta^{-1}\kappa_\alpha \in
\ulhr^{\alpha-\beta}(S^0)(\trivo)$ such that $\epsilon_\beta
(\epsilon_\beta^{-1}\kappa_\alpha) = \kappa_\alpha$.
\end{prop}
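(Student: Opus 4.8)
The plan is to reduce the assertion to one scalar being a unit, and then pin that scalar down with a Gysin sequence. Since $\mydim\alpha = (0,0)$ and $\mydim(\alpha-\beta) = (0,-2m)$ with $m\ge 1$, \autoref{thm::addpstructure} gives $\ulhr^\alpha(S^0)\cong\tw{A}{d(\alpha)}$ and $\ulhr^{\alpha-\beta}(S^0)\cong\jg{\ground}$. Multiplication by $\epsilon_\beta$ is a map of Mackey functors $\jg{\ground}\to\tw{A}{d(\alpha)}$; since $\mfr$ is zero on $\jg{\ground}$ while inside $\tw{A}{d(\alpha)}$ the restriction has kernel $\ground\kappa_\alpha$ (because $\mfr(\kappa_\alpha) = 0$ and $\mfr(\sigma_\alpha) = \iota_\alpha$ is a generator), the image of this map lies in $\ground\kappa_\alpha$. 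Thus on $\trivo$-levels it is the map $\ground\to\ground\kappa_\alpha$, $1\mapsto c\kappa_\alpha$, for a scalar $c = c(\alpha,\beta)\in\ground$, and the existence together with the uniqueness of $\epsilon_\beta^{-1}\kappa_\alpha$ is precisely the statement that $c$ is a unit.

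First I would handle the case in which $\beta$ is an honest representation $V$; then $V^{\cycp} = 0$ and $|V| = 2m$, so $S(V)$ is the free $\cycp$-sphere $S^{2m-1}$. Applying $\ulhr^*$ to the cofiber sequence $S(V)_+\to S^0\to S^V$, the map $\ulhr^{\alpha-V}(S^0)\to\ulhr^\alpha(S^0)$ in the resulting long exact sequence is multiplication by the image of the Thom class under the zero section, which is exactly the class $\epsilon_V$ of \autoref{def::pgenerators} attached to the honest representation $V$. By \autoref{thm::addpstructure} the group $\ulhr^{\alpha-V+1}(S^0)$ vanishes, its dimension $(1,1-2m)$ lying in the region where $\ulhr^*(S^0) = 0$, and $\ulhr^{\alpha-1}(S(V)_+) = 0$; hence the Gysin sequence collapses to a short exact sequence $0\to\ulhr^{\alpha-V}(S^0)\to\ulhr^\alpha(S^0)\to\ulhr^\alpha(S(V)_+)\to 0$ of Mackey functors. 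Because $S(V)$ is a free $\cycp$-space, its $A$-coefficient Bredon cohomology is the Borel cohomology of $S(V)/\cycp$ with constant $\ground$-coefficients; since every representation of $\cycp$ is orientable the $RO(\cycp)$-grading collapses to the integer grading there, and as $|\alpha| = 0$ one computes $\ulhr^\alpha(S(V)_+)\cong\r$ (value $\ground$ at $\trivo$ and at $\freeo$, restriction an isomorphism, transfer multiplication by $p$). So multiplication by $\epsilon_V$ is injective with cokernel $\r$; on $\trivo$-levels this reads $(\ground/c_V\ground)\oplus\ground\cong\ground$ for the scalar $c_V$ of that map, and tensoring with $\ground/\mathfrak m$ for each maximal ideal $\mathfrak m$ of $\ground$ and comparing dimensions forces $\ground/c_V\ground = 0$, i.e.\ $c_V\in\ground^\times$. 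Hence $\kappa_\alpha$ lies in the image of multiplication by $\epsilon_V$ at the $\trivo$ level, and $\epsilon_V^{-1}\kappa_\alpha$ exists and is unique.

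For a general $\beta$ of dimension $(0,2m)$, write $\beta = \sum_j a_j\rotrep_j$ and set $W = \sum_{a_j<0}(-a_j)\rotrep_j$, an honest representation with $\beta + W$ honest of dimension $(0,2m+|W|)$ and $m + \tfrac12|W|\ge 1$ (if $\beta$ is already honest, take $W = 0$). Applying the previous paragraph to $\beta + W$ yields $x\defeq\epsilon_{\beta+W}^{-1}\kappa_\alpha$, which is a generator of $\ulhr^{\alpha-\beta-W}(S^0)(\trivo)\cong\ground$ since multiplication by $\epsilon_{\beta+W}$ is an isomorphism there. Multiplicativity of the $\epsilon$-classes (immediate from their definition in terms of $\mu$-classes and Euler classes of honest representations) gives $\epsilon_{\beta+W} = \epsilon_\beta\epsilon_W$, so $\kappa_\alpha = \epsilon_\beta\cdot(\epsilon_W x)$. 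Writing $\epsilon_W x = c_W g$ for a generator $g$ of $\ulhr^{\alpha-\beta}(S^0)(\trivo)$ and recalling $\epsilon_\beta\cdot g = c\kappa_\alpha$, we obtain $\kappa_\alpha = c_W c\,\kappa_\alpha$ in the free rank-one module $\ground\kappa_\alpha$, whence $c$ is a unit --- which is exactly what was required. (Alternatively, the proposition is subsumed by the full description of the ring $\ulhr^*(S^0)$ in \cite{lgl}, but the argument above is self-contained given the additive structure.)

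The step I expect to be the real obstacle is the computation of $\ulhr^*(S(V)_+;A)$ in the relevant degrees: one must justify that for a free $\cycp$-space the $A$-coefficient Bredon theory reduces to Borel cohomology with the constant underlying coefficients, and that over such a space the $RO(\cycp)$-grading collapses to the ordinary one --- this is where orientability of the complex representations $\rotrep_j$ enters. Everything else is formal manipulation of the Gysin sequence and of the additive data recorded in \autoref{thm::addpstructure}, together with the multiplicativity and well-definedness of the $\epsilon$-classes.
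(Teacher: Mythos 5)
Your argument is correct, and it reconstructs the proposition from the additive data of \autoref{thm::addpstructure} together with the same cofiber sequences $S(V)_+\to S^0\to S^V$ that the paper says underlie \cite{lgl}'s computation of $\ulhr^*(S^0)$, so it is almost certainly in the same spirit as Lewis's own proof (which this paper defers to \cite{lgl} and does not reproduce). The reduction to a single scalar $c$ is cleanly justified: multiplication by $\epsilon_\beta$ is a genuine map of Mackey functors $\jg{\ground}\to\tw{A}{d(\alpha)}$ (it is $(\epsilon_\beta\boxp\id)$ followed by the product, with $\epsilon_\beta\colon A\to\ulhr^\beta(S^0)$ the Yoneda map), and commutation with $\mfr$ forces the image into $\ker\mfr=\ground\kappa_\alpha$. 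Your dimension checks for the collapse of the Gysin sequence are correct: $\alpha-V+1$ lies at $(1,1-2m)$, which falls in the vanishing region of \autoref{thm::addpstructure}, and $\ulhr^{\alpha-1}(S(V)_+)$ vanishes because $|\alpha-1|=-1$. The Nakayama-style conclusion that $\ground/c_V\ground\oplus\ground\cong\ground$ forces $c_V\in\ground^\times$ is valid (cyclic modules over a commutative ring; tensoring with every residue field). The reduction of general $\beta$ to the honest case via $\epsilon_{\beta+W}=\epsilon_\beta\epsilon_W$ is fine, but you are implicitly leaning on \cite[Lemma A.11]{lgl} (well-definedness of $\epsilon_\gamma$ under the choice of decomposition $\gamma=\gamma_0+V$): with the compatible choice $(\beta+W)_0=\beta_0$, $V_{\beta+W}=V_\beta+W$ the identity is indeed immediate from the factorization $S^0\hookrightarrow S^{V_\beta}\hookrightarrow S^{V_\beta\oplus W}$, and Lemma A.11 lets you compare with any other choice. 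You correctly identify the only genuinely delicate input as the computation $\ulhr^\alpha(S(V)_+)(\trivo)\cong\ground$ for free $S(V)$; the collapse of the $RO(\cycp)$-grading and triviality of local coefficients there do require orientability of the $\rotrep_j$, which for odd $p$ holds since every nontrivial irreducible is the realification of a complex representation. No gaps.
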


Putting all of these definitions and propositions together, we can get a
picture of the multiplicative structure of $\ulhr^*(S^0)$.  For the most
part, the structure is analogous to the simpler structure of
\autoref{thm::mult2structure}: there are now multiple generators
$\iota_\alpha$, but $\iota_\alpha \iota_\beta = \iota_{\alpha+\beta}$.
Similarly, $\epsilon_\alpha \epsilon_\beta =
\epsilon_{\alpha+\beta}$, $\mu_\alpha \epsilon_\beta =
\epsilon_{\alpha+\beta}$,  and $\xi_\alpha \xi_\beta =
\xi_{\alpha+\beta}$.  However, $\mu_\alpha \xi_\beta = d(\alpha)
\xi_{\alpha+\beta}$; and the noncanonical choices of the $\mu_\alpha$ for
$\alpha$ of dimension $(0,0)$ lead to some complications.  For example,
$\mfr(\mu_\alpha \mu_\beta) = d(\alpha) d(\beta) \iota_{\alpha+\beta}$, but
$\mfr(\mu_{\alpha+\beta}) = d({\alpha+\beta}) \iota_{\alpha+\beta}$, and in
general ${d({\alpha+\beta})\ne d(\alpha) d(\beta)}$. 

\begin{thm}[Lewis]\label{thm::multpstructure}
As an algebra over $A(\freeo) \cong \ground$,
\[ \ulhr^*(S^0)(\freeo) \cong
\ground[\iota_{\pm(\rotrep_2-\rotrep_1)},\ldots,\iota_{\pm(\rotrep_{(p-1)/2}-\rotrep_1)},\iota_{\pm(2-\rotrep_1)}]/\sim \]
where the relation $\sim$ is given by $\iota_{-\alpha}\iota_{\alpha} =
\mfr(1)$ for each $\iota_\alpha$ appearing in the list of generators.

As an algebra over $A(\trivo) = A(\G)$, the classical Burnside ring,
$\ulhr^*(S^0)(\trivo)$ is graded commutative, as discussed in
\autoref{prop::odddegree} and \autoref{subs::levelwisecup}.  It is generated
by the elements
\begin{align*}
\mu_\alpha &\in \ulhr^{\alpha}(S^0)(\trivo) \cong \tw{A}{d(\alpha)}(\trivo) & &
\text{for } \alpha = \pm(\rotrep_j-\rotrep_1),\ 2\le j\le \frac{p-1}2 \\
\epsilon_{\rotrep_1} &\in \ulhr^{\rotrep_1}(S^0)(\trivo)\cong \jg{\ground}(\trivo) \\
\xi_{\rotrep_1 - 2} &\in \ulhr^{\rotrep_1-2}(S^0)(\trivo)\cong \r(\trivo) \\
\epsilon_{\rotrep_1}^{-m}\kappa_0 &\in \ulhr^{-m\rotrep_1}(S^0)(\trivo)\cong \jg{\ground}(\trivo) & & \text{for each } m\ge 1 \\
\epsilon_{\rotrep_1}^{-m} \xi_{\rotrep_1-2}^{-n} \nu_{3-2\rotrep_1} &\in \ulhr^\alpha(S^0)(\trivo) \cong \jg{\ground/p}(\trivo) & &
\text{for each } m,n\ge 1 \text{ and }\\
& & & \alpha = (3-2\rotrep_1)-m\rotrep_1-n(\rotrep_1-2) \\
\end{align*}
subject to the following relations.  First, the underlying graded
$A(\trivo)$-module of $\ulhr^*(S^0)(\trivo)$ is as described in
\autoref{thm::addpstructure}; this forces the vanishing of some products.
Further, the Frobenius relation ${\mft(x)\cdot y = \mft(x\cdot \mfr(y))}$
dictates relations involving products with $\mft(\mfr (1))$ and the
generators $\mft(\iota_\alpha^m)$.  Finally, we have the following
relations; the restrictions on the dimensions of the representations
involved can be determined from context and so are left implicit.
\begin{align*}
\epsilon_\alpha \epsilon_\beta & =  \epsilon_{\alpha+\beta} \\
\mu_\alpha \epsilon_\beta & =  \epsilon_{\alpha+\beta} \\
\xi_\alpha \xi_\beta & =  \xi_{\alpha+\beta} \\
\mu_\alpha \xi_\beta & =  d(\alpha) \xi_{\alpha+\beta} \\
\epsilon_\alpha \xi_\beta & =  d(\delta-\beta) \epsilon_\gamma \xi_\delta & &
\text{whenever } \alpha+\beta =  \gamma+\delta \\
\epsilon_\beta^{-1}\kappa_\alpha & =  \epsilon_{\delta}^{-1}\kappa_\gamma & & 
\text{whenever } \alpha+\delta =  \beta+\gamma \\
\epsilon_\beta(\epsilon_\beta^{-1}\kappa_\alpha) & =  \kappa_{\alpha} \\
\mu_\alpha \mu_\beta & = \mu_{\alpha+\beta} + \left( \frac{d(\alpha) d(\beta) -
d({\alpha+\beta})}p \right) \mft(\iota_{\alpha+\beta}) \\
\end{align*}
The generators of $\ulhr^*(S^0)(\trivo)$ and $\ulhr^*(S^0)(\freeo)$ are
related by $\mfr \mu_\alpha = d(\alpha)\iota_\alpha$, for each $\alpha$ of
dimension $(0,0)$, and $\mfr \xi_\alpha = \iota_\alpha$.
\end{thm}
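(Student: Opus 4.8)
The plan is to use the reinterpretation of \autoref{subs::levelwisecup}: by \autoref{lem::pboxmaps}, the cup product $\ulhr^*(S^0)\boxp\ulhr^*(S^0)\to\ulhr^*(S^0)$ is exactly the data of two graded ring maps, one on $\ulhr^*(S^0)(\trivo)$ and one on $\ulhr^*(S^0)(\freeo)$, compatible via the Frobenius relations. So it suffices to identify the two levelwise rings and then note the Frobenius relations tie them together as claimed. The $\freeo$-level is the easy half: since the underlying nonequivariant spectrum of $HA$ is $H\ground$, one has $\ulhr^\alpha(S^0)(\freeo)\cong\neqhr^{|\alpha|}(S^0;\ground)$, which is $\ground$ when $|\alpha|=0$ and $0$ otherwise, matching \autoref{thm::addpstructure}. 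The generators $\iota_\alpha$ are the images of $\mfr(1)$ under the canonical nonequivariant trivialisations $S^W\to S^{\alpha+W}$ fixed by the chosen ordering of the $\rotrep_j$, so the relations $\iota_\alpha\iota_\beta=\iota_{\alpha+\beta}$ and $\iota_{-\alpha}\iota_\alpha=\mfr(1)$ are just the statement that these chosen identifications of underlying spheres are compatible with smash products — pure bookkeeping about the trivialisations.

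The $\trivo$-level is where the work is. The central observation is that $\mfr\colon\ulhr^*(S^0)(\trivo)\to\ulhr^*(S^0)(\freeo)$ is a ring homomorphism, and that on the Mackey functors $\r$ occurring in \autoref{thm::addpstructure} (dimensions $\mydim\alpha=(-2m,0)$) the map $\mfr$ is an isomorphism. So each product whose target is such an $\r$ is forced by the $\freeo$-level ring and the restriction formulas $\mfr\xi_\alpha=\iota_\alpha$, $\mfr\mu_\alpha=d(\alpha)\iota_\alpha$ already established in the excerpt: this gives $\xi_\alpha\xi_\beta=\xi_{\alpha+\beta}$ and $\mu_\alpha\xi_\beta=d(\alpha)\xi_{\alpha+\beta}$ immediately. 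Products involving a transfer are governed by the Frobenius relation $\mft(x)\cdot y=\mft(x\cdot\mfr y)$, which together with $\mfr\mft=\cdot\,p$ disposes of all products with the $\mft(\iota_\alpha^m)$. The relations among $\epsilon$'s and $\kappa$'s — $\epsilon_\alpha\epsilon_\beta=\epsilon_{\alpha+\beta}$, $\mu_\alpha\epsilon_\beta=\epsilon_{\alpha+\beta}$, the well-definedness of $\epsilon_\beta^{-1}\kappa_\alpha$, $\epsilon_\beta(\epsilon_\beta^{-1}\kappa_\alpha)=\kappa_\alpha$, and $\epsilon_\alpha\xi_\beta=d(\delta-\beta)\epsilon_\gamma\xi_\delta$ — follow by combining the functoriality of cohomology applied to commuting squares of inclusions $S^0\hookrightarrow S^V$ (through which each $\epsilon_\alpha$ is defined), the already-cited independence-of-choices lemmas, the degree of the relevant map $\bC^m\to\bC^m$ where a $d(\cdot)$-factor appears, and the fact that the additive structure of \autoref{thm::addpstructure} makes the transfer error terms vanish in the torsion groups $\jg{\ground}$, $\jg{\ground/p}$. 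Finally, \autoref{thm::addpstructure} itself pins down which products land in the zero group and so forces the remaining vanishing relations; graded-commutativity of the $\trivo$-level ring is immediate from \autoref{prop::odddegree}, as for $p$ odd the sign is just $(-1)^{|\alpha^\G||\beta^\G|}$.

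The one relation that is not formal is $\mu_\alpha\mu_\beta=\mu_{\alpha+\beta}+\bigl(\tfrac{d(\alpha)d(\beta)-d(\alpha+\beta)}{p}\bigr)\mft(\iota_{\alpha+\beta})$, precisely because on $\tw{A}{d(\alpha+\beta)}$ the restriction $\mfr$ fails to be injective. Here I would pass to homology via Spanier--Whitehead duality and represent $\mu_\alpha$ and $\mu_\beta$ by the explicit power maps $(z_1,\dots,z_m)\mapsto(z_1^{d(\alpha_1)},\dots)$ of \autoref{def::pgenerators}; the product $\mu_\alpha\mu_\beta$ is then represented by their composite, a self-map of a representation sphere of underlying degree $d(\alpha)d(\beta)$ whose $\G$-fixed-point restriction is the identity. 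The right-hand side is represented by a stable map with the same underlying degree (using $d(\alpha)d(\beta)\equiv d(\alpha+\beta)\bmod p$ and that $\mft(\iota_{\alpha+\beta})$ has underlying degree $p$) and the same fixed-point degree (since $\mft(\iota_{\alpha+\beta})$ factors through $\G_+$, whose fixed points are empty). Tom Dieck's injectivity theorem, \autoref{prop::ghtpic}, then forces the two stable classes to agree. I expect this comparison — producing the geometric representatives and getting both their underlying and their fixed-point degrees, together with the exact normalisation of $\mft(\iota_{\alpha+\beta})$, to line up — to be the main obstacle; everything else reduces to organised bookkeeping with $\mfr$ as a ring map, the Frobenius relations, the inclusions $S^0\hookrightarrow S^V$, and the additive structure of \autoref{thm::addpstructure}.
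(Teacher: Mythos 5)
The paper does not actually prove this theorem: as stated just before \autoref{subs::point2} and repeated at the start of \autoref{subs::pointodd}, both the additive and multiplicative structure of $\ulhr^*(S^0)$ are quoted from the appendix of \cite{lgl} without proof. So there is no in-paper argument to compare your outline against; you have written something where the paper deliberately wrote nothing.

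That said, your sketch is a plausible reconstruction and is consistent with the fragments the paper \emph{does} prove in the vicinity (the degree computation of $\mfr(\mu_\alpha)$, the alternative basis $\{\kappa_\alpha,\sigma_\alpha\}$, the divisibility propositions, and the remark after \autoref{lem::pboxmaps} that $\mfr$ is a ring map). The organizing principle --- use $\mfr$ as an injective ring map into the $\freeo$-level whenever the target is $\r$, Frobenius for everything involving a transfer, functoriality of $S^0\hookrightarrow S^V$ plus vanishing of $\mft$ in the $\jg{\cdot}$ groups for the $\epsilon$'s, and a degree argument for the one genuinely non-formal relation $\mu_\alpha\mu_\beta$ --- is the right shape. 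Two places where you have glossed over real work: first, \autoref{prop::ghtpic} is stated for $\pi^\G_0(\Sigma^\infty S^0)$, i.e.\ for self-maps of a fixed sphere, so applying it to compare classes in $\htmf_{-\alpha-\beta}(S^0)$ for $\alpha+\beta\neq0$ of dimension $(0,0)$ requires fixing a (non-equivariant, fixed-point compatible) identification of $S^{\alpha+\beta+W}$ with $S^W$ before ``degree'' is even well-defined, and one must check this normalization is the one implicit in the definitions of $\iota_{\alpha+\beta}$ and $\mu_{\alpha+\beta}$; second, you verify relations but not that the listed elements \emph{generate}, which still needs an induction off \autoref{thm::addpstructure} and the divisibility propositions. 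Neither is a gap so much as deferred bookkeeping, and the Hurewicz map being a ring map (unit of the ring spectrum $HA$) does make the degree comparison legitimate once the normalization is pinned down.
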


\begin{rem}
It follows from \autoref{thm::multpstructure} and the Frobenius relations
that $\kappa_\alpha \kappa_\beta = \kappa_{\alpha+\beta}$ and $\mu_\alpha
\kappa_\beta = \kappa_{\alpha+\beta}$.  Also, if $x \in
\ulhr^\beta(S^0)(\trivo)$ for some $\beta$ in the fourth quadrant, then
$\kappa_\alpha x = 0$ because $p x = 0$ and $\mft(\iota_\alpha) x =
\mft(\iota_\alpha \mfr(x)) = 0$.
\end{rem}

Note that, in choosing algebra generators at the $\trivo$ level, it is
sufficient to choose $\mu_\alpha$ for $\alpha$ of the form
$\rotrep_j-\rotrep_1$, since any other representation of dimension $(0,0)$
is a sum of these.  Then for any $(m,n)$, as long as we have a generator of
$\ulhr^\alpha(S^0)(\trivo)$ for some $\alpha$ of dimension $(m,n)$,
multiplication by the $\mu_\alpha$ will give generators of
$\ulhr^{\alpha'}(S^0)(\trivo)$ for every other $\alpha'$ of dimension
$(m,n)$.  This explains the somewhat arbitrary appearance of $\rotrep_1$ in
the set of generators in \autoref{thm::multpstructure}.

\chapter[Cells in even dimensions]{Cohomology of $\G$-CW-complexes with cells in even dimensions}\label{ch::freeness}
\section{A freeness theorem}\label{sec::freeness}

In this section, we will temporarily ignore the algebra structure of
$\ulhr^*(-)$, and view it as a module over $\ulhr^*(S^0)$.

When dealing with the nonequivariant singular cohomology of a CW-complex,
it is immediate from the definitions that a space with cells only in even
dimensions must have cohomology which is free as a module over the
cohomology of a point, because all boundary maps in the singular chain
complex are zero.  In this section, we will describe a similar result for
$\ulhr^*(-)$, and give some applications.

In \autoref{def::trivialCW}, we defined a $G$-CW-complex with trivial
cells.  If we are interested in putting CW structures on spaces found in
nature, this is a very restrictive definition, and so we make the following
definition.

\begin{defn}\label{def::CW}
A \defword{$G$-CW-complex} $X$ is the colimit of a sequence of subspaces
$X_n\subset X$ defined as follows.  $X_0$ is a finite $G$-set.  At each
stage, $X_{n+1}$ is formed from $X_n$ by attaching cells of the form
$G\times_K D(V)$ along the boundary $G\times_K S(V)$.  Here $K<G$ is a
subgroup, $V$ is a $K$-representation, and $D(V)$ and $S(V)$ are the unit
disk and unit sphere, respectively, of $V$.  We do not place restrictions
on the dimension of $V$.  Since the $n$ in $X_n$ has no geometric meaning,
we refer to $X_n$ as the $n^\text{th}$ \defword{filtration} of $X$.  A
$G$-CW-complex is \defword{finite} if it is built from finitely many cells.
\end{defn}

For example, a representation sphere $S^V$ has a $G$-CW structure with one
zero cell $G/G$, i.e.\ $G\times_G D(0)$, and one cell $G\times_G D(V)$
attached to the zero cell by collapsing the boundary $G\times_G S(V)$ to a
point.

This new definition includes the trivial $G$-CW-complexes of
\autoref{def::trivialCW}, since for a trivial representation $\bR^n$,
$G/K\times D^n = G\times_K D(\bR^n)$.

In general, it is not clear what the appropriate definition of an
``even-dimensional'' cell is in this context.  We can make the following
definition, however, which agrees with intuition for $G=\G$.

\begin{defn}
A cell $G\times_K DV$ is \defword{even-dimensional} if $|V^L|$ is even for
every subgroup $L$ of $K$.
\end{defn}

Return now to the group $G = \G$.  The only subgroups of $\G$ are
$\G$ itself and the trivial subgroup.  Thus the cells are either of the
form $\G\times D^n$ or $\G\times_\G D(V) = D(V)$ for a (possibly trivial)
$\G$-representation $V$.  Using the definition above, $\G\times D^n$ is
even-dimensional if $n$ is even; $D(V)$ is even-dimensional if $|V|$ and
$|V^\G|$ are both even.\footnote{Recall that, when $p>2$, $|V|$ and
$|V^\G|$ always have the same parity, so either both or neither are even.
They may have different parities for $p=2$, but we still use the same
definition of even-dimensional: both must be even.}

Consider for a moment a $\G$-CW-complex $X$ which is built by attaching one
cell ${\G\times_{K_n} D(V_n)}$ at each stage $n$.  We then have a cofiber
sequence
\[ X_{n-1} \to X_{n} \to \G_+\sm_{K_{n}} S^{V_{n}},\]
inducing a long exact sequence in cohomology,
\[ \xrightarrow{\ \partial\ } \ulhr^*(\G_+\sm_{K_{n}} S^{V_{n}}) \to \ulhr^*(X_{n}) \to \ulhr^*(X_{n-1}) \xrightarrow{\ \partial\ }
\ulhr^{*+1}(\G_+\sm_{K_{n}} S^{V_{n}}) \to\]
From the previous sections, we know that
\[ \ulhr^*(\G_+\sm_{\triv} S^n) \cong \ulhr^{*}(S^n)\boxp A_\freeo
\cong \ulhr^{*-n}(S^0)\boxp A_\freeo  \]
and
\[ \ulhr^*(S^V) \cong \ulhr^{*-V}(S^0),\]
both free modules over $\ulhr^*(S^0)$.  If we knew from an inductive
hypothesis that $\ulhr^*(X_{n-1})$ was free as a module over the graded
Green functor $\ulhr^*(S^0)$, and if we could additionally show
that the boundary map $\partial$ was zero\footnote{Something not necessarily
forced by dimensional considerations!}, then the long exact sequence would
break up into short exact sequences.  Using the freeness of
$\ulhr^*(X_{n-1})$, we could then conclude that
\[ \ulhr^*(X_{n}) \cong \ulhr^*(X_{n-1})\oplus \ulhr^*(\G_+\sm_{K_n} S^{V_n}),\]
making $\ulhr^*(X_{n})$ free as well.  The same reasoning would hold if we allowed
finitely many cells to be attached to $X$ at each stage, by the additivity
axiom for cohomology.  Thus, in such a situation, each $X_n$ would have
cohomology which was free over the cohomology of a point.  Further, since
each $\ulhr^*(X_n)\to \ulhr^*(X_{n-1})$ would then be a projection, the appropriate $\lim^1$ term
would vanish.  This would allow us to conclude that $\ulhr^*(X)$ was the
product of copies of $\ulhr^*(S^0)$ and $\ulhr^*(\G_+)$ in the appropriate
dimensions.  

In the examples we will discuss in this \paper{}, this product agrees with the
corresponding coproduct; so the above sketch outlines a method for showing
that the cohomology of such spaces $X$ is free with generators
corresponding to the cells attached.  
The theorem below, a slightly corrected version of Theorem $2.6$ in
\cite{lgl}, gives several examples of spaces $X$ for which
the sketch above can be fleshed out.\footnote{\cite[Theorem 2.6]{lgl} says
that $X_0$ should consist of a single orbit, but this is not a necessary
restriction.  \cite{lgl} also does not put restrictions on the dimensions of the
cells, which means that its product may not be the same as the desired
coproduct.} These examples are not exhaustive, but they are sufficient for
the applications we have in mind.

Recall the notation for shifted Mackey functors from \autoref{def::unnamed} on
page \pageref{def::unnamed}, interpreted for $G=\G$ on page
\pageref{def::pshifted}.

\begin{thm}[Lewis]\label{thm::freeness}
Suppose that $X$ is a $\G$-CW-complex meeting the following conditions.
\begin{itemize}
\item Each $X_n$ is a finite $\G$-CW-complex.

\item All cells of $X$ have even dimension.

\item Suppose $V$ and $W$ are $\G$-representations such that $DW$ is in the
$n^\text{th}$ filtration $X_n$ for some $n$ and $DV$ is one of the cells
attached to $X_n$ to create $X_{n+1}$.  If $|V|>|W|$, then
$|V^\G|\ge|W^\G|$.

\item For each positive integer $N$, only finitely many cells of the form
$D(V)$ have $|V|\le N$, only finitely many have $|V^\G|\le N$, and only
finitely many cells $\G\times D^n$ have $n\le N$.
\end{itemize}
Then $\ulhr^*(X_+)$ is free as a module over the graded Green functor
$\ulhr^*(S^0)$.  Further, $\ulhr^*(X_+)$ decomposes as a direct sum.  Its
summands consist of one copy of $\ulhr^*({X_0}_+)$, one copy of $\Sigma^V
\ulhr^*(S^0)$ for each cell of the form $D(V)$ with $|V|>0$, and one copy of
$\Sigma^{2n} \ulhr^*(\G_+) \cong \Sigma^{2n} \ulhr^*(S^0)\boxp A_\freeo$
for each cell of the form $\G\times D^{2n}$, $n>0$.
\end{thm}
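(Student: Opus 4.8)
The plan is to fill in the inductive sketch given immediately before the theorem. I would argue by induction on the filtration, proving simultaneously that each $\ulhr^*({X_n}_+)$ is free over the graded Green functor $\ulhr^*(S^0)$ with the asserted summands, and that each restriction $\ulhr^*({X_n}_+)\to\ulhr^*({X_{n-1}}_+)$ is a split epimorphism. The base case is $X_0$, a finite $\G$-set: by additivity $\ulhr^*({X_0}_+)$ is a direct sum of copies of $\ulhr^*((\G/\G)_+)=\ulhr^*(S^0)$ and of $\ulhr^*((\G/\triv)_+)=\ulhr^*(\G_+)\cong\ulhr^*(S^0)\boxp A_\freeo$, both free. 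For the inductive step, attaching the finitely many cells that build $X_{n+1}$ from $X_n$ yields a cofiber sequence ${X_n}_+\to{X_{n+1}}_+\to\bigvee_i\G_+\sm_{K_i}S^{V_i}$ and a long exact sequence in $\ulhr^*$; by additivity the right-hand term is $\bigoplus_i\ulhr^*(\G_+\sm_{K_i}S^{V_i})$, and each summand is a free $\ulhr^*(S^0)$-module, being either $\ulhr^{*-V_i}(S^0)\cong\ulhr^*(S^{V_i})$ (when $K_i=\G$) or $\Sigma^{2m}\ulhr^*(S^0)\boxp A_\freeo$ (when $K_i=\triv$, a free cell $\G\times D^{2m}$). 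So once the connecting homomorphism $\partial$ is shown to vanish, the long exact sequence breaks into short exact sequences, which split because the quotient is free; the inductive description of $\ulhr^*({X_{n+1}}_+)$, its freeness, and the splitting then follow. Finally, since the tower $\{\ulhr^*({X_n}_+)\}$ consists of split epimorphisms, the relevant $\lim^1$ vanishes and the Milnor sequence gives $\ulhr^*(X_+)\cong\lim_n\ulhr^*({X_n}_+)$, the product of the listed modules; the fourth hypothesis, together with the fact that the nonzero locus of $\ulhr^*(S^0)$ omits the open third quadrant of the plane with coordinates $(|\alpha^\G|,|\alpha|)$ (see \autoref{thm::add2structure} and \autoref{thm::addpstructure}), ensures that in each fixed $RO(\G)$-degree only finitely many summands contribute, so this product coincides with the asserted direct sum.

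The heart of the matter is the vanishing of $\partial\colon\ulhr^*({X_n}_+)\to\ulhr^{*+1}\bigl(\bigvee_i\G_+\sm_{K_i}S^{V_i}\bigr)$. Using the inductive decomposition of the source, $\partial$ is a matrix of $\ulhr^*(S^0)$-module maps whose source is a summand of the form $\Sigma^W\ulhr^*(S^0)$ --- with $W=0$, or $W$ an even-dimensional $\G$-representation of a previously attached cell with $|W|>0$ --- or of the form $\Sigma^{2n}\ulhr^*(\G_+)\cong\Sigma^{2n}\ulhr^*(S^0)\boxp A_\freeo$, and whose target is a shift of $\ulhr^*(S^0)$ or of $\ulhr^*(S^0)\boxp A_\freeo$ attached to an even-dimensional cell. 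By freeness --- the Yoneda-type identification of maps out of $\ulhr^*(S^0)$ with elements of the target, and of maps out of $\ulhr^*(S^0)\boxp A_\freeo$ with elements of the target's value at $\freeo$ --- each entry of $\partial$ is governed by a single shifted cohomology group of a point: the entry is zero as soon as $\ulhr^\gamma(S^0)$ vanishes (when source and target are both of the first type) or as soon as $\ulhr^\gamma(S^0)(\freeo)\cong\neqhr^{|\gamma|}(S^0;\ground)$ vanishes (when a free cell is involved), where $\gamma$ is the appropriate difference of $RO(\G)$-degrees shifted by the one-dimensional trivial representation. Because every cell involved is even-dimensional --- in the strong sense that both $|V|$ and $|V^\G|$ are even, which for $G=\G$ is what ``even-dimensional'' means --- the integer $|\gamma|$ is odd. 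Hence $\neqhr^{|\gamma|}(S^0;\ground)=0$, which kills every entry with a free cell involved; and, consulting \autoref{thm::add2structure} and \autoref{thm::addpstructure}, $\ulhr^\gamma(S^0)$ itself can be nonzero only when $(|\gamma^\G|,|\gamma|)$ lies in the ``fourth-quadrant'' torsion region, forcing $|\gamma^\G|\ge 3$ and $|\gamma|\le -1$. The inequality $|\gamma|\le -1$ unwinds to $|V|>|W|$ for the two cells responsible for the entry, and then the third hypothesis of the theorem gives $|V^\G|\ge|W^\G|$, whence $|\gamma^\G|\le 1<3$ --- a contradiction. So $\partial=0$, completing the induction.

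The step I expect to be the main obstacle is exactly this analysis of $\partial$: carefully enumerating the combinations of summand types, pinning down which shifted cohomology-of-a-point group classifies each matrix entry and with what degree shift, and running the parity-and-quadrant bookkeeping uniformly for $p=2$ --- where $|\alpha|$ and $|\alpha^\G|$ carry independent parities, so the strong form of ``even-dimensional'' is essential --- and for odd $p$. The remaining ingredients (the cofiber sequences, additivity, splitting of short exact sequences of free modules, the $\lim^1$ argument, and the identification of the product with the coproduct) are routine given the axioms and facts collected in \autoref{ax::finalcoh} and the Mackey functor computations of \autoref{subs::favoritemf}.
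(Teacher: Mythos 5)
Your proposal is correct and follows essentially the same argument as the paper's: induction on the filtration, vanishing of $\partial$ via the free-module Yoneda identification combined with the parity of $|\gamma|$ and $|\gamma^\G|$ to reduce to the fourth-quadrant $\jg{\ground/p}$ region, then the third hypothesis to rule that out, and finally the $\lim^1$/coproduct step using the fourth hypothesis and the vanishing of $\ulhr^*(S^0)$ in the open third quadrant. The only presentational difference is that you organize the boundary map as a matrix over all combinations of source and target summand types, whereas the paper splits into two cases according to whether the newly attached cell is $\G\times D^{2m}$ or $D(V)$; the underlying computation is identical.
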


\begin{proof}
As outlined above, it suffices to show two things: that the boundary maps
are all zero in the long exact sequences associated to the cofiber
sequences
\[ X_{n-1} \to X_n \to X_n/{X_{n-1}},\]
and that the product $\ulhr^*(X_+)$ agrees with the corresponding coproduct.
For ease of notation, suppose that each $X_n$ is formed by attaching a
single cell to $X_{n-1}$; the proof for the general situation is exactly the
same, but the notation is more cumbersome.

We will first address the question of the boundary maps $\partial$.
There are two cases to consider, corresponding to the two types of cell in
a $\G$-CW-complex.  First, suppose that $X_n$ is formed by attaching a cell
$\G\times D^{2m}$ to $X_{n-1}$, giving the long exact sequence 
\[ \cdots \xrightarrow{\ \partial\ } \ulhr^*(\G_+\sm S^{2m}) \to \ulhr^*({X_n}_+) \to \ulhr^*({X_{n-1}}_+)
\xrightarrow{\ \partial\ } \ulhr^{*+1}(\G_+\sm S^{2m}) \to \cdots\]
We know that $\ulhr^*(\G_+\sm S^{2m}) \cong \ulhr^{*-2m}(S^0)\boxp A_\freeo$,
so
\[ \ulhr^{\alpha+1}(\G_+\sm S^{2m}) \cong \begin{cases}
0 & |\alpha| \ne 2m-1 \\
A_\freeo & |\alpha| = 2m-1.
\end{cases} \]
By hypothesis, $\ulhr^*({X_{n-1}}_+)$ is a direct sum of free
$\ulhr^*(S^0)$-modules lying in even dimensions.  The forgetful functor
from $\ulhr^*(S^0)$-modules to graded Mackey functors is right adjoint to
the free functor $M\mapsto \ulhr^*(S^0)\boxp M$.  It follows that a map of
$\ulhr^*(S^0)$-modules out of $\ulhr^*({X_{n-1}}_+)$ is the same as a map of
Mackey functors out of an appropriate sum of shifted copies of $A$ and
$A_\freeo$.  However, these shifted copies of $A$ and $A_\freeo$ all lie in
even dimensions, and $\ulhr^{*+1}(\G_+\sm S^{2m})$ is zero in all even
dimensions.  The vanishing of the target thus forces $\partial = 0$.

Next, suppose that $X_n$ is formed by attaching a cell $D(V)$ to $X_{n-1}$,
so that we have the long exact sequence
\[ \cdots \xrightarrow{\ \partial\ } \ulhr^*(S^V) \to \ulhr^*({X_n}_+) \to \ulhr^*({X_{n-1}}_+)
\xrightarrow{\ \partial\ } \ulhr^{*+1}(S^V) \to \cdots\]
As before, $\partial$ is given by a map of Mackey functors out of an
appropriate sum of copies of $A$ and $A_\freeo$.  $\ulhr^{*+1}(S^V) \cong
\ulhr^{*+1-V}(S^0)$, where $V$ has even dimension, so we may use our
knowledge of $\ulhr^*(S^0)$ to examine the structure of the target.
Whether $p$ is odd or even, the only even degrees in which
$\ulhr^{*+1-V}(S^0)$ is nonzero correspond to the copies of
$\jg{\ground/p}$ in the fourth quadrant.  Suppose that a copy of $A$ in
dimension $W$ has a nonzero target in $\ulhr^{*+1-V}(S^0)$.  It follows
that ${(|W^\G|,|W|) = (|V^\G|-1+2k+1,|V|-1-2\ell)}$ for some positive
integers $k$ and $\ell$.  Hence $|V|>|W|$ but $|V^\G| < |W^\G|$, which
contradicts the assumption in the theorem about the dimensions of the cells
of $X$.  Thus $\partial$ is zero on the components of $\ulhr^*({X_{n-1}}_+)$
corresponding to cells $D(W)$.   Similarly, a map out of $A_\freeo$ in the
source is determined by selecting an element of $\jg{\ground/p}(\freeo)$ in
the corresponding dimension in the target.  Since $\jg{\ground/p}(\freeo) =
0$, $\partial$ must also be zero on these summands.

It follows that, for complexes $X$ meeting the hypotheses of the theorem,
the boundary maps $\partial$ are all zero.  Hence, by induction,
$\ulhr^*({X_n}_+)$ is free on generators in dimensions corresponding to the
cells of $X_n$ for each $n$.

Since $\ulhr^*({X_n}_+)$ is the direct sum of $\ulhr^*({X_{n-1}}_+)$ and a free
summand, and $\ulhr^*({X_n}_+)\to \ulhr^*({X_{n-1}}_+)$ is the projection, the
appropriate $\lim^1$ term vanishes and we see that
\[ \ulhr^*({X}_+) = \prod_V \ulhr^{*-V}(S^0) \times \prod_m \ulhr^{*-2m}(S^0)_\freeo\]
where the products run over the dimensions of the cells in $X$.
The final hypothesis of the theorem guarantees that only finitely many
cells of $X$ contribute to the cohomology in any given dimension, because
the cohomology of a point vanishes throughout the ``third quadrant,''
i.e.\ when $|\alpha^\G|<0$ and $|\alpha|<0$.  Hence the infinite product
above agrees with the corresponding coproduct, and $\ulhr^*({X}_+)$ is free on
generators in dimensions corresponding to the cells of $X$.
\end{proof}

\begin{rem}
An analogous but slightly stronger result holds for homology, by a similar
argument.  Since homology commutes with colimits, we may remove the final
hypothesis of the theorem.
\end{rem}

\begin{rem}
Ferland and Lewis have shown in \cite{FL} that, for homology $\ulhlr_*(-)$,
a similar result with a twist holds if we weaken the third hypothesis of
the theorem to say that there are only finitely many cells $D(V)$ attached
after $D(W)$ such that $|V|>|W|$ but $|V^\G|<|W^\G|$.  In this situation,
it is possible to have nonzero boundary maps $\partial$. However, Ferland
and Lewis show that the resulting homology $\ulhlr_*(X_+)$ is still free, but
on generators whose dimensions they are unable to determine!  In fact, this
brings up a related issue: even if $\ground = \bZ$, it is possible to have
two isomorphic free $\ulhlr_*(S^0)$ modules whose generators are in
different dimensions.  The situation becomes either worse or better,
depending on point of view, when $\ground = \bQ$; in that situation, all
the possible dimensions for the generators of $\ulhlr_*(X_+)$ yield
isomorphic free $\ulhlr_*(S^0)$ modules.
\end{rem}

The main application we have in mind for \autoref{thm::freeness} is $X =
\cpv{\cxuniverse}$, the space of complex lines in a complete complex
universe $\cxuniverse$.  By a complete universe, we mean a complex
$\G$-representation which contains countably infinitely many copies of each
finite-dimensional representation.  For concreteness, we will view
$\cxuniverse$ as follows.  Let $\cxrep$ be the complex one-dimensional
representation of $\G$ whose underlying real representation is the
$\rotrep_1$ of \autoref{subs::pointodd}, and write $\cxrep^n$ to mean
$\cxrep^{\otimes n} = \cxrep\otimes\cdots\otimes\cxrep$.  Then \[ \{\cxrep,
\cxrep^2, \ldots, \cxrep^{p-1}, \cxrep^{p} = 1\}\] gives a complete set of
the irreducible complex representations of $\G$.  We may then view
$\cxuniverse$ as the direct sum
\begin{align*}
\cxuniverse & = 1\oplus \cxrep\oplus \cxrep^2\oplus \cdots\oplus \cxrep^{p-1}\oplus
        \cxrep^p\oplus \cxrep^{p+1}\oplus \cdots \\
    & \cong 1\oplus \cxrep\oplus \cxrep^2\oplus \cdots\oplus \cxrep^{p-1}\oplus
        1\oplus \cxrep\oplus \cdots
\end{align*}
built up from the flag of subrepresentations ${\cxuniverse^0 \subset
\cxuniverse^1 \subset \cxuniverse^2 \subset \cdots}$, where we define
${\cxuniverse^n \defeq \bigoplus_{k=0}^n \cxrep^k}$.

The usual nonequivariant Schubert cell decomposition generalizes to give a
$\G$-CW structure on $\cpv{\cxuniverse}$.  Explicitly, suppose that we are
given a complex line in $\cxuniverse$.  We can use the decomposition
${\cxuniverse = 1\oplus \cxrep\oplus \cxrep^2 \oplus\cdots}$ to choose a
basis for $\cxuniverse$; in the coordinates for this basis, our line can be
viewed as the set of $\bC$-scalar multiples of a point with coordinates
${(x_0,x_1,\ldots,x_{N-1},1,0,0,\ldots)}$.  That is, we choose any point in
the line and then normalize so that the last nonzero coordinate is 1.
Then, for a fixed $N$, the set of all points of the form
${(x_0,x_1,\ldots,x_{N-1},1,0,0,\ldots)}$ is the interior of a disk of real
dimension $2N$.  When we consider the coordinate-wise action of $\G$ on
this disk, we see that it is the interior of the $\G$-representation
$D(\cxrep^{-N} \cxuniverse^{N-1})$, with boundary in
$D(\cxrep^{-(N-1)}\cxuniverse^{N-2})$.

Thus, if we define $\omega_N \defeq \cxrep^{-N}\cxuniverse^{N-1} =
\cxrep^{-N}(1\oplus\cxrep\oplus\cdots\oplus\cxrep^{N-1})$,
\label{def::omega} we see that $\cpv{\cxuniverse}$ has a $\G$-CW structure
with one cell $D(\omega_N)$ of real dimension $(|\omega_N^\G|,|\omega_N|) =
(2\floor{\frac{N}p},2N)$ for each integer $N\ge 0$.  These dimensions are
plotted for $p=3$ in \autoref{fig::dimplot}.

\begin{figure}
\[ \xymatrix@R=-0.25pc@C=0.5pc{
{\phantom{\makebox[0pt][l]{$|\alpha^\G|$}}} &   & \makebox[0pt]{$|\alpha|$}  &   &   &   &   &   &   &   &   & \\
{\phantom{\makebox[0pt][l]{$|\alpha^\G|$}}} & . &   & . &   & . &   & . &   & . &   & \\
{\phantom{\makebox[0pt][l]{$|\alpha^\G|$}}} &   & . &   & \makebox[0pt][c]{$\omega_5$} &   & . &   & . &   & . & \\
{\phantom{\makebox[0pt][l]{$|\alpha^\G|$}}} & . &   & . &   & . &   & . &   & . &   & \\
{\phantom{\makebox[0pt][l]{$|\alpha^\G|$}}} &   & . &   & \makebox[0pt][c]{$\omega_4$} &   & . &   & . &   & . & \\
{\phantom{\makebox[0pt][l]{$|\alpha^\G|$}}} & . &   & . &   & . &   & . &   & . &   & \\
{\phantom{\makebox[0pt][l]{$|\alpha^\G|$}}} &   & . &   & \makebox[0pt][c]{$\omega_3$} &   & . &   & . &   & . & \\
{\phantom{\makebox[0pt][l]{$|\alpha^\G|$}}} & . &   & . &   & . &   & . &   & . &   & \\
{\phantom{\makebox[0pt][l]{$|\alpha^\G|$}}} &   & \makebox[0pt][c]{$\omega_2$} &   & . &   & . &   & . &   & . & \\
{\phantom{\makebox[0pt][l]{$|\alpha^\G|$}}} & . &   & . &   & . &   & . &   & . &   & \\
{\phantom{\makebox[0pt][l]{$|\alpha^\G|$}}} &   & \makebox[0pt][c]{$\omega_1$} &   & . &   & . &   & . &   & . & \\
{\phantom{\makebox[0pt][l]{$|\alpha^\G|$}}} & . &   & . &   & . &   & . &   & . &   & \\
\ar@{<.>}[rrrrrrrrrrr] &   & \makebox[0pt][c]{$\omega_0$} &   & . &   & . &   & . &   & . & \makebox[0pt][l]{$|\alpha^\G|$} \\
{\phantom{\makebox[0pt][l]{$|\alpha^\G|$}}} & . &   & . &   & . &   & . &   & . &   & \\
{\phantom{\makebox[0pt][l]{$|\alpha^\G|$}}} &   & \ar@{<.>}[uuuuuuuuuuuuu]  &   &   &   &   &   &   &   &   & \\
} \]
\caption[\lofspace{}The equivariant dimensions of Schubert cells]{The equivariant dimensions of the first few Schubert cell representations $\omega_N$ for $p=3$.  The dimensions of virtual representations are marked with dots; dimensions not corresponding to virtual representations are left blank.}
\label{fig::dimplot}
\end{figure}
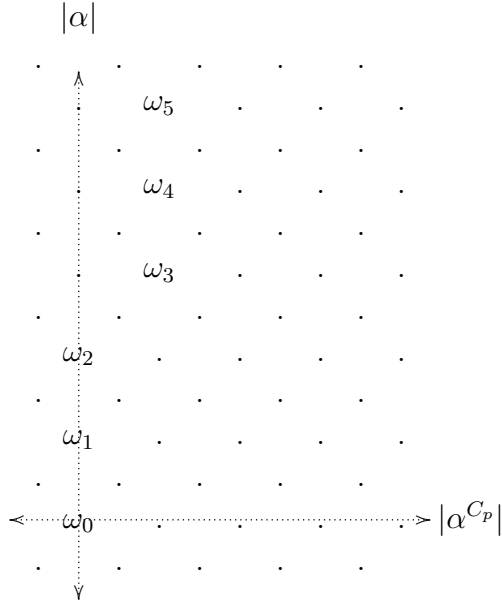

\begin{cor}\label{cor::cxproj}
Let $\cpv{\cxuniverse}$ be the space of complex lines in a complete complex
universe $\cxuniverse$.  Then the $\G$-CW structure on $\cpv{\cxuniverse}$
described above meets the requirements of \autoref{thm::freeness}, so as a
module over $\ulhr^*(S^0;A)$,
\[ \ulhr^*(\cpv{\cxuniverse}_+;A)\cong \bigoplus_{N\ge0} \Sigma^{\omega_N}
\ulhr^*(S^0;A)\]
where $\Sigma^{\omega_N}$ denotes the suspension by the underlying real
representation of the complex representation $\omega_N$.\hfill $\Box$
\end{cor}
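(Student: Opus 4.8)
The plan is to verify that the Schubert-cell $\G$-CW structure on $\cpv{\cxuniverse}$ described above satisfies each of the four hypotheses of \autoref{thm::freeness}, and then to read the decomposition directly off that theorem's conclusion. Here the $N^\text{th}$ filtration is $\cpv{\cxuniverse^N}$, with $X_0 = \cpv{\cxuniverse^0}$ a single $\G$-fixed point, and each subsequent filtration is obtained by attaching the single cell $D(\omega_N)$, where $\omega_N = \cxrep^{-N}\cxuniverse^{N-1}$ has equivariant dimension $\mydim{\omega_N} = (2\floor{N/p},2N)$. The first hypothesis is then clear, since $\cpv{\cxuniverse^N}$ has exactly $N+1$ cells and is finite. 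The second is also immediate: every cell is of the form $D(\omega_N)$, which is even-dimensional because both $2\floor{N/p}$ and $2N$ are even, and there are no free cells $\G\times D^n$ at all.

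The substantive point is the third hypothesis, which, as noted above, is absent from the original \cite[Theorem 2.6]{lgl}, and I expect the following monotonicity observation to be its entire content. The cells are attached in order of increasing $N$, so $D(\omega_M)$ lies after $D(\omega_N)$ in the filtration precisely when $M>N$. The underlying-dimension function $N\mapsto|\omega_N| = 2N$ is strictly increasing, while the fixed-dimension function $N\mapsto|\omega_N^\G| = 2\floor{N/p}$ is non-decreasing. Hence, if $D(\omega_M)$ comes after $D(\omega_N)$ with $|\omega_M|>|\omega_N|$, then $M>N$, so $|\omega_M^\G| = 2\floor{M/p}\ge 2\floor{N/p} = |\omega_N^\G|$; this is exactly the inequality the third hypothesis of the corrected \autoref{thm::freeness} demands. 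For the fourth hypothesis, $|\omega_M| = 2M\le N$ bounds $M\le N/2$ and $|\omega_M^\G| = 2\floor{M/p}\le N$ bounds $M< p(N/2+1)$, so only finitely many cells meet either condition, and the clause about cells $\G\times D^n$ is vacuous.

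It then remains only to unwind the conclusion of \autoref{thm::freeness}. That conclusion produces a direct-sum decomposition of $\ulhr^*(\cpv{\cxuniverse}_+)$ as a module over $\ulhr^*(S^0)$ consisting of one copy of $\ulhr^*({X_0}_+)$, one copy of $\Sigma^{\omega_N}\ulhr^*(S^0)$ for each cell $D(\omega_N)$ with $|\omega_N|>0$, and one copy of $\Sigma^{2n}\ulhr^*(\G_+)$ for each free cell $\G\times D^{2n}$. Since $\omega_0 = 0$ we have $\ulhr^*({X_0}_+) \cong \ulhr^*(S^0) = \Sigma^{\omega_0}\ulhr^*(S^0)$, the cells $D(\omega_N)$ with $|\omega_N|>0$ are exactly those with $N\ge 1$, and there are no free cells; assembling these summands gives
\[ \ulhr^*(\cpv{\cxuniverse}_+;A)\cong \bigoplus_{N\ge 0}\Sigma^{\omega_N}\ulhr^*(S^0;A),\]
as claimed.
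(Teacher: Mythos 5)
Your verification is correct, and it fills in exactly the routine check that the paper leaves implicit by marking this corollary with a \(\Box\) and no written proof: one runs through the four hypotheses of \autoref{thm::freeness} for the Schubert filtration \(X_N = \cpv{\cxuniverse^N}\) and reads off the conclusion. The only substantive observation — that \(N\mapsto 2N\) is strictly increasing while \(N\mapsto 2\floor{N/p}\) is non-decreasing, which gives the third (monotonicity) hypothesis — is precisely the reason the cells must be attached along a full flag of subrepresentations rather than in an arbitrary order, and your bounds for the fourth (local finiteness) hypothesis are right. This matches the paper's intent.
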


\begin{rem}
Notice that choosing a different flag of subrepresentations of
$\cxuniverse$ could result in a different $\G$-CW structure on
$\cpv{\cxuniverse}$, with cells $D(V)$ in \emph{different} dimensions!  The
resulting cohomologies must be isomorphic as $\ulhr^*(S^0)$-modules, of
course, but in general the isomorphism is not induced by a space level map,
because there is no equivariant cellular approximation theorem---a $G$-map
between $G$-CW-complexes need not be $G$-homotopic to a cellular map.
\end{rem}

We also have the following easy consequence of \autoref{thm::freeness}.

\begin{cor}\label{lem::trivspaces}
Let $Y$ be a CW-complex with cells only in even dimensions, viewed as a
$\G$-space with the trivial action.  Then there is an isomorphism of
$\ulhr^*(S^0)$-modules
\[ \ulhr^*({Y}_+) \cong \ulhr^*(S^0) \otimes \neqhr^*(Y_+;\ground).\] 
\end{cor}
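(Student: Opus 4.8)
The plan is to recognize $Y$, equipped with the trivial $\G$-action, as a $\G$-CW-complex with trivial cells (\autoref{def::trivialCW}) having only even-dimensional cells, and then to read off the statement from \autoref{thm::freeness}. Each $2n$-cell of $Y$ becomes the cell $\G/\G\times D^{2n}=D(\bR^{2n})$, and $|(\bR^{2n})^L|=2n$ is even for every subgroup $L\le\G$, so this cell is even-dimensional; there are no cells of the form $\G\times D^m$ since the action is trivial. The dimension-ordering hypothesis of \autoref{thm::freeness} is vacuous here: for every trivial cell $D(V)$ one has $|V|=|V^\G|$, so $|V|>|W|$ forces $|V^\G|\ge|W^\G|$ automatically. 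Ordering the cells so that all cells of a given dimension are attached before those of the next, the finiteness hypotheses hold provided $Y$ has finite skeleta, which we assume (it holds for finite $Y$ and, more generally, for the finite-type complexes arising in our applications). \autoref{thm::freeness} then yields an isomorphism of $\ulhr^*(S^0)$-modules
\[ \ulhr^*(Y_+)\;\cong\;\bigoplus_{i}\Sigma^{d_i}\,\ulhr^*(S^0), \]
where $i$ ranges over the cells of $Y$ and $d_i$ is the (even) dimension of the $i$-th cell; the zero-cells together contribute the summand $\ulhr^*({Y_0}_+)\cong\bigoplus_{\text{zero-cells}}\ulhr^*(S^0)$, which is $\bigoplus\Sigma^0\ulhr^*(S^0)$.

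Next I would identify the right-hand side. Since $Y$ has cells only in even dimensions, every differential in the cellular cochain complex $C^*(Y;\ground)$ vanishes, so $\neqhr^n(Y_+;\ground)$ is the free $\ground$-module on the $n$-cells of $Y$ (using the finite-skeleta assumption), and is $0$ for $n$ odd. As a graded $\ground$-module, $\neqhr^*(Y_+;\ground)\cong\bigoplus_{i}\Sigma^{d_i}\ground$, with one generator in degree $d_i$ for each cell $i$. The category $\mackey^*$ is tensored over graded $\ground$-modules levelwise, and this tensoring commutes with direct sums and with the grading shifts $\Sigma^{d_i}$, so
\[ \ulhr^*(S^0)\otimes\neqhr^*(Y_+;\ground)\;\cong\;\bigoplus_{i}\Sigma^{d_i}\bigl(\ulhr^*(S^0)\otimes_\ground\ground\bigr)\;\cong\;\bigoplus_{i}\Sigma^{d_i}\,\ulhr^*(S^0) \]
as $\ulhr^*(S^0)$-modules, the module structure coming from the action on the first tensor factor. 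Comparing with the previous display, both sides are the direct sum of copies of $\ulhr^*(S^0)$ indexed by the same set of cells, with the $i$-th copy suspended by the same integer $d_i$; hence they are isomorphic as $\ulhr^*(S^0)$-modules. Note that no delicate coincidence between free $\ulhr^*(S^0)$-modules with differing generating degrees (of the kind remarked on in the introduction) is invoked, because here the two multisets of suspensions literally agree.

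The one point needing care is convergence when $Y$ is infinite: both direct sums are infinite, but in any fixed degree $\alpha\in RO(\G)$ only finitely many of the summands $\Sigma^{d_i}\ulhr^*(S^0)$ are nonzero, since the $d_i$ are non-negative integers and $\ulhr^*(S^0)$ vanishes in the third quadrant ($|\alpha^\G|<0$ and $|\alpha|<0$); this is exactly the observation used at the end of the proof of \autoref{thm::freeness} to see that the relevant product coincides with the coproduct. So the comparison of the two decompositions is unproblematic, and for finite $Y$ there is nothing to check. I expect the only genuinely nontrivial step to be the bookkeeping of the first paragraph — checking that $Y$ with the trivial action meets the hypotheses of \autoref{thm::freeness} (in particular reading ``finite skeleta'' into the statement) and correctly separating off the zero-cells, which \autoref{thm::freeness} treats differently from the positive-dimensional cells.
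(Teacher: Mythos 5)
Your argument is exactly the paper's: the corollary is immediate from \autoref{thm::freeness}, since $Y$ with the trivial action is a $\G$-CW-complex whose cells $\G/\G\times D^{2n}=D(\bR^{2n})$ are all even-dimensional trivial cells, and the resulting direct sum of suspensions of $\ulhr^*(S^0)$ matches $\ulhr^*(S^0)\otimes\neqhr^*(Y_+;\ground)$. You've just spelled out the hypothesis-checking and bookkeeping that the paper compresses into one sentence; your observation that the statement implicitly needs $Y$ to have finite skeleta (to satisfy the finiteness hypothesis of \autoref{thm::freeness}) is a fair catch of an unstated convention, not a flaw in your proof.
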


\begin{proof}
The notation simply means that $\ulhr^*(Y_+)$ is the free
$\ulhr^*(S^0)$-module on generators in dimensions of the cells of
$Y$, which all have the trivial $\G$ action.
\end{proof}

\section{Tools for Computing Multiplicative Structure}

So far we have not said anything about the multiplicative structure of
$\ulhr^*(X_+)$ for $G$-CW-complexes $X$ satisfying the hypotheses of
\autoref{thm::freeness}.  In this section we will give a result which
partially addresses the issue of multiplicative structure.

To begin with, for CW-complexes with a trivial $\G$ action and cells only
in even dimensions, the multiplicative structure of $\ulhr^*(-)$ is
determined by the usual nonequivariant cup product structure on
$\neqhr^*(-)$.

\begin{prop}\label{lem::trivspacesmult}
Suppose $Y$ is a based CW-complex with cells only in even dimensions such
that $\neqhr^*(Y) \cong \colim \neqhr^*(Y_m)$, the colimit of
the cohomologies of the skeleta, and similarly for $\ulhr^*(Y)$.  Then the
isomorphism of \autoref{lem::trivspaces} is an isomorphism of graded Green
functors.
\end{prop}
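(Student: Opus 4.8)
The plan is to realize the isomorphism of \autoref{lem::trivspaces} as an explicit comparison map and then verify multiplicativity one level at a time by means of \autoref{lem::pboxmaps}; the content lives entirely at the $\trivo$ level, the $\freeo$ level being tautologically the nonequivariant cup product.

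Since $Y$ carries the trivial $\G$-action it is a $\G$-CW-complex with trivial cells, so $\ulhr^*(Y_+)$ is computed by $\Hom_{\sO_\G}(\Ccelleq_*(Y),A)$, where $\Ccelleq_*(Y)$ is the constant coefficient system on the nonequivariant cellular chains of $Y$; in particular, sending a cellular cochain $b$ to $b\otimes 1_A$ at the chain level gives a ring map $\neqhr^*(Y_+;\ground)\to\hr^*(Y_+;A)=\ulhr^*(Y_+)(\trivo)$, and composing with $\mfr$ recovers the inclusion of $\neqhr^{|*|}(Y_+;\ground)=\ulhr^*(Y_+)(\freeo)$. Picking a $\ground$-basis $\{b_i\}$ of $\neqhr^*(Y_+;\ground)$ indexed by the cells — which is possible, and lies in even degrees, because all cells of $Y$ are even — I would define a graded $\ulhr^*(S^0)$-linear map $\Psi\colon\ulhr^*(S^0)\otimes\neqhr^*(Y_+;\ground)\to\ulhr^*(Y_+)$ by $t\otimes b_i\mapsto t\cdot\overline{b_i}$, where $\overline{b_i}$ is the image of $b_i$ above and the source carries the graded Green functor structure combining the box product on $\ulhr^*(S^0)$ with the nonequivariant cup product on $\neqhr^*(Y_+;\ground)$. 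Because $\mfr\overline{b_i}=b_i$, the $\overline{b_i}$ form a system of free $\ulhr^*(S^0)$-module generators of exactly the kind furnished by \autoref{thm::freeness}, so $\Psi$ is (a realization of) the isomorphism of \autoref{lem::trivspaces}, and it remains only to show $\Psi$ respects products. Working first on each finite skeleton $Y_m$, where \autoref{thm::freeness} applies directly, and then invoking the hypotheses $\neqhr^*(Y)\cong\colim_m\neqhr^*(Y_m)$ and $\ulhr^*(Y)\cong\colim_m\ulhr^*(Y_m)$ to read the statement off in the limit, it suffices to treat a fixed finite complex.

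For a fixed complex, \autoref{lem::pboxmaps} says that the cup product on $\ulhr^*(Y_+)$ — being a map out of a box product — is pinned down by its two level maps together with the Frobenius relations, and a map of Mackey functors automatically respects $\mfr$ and $\mft$; so it is enough that $\Psi_\trivo$ and $\Psi_\freeo$ be ring homomorphisms. The $\freeo$ level is immediate: by \autoref{subs::levelwisecup} the $\freeo$-level cup product on $\ulhr^*(Y_+)$ is the ordinary cup product on $\neqhr^{|*|}(Y_+;\ground)$, and $\ulhr^*(S^0)(\freeo)$ is generated over $A(\freeo)=\ground$ by the invertible classes of \autoref{thm::mult2structure} (resp.\ \autoref{thm::multpstructure}), so $\Psi_\freeo$ is just the evident extension of the identity of $\neqhr^{|*|}(Y_+;\ground)$ along units.

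The one genuinely non-formal step, and the main obstacle, is that $\Psi_\trivo$ be a ring homomorphism, i.e.\ that $b\mapsto b\otimes 1_A$ be multiplicative on $\hr^*(Y_+;A)$. This amounts to the claim that, for a complex with trivial $\G$-action, the equivariant cellular cup product is carried by the ordinary Alexander--Whitney diagonal approximation on $C_*(Y)$ tensored with the Burnside-ring multiplication of $A$, so that $\hr^*(Y_+;A)\cong\neqhr^*(Y_+;\ground)\otimes A(\trivo)$ as rings. Proving this requires unwinding the construction of the equivariant cup product — the diagonal $Y\to Y\times Y$, the external pairing on Bredon cochains, and the Green-functor multiplication of $A$ — and using that the diagonal and all attaching maps are $\G$-fixed. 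Once this chain-level fact is in hand, the $\freeo$-level computation together with the Frobenius relations of \autoref{lem::pboxmaps} force $\Psi$ to be a map of graded Green functors, hence an isomorphism of graded Green functors, since \autoref{lem::trivspaces} already provides that $\Psi$ is bijective at every level.
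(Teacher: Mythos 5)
Your proposal takes a genuinely different route from the paper and, more importantly, it has a gap at exactly the step you yourself flag as the main obstacle.

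The paper sidesteps every chain-level question by working with representing spectra: the tensoring of Mackey functors over $\rmod$ and of genuine $\G$-spectra over nonequivariant spectra produces a pairing $\repnoneq\sm\repeq\to\repeq$, and the resulting map of function spectra $F(Y,\repnoneq)\sm F(S^0,\repeq)\to F(Y,\repeq)$ yields the comparison map $\varphi\colon \neqhr^*(Y;\ground)\otimes\ulhr^*(S^0)\to\ulhr^*(Y)$ \emph{already as a map of graded Green functors}, by construction. Multiplicativity is then free, and the only work left is to check $\varphi$ is an isomorphism, which is done by induction on even skeleta via the Five Lemma. Your argument reverses this order: you get an isomorphism of $\ulhr^*(S^0)$-modules cheaply from \autoref{thm::freeness} and then try to verify multiplicativity by hand, reducing via \autoref{lem::pboxmaps} to the two level maps.

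The gap is in your $\trivo$-level step. You correctly identify that you need the map $b\mapsto b\otimes 1_A$ to be multiplicative into $\hr^*(Y_+;A)$, and you say this ``amounts to the claim that \ldots\ the equivariant cellular cup product is carried by the ordinary Alexander--Whitney diagonal \ldots\ tensored with the Burnside-ring multiplication,'' and that ``Proving this requires unwinding the construction of the equivariant cup product.'' That sentence is where the actual content of the proposition lives, and it is not carried out. Two issues make it non-trivial. First, the cellular description $\Hom_{\sO_\G}(\Ccelleq_*(Y),A)$ from \autoref{def::cellchains} computes only the \emph{integer-graded} Bredon theory; the $RO(\G)$-graded cup product on $\ulhr^*(Y_+)$ passes through representation spheres and the box product (a Day convolution, not a levelwise tensor), and there is no equivariant cellular approximation theorem to justify a chain-level Alexander--Whitney comparison across $RO(\G)$-degrees. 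Second, you invoke $\ulhr^*(Y)\cong\colim\ulhr^*(Y_m)$ to pass to the limit, but the multiplicative claim on each $Y_m$ is still exactly the unproved chain-level fact. So while the overall architecture (Dress pairings, levelwise reduction, colimit passage) is sound, the proposal defers rather than establishes the central point; without a proof of the $\trivo$-level compatibility, the argument is incomplete. The paper's spectrum-level construction is precisely what makes this compatibility automatic, which is why it is worth the up-front abstraction.
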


{\newcommand{\freegreen}{\sG}
\begin{proof}
We will use the representing spectra of the cohomology theories in
question.  Let $Y$ be a (nonequivariant) CW-complex with cells only in even
dimensions; we will also write $Y$ when viewing it as a $\G$-CW-complex
with trivial $\G$-action.  Let $\repnoneq$ be the representing spectrum for
nonequivariant cohomology $\neqhr^*(-;\ground)$, and $\repeq$ the
representing spectrum for $\ulhr^*(-;A)$.

The category of Mackey functors is tensored over $\ground$-modules, and
genuine $\G$-spectra are tensored over nonequivariant spectra.  Thus, in
the usual way, the isomorphism $\ground\otimes A\to A$ induces a map
$\repnoneq\sm\repeq \to \repeq$.  Combining this with the usual abstract
nonsense provides a map of function spectra
\[ F(Y,\repnoneq)\sm F(S^0,\repeq)\to F(Y\sm S^0,\repnoneq\sm\repeq)\to
F(Y,\repeq).\]
Taking homotopy groups or Mackey functors, as appropriate, then produces a
map of graded Green functors
\[ \varphi\colon \neqhr^*(Y;\ground)\otimes \ulhr^*(S^0) \to
\ulhr^*(Y),\]
natural in $Y$.
We claim that this map is an isomorphism.  This can be proved using
induction, the long exact sequence in cohomology, and the fact that our $Y$
has cells only in even dimensions.  For convenience in what follows, given
a ring $B$, we will write $\freegreen B$ to mean the Green functor
$B\otimes \ulhr^*(S^0)$.  Thus $\varphi$ above is a map
$\freegreen\neqhr^*(Y;\ground)\to \ulhr^*(Y)$.

Write $Y_m$ for the $m$-skeleton of $Y$ as a nonequivariant CW-complex.
Then we have cofiber sequences ${Y_{2m}\to Y_{2m+2}\to Y_{2m+2}/{Y_{2m}}}$
for each $m$, and $Y_{2m+2}/{Y_{2m}}$ is a wedge of copies of $S^{2m+2}$.
The boundary maps in the associated long exact sequence are zero, so we
have a short exact sequence
\[ 0\to \neqhr^n(Y_{2m+2}/{Y_{2m}})\to \neqhr^n(Y_{2m+2})\to
\neqhr^n(Y_{2m}) \to 0\]
for each $n$; $\ground$ coefficients are implicit in the notation.  Since
the arrows are all induced by maps of spaces, we in fact have a short exact
sequence of $\ground$-algebras
\[ 0\to \neqhr^*(Y_{2m+2}/{Y_{2m}})\to \neqhr^*(Y_{2m+2})\to
\neqhr^*(Y_{2m}) \to 0.\]
Since $Y$ has cells only in even dimensions, $\neqhr^*(Y_{2m})$ is a
free $\ground$-module for each $m$, and thus tensoring the above short
exact sequence with the Mackey functor $\ulhr^*(S^0)$ gives another short
exact sequence
\[ 0\to \freegreen\neqhr^*(Y_{2m+2}/{Y_{2m}})\to \freegreen\neqhr^*(Y_{2m+2})\to
\freegreen\neqhr^*(Y_{2m}) \to 0.\]
Since our map $\varphi$ above is natural, we get a map of short exact
sequences of graded Green functors
\[ 
\xymatrix{
0 \ar[r] & \freegreen\neqhr^*(Y_{2m+2}/{Y_{2m}}) \ar[r] \ar_\varphi[d] & 
\freegreen\neqhr^*(Y_{2m+2})  \ar[r] \ar_\varphi[d]& 
\freegreen\neqhr^*(Y_{2m})  \ar[r] \ar_\varphi[d] & 0 \\
0 \ar[r] & \ulhr^*(Y_{2m+2}/{Y_{2m}}) \ar[r] &
\ulhr^*(Y_{2m+2}) \ar[r] &
\ulhr^*(Y_{2m})   \ar[r] & 0 \\
} \]
It is clear from the suspension axiom that
\[{\varphi\colon \freegreen\neqhr^*(Y_{2m+2}/{Y_{2m}})\to
\ulhr^*(Y_{2m+2}/{Y_{2m}})}\]
is an isomorphism.  We may then do an inductive argument; the right-hand
vertical arrow is an isomorphism by the inductive hypothesis, and it
follows from the Five Lemma that the middle arrow is also an isomorphism.
It follows by induction that $\varphi$ is an isomorphism on the
$2m$-skeleton of $Y$ for every $m$.  Since by assumption the cohomology of
$Y$ is the colimit of the cohomologies of the skeleta, it follows that 
\[ \varphi\colon \neqhr^*(Y)\otimes \ulhr^*(S^0)\to \ulhr^*(Y) \]
is an isomorphism.
\end{proof}
}

For a $\G$-CW-complex $X$ satisfying the hypotheses of
\autoref{thm::freeness}, this result allows us to compare the unknown ring
${\ulhr^*(X_+)(\trivo)}$ with more familiar objects, as follows.  The map
$\G_+\sm X_+ \to X_+$ coming from the projection
$\rho\colon\freeo\to\trivo$ induces a homomorphism of Green functors
\[ \rho^*\colon \ulhr^*(X_+) \to \ulhr^*(\G_+\sm X_+).\]
We have previously seen that $\ulhr^*(\G_+\sm X_+) \cong
\ulhr^*(X_+)_\freeo = \ulhr^*(X_+)\boxp A_\freeo$, using the notation for
shifted Mackey functors from page \pageref{def::unnamed} and page
\pageref{def::pshifted}.  The map $\rho^*$ can also be viewed as the map
${\ulhr^*(X_+) \to \ulhr^*(X_+)_\freeo}$ induced by the projection
$\freeo\to\trivo$, and we will switch back and forth between these views in
what follows.  Since $\ulhr^*(X_+)(\freeo) \cong \neqhr^{|*|}(X_+)$, the
target of $\rho^*$ is something we can already compute.

It also follows from the definition of a $\G$-CW-complex and from our
definition of ``even dimensional'' that, if $X$ satisfies the hypotheses of
\autoref{thm::freeness}, then $X^\G$ is a CW-complex with trivial $\G$
action, having cells only in even dimensions.  Thus
\autoref{lem::trivspacesmult} applies, so we know $\ulhr^*(X^\G_+)$ as a
Green functor.  The inclusion $i\colon X^\G\hookrightarrow X$ induces
$\ulhr^*({X}_+)\to \ulhr^*(X^\G_+)$.  Putting these together, we see that
the map
\[ \rho^* \oplus i^* \colon \ulhr^*(X_+) \to \ulhr^*(X_+)_\freeo \oplus
\ulhr^*(X^\G_+) \]
is a homomorphism of Green functors.  

We have the following surprising result.\footnote{Flawed proof given by
Lewis in \cite{lgl}; it has been corrected by the addition of
\autoref{lem::crucial}.}

\begin{thm}\label{thm::multcomparison}
Let $X$ be a $\G$-CW-complex satisfying the hypotheses of
\autoref{thm::freeness}, so that $\ulhr^*(X_+)$ is a free
$\ulhr^*(S^0)$-module.  Further suppose that the ground ring $\ground$ has
no elements of order $p$.  Then for any $\alpha\in RO(\G)$ of even
dimension, the map
\[ \rho^* \oplus i^* \colon \ulhr^\alpha(X_+) \to
\ulhr^\alpha(X_+)_\freeo \oplus \ulhr^\alpha(X^\G_+) \]
is a monomorphism.  Under the same hypotheses, the map
\[ \rho^* \oplus \hat{i}^* \colon \ulhr^\alpha(X_+) \to
\ulhr^\alpha(X_+)_\freeo \oplus \left( \jg{\ground}\boxp \ulhr^\alpha(X^\G_+) \right) \]
becomes a monomorphism after tensoring with $\ground\left[1/p\right]$, for any $\alpha$.
Here $\hat{i}^*$ is the composite
\[ {\ulhr^\alpha(X_+) \xrightarrow{\ i^*\ }\ulhr^\alpha(X^\G_+) = A \boxp
\ulhr^\alpha(X^\G_+) \xrightarrow{1\boxp\id} \jg{\ground}\boxp \ulhr^\alpha(X^\G_+)};\]
the final arrow comes from the map $A\to\jg{\ground}$ taking $\id\mapsto
1\in\ground$.
\end{thm}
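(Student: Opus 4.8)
The plan is to test both maps one level of the Mackey functors at a time, clear away the free level immediately, and reduce the fixed level to the explicit arithmetic of \autoref{sec::modp} together with the free decomposition of \autoref{thm::freeness}; the one genuinely new ingredient is \autoref{lem::crucial}. First, recall that a map of Mackey functors is a monomorphism exactly when it is injective on both $M(\trivo)$ and $M(\freeo)$. On the $\freeo$ level $\rho^*$ by itself is already injective: under the identifications $\ulhr^\alpha(X_+)(\freeo)\cong\neqhr^{|\alpha|}(X_+;\ground)$ and $\ulhr^\alpha(X_+)_\freeo(\freeo)=\ulhr^\alpha(X_+)(\freeo\times\freeo)\cong\bigoplus_{\G}\neqhr^{|\alpha|}(X_+;\ground)$, the map $\rho^*$ is induced by the homeomorphism of each wedge summand of $\G_+\sm X_+\cong\bigvee_{\G}X_+$ onto $X_+$, hence is the diagonal. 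So both displayed maps are monomorphisms on $M(\freeo)$, and it remains to handle $M(\trivo)$; there $\rho^*$ is just the restriction map $\mfr$ of the Mackey functor $\ulhr^\alpha(X_+)$, so we must show $\ker\mfr\cap\ker i^*_\trivo=0$ in $\ulhr^\alpha(X_+)(\trivo)$ for $\alpha$ even-dimensional, and $\ker\mfr\cap\ker\hat i^*_\trivo=0$ after applying $-\otimes\ground[1/p]$ for arbitrary $\alpha$.

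The next step is to compute $\ker\mfr$ from freeness. By \autoref{thm::freeness}, $\ulhr^*(X_+)$ splits as a direct sum of shifts $\Sigma^{\omega}\ulhr^*(S^0)$, one per cell $D(\omega)$ with $|\omega|>0$ and per orbit $\G/\G$ in $X_0$, together with shifts $\Sigma^{2n}\ulhr^*(\G_+)\cong\Sigma^{2n}\ulhr^*(S^0)\boxp A_\freeo$, one per cell $\G\times D^{2n}$ and per orbit $\G/\triv$ in $X_0$. Since $\mfr$ is a map of $\ulhr^*(S^0)$-modules it respects this splitting, so $\ker\mfr$ is the direct sum of the summand-wise kernels. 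On an $A_\freeo$-summand the restriction is, up to a shift, the diagonal $\ground\to\ground^{\oplus p}$, hence injective. On a summand $\Sigma^\omega\ulhr^*(S^0)$ in degree $\alpha$ we see $\ulhr^{\alpha-\omega}(S^0)$, and since all cells and $\alpha$ are even-dimensional so is $\alpha-\omega$; by \autoref{thm::add2structure} and \autoref{thm::addpstructure} the restriction of an even-dimensional $\ulhr^\beta(S^0)$ is injective unless $\mydim\beta$ is $(0,0)$ (kernel spanned by $\kappa_\beta$), $(0,2n)$ with $n\neq0$ (kernel all of $\jg{\ground}(\trivo)$, an $\epsilon$-class), or $(-2m,2n)$ with $m,n\geq1$ (kernel all of $\jg{\ground/p}(\trivo)$); the only other nonzero case is $\mydim\beta=(2m,0)$, where the restriction is multiplication by $p$, injective because $\ground$ has no $p$-torsion. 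Thus $\ker\mfr$ is spanned by elements $\eta\cdot x_i$ with $x_i$ a cell generator of the free module and $\eta$ a $\kappa$-, $\epsilon$-, or $\jg{\ground/p}$-class of $\ulhr^*(S^0)(\trivo)$.

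Now the fixed points: since all cells of $X$ are even-dimensional, $X^\G$ is an ordinary CW complex with cells only in even dimensions and trivial action, so \autoref{lem::trivspaces} and \autoref{lem::trivspacesmult} give an isomorphism of graded Green functors $\ulhr^*(X^\G_+)\cong\ulhr^*(S^0)\otimes\neqhr^*(X^\G_+;\ground)$; in particular $\ulhr^*(X^\G_+)$ is a free $\ulhr^*(S^0)$-module with one generator $e_i$, in integer degree $|V_i^\G|$, for each cell $D(V_i)$ of $X$. As $i^*$ is an $\ulhr^*(S^0)$-algebra map it is determined by the classes $i^*(x_i)$, and this is where \autoref{lem::crucial} enters: it shows that $i^*(x_i)$ equals the equivariant Euler class in $\ulhr^{V_i-V_i^\G}(S^0)(\trivo)$ of the nontrivial summand $V_i-V_i^\G$ of the cell representation, times $e_i$, plus an $\ulhr^*(S^0)(\trivo)$-combination of the generators $e_j$ belonging to cells attached no later than $D(V_i)$, the dimension hypothesis of \autoref{thm::freeness} being exactly what keeps those lower-order corrections where the subsequent computation can absorb them; moreover this Euler class is a unit when $V_i$ is a trivial representation. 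Given $\sum_i\eta_i x_i\in\ker\mfr$ with $i^*\bigl(\sum_i\eta_i x_i\bigr)=0$, look at the $e_i$-component for the largest $i$ (in the order cells are attached) with $\eta_i\neq0$: only $x_i$ itself contributes there, giving $\eta_i\cdot(\text{Euler class})$, and evaluating this product in $\ulhr^*(S^0)$ via \autoref{thm::mult2structure} and \autoref{thm::multpstructure}—for instance $\kappa_\beta\epsilon_\gamma=p\,\epsilon_{\beta+\gamma}$, $\epsilon_\beta\epsilon_\gamma=\epsilon_{\beta+\gamma}$, and the corresponding facts for the $\jg{\ground/p}$-classes in the upper-left region—shows it is nonzero, once more because $\ground$ has no $p$-torsion. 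This contradiction forces every $\eta_i=0$ and proves the first statement.

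Finally, the assertion after inverting $p$. If $\alpha$ is not even-dimensional, the functors $\ulhr^{\alpha-\omega}(S^0)$ meeting $M(\trivo)$ are among $\rminus$, $\lminus$, and fourth-quadrant $\jg{\ground/p}$'s, all of which vanish at $\trivo$ after inverting $p$, so by the $\freeo$-level remark both displayed maps are monomorphisms for trivial reasons. If $\alpha$ is even-dimensional, over $\ground[1/p]$ each $\tw{A}{d}$ becomes $\jg{\ground}\oplus\l$ by the corollary to \autoref{lem::twequiv}, the $\jg{\ground/p}$-summands vanish, and the $\l$-summands acquire invertible restriction; hence $\ker\mfr\otimes\ground[1/p]$ is concentrated in the $\jg{\ground}$-pieces, where the $\eta_i$ are $\epsilon$-classes. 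On such classes the projection $A\to\jg{\ground}$ built into $\hat i^*$—which only discards the part of $\ulhr^\alpha(X^\G_+)$ lying in the image of a transfer—has no effect, so the leading-term argument of the previous paragraph goes through with $\epsilon_\beta\epsilon_\gamma=\epsilon_{\beta+\gamma}\neq0$ over $\ground[1/p]$, giving $\ker\mfr\cap\ker\hat i^*_\trivo=0$. The main obstacle is \autoref{lem::crucial}: everything else is routine bookkeeping with the Mackey functors of \autoref{sec::modp}, but identifying the restriction to the fixed points of the cohomology class of a cell—its leading Euler-class term, and the control via the dimension hypothesis on the lower-order corrections—is precisely the point at which the argument of \cite{lgl} broke down.
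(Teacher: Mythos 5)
Your approach is genuinely different from the paper's. The paper proves \autoref{thm::multcomparison} by induction on the cell filtration of $X$: the long exact sequences of \autoref{thm::freeness} split into short exact sequences of Mackey functors $0\to\ulhr^\alpha(X_n/X_{n-1})\to\ulhr^\alpha({X_n}_+)\to\ulhr^\alpha({X_{n-1}}_+)\to 0$, and the Five Lemma for monomorphisms pushes the result up the filtration, with \autoref{lem::crucial} supplying the base case and the cofiber terms. You instead compute $\ker\mfr$ directly on the free $\ulhr^*(S^0)$-module, describe $i^*$ on the module generators by an Euler-class leading-term formula, and argue by looking at the component of the last surviving generator. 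Both are valid strategies: yours gives more explicit information about the map $i^*$ itself and is closer to how one would actually carry out a calculation, whereas the paper's inductive organization never needs the explicit form of $i^*$ and is for that reason more robust.

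Two points need attention. First, \autoref{lem::crucial} does not say what you attribute to it. That lemma only asserts that $\rho^*\oplus i^*$ and $\rho^*\oplus\hat{i}^*$ are monomorphisms when $X$ is a single sphere $\G_+\sm S^m$ or $S^V$; it contains no claim about the shape of $i^*(x_i)$ for a general $\G$-CW-complex. The structural statement you want---that $i^*(x_i)$ equals $\epsilon_{V_i-V_i^\G}\,e_i$ plus an $\ulhr^*(S^0)(\trivo)$-combination of $e_j$ for earlier cells---is correct, but you must establish it: one shows $i^*$ respects the cell filtrations of $X$ and $X^\G$ (using the vanishing of boundary maps from \autoref{thm::freeness}) and then identifies the induced map on the cofiber quotients $X_n/X_{n-1}\to X_n^\G/X_{n-1}^\G$ as multiplication by the Euler class, which is the content of \autoref{lem::repsphere} rather than of \autoref{lem::crucial}. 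The nonvanishing of the products $\eta_i\cdot\epsilon_\gamma$ that you verify by hand is precisely what \autoref{lem::crucial} encodes in the single-sphere case. Second, your treatment of odd-dimensional $\alpha$ after inverting $p$ has a gap at $p=2$. For $\alpha$ with $|\alpha^\G|$ even and $|\alpha|$ odd (e.g.\ $\alpha=\signrep$), the summand $\ulhr^{\alpha-\omega}(S^0)$ can be $\jg{\ground}$ in a dimension $(0,n)$ with $n$ odd, and $\jg{\ground}(\trivo)\cong\ground$ does \emph{not} vanish after inverting $2$, so the shortcut ``$M(\trivo)\otimes\ground[1/p]=0$, hence trivial'' fails. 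The conclusion still holds---the offending classes are $\epsilon$-classes and your even-dimensional leading-term argument applies to them unchanged---but the case needs to be routed through that argument rather than dismissed.
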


The proof depends on the following crucial lemma.

\begin{thm}\label{lem::crucial}
The statement of \autoref{thm::multcomparison} holds when $X_+$ is replaced
by a sphere of the form ${\G_+\sm S^{m}}$ or $S^V$ for a
$\G$-representation $V$.
\end{thm}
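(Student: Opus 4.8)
The plan is to treat the two shapes of sphere separately. The sphere $\G_+\sm S^m$ is easy: since $\G$ acts freely on $\G$, $(\G_+\sm S^m)^\G=\ast$, so $i^*$ and $\hat i^*$ vanish and both assertions reduce to showing $\rho^*$ is a monomorphism. But $\ulhr^\alpha(\G_+\sm S^m)\cong\ulhr^{\alpha-m}(S^0)\boxp A_\freeo$, a Mackey functor of the form $M_\freeo$, and for any such both components of the canonical map $M_\freeo\to(M_\freeo)_\freeo$ are the diagonal $M(\freeo)\to M(\freeo)^{\oplus p}$, which is split monic; this persists after $\otimes\,\ground[1/p]$ since localization is exact. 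No hypothesis on $\ground$ is needed here.

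The substance is the case $S^V$. The plan is to write $V=V^\G\oplus W$ with $W^\G=0$, so that $(S^V)^\G=S^{V^\G}$ and the inclusion $i\colon S^{V^\G}\hookrightarrow S^V$ is $\id\sm j$ for $j\colon S^0\hookrightarrow S^W$. Using suspension to identify $\ulhr^\alpha(S^V)\cong\ulhr^{\alpha-V}(S^0)$ and $\ulhr^\alpha(S^{V^\G})\cong\ulhr^{\alpha-V^\G}(S^0)$, the map $\rho^*$ becomes the canonical map $M\to M_\freeo$ with $M=\ulhr^{\alpha-V}(S^0)$, whose $\trivo$-level component is the restriction $\mfr$ and whose $\freeo$-level component is a diagonal (hence injective); and $i^*$ becomes multiplication by the Euler class $\epsilon_W\in\ulhr^W(S^0)$, which is exactly the recipe by which the classes $\epsilon_\gamma$ were defined in \autoref{subs::point2} and \autoref{subs::pointodd}. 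Since $\rho^*$ is already injective at the $\freeo$ level, the claim reduces to checking that at the $\trivo$ level $\mfr$ together with multiplication by $\epsilon_W$ are jointly injective on $\ulhr^{\alpha-V}(S^0)(\trivo)$ (and, for the second statement, that $\mfr$ together with the composite of $\epsilon_W$-multiplication with the quotient by $\operatorname{im}\mft$ — which is the $\trivo$-part of $\jg{\ground}\boxp(-)$, by \autoref{ex::jg} — are jointly injective after $\otimes\,\ground[1/p]$). It is worth noting that a direct attack through the cofiber sequence $S(W)_+\to S^0\to S^W$ stalls, because the relevant connecting map fails to be injective at the free level; passing instead to the Euler-class description is what makes the multiplicative structure of $\ulhr^*(S^0)$ usable.

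Then comes the case analysis. When $\alpha$ (and, in every setting where the lemma is invoked, $V$) has even dimension, $M=\ulhr^{\alpha-V}(S^0)$ is one of $0$, $A$, $\tw{A}{d}$, $\r$, $\l$, $\jg{\ground}$, $\jg{\ground/p}$ by \autoref{thm::addpstructure} (or \autoref{thm::add2structure} when $p=2$). For $\r$ the restriction is an isomorphism, and for $\l$ it is multiplication by $p$, injective because $\ground$ has no $p$-torsion — so $\rho^*$ alone handles these. For $A$ or $\tw{A}{d}$, $\ker\mfr$ is generated by $\kappa=p\mu-d\,\mft(\iota)$, while $\epsilon_W$ kills $\mft(\iota)$ (as $\epsilon_W$ restricts to $0$, so $\epsilon_W\cdot\mft(\iota)=\mft(\iota\cdot\mfr\epsilon_W)=0$) and sends $\mu$ to a generator of the target; hence the kernel of $\epsilon_W$-multiplication at this level is $\ground\cdot\mft(\iota)$, and $\ground\kappa\cap\ground\cdot\mft(\iota)=0$ since $a\kappa\in\ground\cdot\mft(\iota)$ forces $ap=0$, hence $a=0$. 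For $M=\jg{\ground}$ or $\jg{\ground/p}$ we have $\mfr=0$, and here the plan is to check injectivity of $\epsilon_W$-multiplication on $M(\trivo)$ directly: it is the identity when $W=0$, and otherwise $\alpha-V+W$ still lies on the vertical axis (resp.\ in the second quadrant), where the relations $\epsilon_\alpha\epsilon_\beta=\epsilon_{\alpha+\beta}$, $\epsilon_\beta(\epsilon_\beta^{-1}\kappa_\alpha)=\kappa_\alpha$ and $\epsilon_\alpha\xi_\beta=d(\delta-\beta)\epsilon_\gamma\xi_\delta$ of \autoref{thm::multpstructure} (and their $p=2$ counterparts in \autoref{thm::mult2structure}) show that it carries the canonical generator to a unit multiple — via the twist $d(\cdot)$, which is prime to $p$ — of the canonical generator, hence is injective on $\ground$ or on $\ground/p$. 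This $\jg{\cdot}$ step, where $\mfr$ carries no information and one must fall back on the fine multiplicative structure of $\ulhr^*(S^0)$, is the place I expect to require the most care.

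For the second statement the plan is to first tensor with $\ground[1/p]$; exactness of localization lets it pass through $\mfr$, $\epsilon_W$-multiplication and $\jg{\ground}\boxp(-)$. Over $\ground[1/p]$ we have $\tw{A}{d}\cong A\cong\jg{\ground}\oplus\l$ and $\jg{\ground/p}=0$ (the corollaries following \autoref{lem::twiso}, together with $\ground/p\otimes\ground[1/p]=0$), so $M\otimes\ground[1/p]$ is a sum of copies of $0$, $\r$, $\l$ and $\jg{\ground}$. The restriction is injective on the $\r$ and $\l$ summands; on the $\jg{\ground}$ summands $\mfr=0$, but there $\hat i^*$ is once more $\epsilon_W$-multiplication followed by the projection, injective by the computation just given, so $\rho^*\oplus\hat i^*$ is a monomorphism after inverting $p$. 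One should record at the outset that this discussion takes $S^V$, and $\G_+\sm S^m$, to be even-dimensional; for odd $V$ the statement can fail outright, but such cells are exactly those excluded by the hypotheses of \autoref{thm::freeness}.
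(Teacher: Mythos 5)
Your proof takes a genuinely different route from the paper's.  The paper identifies $\ker i^*$ concretely by iterating cofiber sequences: first the pair of cofibrations $S^0\to X_1\to S^1\sm\G_+$ and $X_1\to S^\rotrep\to S^\rotrep\sm\G_+$ for one irreducible $\rotrep$ (\autoref{lem::rotsphere}), then the chain $S^0\hookrightarrow S^{\rotrep_1}\hookrightarrow\cdots\hookrightarrow S^V$ (\autoref{lem::repsphere}); this needs only the \emph{additive} structure of $\ulhr^*(S^0)$ together with the Mackey relation $\mfr\mft=\operatorname{trace}$.  You instead reinterpret $i^*$ at the $\trivo$ level, after suspension, as multiplication by the Euler class $\epsilon_W$ and run a case analysis against the known $\ulhr^*(S^0)$-module pattern.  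This is arguably the more conceptual formulation — it explains \emph{why} $\im i^*\subset\ker\mfr$ (because $\mfr\epsilon_W=0$) and why $\ker i^*$ sits in $\im\mft$ (the Frobenius relation $\epsilon_W\mft(y)=\mft(\mfr(\epsilon_W)\,y)=0$) — and it treats all of $S^V$ uniformly rather than one irreducible at a time.  The cost is that it leans on the full multiplicative structure of $\ulhr^*(S^0)$ from \autoref{thm::multpstructure}/\autoref{thm::mult2structure}, which is a substantially heavier import than the additive \autoref{thm::addpstructure}.  Your $\G_+\sm S^m$ case is the same as the paper's, just spelled out: $\rho^*(\trivo)$ is the diagonal, split monic.

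Two soft spots, neither fatal.  (i) In the $\jg{\ground}$ case your assertion that $\epsilon_W$-multiplication ``carries the canonical generator to a unit multiple --- via the twist $d(\cdot)$'' is not right in the subcase where $\alpha-V$ lies below the origin on the vertical axis but $\alpha-V+W$ lies at or above it.  There the generator is $\epsilon_\beta^{-1}\kappa_\gamma$ with $|\beta|\le|W|$, and writing $W=\beta+W'$ one gets
$\epsilon_W\cdot\epsilon_\beta^{-1}\kappa_\gamma=\epsilon_{W'}\kappa_\gamma=p\,\epsilon_{W'+\gamma}$
(the $\mft(\iota_\gamma)$ term dies because $\mfr\epsilon_{W'}=0$).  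That is $p$ times a generator, not a unit multiple, and injectivity at this spot rests on the hypothesis that $\ground$ has no $p$-torsion rather than on $d(\cdot)$ being prime to $p$.  The conclusion is still correct, so this is a gap in the stated justification rather than in the theorem.  (ii) Your restriction to even-dimensional $V$, and the accompanying remark that for odd $V$ ``the statement can fail outright,'' is unnecessary and the remark is not correct.  When $p>2$ any $V$ with $V^\G=0$ is automatically even.  When $p=2$ and $V=n\signrep$ with $n$ odd, a check against \autoref{thm::add2structure} shows that for $\alpha$ of even dimension $\ulhr^{\alpha-V}(S^0)$ is only ever $0$, $\jg{\ground}$, or $\jg{\ground/2}$ (the functors $\rminus,\lminus$ you were worried about occur only at odd $|(\alpha-V)^\G|$), so your $\epsilon_W$-multiplication argument applies unchanged; likewise the second assertion after inverting $p$ needs no parity hypothesis since $\rminus(\trivo)$ and $\lminus(\trivo)\otimes\ground[1/2]$ vanish.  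The paper's \autoref{lem::repsphere} makes no parity assumption on $V$, and neither should you.
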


Note that this lemma does not immediately follow from the suspension
isomorphism and the statement for $S^0$, because suspension by
representation spheres does not preserve fixed points.  The proof appears
below, after the proof of \autoref{thm::multcomparison}.

By the additivity axiom, it follows from \autoref{lem::crucial} that the
statement of the theorem also holds for wedge sums of spheres $\G_+\sm
S^{2m}$ and $S^V$.  Given this, the proof of \autoref{thm::multcomparison}
is a straightforward induction argument.

\begin{proof}[Proof of \autoref{thm::multcomparison}]
We proceed by induction.  By \autoref{lem::crucial}, both maps of
\autoref{thm::multcomparison} are monomorphisms for finite $\G$-sets.

Now suppose that the result is known for $X_{n-1}$, and consider $X_n$.
Since $X$ satisfies the hypotheses of \autoref{thm::freeness}, we know that
the boundary maps in the long exact sequence associated to the cofibration
\[ X_{n-1}\to X_n \to {X_n}/{X_{n-1}} \]
are all zero.  Since long exact sequences associated to cofibrations are
natural, the map $\rho^*\oplus i^*$ induces the commuting diagram of Mackey
functors below; the columns are exact.
\[ \xymatrix@C=3pc{
  0 \ar[d] & 0 \ar[d] \\
  \ulhr^\alpha({X_n}/{X_{n-1}}) \ar[r]^-{\rho^*\oplus i^*} \ar[d] &
  \ulhr^\alpha({X_n}/{X_{n-1}})_\freeo \oplus \ulhr^\alpha(({X_n}/{X_{n-1}})^\G) \ar[d] \\
  \ulhr^\alpha({X_n}_+) \ar[r]^-{\rho^*\oplus i^*} \ar[d] &
  \ulhr^\alpha({X_n}_+)_\freeo \oplus \ulhr^\alpha({X_n}^\G_+)\ar[d] \\
  \ulhr^\alpha({X_{n-1}}_+) \ar[r]^-{\rho^*\oplus i^*} \ar[d] &
  \ulhr^\alpha({X_{n-1}}_+)_\freeo \oplus \ulhr^\alpha({X_{n-1}}^\G_+) \ar[d] \\
  0 & 0 \\
} \]
If $\alpha$ has even dimension, the bottom horizontal arrow is a
monomorphism by the induction hypothesis, and the top one by
\autoref{lem::crucial}.  Hence, by the Five Lemma, the middle arrow is a
monomorphism as well.  The same holds for the corresponding diagram with
horizontal arrows given by $\rho^*\oplus\hat{i}^*$, for any $\alpha$.

Thus, by induction, the theorem holds for $\ulhr^*(X_n)$, for each $n$.
Since $X$ satisfies the hypotheses of \autoref{thm::freeness}, $\ulhr^*(X)
\cong \colim \ulhr^*(X_n)$, so the theorem must hold for $\ulhr^*(X)$ as
well.
\end{proof}

We now turn to \autoref{lem::crucial}.  Since there are multiple parts to
the argument, we will break the proof into several smaller lemmas.  These
are then used to prove \autoref{lem::crucial} on page
\pageref{pf::crucial}.

\begin{lem}\label{lem::freeolevel}
The assertions of \autoref{thm::multcomparison} hold at the $\freeo$ level,
for any $\G$-space $X$.
\end{lem}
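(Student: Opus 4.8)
The plan is to reduce everything at the $\freeo$ level to the elementary observation that $\rho^*$ alone is already a split monomorphism there. Recall that for a based $\G$-space $Z$ one has $\ulhr^\alpha(Z)(\freeo) \cong \neqhr^{|\alpha|}(Z;\ground)$, the reduced nonequivariant cohomology of the underlying space, via the adjunction $[\G_+\sm Z,\Sigma^\alpha HA]_\G \cong [Z,\Sigma^\alpha HA]$ together with the fact that the underlying nonequivariant spectrum of $HA$ is $H(A(\freeo)) = H\ground$. First I would unwind, under this identification, the three objects appearing in the statement: $\ulhr^\alpha(X_+)(\freeo) \cong \neqhr^{|\alpha|}(X_+;\ground)$; the underlying nonequivariant space of $\G\times X$ is a disjoint union of $|\G|$ copies of $X$, so $\ulhr^\alpha(X_+)_\freeo(\freeo) = \ulhr^\alpha(X_+)(\freeo\times\freeo) \cong \bigoplus_{g\in\G}\neqhr^{|\alpha|}(X_+;\ground)$, consistently with $\freeo\times\freeo$ being a disjoint union of $|\G|$ copies of $\freeo$; and $(\jg{\ground}\boxp\ulhr^\alpha(X^\G_+))(\freeo) = \jg{\ground}(\freeo)\otimes\ulhr^\alpha(X^\G_+)(\freeo) = 0$ by \autoref{lem::pboxformula}, since $\jg{\ground}(\freeo)=0$.

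Next I would identify $\rho^*$ at the $\freeo$ level. It is induced by the $\G$-map $\G_+\sm X_+\to X_+$ coming from $\rho\colon\freeo\to\trivo$, whose underlying nonequivariant map $\G\times X\to X$ restricts, on each of the $|\G|$ copies of $X$, to a homeomorphism. Hence on underlying cohomology $\rho^*\colon \neqhr^{|\alpha|}(X_+;\ground)\to\bigoplus_{g\in\G}\neqhr^{|\alpha|}(X_+;\ground)$ is a map each of whose components is an isomorphism; in particular the composite of $\rho^*$ with the projection onto any single summand is an isomorphism, so $\rho^*$ is a split monomorphism at the $\freeo$ level. It follows at once that $(\rho^*\oplus i^*)_\freeo$ and $(\rho^*\oplus\hat{i}^*)_\freeo$ are split monomorphisms as well — the retraction factors through the $\rho^*$ summand, and in the second case the $\hat{i}^*$ summand is zero by the computation above — and that this persists after applying $-\otimes_\ground\ground[1/p]$. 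This proves the lemma for every $\alpha$, with no parity hypothesis needed.

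There is no real obstacle here; the only care required is in setting up the identifications in the first paragraph correctly, in particular checking that the map of underlying spaces really is a homeomorphism on each sheet, which is routine. The purpose of isolating \autoref{lem::freeolevel} is precisely that it disposes of the $\freeo$ level once and for all, so that the induction proving \autoref{thm::multcomparison}, and its base case \autoref{lem::crucial}, need only treat the $\trivo$ level.
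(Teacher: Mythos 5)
Your argument is correct and is essentially the same as the paper's: both identify $\rho^*$ at the $\freeo$ level with the diagonal inclusion $\neqhr^{|\alpha|}(X_+;\ground)\to\bigoplus_{g\in\G}\neqhr^{|\alpha|}(X_+;\ground)$, observe it is a (split) monomorphism, and conclude that $\rho^*\oplus i^*$ and $\rho^*\oplus\hat{i}^*$ are monomorphisms preserved by inverting $p$. Your extra observation that $(\jg{\ground}\boxp\ulhr^\alpha(X^\G_+))(\freeo)=0$ is a harmless and correct refinement that the paper leaves implicit.
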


\begin{proof}
At the $\freeo$ level,
\[{\rho^*:\ulhr^*(X_+)(\freeo)\to \ulhr^*(X_+)(\freeo\times\freeo)\cong
\bigoplus_{g\in \G}\ulhr^*(X_+)(\freeo)}\] is the diagonal map, a
monomorphism.  Thus $\rho^*\oplus i^*$ and $\rho^*\oplus\hat{i}^*$ are
monomorphisms as well, and the latter remains a monomorphism after
tensoring with $\ground[1/p]$.
\end{proof}

It remains only to prove the result at the $\trivo$ level.  For the lemmas
below, recall from \autoref{ex::jg} that
\[ \jg{\ground}\boxp M = \jg{M(\trivo)/{(\im\mft)}},\]
and that ${\ulhr^*(X)(\trivo) = \hr^*(X)}$ and ${\ulhr^*(X)(\freeo) =
\hr^*(\G_+\sm X)}$ by definition.

\begin{lem}\label{lem::dim0}
If $X$ is a finite $\G$-set, then the map
\[ \rho^*\oplus i^* \colon \hr^\alpha(X_+) \to
\hr^\alpha(\G_+\sm X_+) \oplus \hr^\alpha(X^\G_+)\]
is a monomorphism for $\alpha$ of even dimension, and
\[ \rho^*\oplus \hat{i}^* \colon \hr^\alpha(X_+) \to
\hr^\alpha(\G_+\sm X_+) \oplus \left( \hr^\alpha(X^\G_+)/{(\im\mft)} \right)\]
becomes a monomorphism for every $\alpha$ after tensoring with
$\ground[1/p]$.
\end{lem}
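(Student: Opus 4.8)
The strategy is to reduce, via additivity, to the case where $X$ is a single orbit, and then read the answer off the combinatorial model of Mackey functors for $G=\G$. Two identifications do most of the work. First, unwinding the definition of the Mackey functor $\ulhr^\alpha(X_+)$, the map $\rho^*\colon\hr^\alpha(X_+)\to\hr^\alpha(\G_+\sm X_+)$ is exactly the restriction $\mfr$ of the Mackey functor $\ulhr^\alpha(X_+)$ associated to $\rho\colon\freeo\to\trivo$, since both are induced by the collapse $\G_+\sm X\to X$. Second, writing a finite $\G$-set as $X=X^\G\sqcup F$ with $F$ free, additivity gives $\ulhr^\alpha(X_+)\cong\ulhr^\alpha(X^\G_+)\oplus\ulhr^\alpha(F_+)$, and under this splitting $i^*$ (resp.\ $\hat i^*$) is the identity (resp.\ the canonical quotient by $\im\mft$) on the first summand and is zero on the second, because $F_+\to X_+\to X^\G_+$ factors through the basepoint. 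So it suffices to treat $X=\freeo$ and $X=\trivo$.

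\textbf{The free orbit.} Here $X^\G=\emptyset$, so $i^*=\hat i^*=0$ and everything rests on $\rho^*=\mfr$ for $\ulhr^\alpha(\freeo_+)\cong\ulhr^\alpha(S^0)\boxp A_\freeo=\ulhr^\alpha(S^0)_\freeo$. By the description of the shifted Mackey functor $M_\freeo$ on page \pageref{def::pshifted}, its restriction map is the diagonal $M(\freeo)\to M(\freeo)^{\oplus p}$, which is split injective; hence $\rho^*$, and therefore $\rho^*\oplus i^*$ and $\rho^*\oplus\hat i^*$, is a monomorphism and stays one after $\otimes\,\ground[1/p]$.

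\textbf{The fixed orbit.} By additivity we may take $X=\trivo$, so $X^\G=X$, $\G_+\sm X_+=\G_+$, and $\ulhr^\alpha(X_+)=\ulhr^\alpha(S^0)$. Then $i^*$ is the identity, so the first assertion is immediate --- in fact for \emph{every} $\alpha$; the even-dimension hypothesis is only needed in the inductive step of \autoref{thm::multcomparison}. For the second assertion, $\hat i^*$ is the quotient map $\ulhr^\alpha(S^0)(\trivo)\to\ulhr^\alpha(S^0)(\trivo)/\im\mft$, with kernel $\im\mft$, and $\rho^*=\mfr$; so I must check that $\mfr|_{\im\mft}$ becomes injective after inverting $p$. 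If $x=\mft(y)$ and $\mfr(x)=0$, then $\mfr\mft(y)=\trace(y)=\sum_{k=0}^{p-1}g^ky=0$ for $g$ a generator of $\G$, and using $\mft(g^ky)=\mft(y)$ (the transfer along $\freeo\to\trivo$ coequalizes the $\G$-action, from the pullback relation in $\sB_\G$) one computes
\[ px=\mft(py)=\mft\Big(\trace(y)-\sum_{k=1}^{p-1}(g^k-1)y\Big)=-\sum_{k=1}^{p-1}\big(\mft(g^ky)-\mft(y)\big)=0, \]
so $x$ is $p$-torsion and dies in $\ulhr^\alpha(S^0)(\trivo)\otimes\ground[1/p]$. (Alternatively, since there are only finitely many Mackey functors $A$, $\tw{A}{d}$, $\r$, $\rminus$, $\l$, $\lminus$, $\jg{\ground}$, $\jg{\ground/p}$ occurring in \autoref{thm::add2structure} and \autoref{thm::addpstructure}, one can simply inspect each: the torsion ones vanish after $\otimes\,\ground[1/p]$, and for the others $\mfr$ is already injective on $\im\mft$ after inverting $p$.) Reassembling the summands $X^\G\sqcup F$ gives the lemma.

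\textbf{Main obstacle.} There is no deep difficulty; the work is bookkeeping --- correctly identifying $\rho^*$ and $\hat i^*$ with the Mackey-functor maps $\mfr$, $\mft$, and the quotient by $\im\mft$. The one genuinely substantive point is the relation $\mfr\mft=\trace$ together with $\mft g=\mft$, which is precisely what forces $\mfr|_{\im\mft}$ to be injective after inverting $p$; it fails integrally (already for $\ulhr^0(S^0)=A$), which is why both the hypothesis that $\ground$ has no $p$-torsion and the localization at $p$ are essential.
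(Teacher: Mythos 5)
Your proof is correct and takes a genuinely different route for the fixed orbit. The reduction by additivity to single orbits, and the split-injectivity of the diagonal $\rho^*$ on the free orbit, agree with the paper. For $X=\trivo$, the paper enumerates the possible Mackey functors $\ulhr^\alpha(S^0)$ via \autoref{thm::add2structure} and \autoref{thm::addpstructure}, splitting into three cases by the dimension pair $(|\alpha^\G|,|\alpha|)$: for $\r$, $\l$, $\rminus$, $\lminus$ it checks $\mfr$ directly; for $\tw{A}{d}$ it inverts the matrix $\mymatrix{d&1\\p&0}$; for $\jg{\ground}$ and $\jg{\ground/p}$ it notes $\im\mft=0$. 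You instead prove the uniform fact that in any $\G$-Mackey functor, $\ker\mfr\cap\im\mft$ consists of $p$-torsion: if $x=\mft y$ and $\mfr x=0$, then
\[ px = \sum_{k=0}^{p-1}\mft(g^k y)=\mft\Big(\sum_{k} g^k y\Big)=\mft(\mfr\mft y)=\mft(\mfr x)=0, \]
using $\mft(gy)=\mft y$, which follows most directly from the fact that $\mft$ is a $\ground[\G]$-map to a module with trivial action (\autoref{sec::modp}) rather than from a ``pullback relation.'' Your version avoids any reliance on the explicit computation of $\ulhr^*(S^0)$ and makes transparent why inverting $p$ suffices; the paper's casework has the small side benefit of showing concretely which Mackey functors carry the killed $p$-torsion. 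One minor inaccuracy in your closing remarks: the no-$p$-torsion hypothesis on $\ground$ is not actually needed for \autoref{lem::dim0} itself --- $px=0$ forces $x\otimes 1=0$ over $\ground[1/p]$ in any case --- it enters \autoref{thm::multcomparison} through \autoref{lem::rotsphere}, where injectivity of the trace map on $M(\freeo)$ is required.
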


\begin{proof}
The first assertion holds by inspection, since $i^*$ is an isomorphism when
the set is ${X_+ = S^0}$, and $\rho^*$ is the diagonal map, a monomorphism,
when ${X_+ = \G_+}$.  For the same reason, the second assertion holds when
${X_+ = \G_+}$.  

It thus suffices to consider the second assertion when $X_+ = S^0$.  There
are three cases to consider, based on the dimension of $\alpha$.
\begin{itemize}
\item If $|\alpha|=0$ and $|\alpha^\G|\ne 0$, 
then $\ulhr^\alpha(S^0)$ is one of $\r$, $\l$, $\rminus$, or $\lminus$.
For $\r$, $\l$, and $\rminus$, $\rho^*$ is a 
monomorphism and remains one after inverting $p$. For $\lminus$, $\rho^*$
becomes a monomorphism after inverting $p=2$.  Thus $\rho^*\oplus
\hat{i}^*$ is a monomorphism as well, in each case.  

\item If $\alpha$ has dimension $(0,0)$, we know that $\ulhr^\alpha(S^0) =
\tw{A}{d}$ for some $d$ prime to $p$, and that $\ulhr^\alpha(S^0)_\freeo =
A_\freeo$.  We can also identify $\jg{\tw{A}{d}(\trivo)/{(\im\mft)}} \cong
\ground$.  It follows that $(\rho^*\oplus\hat{i}^*)(\trivo)$ is the map
$\ground\oplus\ground \to \ground\oplus\ground$ given by
$\mymatrix{d&1\\p&0}$.  This is a monomorphism and remains so after
tensoring with $\ground[1/p]$.

\item If $|\alpha|\ne 0$, then $\ulhr^\alpha(S^0)$ is $\jg{\ground}$ or
$\jg{\ground/p}$, so $\im\mft = 0$ in $\ulhr^\alpha(S^0)(\trivo)$ and
$\hat{i}^*$ is an isomorphism.  Thus $\rho^*\oplus\hat{i}^*$ is a 
monomorphism.
\end{itemize}
Putting these results together gives the statement of the lemma.
\end{proof}

\begin{lem}\label{lem::rotsphere}
If $\rotrep$ is the underlying real two dimensional representation of a
nontrivial irreducible complex representation of $\G$,
then the map
\[ \rho^*\oplus i^* \colon \hr^\alpha(S^\rotrep) \to
\hr^\alpha(\G_+\sm S^\rotrep) \oplus \hr^\alpha\left(
(S^\rotrep)^\G_+\right)\]
is a monomorphism for $\alpha$ of even dimension, and
\[ \rho^*\oplus \hat{i}^* \colon \hr^\alpha(S^\rotrep) \to
\hr^\alpha(\G_+\sm S^\rotrep) \oplus \left( \hr^\alpha\left( (S^\rotrep)^\G_+\right)/{(\im\mft)} \right)\]
becomes a monomorphism for every $\alpha$ after tensoring with
$\ground[1/p]$.
\end{lem}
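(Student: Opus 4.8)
The plan is to carry out the same reduction as in \autoref{lem::dim0} and \autoref{lem::rotsphere}'s predecessors. At the $\freeo$ level $\rho^*$ is the diagonal, so both $\rho^*\oplus i^*$ and $\rho^*\oplus\hat i^*$ are monomorphisms for any $\G$-space by \autoref{lem::freeolevel}, and only the $\trivo$ level is at issue. There the suspension isomorphism identifies $\ulhr^\alpha(S^\rotrep)$ with $\ulhr^{\alpha-\rotrep}(S^0)$, whose isomorphism type is read off from \autoref{thm::add2structure} when $p=2$ and from \autoref{thm::addpstructure} when $p$ is odd, and under this identification $(\rho^*)(\trivo)$ is precisely the restriction $\mfr$ of that Mackey functor, so $\ker\rho^*$ is known. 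To get at $i^*$ I would use the cofiber sequence $S(\rotrep)_+\to S^0\to S^\rotrep$: since $D(\rotrep)$ is $\G$-contractible the middle term is $S^0$, and the composite $S^0\to S^\rotrep$ is homotopic to the inclusion of the fixed set $(S^\rotrep)^\G\cong S^0$, hence induces $i^*$. The long exact sequence of Mackey functors then gives $\ker i^*=\im\bigl(\delta\colon\ulhr^{\alpha-1}(S(\rotrep)_+)\to\ulhr^\alpha(S^\rotrep)\bigr)$. Now $S(\rotrep)$ is a free $\G$-space whose orbit space is a circle, so $\ulhr^{\alpha-1}(S(\rotrep)_+)(\trivo)\cong\neqhr^{|\alpha|-1}(S^1;\ground)$, which vanishes unless $|\alpha|\in\{1,2\}$. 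Thus $i^*$ is already injective at the $\trivo$ level for $|\alpha|\notin\{1,2\}$, settling both assertions in those gradings, and when $|\alpha|=1$ one checks from \autoref{thm::add2structure}/\autoref{thm::addpstructure} that $\ulhr^{\alpha-\rotrep}(S^0)$ is $0$ or a copy of $\jg{\ground/p}$, hence dies after inverting $p$ and contributes nothing to the second assertion. Only $|\alpha|=2$ has content.

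When $|\alpha|=2$ the Mackey functor $\ulhr^{\alpha-\rotrep}(S^0)$ has dimension $(|\alpha^\G|,0)$, so it is $0$, $\r$, $\l$, or $\tw{A}{d}$ (equivalently $A$). For $\r$ and $\l$ the restriction $\mfr$ is already a monomorphism --- using that $\ground$ has no elements of order $p$ in the $\l$ case --- so $\rho^*\oplus i^*$ is a monomorphism regardless of $i^*$, and this persists after inverting $p$. The only case requiring work is $\ulhr^{\alpha-\rotrep}(S^0)\cong\tw{A}{d}$, where $\ker\mfr$ is the rank-one submodule generated by $\kappa$; I must show it does not meet $\im\delta$. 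For this I would use that $\delta$ is a map of Mackey functors, hence commutes with $\mfr$; that the restriction on the free module $\ulhr^{\alpha-1}(S(\rotrep)_+)$ is injective, because on the cohomology of a free $\G$-space transfer-after-restriction is multiplication by $p$ and $\ground$ has no $p$-torsion; and that the $\freeo$-level map $\delta_\freeo$ is an isomorphism, since smashing the cofiber sequence with $\G_+$ produces a nonequivariant cofiber sequence whose middle term $(\G\times D(\rotrep))_+$ has reduced cohomology only in degree $0$, forcing $\delta_\freeo$ to be an isomorphism in degree $|\alpha|=2$. Then $\delta(x)\in\ker\mfr$ implies $\delta_\freeo(\mfr x)=\mfr(\delta x)=0$, whence $\mfr x=0$ and $x=0$; so $\im\delta\cap\ker\mfr=0$ and the first assertion follows at the $\trivo$ level.

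For the second assertion I would invert $p$ at the outset: then $\tw{A}{d}\cong A\cong\jg{\ground}\oplus\l$ and all torsion Mackey functors vanish, so $\ulhr^\alpha(S^\rotrep)$ becomes a direct sum of copies of $\l$, $\r$, and $\jg{\ground}$. On the $\l$- and $\r$-summands $\rho^*=\mfr$ is already monic, and since $\mfr$ vanishes on the remaining $\jg{\ground}$-summands, a short diagram chase reduces $\rho^*\oplus\hat i^*$ being monic to $\hat i^*$ being monic on $\ker\mfr$, the sum of the $\jg{\ground}$-summands. Because $i^*$ is a map of Mackey functors it carries $\ker\mfr$ into $\ker\mfr^{S^0}$, and the projection $\ulhr^\alpha(S^0)(\trivo)\to\ulhr^\alpha(S^0)(\trivo)/\im\mft$ defining $\jg{\ground}\boxp(-)$ (see \autoref{ex::jg}) is injective on $\ker\mfr^{S^0}$ after inverting $p$; hence $\hat i^*$ is monic on $\ker\mfr$ exactly when $i^*$ is, i.e.\ exactly when $\im\delta\cap\ker\mfr=0$, which holds by the same naturality argument as above (now with $\delta_\freeo$ still an isomorphism in the only relevant grading $|\alpha|=2$).

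The hard part is precisely the case $|\alpha|=2$ with $\ulhr^{\alpha-\rotrep}(S^0)\cong\tw{A}{d}$: there $\ker i^*$ is a genuinely nonzero rank-one submodule that a priori could coincide with the rank-one submodule $\ker\mfr=\ground\kappa$, and the only leverage against this is the naturality of the Bredon connecting homomorphism as a map of Mackey functors, combined with the computations that $\mft\mfr$ is multiplication by $p$ on $S(\rotrep)_+$ and that $\delta$ restricts to a nonequivariant isomorphism at the $\freeo$ level. Everything else is bookkeeping with the additive structure of $\ulhr^*(S^0)$ from \autoref{thm::add2structure}/\autoref{thm::addpstructure} and with the effect of inverting $p$ on the Mackey functors that occur.
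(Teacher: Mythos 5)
Your approach is a genuine and attractive alternative to the paper's.  The paper filters $S^\rotrep$ through a $\G$-CW skeleton $X_1$ (a wedge of $p$ arcs between the poles, so that $X_1/S^0\simeq S^1\sm\G_+$) and factors $i^*$ through $X_1$, analyzing the two resulting boundary maps via the kernel and cokernel of $\mfr$ and $\mft$.  You instead work directly from the representation-sphere cofiber sequence $S(\rotrep)_+\to S^0\to S^\rotrep$ and exploit the freeness of $S(\rotrep)$: $\mfr$ is monic on $\ulhr^*(S(\rotrep)_+)(\trivo)$ because $\mft\mfr=p$, the connecting map $\delta$ is a map of Mackey functors, and $\delta_\freeo$ is an isomorphism when $|\alpha|=2$, so $\delta(x)\in\ker\mfr$ forces $\mfr x=0$ and hence $x=0$.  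That piece of the argument is correct, and arguably cleaner than the two-step filtration in the paper.

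The gap is in how you dispatch the gradings with $|\alpha|=1$ for the second assertion.  You assert that there $\ulhr^{\alpha-\rotrep}(S^0)$ is $0$ or a copy of $\jg{\ground/p}$ and hence vanishes after inverting $p$.  That reading of \autoref{thm::addpstructure} is correct for odd $p$, but for $p=2$ and $\alpha=\signrep$ one has $\alpha-\rotrep=-\signrep$ of dimension $(0,-1)$, and \autoref{thm::add2structure} gives $\ulhr^{-\signrep}(S^0)\cong\jg{\ground}$, which does \emph{not} die after inverting $2$.  Moreover your naturality argument is vacuous there: $\delta_\freeo\colon\neqhr^0((S^1)_+)\to\neqhr^1(S^2)=0$ is already the zero map, so ``$\delta_\freeo(\mfr x)=0$'' gives no information about $\mfr x$.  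So you are left needing an independent proof that $\hat{i}^*$ is injective on $\hr^\signrep(S^{2\signrep})\cong\ground$.  The paper supplies this by factoring $i^*$ as $\hr^{-\signrep}(S^0)\to\hr^0(S^0)\to\hr^\signrep(S^0)$, noting that the first map is monic with image $\ker\mfr\subset A(\trivo)$, the second has kernel exactly $\im\mft$, and $\ker\mfr\cap\im\mft=0$ in $A(\trivo)$.  You would need to import some such argument (equivalently: multiplication by $\epsilon^2$ is nonzero from $\ulhr^{-\signrep}(S^0)(\trivo)$ to $\ulhr^{\signrep}(S^0)(\trivo)$) to close this case.  A smaller omission: in the case analysis at $|\alpha|=2$ you list only $0$, $\r$, $\l$, $\tw{A}{d}$, but for $p=2$ the Mackey functors $\rminus$ and $\lminus$ can also appear; they cause no trouble, since $\rminus(\trivo)=0$ and $\lminus$ occurs only at odd $|\alpha^\G|$ with $\lminus(\trivo)=\ground/2$ dying after inverting $2$, but they should be named rather than silently dropped.
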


\begin{proof}
Write $X = S^\rotrep$, for brevity.  Then $X^\G = S^0$ and
$S^0\hookrightarrow S^\rotrep$ is the inclusion of the points at $0$ and
$\infty$.  We can give $X$ the structure of a $\G$-CW-complex with trivial
cells, as follows.  We let $X_0 = S^0 = X^\G$; $X_1$ is given by attaching
a cell $\G\times D^1$ to $X_0$ with endpoints the two distinct points of
$X_0$; and $X_2 = X$ is given by attaching a cell $\G\times D^2$ by gluing
in copies of $D^2$ between consecutive copies of $D^1$ in $X_1$.  Although
this $\G$-CW-structure does not have cells only in even
dimensions, we know the cohomology of the cofibers $X_1/X_0 = S^1 \sm \G_+$
and $X_2/X_1 = S^2\sm \G_+ = S^\rotrep\sm \G_+$, and we can use this to
learn about the map induced by $S^0\hookrightarrow S^\rotrep$ on
cohomology.

From here, we need to consider the cases $p=2$ and $p>2$ separately; the
arguments are similar but slightly more complicated for $p=2$.

We will begin by assuming $p$ is an odd prime.  First, consider the
cofibration sequence
\[ S^0\to X_1 \to X_1/S^0 = S^1\sm \G_+ \to S^1\sm S^0 \to \cdots \]
The map $S^1\sm \G_+ \to S^1\sm S^0$ is induced by the projection
$\rho\colon \freeo\to\trivo$, hence agrees with the restriction $\mfr$ on
cohomology.  It follows that, for any $\alpha$, we have an exact sequence
\[ \ulhr^{\alpha-1}(S^0) \xrightarrow{\ \rho^*} \ulhr^{\alpha-1}(\G_+) \to
\ulhr^\alpha(X_1) \to \ulhr^\alpha(S^0) \xrightarrow{\ \rho^*} \ulhr^\alpha(\G_+).\]
This breaks up into a short exact sequence
\begin{equation}\label{eqn::mfrseq}
0\to (\coker\rho^*)^{\alpha-1} \to \ulhr^\alpha(X_1) \to
(\ker\rho^*)^\alpha \to 0.
\end{equation}
At the $\trivo$ level, $\rho^*$ agrees with the map
\[ {\mfr\colon \ulhr^*(S^0)(\trivo) \to \ulhr^*(S^0)(\freeo)\cong
\ulhr^*(\G_+)(\trivo)},\]
so we can use our knowledge of $\ulhr^*(S^0)$ to describe the kernel and
cokernel of $\rho^*$.  Of the Mackey functors appearing in $\ulhr^*(S^0)$
for $p>2$, $\mfr$ fails to be surjective only for $\l$; it fails to be
injective for $\jg{\ground}$, $\jg{\ground/p}$, and $\tw{A}d$.  A pictorial
representation of $(\ker\rho^*)(\trivo)$ and $(\coker\rho^*)(\trivo)$ for
$p>2$ is shown in \autoref{fig::cokerrho}.  For every $\alpha$, at least
one of $(\coker\rho^*)^{\alpha-1}$ and $(\ker\rho^*)^\alpha$ vanishes, so
the short exact sequence of \eqnref{eqn::mfrseq} splits;
${\ulhr^\alpha(X_1)\cong (\coker\rho^*)^{\alpha-1}\oplus
(\ker\rho^*)^\alpha}$, and the map $\ulhr^*(X_1)\to \ulhr^*(S^0)$ induced
by $S^0\hookrightarrow X_1$ is the projection onto $\ker\rho^*$.


\begin{figure}
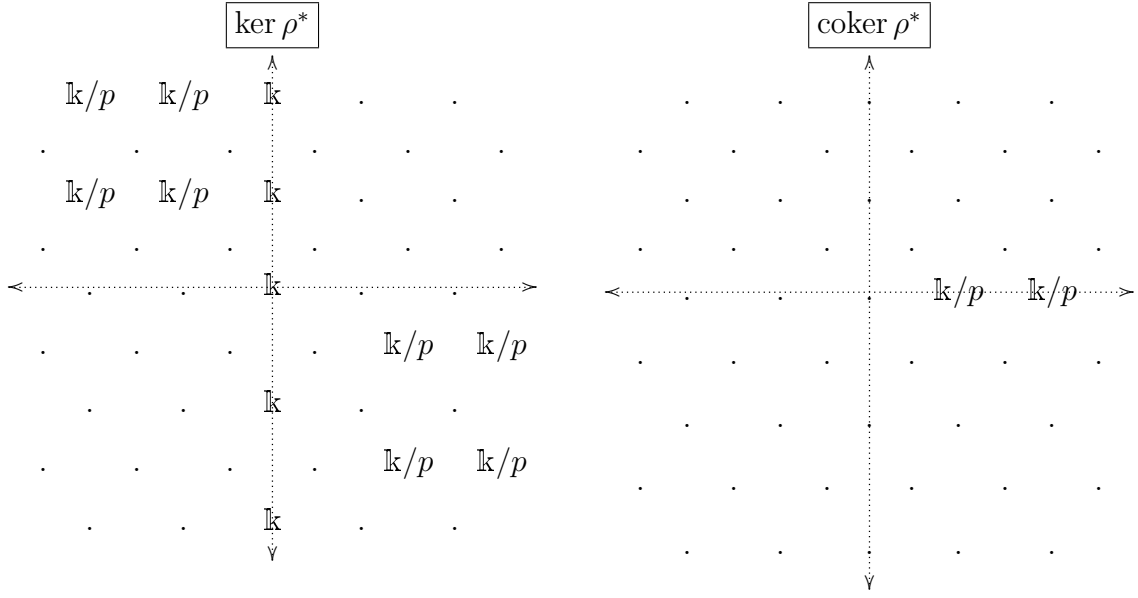

\[ 
\cohomplot{}{\framebox{$\ker\rho^*$}}{\ground}{.}{.}{\ground}{\ground/p}{}{0.7pc}
\cohomplot{}{\framebox{$\coker\rho^*$}}{.}{.}{\ground/p}{.}{.}{\ground/p}{0.7pc}
\]
\caption[\lofspace{}The kernel and cokernel of $\rho$]{A plot showing the kernel and cokernel of
$\rho$ at the $\trivo$ level; $p$ is odd in this picture, and $\ground$ has
no torsion of order $p$.  Compare to \autoref{fig::modp}.
As always, the axes represent $|\alpha^\G|$ and
$|\alpha|$.}
\label{fig::cokerrho}
\end{figure}

Similarly, in the cofibration sequence
\[ X_1 \to S^\rotrep \to S^\rotrep\sm \G_+ \to S^1\sm X_1 \to \cdots \]
the map $S^\rotrep\to S^\rotrep\sm \G_+$ is exactly the transfer associated
to the projection $\rho\colon\freeo\to\trivo$; see, e.g.\ \cite[page 83]{AK}.
Thus the induced map on cohomology is given by ${\ulhr^*(\G_+)(\trivo)\cong
\ulhr^*(S^0)(\freeo) \xrightarrow{\mft} \ulhr^*(S^0)(\trivo)}$.  It follows
as before that the cohomology long exact sequence arising from the
cofibration sequence above breaks up into short exact sequences
\begin{equation}\label{eqn::mftseq}
0\to (\coker\mft)^{\alpha-\rotrep} \to \ulhr^\alpha(X_1) \to
(\ker\mft)^{\alpha-\rotrep+1} \to 0
\end{equation}
where $\mft$ denotes the transfer map on $\ulhr^*(S^0)$.
By assumption $\ground$ has no torsion of order $p$, so
$\ker\mft$ vanishes for every Mackey functor appearing in $\ulhr^*(S^0)$
when $p>2$.  So $\ulhr^\alpha(X_1)\cong (\coker\mft)^{\alpha-\rotrep}$, and
$\ulhr^*(S^\rotrep)\to \ulhr^*(X_1)$ is the projection.\footnote{Note that
this is compatible with ${\ulhr^\alpha(X_1) \cong (\coker\rho^*)^{\alpha-1}
\oplus (\ker\rho^*)^\alpha}$.} A pictorial representation of
$(\coker\mft)(\trivo)$ is shown in \autoref{fig::coker}.

\begin{figure}
\[ 
\cohomplot{|\alpha^\G|}{|\alpha|}{\ground}{\ground/p}{.}{\ground}{\ground/p}{}{0.7pc} 
\]
\caption[\lofspace{}The cokernel of $\mft$]{A plot showing the cokernel of $\mft$ at the $\trivo$ level, again
for $p$ odd.  Compare to $\ulhrnoph^*(S^0)(\trivo)$ in
\autoref{fig::modp}.}
\label{fig::coker}
\end{figure}

Putting all of this together, we see that the inclusion of fixed points
$S^0\hookrightarrow S^\rotrep$ induces the composite of two projections
followed by an inclusion
\begin{align*}
\hr^\alpha(S^\rotrep) & \cong \hr^{\alpha-\rotrep}(S^0) \twoheadrightarrow
(\coker\mft)^{\alpha-\rotrep} \cong \\
& \cong (\coker\rho^*)^{\alpha-1}\oplus
(\ker\rho^*)^\alpha \twoheadrightarrow (\ker\rho^*)^\alpha \hookrightarrow
\hr^\alpha(S^0)
\end{align*}
at the $\trivo$ level of the cohomology Mackey functors.  If we restrict to
representations $\alpha$ of even dimension, or if we tensor everything in
sight with $\ground[1/p]$, then $(\coker\rho^*)^{\alpha-1}$ vanishes, and
the kernel of $i^*\colon \ulhr^*(S^\rotrep)(\trivo)\to \ulhr^*(S^0)(\trivo)$ is
exactly the image of $\mft$ in $\ulhr^*(S^\rotrep)$.  The kernel of
$\rho^*$ is, of course, the kernel of $\mfr$, and so we are interested in
the intersection of the kernel of $\mfr$ and the image of $\mft$.  

However, recall that in any Mackey functor $M$, the composite $\mfr\mft$ is
the trace map of the $\G$-action on $M(\freeo)$.  Since $p>2$ and $\ground$
has no $p$-torsion, this trace map is a monomorphism for every Mackey
functor appearing in $\ulhr^*(S^\rotrep) \cong \ulhr^{*-\rotrep}(S^0)$.
Thus ${(\ker\mfr)}$ and ${(\im\mft)}$ have trivial intersection, and 
the map $\rho^*\oplus i^*$ is a monomorphism for every $\alpha$ of even
dimension.

Similarly, the image of $i^*$ in $\hr^\alpha(S^0)$ is the kernel of $\mfr$,
and thus has empty intersection with the image of $\mft$.  It follows that
$\rho^*\oplus \hat{i}^*$ becomes a monomorphism after inverting $p$, for
every $\alpha$.  This completes the proof for odd primes.

{\renewcommand{\G}{{C_2}}
We next turn to the proof for $p=2$, where the existence of a nontrivial
real one-dimensional representation creates additional dimensions $\alpha$ where
$\ulhr^\alpha(S^0)\ne 0$.  In particular, there are $\alpha$ for which
$(\coker \rho^*)^{\alpha-1}$ and $(\ker \rho^*)^\alpha$ are both nonzero,
and the short exact sequence of \eqnref{eqn::mfrseq} does not split.
Additionally, $\ker \mft\ne 0$ for the Mackey functors $\lminus$ and
$\rminus$, and all three terms in the short exact sequence of
\eqnref{eqn::mftseq} are nonzero for $\alpha=\signrep$.

Fortunately, we also have some more explicit knowledge of the spaces
involved.  We have $\rotrep = \signrep\oplus \signrep$, i.e. two copies of
the one-dimensional sign representation; $X_1 = S^\signrep$, whose
cohomology is known; and the inclusion ${X_1 = S^\signrep \hookrightarrow
S^{2\signrep} = S^\rotrep}$ is the suspension of $S^0\hookrightarrow
S^\signrep$.

The arguments for $p>2$ apply equally well for $p=2$ in dimensions $\alpha
= a_0 + a_1\signrep$ where $a_1$ is even, and in fact for dimensions where
one of the terms in \eqnref{eqn::mfrseq} and also
$(\ker\mft)^{\alpha-\rotrep+1}$ in \eqnref{eqn::mftseq} vanish.  Thus we
can concentrate on the exceptions.  If we generate a picture from
\autoref{fig::mod2} in the same way that \autoref{fig::cokerrho} was
generated from \autoref{fig::modp}, we see that these dimensions are
exactly those $\alpha$ of the form $a + (1-a)\signrep$ for some integer
$a$.  These all have $|\alpha| = 1$, hence are not of even dimension; so we
need only consider $\alpha$ for which
$\ulhr^\alpha(S^\rotrep)(\trivo)\otimes \ground[1/2] \ne 0$.  The only such
$\alpha$ is $\alpha=\signrep$; using the suspension isomorphism, $i^*\colon
\hr^\signrep(S^{2\signrep})\to \hr^\signrep(S^0)$ factors as
\[ \hr^{-\signrep}(S^0) \to \hr^0(S^0)\to \hr^\signrep(S^0).\]
Using \eqnref{eqn::mfrseq}, we see the left-hand map is a monomorphism
with image $\ker\mfr\subset \hr^0(S^0)$.  From \eqnref{eqn::mftseq}, we
see that the kernel of the right-hand map is $\im\mft$.  Since $\ker\mfr
\cap \im\mft = 0$ in $\ulhr^0(S^0)(\trivo) = A(\trivo)$, it follows that
$i^*\colon \ground\to\ground$ is a monomorphism and remains so after
inverting $p=2$.  Since $\im\mft = 0$ in $\hr^\signrep(S^0)$, this
completes the proof of the lemma.  
}
\end{proof}

\begin{lem}\label{lem::repsphere}
Let $V$ be a $\G$-representation with $V^\G = 0$.  Then the kernel of the
map of $\ground$-modules $i^*\colon \hr^*(S^V)\to \hr^*(S^0)$ induced by
$S^0\hookrightarrow S^V$ is given by
\[ (\ker i^*)^\alpha = \begin{cases}
(\im\mft)^\alpha & |\alpha| = |V| \\
\hr^\alpha(S^V) & 0 < |\alpha| < |V| \text{ and } |\alpha^\G| > 0\\
0 = (\im\mft)^\alpha & \text{otherwise.}
\end{cases}
\]

\begin{figure}
\[ 
\shiftedpic{}{\framebox{$\ulhr^*(S^V)$}}{\tw{A}{d}}{\r}{\l}{\jg{\ground}}{\jg{\ground/p}}{\ground/p}
\hspace{10pt}
\kernelpic{}{\framebox{$\ker(\ulhr^*(S^V)(\trivo)\to\ulhr^*(S^0)(\trivo))$}}{\ground}{p\ground}{\ground}{\ground}{\ground/p}{\ground/p}
\]
\caption[\lofspace{}The kernel of $\ulhrcapt^*(S^V)(\trivo)\to\ulhrcapt^*(S^0)(\trivo)$]{$\ulhrnoph^*(S^V)$ and the kernel of
$\ulhrnoph^*(S^V)(\trivo)\to\ulhrnoph^*(S^0)(\trivo)$, shown for $|V| = 4$
and $p>2$.}
\label{fig::kernel}
\end{figure}

\end{lem}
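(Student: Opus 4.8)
The plan is to compare $S^V$ with $S^0$ through the cofiber sequence
\[ S(V)_+ \xrightarrow{\ q\ } S^0 \xrightarrow{\ i\ } S^V \to \Sigma S(V)_+, \]
which comes from $D(V)/S(V)=S^V$ together with the $\G$-equivariant contraction $D(V)\simeq\{0\}$. Since $V^\G=0$ we have $(S^V)^\G=S^{V^\G}=S^0$, and under this identification the middle map $i$ is exactly the inclusion $S^0\hookrightarrow S^V$ of the lemma, while $q$ is the collapse. Taking reduced $\trivo$-level Bredon cohomology yields a long exact sequence
\[ \cdots \to \hr^{\alpha-1}(S(V)_+) \xrightarrow{\ \partial\ } \hr^\alpha(S^V) \xrightarrow{\ i^*\ } \hr^\alpha(S^0) \xrightarrow{\ q^*\ } \hr^\alpha(S(V)_+) \to \cdots, \]
so that $(\ker i^*)^\alpha=\im\partial\cong\coker\big(q^*\colon\hr^{\alpha-1}(S^0)\to\hr^{\alpha-1}(S(V)_+)\big)$. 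I want to stress that this isomorphism is only abstract: the content of the lemma is to locate the subgroup $\ker i^*=\im\partial$ inside $\hr^\alpha(S^V)\cong\hr^{\alpha-V}(S^0)$ and to recognize it as $\im\mft$. (I work throughout under the standing hypothesis of \autoref{thm::multcomparison} that $\ground$ has no $p$-torsion.)

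The two computational inputs are $\hr^*(S(V)_+)$ and the behaviour of $q^*$. As $V^\G=0$, the space $S(V)$ is a free $\G$-CW-complex of dimension $|V|-1$, so its cellular cochain complex with coefficients in the trivial $\G$-module $A(\freeo)=\ground$ is the cochain complex of the orbit space; hence $\hr^\alpha(S(V)_+;A)\cong\neqh^{|\alpha|}(S(V)/\G;\ground)$, a group depending only on $|\alpha|$. The orbit space is a generalized lens space for $p$ odd (and $\bR P^{|V|-1}$ for $p=2$), whose cohomology, with no $p$-torsion in $\ground$, is $\ground$ in degree $0$, is $\ground/p$ resp.\ $0$ in the even resp.\ odd intermediate degrees (for $p$ odd), and is $\ground$ or $\ground/p$ in the top degree $|V|-1$ according as the orbit space is orientable or not. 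The map $q^*$ is pullback along $S(V)\to\pt$; in $|\cdot|$-degree $0$ it is $\mfr$ followed by $\neqh^0(\pt)\cong\neqh^0(S(V)/\G)$, and it sends the equivariant Euler class $\epsilon$ (and, for $p$ odd, the classes $\xi_\beta$) to the ordinary Euler class of the associated line bundle over $S(V)/\G$, which generates $\neqh^2(S(V)/\G;\ground)\cong\ground/p$. Being a ring map, $q^*$ is therefore surjective in every degree $k$ with $0\le k<|V|-1$, and each $2$nd-quadrant copy of $\jg{\ground/p}$ in $\ulhr^*(S^0)$ — a product of $\xi$'s and $\epsilon$'s — maps onto the corresponding lens-space class.

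The lemma then follows from bookkeeping organized by the bidegree $(|\alpha^\G|,|\alpha|)$, i.e.\ by the shifted picture $\ulhr^{*-V}(S^0)$ of \autoref{fig::kernel} read against \autoref{fig::modp}/\autoref{fig::mod2}. If $|\alpha|\le 0$ or $|\alpha|>|V|$, then $\hr^{\alpha-1}(S(V)_+)$ vanishes for dimension reasons so $\ker i^*=0$, and in those bidegrees $\hr^\alpha(S^V)$ is $0$, a $\jg{-}$, or lies on the $|\cdot^\G|=0$ axis, all with vanishing $\freeo$-level, so $\im\mft=0$ too. If $0<|\alpha|<|V|$ and $|\alpha^\G|>0$, the bidegree of $\alpha$ is in the open first quadrant where $\hr^\alpha(S^0)=0$, so $i^*=0$ and $\ker i^*=\hr^\alpha(S^V)$ — the middle case. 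If $0<|\alpha|<|V|$ and $|\alpha^\G|\le 0$, then $\alpha-1$ (bidegree that of $\alpha$ shifted diagonally by $(-1,-1)$) lands where $\ulhr^*(S^0)(\trivo)$ is either $0$ or one of the groups on which $q^*$ was just shown surjective, so $\coker(q^*)^{\alpha-1}=0$ and $\ker i^*=0=\im\mft$. This leaves the layer $|\alpha|=|V|$: here $\hr^\alpha(S^V)\cong\ulhr^{(|\alpha^\G|,0)}(S^0)(\trivo)$ is one of $\tw{A}{d}(\trivo)$, $\l(\trivo)$, $\r(\trivo)$ (plus $\lminus(\trivo)$, $\rminus(\trivo)$ when $p=2$), the target $\hr^\alpha(S(V)_+)=\neqh^{|V|}(S(V)/\G)=0$ by dimension so $i^*$ is onto $\hr^\alpha(S^0)$, and Mackey-naturality of $i^*$ — using that $\neqhr^{|\alpha|}(S^{|V|};\ground)\to\neqhr^{|\alpha|}(S^0;\ground)$ is zero — gives $\im\mft\subseteq\ker i^*$; comparing the abstract sizes of $\hr^\alpha(S^V)$, $\hr^\alpha(S^0)$ and $\im\mft$ via the explicit diagrams of $\tw{A}{d},\l,\r,\lminus$ and the no-$p$-torsion hypothesis then forces $\ker i^*=\im\mft$.

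I expect two points to need real care. The first is the description of $q^*$ on positive-degree classes: that the equivariant Euler classes restrict to \emph{generators} of the torsion in $\neqh^*(S(V)/\G;\ground)$ rather than to zero is exactly what makes $q^*$ surjective below the top degree, and is the step where the argument in \cite{lgl} went wrong. The second is $p=2$: the extra one-dimensional representation $\signrep$ introduces additional nonzero dimensions in $\ulhr^*(S^0)$, the orbit space $\bR P^{|V|-1}$ can be non-orientable (so its top class is $\ground/2$, not $\ground$), and several of the short exact sequences above fail to split — the same phenomena handled in the proof of \autoref{lem::rotsphere}, to which one reduces (using the explicit free $\cyctwo$-CW model of $S(n\signrep)$) in the exceptional bidegrees.
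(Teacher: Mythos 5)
Your proof takes a genuinely different route from the paper. The paper decomposes $V=\rotrep_1\oplus\cdots\oplus\rotrep_\ell$ into nontrivial irreducibles and iterates the map $\hr^\alpha(S^0)\to\hr^{\alpha+\rotrep_i}(S^0)$ one $\rotrep$ at a time, applying \autoref{lem::rotsphere} at each step; everything is carried out internally to $\ulhr^*(S^0)$, using only $\ker\mfr\cap\im\mft=0$ and parity. You instead run the cofiber sequence $S(V)_+\to S^0\to S^V$ all at once and compute the extra term $\hr^*(S(V)_+)$ geometrically as the cohomology of a generalized lens space, reducing the kernel question to surjectivity of $q^*$ and to pullbacks of equivariant Euler classes. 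For $p$ odd this works and gives a cleaner geometric picture; the price is the external input that a nontrivial line bundle over $S(V)/\G$ has Chern class generating $H^2\cong\bZ/p$, and a bit more care than ``comparing abstract sizes'' at $|\alpha|=|V|$ (there you need $i^*$ surjective plus $\im\mft\subseteq\ker i^*$ and the explicit presentations of $\tw{A}{d}$, $\l$, $\r$, not just ranks).

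However, there is a genuine gap for $p=2$, and it is not the one you flag. The identification
$\hr^\alpha(S(V)_+;A)\cong\neqh^{|\alpha|}(S(V)/\G;\ground)$ ``a group depending only on $|\alpha|$''
is false when $\G=C_2$. The correct statement replaces $\ground$ by the local system on $S(V)/\G=\bR P^{|V|-1}$ obtained by twisting through the sign character whenever the $\signrep$-coefficient of $\alpha$ is odd: writing $\alpha=\beta-W$ with $W$ a sum of $\signrep$'s and $\beta\in\bZ$, one has $\hr^\alpha(S(V)_+;A)\cong\neqh^{|\beta|}\bigl(\mathrm{Th}(W\times_\G S(V))\bigr)$, and the bundle $W\times_\G S(V)\to\bR P^{|V|-1}$ is nonorientable when $|W|$ is odd, so the Thom isomorphism introduces the sign local system. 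This matters in every degree, not only at the top class: for the untwisted system $\neqh^0=\ground$, while for the twisted one $\neqh^0=0$ (and $\neqh^1=\ground/2$). Concretely, take $V=2\signrep$ and $\alpha=\signrep$. Your bookkeeping gives $\hr^{\signrep-1}(S(V)_+)=\neqh^0(\bR P^1;\ground)=\ground$ while $\hr^{\signrep-1}(S^0)=\rminus(\trivo)=0$, so $\coker q^*=\ground$ and hence $\ker i^*\cong\ground$. But directly, $i^*\colon\hr^\signrep(S^{2\signrep})\to\hr^\signrep(S^0)$ sends $\Sigma^{2\signrep}(\epsilon^{-1}\kappa)\mapsto\epsilon^2\cdot\epsilon^{-1}\kappa=\epsilon\kappa=2\epsilon$, a monomorphism, so in fact $\ker i^*=0$. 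The discrepancy disappears once the coefficient system is twisted, because then $\hr^{\signrep-1}(S(V)_+)=\neqh^0(\bR P^1;\ground_{\mathrm{tw}})=0$; equivalently, the connecting map $\G_+\sm S^1\to\Sigma\G_+$ acts on $\ulhr^{\signrep-1}(S^0)(\freeo)=\rminus(\freeo)$ by $g-1=-2$, not by $0$. Your three listed $p=2$ caveats (extra nonzero bidegrees in $\ulhr^*(S^0)$, possible non-orientability of the top class, non-split exact sequences) do not include this local-coefficient phenomenon, and your closing suggestion to ``reduce to \autoref{lem::rotsphere} in the exceptional bidegrees'' is, if carried out, simply the paper's iterative argument rather than a repair of the $S(V)$ route. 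For $p$ odd the issue evaporates, since $\G$-bundles of odd order are orientable; so your proof is complete in that case, but for $p=2$ the twist must be built into the computation of $\hr^*(S(V)_+)$ before the bookkeeping can be trusted.
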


The kernel of $i^*$ is shown pictorially for odd primes $p$ in
\autoref{fig::kernel}.  Note that, for $|\alpha| \ne |V|$, the kernel is
nonzero only in odd dimensions.

\begin{proof}
We know from the proofs of \autoref{lem::rotsphere} that, for a nontrivial
irreducible representation $\rotrep$ and $\alpha\in RO(\G)$,
$S^0\hookrightarrow S^\rotrep$ induces the map
\begin{align*}
\hr^{\alpha+\rotrep}(S^\rotrep) & \cong \hr^{\alpha}(S^0) \twoheadrightarrow
(\coker\mft)^{\alpha} \cong \\
& \cong (\coker\rho^*)^{\alpha+\rotrep-1}\oplus
(\ker\rho^*)^{\alpha+\rotrep} \twoheadrightarrow (\ker\rho^*)^{\alpha+\rotrep} \hookrightarrow
\hr^{\alpha+\rotrep}(S^0),
\end{align*}
whose kernel is as described in the statement of the lemma.  In particular,
if $|\alpha|$ is even, the kernel of the map
$\hr^{\alpha}(S^0)\to \hr^{\alpha+\rotrep}(S^0)$ is $(\im \mft)^{\alpha}$,
and the image is $(\ker \mfr)^{\alpha+\rotrep}$.  If $|\alpha|$ is odd,
then the map is a surjection onto the target.

Now consider any representation $V$ with $V^\G = 0$, so we can write
\[V = \rotrep_1 \oplus \rotrep_2 \oplus \cdots \oplus \rotrep_\ell,\]
where each $\rotrep_i$ is a nontrivial irreducible representation of $\G$.
Then the sequence of inclusions $S^0\hookrightarrow S^{\rotrep_1}
\hookrightarrow \cdots \hookrightarrow
S^{\rotrep_1\oplus\cdots\oplus\rotrep_\ell}$ induces maps 
\[ \hr^\alpha(S^0) \to \hr^{\alpha+\rotrep_{\ell}}(S^0) \to \cdots \to
\hr^{\alpha+\rotrep_{\ell}+\cdots+\rotrep_1}(S^0)
\]
where each arrow is a map of the form described above.  The description of
the kernel then follows from the description above and the fact that
$(\im\mft) \cap (\ker\mfr) = 0$ in $\ulhr^*(S^0)$.
\end{proof}

We are finally in a position to prove the more general statement of
\autoref{lem::crucial}.  As a reminder, the claim is that, when $X = \G_+
\sm S^m$ or $X = S^V$ for a $\G$-representation $V$,
\[ \rho^* \oplus i^* \colon \ulhr^\alpha(X) \to
\ulhr^\alpha(X)_\freeo \oplus \ulhr^\alpha(X^\G) \]
is a monomorphism, and 
\[ \rho^* \oplus \hat{i}^* \colon \ulhr^\alpha(X) \to
\ulhr^\alpha(X)_\freeo \oplus \left( \jg{\ground}\boxp \ulhr^\alpha(X^\G) \right) \]
becomes one after inverting $p$ in $\ground$.

\begin{proof}[Proof of \autoref{lem::crucial}]\label{pf::crucial}
For spheres $\G_+\sm S^{m}$, $\rho^*$ is already a monomorphism, and
remains one after tensoring with $\ground[1/p]$.  The result follows.

Next consider spheres $S^V$.  \autoref{lem::freeolevel} shows that
$\rho^*\oplus i^*$ and $\rho^*\oplus\hat{i}^*$ are monomorphisms at the
$\freeo$ level, so we restrict attention to the $\trivo$ level.
Suspension by a trivial sphere preserves
fixed points, and so we may assume without loss of generality that $V^\G =
0$ and so $(S^V)^\G = S^0$.

We know that, for any $\alpha$, the kernel of $\hr^\alpha(S^V)\to
\hr^\alpha(\G_+\sm S^V)$ is exactly the kernel of $\mfr\colon
\ulhr^\alpha(S^V)(\trivo)\to \ulhr^\alpha(S^V)(\freeo)$; for $|\alpha| \ne
|V|$, $\mfr$ is the zero map on $\ulhr^\alpha(S^V)$.  From
\autoref{lem::repsphere}, we know that the kernel of $i^*$ is the image of
$\mft$ in these dimensions.  As in \autoref{lem::rotsphere}, it follows
that $(\ker \rho^*\oplus i^*)^\alpha = 0$ when $|\alpha| = |V|$, since
$(\im\mft)\cap (\ker\mfr) = 0$ in $\ulhr^*(S^V)$.  In these dimensions,
$\im\mft = 0$ in the image, and so $(\ker\rho^*\oplus \hat{i}^*)^\alpha =
0$ as well.

For $|\alpha|\ne |V|$ of even dimension, $i^*$ is a monomorphism, and so
$\rho^*\oplus i^*$ is as well.  The kernel of $i^*$ disappears for all
$|\alpha| \ne |V|$ after inverting $p$, and the image of $i^*$ is contained
in the kernel of $\mfr$, which does not intersect $\mft$.  It follows that
$\mfr\oplus \hat{i}^*$ becomes a monomorphism in all dimensions after
tensoring with $\ground[1/p]$.
\end{proof}

\section{Cohomology of Complex Projective Spaces}\label{sec::cxprojmult}

We have already seen in \autoref{cor::cxproj} that, as a module over
$\ulhr^*(S^0)$, the cohomology $\ulhr^*({\cpv{\cxuniverse}}_+)$ of the
complex projective space on a complete universe is free on generators
corresponding to the cells $D(\omega_N)$ in the equivariant Schubert cell
decomposition; $\omega_N$ is the complex representation
\[\omega_N = \phi^{-N}(1\oplus\phi\oplus\cdots\oplus\phi^{N-1}),\]
where $\phi$ has underlying real representation $\rotrep_1$.  For ease of
notation, we will also use $\omega_N$ to denote the underlying real
representation of $\omega_N$, relying on context  to determine whether a
real or complex representation is needed.  

In the remainder of this section we will sketch how to use
\autoref{thm::multcomparison} to find the multiplicative structure of
$\ulhr^*(\cpv{\cxuniverse}_+)$, since it will be relevant to the computation of
$\ulhr^*(B_\G O(2)_+)$ in \autoref{ch::computations}.  We will use the
Dress pairing of \autoref{lem::dresspairing} and \autoref{lem::pboxmaps}
and focus on $\ulhr^*(\cpv{\cxuniverse}_+)(\trivo) =
\hr^*(\cpv{\cxuniverse}_+)$, since the nonequivariant cohomology
$\ulhr^*(\cpv{\cxuniverse}_+)(\freeo)$ is known.  Further, since we are
working with unreduced cohomology in this section, we will write
$\h^*(\cpv{\cxuniverse})$ rather than $\hr^*(\cpv{\cxuniverse}_+)$ and
$\h^*(\pt)$ rather than $\hr^*(S^0)$.
Full details of all results presented below, for
$\h^*(\cpv{\cxuniverse})$ and for the more general computation of
$\h^*(\cpv{V})$ for any $\G$-representation $V$, can be found in
\cite{lgl}.

We would first like to describe the maps $\rho^*\oplus i^*$ and
$\rho^*\oplus \hat{i}^*$ of \autoref{thm::multcomparison} explicitly; we
will start by examining $\cpv{\cxuniverse}^\G$.  A point of
$\cpv{\cxuniverse}$ is a line in $\cxuniverse$ through the origin and a
point $x = (x_0,x_1,\ldots,x_{N-1},1,0,\ldots)$.   This line will lie in
$\cpv{\cxuniverse}^{\G}$ if $gx$ is a $\bC$-scalar multiple of $x$.  It is
clear that this happens only when all nonzero coordinates $x_j$ come from
isomorphic representations $\phi^j$.  Since there are $p$ irreducible
complex representations of $\G$, it follows that 
\[ \cpv{\cxuniverse}^\G \simeq \coprod_{N=0}^{p-1} \cpv{(\phi^N)^{\oplus
\infty}} = \coprod^{p} \bC P^\infty.\]
Thus $\h^*(\cpv{\cxuniverse}^\G) \cong \bigoplus_{p}\h^*(\bC
P^\infty)$.  Since finite products and coproducts of
$\ground$-modules agree\footnote{The same is true of Mackey functors.}, the
map ${i^*\colon \h^*(\cpv{\cxuniverse})\to \h^*(\cpv{\cxuniverse}^\G)}$
is determined by $p$ maps ${i_N^*\colon \h^*(\cpv{\cxuniverse})\to
\h^*(\cpv{(\phi^N)^{\oplus\infty}})}$; and so $\rho^*\oplus i^*$ is
determined by $\rho^*$ together with the maps $i^*$.  In this context,
\autoref{thm::multcomparison} tells us that, for even-dimensional $\alpha$,
two elements ${x,y\in \h^\alpha(\cpv{\cxuniverse})}$ are the same if and
only if $\rho^*(x) = \rho^*(y)$ and $i_N^*(x) = i_N^*(y)$ for each $0\le
N\le {p-1}$.  An analogous statement holds for every $\alpha$ and
$\hat{i}^*$ after tensoring with $\ground[1/p]$.

Since we know the multiplicative structures of the targets of $\rho^*$,
$i^*$, and $\hat{i}^*$, we will be able to identify the products of
generators in $\h^*(\cpv{\cxuniverse})$ once we have computed
the values of $\rho^*$ and $i_N^*$ or $\hat{i}_N^*$ on these generators.

\begin{prop}[Lewis]\label{thm::cpvgenerators}
Let $i_N^*$ for $0\le N\le {p-1}$ be the components of ${i^*\colon
\h^*(\cpv{\cxuniverse})\to \h^*(\cpv{\cxuniverse}^\G)}$, as described
above.  Then there are elements ${C\in \h^{\omega_p}(\cpv{\cxuniverse})}$
and, for each $1\le j\le p-1$, $D_j\in
\h^{\omega_j}(\cpv{\cxuniverse})$, satisfying the following
conditions.
\begin{enumerate}
\item If we write $D_0 = 1$, then $\{D_j C^n\}_{0\le j\le p-1,\ n\ge 0}$
gives a complete set of generators of $\h^*(\cpv{\cxuniverse})$ as a
module over $\h^*(\pt)$.

\item Given a chosen generator $z$ of $\ulh^*(\cpv{\cxuniverse})(\freeo)
\cong \neqh^{|*|}(\cpv{\cxuniverse})$,
\[ \rho^*(D_j) = z^j \text{ and } \rho^*(C) = z^p.\]

\item Let $\h^*(\cpv{(\phi^N)^{\oplus\infty}}) \cong \ground[z_N] \otimes
\h^*(\pt)$.  Using brackets $[-]$ to denote the image of an element of
$\h^*(-)$ in $\h^*(-)/{(\im \mft)}$, we have the
following.  For each ${0\le N\le p-1}$, 
\[ \hat{i}_N^* (D_j) = [ d_{N,j} \epsilon_{\omega_j} ] 
\text{ and } \hat{i}_N^* (C) = [ \epsilon_{\omega_{p-1}} z_N ] \]
where $d_{N,j}$ is given by
\[ d_{N,j} = \begin{cases}
0 & N < j \\
1 & N = j \\
\prod_{i=0}^{j-1} d\left(\rotrep_{|i-N|}-\rotrep_{|i-j|}\right) & N > j
\end{cases}
\]
\end{enumerate}
\end{prop}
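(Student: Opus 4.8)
The plan is to run everything through \autoref{thm::multcomparison}: every Schubert dimension $\omega_N$ is even, so an element of $\h^{\omega_N}(\cpv{\cxuniverse})$ is pinned down by the pair $(\rho^*,i^*)$ of its images, and after inverting $p$ the pair $(\rho^*,\hat i^*)$ suffices in every dimension. Both targets are already in hand: $\rho^*$ lands in $\ulh^*(\cpv{\cxuniverse})(\freeo)\cong\neqh^{|*|}(\bC P^\infty)$, whose underlying ring is the polynomial ring $\ground[z]$ on the nonequivariant generator $z$, and $i^*=(i^*_0,\dots,i^*_{p-1})$ lands in $\bigoplus_{N=0}^{p-1}\h^*(\cpv{(\cxrep^N)^{\oplus\infty}})$, each summand being $\h^*(\pt)[z_N]$ as a graded Green functor by \autoref{lem::trivspacesmult}. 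Using the freeness of \autoref{cor::cxproj}, I would take $D_j$ for $0\le j\le p-1$ (with $D_0=1$) and $C$ to be the module generators of $\h^*(\cpv{\cxuniverse})$ in dimensions $\omega_j$ and $\omega_p$, to be normalized in the next step.

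For part (2) I would first observe that restricting the Schubert $\G$-CW structure to the underlying nonequivariant space is the standard cell structure on $\bC P^\infty$, with $D(\omega_N)$ in nonequivariant dimension $2N$; since all the relevant boundary maps vanish by \autoref{thm::freeness} and $\rho^*$ is a ring homomorphism, $\rho^*(D_j)$ and $\rho^*(C)$ differ from $z^j$ and $z^p$ by units, so after normalizing the module generators we may assume $\rho^*(D_j)=z^j$ and $\rho^*(C)=z^p$. This already gives $\rho^*(D_jC^n)=z^{j+np}$. Since $\omega_{j+np}\cong\omega_j\oplus n\omega_p$ in $RO(\G)$ — a direct computation using $\cxrep^p\cong 1$ — the element $D_jC^n$ lies in the same dimension as the module generator attached to that Schubert cell and shares its $\rho^*$-image, so once part (3) shows the $i^*$-images agree as well, \autoref{thm::multcomparison} identifies $D_jC^n$ with a module generator, giving part (1). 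The $i^*$-comparison cannot be skipped here: $\rho^*$ detects only that the restriction of the $A$-summand component of $D_jC^n$ is a unit, not that that component is a \emph{generator} of the Burnside Mackey functor.

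Part (3) is the heart of the matter, and the reason \autoref{thm::multcomparison} (hence \autoref{lem::crucial}) is needed at all: the inclusion $\cpv{(\cxrep^N)^{\oplus\infty}}\hookrightarrow\cpv{\cxuniverse}$ of a fixed-point component is \emph{not} cellular for these $\G$-CW structures — there is no equivariant cellular approximation theorem — so $i_N^*$ cannot be read off the Schubert filtration. Instead I would compute $i^*_N(D_j)$ by hand: write a point of the cell $D(\omega_j)$ as the line through $(x_0,\dots,x_{j-1},1,0,\dots)$ and a point of the relevant cell of the $N$-th fixed $\bC P^\infty$ in its own $\cxrep^N$-isotypic coordinates, and compare the two normalizations. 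On one-point compactifications this comparison is the coordinatewise power map $(z_i)\mapsto(z_i^{\,d(\rotrep_{|i-N|}-\rotrep_{|i-j|})})$ — exactly the map used to build $\mu_\alpha$ in \autoref{def::pgenerators} — whose Hurewicz image has degree $\prod_{i=0}^{j-1}d(\rotrep_{|i-N|}-\rotrep_{|i-j|})=d_{N,j}$; the cases $N<j$ (the $N$-th fixed component already sits in an earlier filtration, so the restriction vanishes) and $N=j$ (the comparison is the identity) are immediate. To see that $\hat i^*_N(D_j)$ is \emph{exactly} $[d_{N,j}\epsilon_{\omega_j}]$, with no $z_N^k$-terms for $k\ge 1$, I would feed the claimed value and the actual value into the $\ground[1/p]$-monomorphism $\rho^*\oplus\hat i^*$ of \autoref{thm::multcomparison}: their $\rho^*$-parts agree by part (2) and their $z_N^0$-parts agree by the degree count, so they coincide after inverting $p$, and since the only $p$-torsion ambiguity lives in copies of $\jg{\ground/p}$ a short torsion check upgrades this to the integral identity. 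The computation of $\hat i^*_N(C)$ has the same shape, with $\omega_p\cong 1\oplus\cxrep\oplus\cdots\oplus\cxrep^{p-1}$: here the cell has a two-dimensional fixed part, so $i^*_N(C)$ genuinely involves $z_N$, and because $\omega_p$ contains each isotypic type exactly once the power-map exponents telescope to a unit, yielding $\hat i^*_N(C)=[\epsilon_{\omega_{p-1}}z_N]$ for every $N$.

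I expect the main obstacle to be this coordinate computation in part (3) — getting the product $d_{N,j}$ right and, especially, ruling out higher $z_N$-contributions — since the naive filtration argument is unavailable and one must track a genuine map of representation spheres; parts (1) and (2), by contrast, are bookkeeping around \autoref{thm::multcomparison} once the restrictions of the generators are known.
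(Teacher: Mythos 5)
The paper's own proof is a one-sentence citation of Lewis's Construction 6.1 in \cite{lgl}, which produces the generators bottom-up by identifying them in small finite projective spaces $\cpv{V}$ and propagating along cell attachments via long exact sequences. Your architecture is genuinely different: take the module generators already furnished by \autoref{cor::cxproj}, normalize via $\rho^*$, and then use the detection maps of \autoref{thm::multcomparison} together with explicit coordinate comparisons on representation spheres to pin down the images. Your remark that \autoref{thm::multcomparison} (and hence \autoref{lem::crucial}) is the essential tool \emph{because} the fixed-point inclusions are not cellular is exactly the right framing, and this is a legitimate alternative route to the result.

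However, the central step of your part (3) has a logical gap. You propose to ``feed the claimed value and the actual value into the $\ground[1/p]$-monomorphism $\rho^*\oplus\hat{i}^*$.'' That map lets you separate two elements of the \emph{source} $\h^{\omega_j}(\cpv{\cxuniverse})$; it cannot determine, or even constrain, what $\hat{i}^*_N(D_j)$ \emph{is} in the target, since that image is itself one of the coordinates of the map. Using the monomorphism to prove a formula for $\hat{i}^*_N(D_j)$ is circular --- the content of part (3) must be an honest computation in cohomology of the non-cellular inclusions $\cpv{(\cxrep^N)^{\oplus\infty}}\hookrightarrow\cpv{\cxuniverse}$, which your coordinate sketch gestures at but does not carry out. (This is precisely the work Lewis's small-projective-space comparison does.) Relatedly, your justification of $d_{N,j}=0$ for $N<j$ (``the $N$-th fixed component already sits in an earlier filtration'') conflicts with your own observation that filtration considerations do not compute $i^*$ for a non-cellular map, and needs a separate argument.

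Finally, the ``short torsion check'' you wave at is not short, and as stated may not even go through. The potential $z_N^k$-contributions for $1\le k\le j$ to $\hat{i}^*_N(D_j)$ live in copies of $\ground/p$; for example, the coefficient of $z_N^{\,j}$ lands in $\ulh^{\omega_j-2j}(S^0)(\trivo)/(\im\mft)\cong\ground/p$, and comparing $\rho^*$-images forces that coefficient to be a \emph{unit} of $\ground$, not a multiple of $p$. Inverting $p$ kills this term, which is why your $\ground[1/p]$ analysis sees nothing, but the integral statement $\hat{i}^*_N(D_j)=[\,d_{N,j}\epsilon_{\omega_j}\,]$ seems to require $p$ already invertible in $\ground$ --- which holds in the thesis's eventual application $\ground=\fq$ with $q\ne p$, but not for $\ground=\bZ$. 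Either this hypothesis must be made explicit, or the torsion contributions must be ruled out by a genuine geometric argument; neither is done in your sketch.
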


\begin{proof}
See \cite[Construction 6.1]{lgl}.  The idea is to identify appropriate generators
in the cohomology of certain small finite projective spaces, and then pass
to the projective spaces of interest by attaching cells and looking at the
corresponding long exact sequences in cohomology.
\end{proof}

\autoref{thm::cpvgenerators} puts us in a position to completely describe
the multiplicative structure of $\h^*(\cpv{\cxuniverse})$.  Since the
$\{ D_j C^n\}$ give a complete set of additive generators, dimensional
considerations show that $(D_j C^n) (D_{k} C^{m})$ must be a
$\h^*(\pt)$-linear combination of the elements $D_0 C^{n+m}$
through $D_{p-1} C^{n+m}$ and, if $j+k\ge p$, of $D_0 C^{n+m+1}$ through
$D_{j+k-p} C^{n+m+1}$.  After quotienting by $\im\mft$ and inverting $p$,
the contributions from the $D_j C^{n+m+1}$ vanish.  Using
\autoref{thm::cpvgenerators} and the fact that each $\hat{i}_N^*$ is a map
of algebras, this leaves us with $p$ equations in $p$ unknowns, which can
be used to determine the coefficients of $D_0 C^{n+m}$ through $D_{p-1}
C^{n+m}$.  We can then go through a similar process using $i_N^*$ to
determine the coefficients of $D_0 C^{n+m+1}$ through $D_{j+k-p}
C^{n+m+1}$.

Unfortunately, for odd primes, there does not appear to be a good closed
form for these coefficients, since their values depend on our noncanonical
choices for the $d(\alpha)$, which in turn depend on the ``mod $p$ inverses''
$\text{inv}(j)$.  Even with $\ground = \bQ$, there is some difficulty,
since $\fp$ is not a subfield of $\bQ$.
Thus we will simply say the following.

\begin{thm}[Lewis]\label{thm::multcpv}
As a graded commutative algebra over $\h^*(\pt)$, $\h^*(\cpv{\cxuniverse})$ is
generated by elements $C$ in dimension $\omega_p$ and $D_j$ in dimension
$\omega_j$ for each $1\le j\le p-1$.  $C$ generates a polynomial subalgebra
of $\h^*(\cpv{\cxuniverse})$, and a complete set of additive generators of
$\h^*(\cpv{\cxuniverse})$ is given by the elements $\{ D_j C^n \}_{0\le
j\le p-1,\ n\ge 0}$.  
If $j\le k \le p-1$, then each product $D_j D_k = D_k D_j$ is given by a linear
combination over $\h^*(\pt)$ of the elements $D_j$ through $D_{j+k}$, if
$j+k\le p-1$, and of $D_j$ through $D_{p-1}$ and $D_0 C = C$ through
$D_{j+k-p} C$ if $j+k\ge p$.
\end{thm}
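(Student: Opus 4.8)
The plan is to leverage three facts about $\cpv{\cxuniverse}$ and $\pt$ that are already in hand: the free $\h^*(\pt)$-module structure of \autoref{cor::cxproj}; the explicit restriction formulas for the generators $C,D_1,\dots,D_{p-1}$ in \autoref{thm::cpvgenerators}; and the statement of \autoref{thm::multcomparison} that $\rho^*$ together with the maps $\hat{i}_N^*$ ($0\le N\le p-1$) is a jointly injective family after inverting $p$, with $\rho^*\oplus i^*$ injective outright in even dimensions.

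The additive and commutativity assertions are almost formal. Since $\phi^p\cong 1$ one has $\omega_j+n\omega_p\cong\omega_{j+np}$ as representations, so $D_jC^n$ lies in dimension $\omega_{j+np}$, and it is nonzero because $\rho^*(D_jC^n)=z^{j}(z^p)^n=z^{j+np}\ne 0$. By \autoref{cor::cxproj} the module $\h^*(\cpv{\cxuniverse})$ is free over $\h^*(\pt)$ with exactly one generator in each dimension $\omega_N$, so \autoref{thm::cpvgenerators}(1) makes $\{D_jC^n\}$ a free $\h^*(\pt)$-basis and every element has a unique expansion $\sum b_{l,n}D_lC^n$ with $b_{l,n}\in\h^*(\pt)$. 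As $D_0=1$, the powers $C^n=D_0C^n$ are distinct basis elements, so $\sum_n a_nC^n=0$ with $a_n\in\h^*(\pt)$ forces every $a_n=0$ and $C$ generates a free polynomial subalgebra. Finally $\omega_j,\omega_k$ are even-dimensional ($|\omega_j^\G|=|\omega_k^\G|=0$), so by \autoref{prop::odddegree}/\autoref{prop::eqdegree} the graded-commutativity unit $u(\omega_j,\omega_k)$ is $1$ and $D_jD_k=D_kD_j$.

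For the product, expand $D_jD_k=\sum_{l,n}b_{l,n}D_lC^n$; homogeneity puts $b_{l,n}$ in $\h^{\omega_j+\omega_k-\omega_{l+np}}(\pt)$, whose $(|\alpha^\G|,|\alpha|)$-dimension is $(-2\lfloor(l+np)/p\rfloor,\,2(j+k-l-np))$. Reading off from \autoref{thm::addpstructure}/\autoref{fig::modp} where this group can be nonzero: there are no terms for $n\ge2$ (fixed dimension $\le -4$ with negative total dimension); for $n=1$ one needs $0\le l\le j+k-p$, nonempty precisely when $j+k\ge p$ — which pins down the $D_lC$-part of the claim exactly; and for $n=0$ the crude bound is $0\le l\le p-1$. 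The lower cutoff $l\ge j$ is then clean: since $d_{N,j}=0$ for $N<j$ (\autoref{thm::cpvgenerators}(3)), $\hat{i}_N^*(D_j)=0$, hence $\hat{i}_N^*(D_jD_k)=0$, for every $N<j$; applying $\hat{i}_{l'}^*$ to the expansion for $l'=0,1,\dots,j-1$ in turn, using $d_{l',l}=0$ for $l>l'$, $d_{l',l'}=1$, and (inductively) $b_{l,0}=0$ for $l<l'$, the only surviving contribution in $z_{l'}$-polynomial degree $0$ is $\overline{b_{l',0}\,\epsilon_{\omega_{l'}}}$, and since $\cpv{(\phi^{l'})^{\oplus\infty}}$ carries the trivial action, so that by \autoref{lem::trivspaces} the unit $\h^*(\pt)\to\h^*(\cpv{(\phi^{l'})^{\oplus\infty}})$ is split and $\im\mft$ vanishes in that summand, this forces $b_{l',0}=0$. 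With the lower cutoff and the $D_lC$-part in hand, the maps $\rho^*$ and $\hat{i}_j^*,\dots,\hat{i}_{p-1}^*$ give a triangular linear system (diagonal $d_{N,N}=1$) for the remaining coefficients $b_{j,0},\dots,b_{p-1,0}$; solving it with the explicit $d_{N,m}$ of \autoref{thm::cpvgenerators} shows the solution is supported on $b_{j,0},\dots,b_{\min(j+k,\,p-1),0}$, which is the upper cutoff. After inverting $p$, the analogous system in the $i_N^*$ recovers the coefficients $b_{l,1}$ of the $D_lC$-terms as well.

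The step I expect to be the real obstacle is exactly this last one. For $p>2$ there is no closed form for the surviving coefficients: they are tangled up with the noncanonical constants $d(\alpha)$ (the mod-$p$ inverses) that appear both inside $\h^*(\pt)$ and in the $d_{N,m}$, so what one actually proves is the \emph{shape} of each product, namely the support stated in \autoref{thm::multcpv}. Verifying that the solution of the triangular system genuinely vanishes beyond $l=j+k$ — equivalently, that the structure constants $d_{N,m}$ satisfy the appropriate compatibility inherited from Lewis's Construction 6.1 — is the delicate point; once \autoref{thm::multcomparison} and \autoref{thm::cpvgenerators} are granted, everything else is routine $RO(\G)$-degree bookkeeping.
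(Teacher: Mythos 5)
Your proposal follows the same strategy the paper outlines in the paragraph preceding the theorem: expand $D_j D_k$ in the free $\h^*(\pt)$-basis $\{D_l C^n\}$, bound the exponents $n$ by comparing $RO(\G)$-degrees against the support of $\h^*(\pt)$, and pin down the remaining coefficients by comparing images under $\rho^*$ and the algebra maps $\hat{i}_N^*$, $i_N^*$ supplied by \autoref{thm::multcomparison} and \autoref{thm::cpvgenerators}. You usefully spell out the lower cutoff $l\ge j$ via the triangularity of $(d_{N,m})$ and the vanishing of $\im\mft$ in degree $\omega_j+\omega_k$, which the paper leaves implicit, and you correctly identify the upper cutoff as the remaining computational content, which the paper likewise defers to Lewis's Construction 6.1.
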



However, when $p=2$, the issues with $d(\alpha)$ vanish, and we have the
following description of the algebra structure.

{\renewcommand{\G}{{C_2}}
\begin{thm}[Lewis]\label{thm::2cpv}
As a graded commutative algebra over $\h^*(\pt)$,
$\h^*(\cpv{\cxuniverse})$ is generated by elements $D_1 \in
\h^{2\signrep}(\cpv{\cxuniverse})$ and $C\in
\h^{2(1+\signrep)}(\cpv{\cxuniverse})$, satisfying the single
relation
\[ D_1^2 = \epsilon^2 D_1 + \xi C.\]
\end{thm}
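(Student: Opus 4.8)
The plan is to reduce the theorem to the single product $D_1^2 = D_1 \smile D_1$ and then to evaluate that product by comparison with objects whose multiplicative structure is already known. By \autoref{thm::cpvgenerators}(1) the elements $\{C^n, D_1 C^n\}_{n\ge 0}$ form a basis of $\h^*(\cpv{\cxuniverse})$ over $\h^*(\pt)$; in particular $\{C^n\}_{n\ge 0}$ is $\h^*(\pt)$-linearly independent, so $C$ generates a polynomial subalgebra and every element of $\h^*(\cpv{\cxuniverse})$ is uniquely $\sum_n (a_n + b_n D_1) C^n$ with $a_n, b_n \in \h^*(\pt)$. Hence $\h^*(\cpv{\cxuniverse})$ is the quotient of $\h^*(\pt)[C, D_1]$ by the single relation that rewrites $D_1^2$ in this basis. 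Once we know that relation is $D_1^2 = \epsilon^2 D_1 + \xi C$, we are done: the quotient $\h^*(\pt)[C, D_1]/(D_1^2 - \epsilon^2 D_1 - \xi C)$ is free of rank two over $\h^*(\pt)[C]$ on $\{1, D_1\}$, hence free over $\h^*(\pt)$ on $\{C^n, D_1 C^n\}$, so the evident (degreewise $\h^*(\pt)$-linear, basis-to-basis) surjection onto $\h^*(\cpv{\cxuniverse})$ is an isomorphism of graded $\h^*(\pt)$-algebras.

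Next I would pin down the shape of $D_1^2$. Since $D_1$ has real degree $\omega_1 = 2\signrep$, the product lies in $\h^{4\signrep}(\cpv{\cxuniverse})$, which by \autoref{cor::cxproj} is the sum of $\h^{4\signrep - n\omega_2}(\pt)$ and $\h^{4\signrep - \omega_1 - n\omega_2}(\pt)$ over $n\ge 0$, with $\omega_2 = 2 + 2\signrep$. Inspecting \autoref{thm::add2structure}, all these summands vanish except $\h^{4\signrep}(\pt) \cong \jg{\ground}(\trivo)$, spanned by $\epsilon^4$ (contributing a multiple of $1 = D_0$); $\h^{2\signrep}(\pt) \cong \jg{\ground}(\trivo)$, spanned by $\epsilon^2$ (contributing a multiple of $D_1$); and $\h^{2\signrep - 2}(\pt) \cong \r(\trivo)$, spanned by $\xi$ (contributing a multiple of $C$). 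So $D_1^2 = a\,\epsilon^4 + c\,\epsilon^2 D_1 + b\,\xi C$ for uniquely determined $a, b, c \in \ground$, and it remains to show $a = 0$ and $b = c = 1$.

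To evaluate $a, b, c$ I would apply the comparison map of \autoref{thm::multcomparison}. Since $4\signrep$ has even dimension, $\rho^* \oplus i^*$ is a monomorphism on $\h^{4\signrep}(\cpv{\cxuniverse})$ provided $\ground$ has no $2$-torsion (for a general ground ring one instead quotes the cell-by-cell construction underlying \autoref{thm::cpvgenerators}, or works after inverting $2$ and uses that $\h^{4\signrep}(\cpv{\cxuniverse})$ is here torsion-free), so it suffices to check that $D_1^2$ and $\epsilon^2 D_1 + \xi C$ agree under $\rho^*$ and under each component $i_0^*, i_1^*$ of $i^*$, all of which are ring homomorphisms. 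From \autoref{thm::cpvgenerators}, $\rho^*$ acts on the coefficients $\h^*(\pt)$ as $\mfr$, so $\rho^*(\epsilon) = \mfr(\epsilon) = 0$ (as $\mfr$ annihilates $\jg{\ground}$) and $\rho^*(\xi) = \mfr(\xi) = \iota^{-2}$ by \autoref{thm::mult2structure}; together with $\rho^*(D_1) = z$, $\rho^*(C) = z^2$ and the normalization identifying $\iota^{-1}$ with the unit at the $\freeo$ level, this gives $\rho^*(D_1^2) = z^2 = \rho^*(\xi C) = \rho^*(\epsilon^2 D_1 + \xi C)$. On the fixed points $\cpv{\cxuniverse}^\G \simeq \bC P^\infty \sqcup \bC P^\infty$, \autoref{thm::cpvgenerators}(3) gives $\hat{i}_0^*(D_1) = 0$, $\hat{i}_1^*(D_1) = [\epsilon_{\omega_1}] = [\epsilon^2]$, and $\hat{i}_N^*(C) = [\epsilon^2 z_N]$; squaring these and noting that the stray coefficient products (for instance $\xi\epsilon^2$) lie in $\ground/2$-torsion summands of $\h^*(\pt)$, which die after inverting $2$, matches $\hat{i}_N^*(D_1^2)$ with $\hat{i}_N^*(\epsilon^2 D_1 + \xi C)$ for $N = 0, 1$. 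Feeding this back through the monomorphism yields $a = 0$ and $b = c = 1$, hence $D_1^2 = \epsilon^2 D_1 + \xi C$, which completes the proof in view of the first paragraph.

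The step I expect to be the main obstacle is the bookkeeping in the third paragraph: keeping the normalizations of $z$, $z_N$, $\iota$, $\epsilon_{\omega_j}$ and of the restriction and transfer maps consistent between \autoref{thm::cpvgenerators} and \autoref{thm::mult2structure}, and confirming the precise vanishing of the auxiliary products of $\epsilon$ and $\xi$ in the off-axis torsion groups of $\h^*(\pt)$, so that the resulting linear system in $a, b, c$ genuinely forces the asserted solution. The remaining ingredients — the reduction to one relation, the identification of the three possible summands, and the appeal to \autoref{thm::multcomparison} — are purely formal.
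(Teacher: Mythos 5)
Your proof follows the methodology the paper outlines before \autoref{thm::multcpv}: detect products of the module generators through the monomorphism of \autoref{thm::multcomparison}, using the values of $\rho^*$ and $\hat{i}_N^*$ supplied by \autoref{thm::cpvgenerators}. The paper gives no explicit proof of the $p=2$ case (the theorem is attributed to Lewis and \cite{lgl}), so you have correctly filled in the details; your use of $\rho^*$ rather than the unhatted $i_N^*$ to isolate the coefficient of $\xi C$ is a legitimate and slightly cleaner variant of the text's suggestion. One small clarification on the bookkeeping you rightly flag as the sticking point: the relevant normalization is that the canonical identification $\ulh^\alpha(-)(\freeo)\cong\neqh^{|\alpha|}(-;\ground)$ sends every power $\iota^n$ to $1\in\ground$, not that $\iota^{-1}$ is literally the unit $\mfr(1)$ of $\ulhr^*(S^0)(\freeo)$ (it is not, since $\iota\cdot\iota^{-1}=\mfr(1)$), and this is precisely what lets you see $\rho^*(D_1^2)\in\ulh^{4\signrep}(\cpv{\cxuniverse})(\freeo)$ and $\rho^*(\xi C)=\mfr(\xi)\rho^*(C)$ as the same element $z^2$.
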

}

\chapter{Spectral sequences for local coefficients}\label{ch::ss}
\section{The nonequivariant situation}\label{sec::noneq}

\renewcommand{\G}{G}

Let $X$ be a path-connected based space with universal cover $\tilde X$.
Let $\pi = \pi_1(X)$ and let $\pi$ act on the right of $\tilde{X}$ by deck
transformations.  As always, fix a ground ring $\ground$.  Let $M$ be a
left and $N$ be a right module over the group ring $\ground[\pi]$.  Let
$C_*$ be the normalized singular chain complex functor with coefficients in
$\ground$.

\begin{defn}\label{def::stdloc}  Define the homology of $X$ with coefficients in
$M$ by 
\[  \neqh_*(X;M) = \neqh_*(C_*(\tilde X)\otimes_{\ground[\pi]} M). \]
Define the cohomology of $X$ with coefficients in $N$ by \ \ 
\[ \neqh^*(X;N) = \neqh^*(\Hom_{\ground[\pi]}(C_*(\tilde X),N)). \]
\end{defn}

Functoriality in $M$ and $N$ for fixed $X$ is clear.  For a based map
$f\colon X\to Y$, where $\pi_1(Y) = \rho$, and for a left $\ground[\rho]$-module
$P$, we may regard $P$ as \rarticle $\ground[\pi]$-module by pullback along $\pi_1(f)$,
and then, using the standard functorial construction of the universal
cover, we obtain
\[ f_*\colon \neqh_*(X;f^*P)\to \neqh_*(Y;P). \]
Cohomological functoriality is similar.  The definition goes back to
Eilenberg \cite{Eil}, and has the homology of spaces and the homology of
groups as special cases, as discussed below.  It deserves more emphasis
than it is usually given because it implies spectral sequences for the
calculation of homology and cohomology with local coefficients, as we shall
recall. 

\begin{exmp}  If $\pi$ acts trivially on $M$ and $N$, then $\neqh_*(X;M)$ and
$\neqh^*(X;N)$ are the usual homology and cohomology groups of $X$ with
coefficients in $M$ and $N$.  We can identify $C_*(X)$ with
$C_*(\tilde{X})\otimes_{\ground[\pi]}\ground$, where $\ground[\pi]$ acts trivially on $\ground$.
This implies the identifications 
\[C_*(\tilde X)\otimes_{\ground[\pi]} M\cong C_*(X;M)\hspace{12pt}
\text{and}\hspace{12pt} \Hom_{\ground[\pi]}(C_*(\tilde X),N) \cong C^*(X;N). \]
\end{exmp}

\begin{exmp}  If $X = K(\pi,1)$, then $\neqh_*(X;M)$ and $\neqh^*(X;N)$ are the
usual homology and cohomology groups of $\pi$ with coefficients
in $M$ and $N$ since $C_*(\tilde{X})$ is \rarticle $\ground[\pi]$-free resolution of $\ground$.
That is,
\[  \neqh_*(K(\pi,1);M) = \Tor^{\ground[\pi]}_*(\ground,M) \hspace{12pt} 
\text{and}\hspace{12pt} \neqh^*(K(\pi,1);M) = \Ext_{\ground[\pi]}^*(\ground,N). \]
\end{exmp}

\begin{exmp}  If $M =\ground[\pi]\otimes_\ground C$ and $N =
\Hom_\ground(\ground[\pi],C)$ for \rarticle $\ground$-module $C$, then
\[\neqh_*(X;M)\cong \neqh_*(\tilde{X},C)\hspace{12pt} \text{and}\hspace{12pt}
\neqh^*(X;N) \cong \neqh^*(\tilde{X};C). \]
\end{exmp}

\begin{rem}  If we replace $N$ by $M$ 
(viewed as a right $\ground[\pi]$-module) in the cohomology 
case of the previous example, then we are forced to 
impose finiteness restrictions and consider cohomology
with compact supports; compare \cite[3H.5]{Hat}.
\end{rem}

We have spectral sequences that generalize the last two examples.  When
$\pi$ acts trivially on $M$ and $N$, they can be thought of as versions of
the Serre spectral sequence of the evident fibration $\tilde{X}\to
X\to K(\pi,1)$. 

\begin{thm}[Eilenberg Spectral Sequence]\label{thm::CE}  There are spectral sequences
\[  E^2_{p,q} = \Tor^{\ground[\pi]}_{p,q}(\neqh_*(\tilde{X}),M) \Longrightarrow  \neqh_{p+q}(X;M)  \]
and
\[  E_2^{p,q} = \Ext_{\ground[\pi]}^{p,q}(\neqh_*(\tilde{X}),N) \Longrightarrow  \neqh^{p+q}(X;N).  \]
\end{thm}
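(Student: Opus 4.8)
The plan is to recognize the two displayed spectral sequences as the standard hyperhomology and hypercohomology spectral sequences (in the sense of \cite{CE}) attached to the chain complex $C_*(\tilde X)$ of $\ground[\pi]$-modules. The one structural input required is that $C_*(\tilde X)$ is a nonnegatively graded complex of \emph{free} $\ground[\pi]$-modules. Indeed, the deck group $\pi$ acts freely on $\tilde X$, hence freely on the set of singular simplices $\sigma\colon\Delta^n\to\tilde X$ (a deck transformation fixing $\sigma$ fixes $\sigma$ of a vertex, so is the identity), and the subset of nondegenerate simplices is $\pi$-invariant; so the normalized chain group $C_n(\tilde X)$ is the free $\ground[\pi]$-module on a chosen set of orbit representatives. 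In particular each $C_n(\tilde X)$ is projective and flat over $\ground[\pi]$.

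For the homology statement I would choose a projective resolution $P_*\to M$ of the left $\ground[\pi]$-module $M$ and form the first-quadrant double complex $B_{p,q}=C_p(\tilde X)\otimes_{\ground[\pi]}P_q$, with total complex $\mathrm{Tot}(B)_n=\bigoplus_{p+q=n}B_{p,q}$; since the double complex lies in the first quadrant, each diagonal is finite and both of its spectral sequences converge to $\neqh_*(\mathrm{Tot}(B))$. Taking homology in the $P$-direction first, flatness of $C_p(\tilde X)$ gives $H_q\!\bigl(C_p(\tilde X)\otimes_{\ground[\pi]}P_*\bigr)=C_p(\tilde X)\otimes_{\ground[\pi]}M$ concentrated in $q=0$; that spectral sequence degenerates and identifies $\neqh_n(\mathrm{Tot}(B))$ with $H_n\!\bigl(C_*(\tilde X)\otimes_{\ground[\pi]}M\bigr)=\neqh_n(X;M)$. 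Taking homology in the $C_*(\tilde X)$-direction first, flatness of $P_q$ gives $H_p\!\bigl(C_*(\tilde X)\otimes_{\ground[\pi]}P_q\bigr)=\neqh_p(\tilde X)\otimes_{\ground[\pi]}P_q$ on the $E^1$-page, whence $E^2_{p,q}=\Tor^{\ground[\pi]}_q(\neqh_p(\tilde X),M)$; relabeling $p\leftrightarrow q$ puts this in the form stated, converging to $\neqh_{p+q}(X;M)$. Here $\Tor^{\ground[\pi]}_{p,q}(\neqh_*(\tilde X),M)$ is the graded $\Tor$, i.e.\ $\Tor^{\ground[\pi]}_p(\neqh_q(\tilde X),M)$.

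The cohomology statement is formally dual: choose an injective resolution $N\to I^*$ of the right $\ground[\pi]$-module $N$ and form the first-quadrant double complex $\Hom_{\ground[\pi]}(C_p(\tilde X),I^q)$. Projectivity of $C_p(\tilde X)$ makes $\Hom_{\ground[\pi]}(C_p(\tilde X),-)$ exact, so one filtration degenerates onto $H^*\!\bigl(\Hom_{\ground[\pi]}(C_*(\tilde X),N)\bigr)=\neqh^*(X;N)$; injectivity of $I^q$ makes $\Hom_{\ground[\pi]}(-,I^q)$ exact, so the other gives $H^p\!\bigl(\Hom_{\ground[\pi]}(C_*(\tilde X),I^q)\bigr)=\Hom_{\ground[\pi]}(\neqh_p(\tilde X),I^q)$ on $E_1$, hence $E_2^{p,q}=\Ext_{\ground[\pi]}^q(\neqh_p(\tilde X),N)$, converging to $\neqh^{p+q}(X;N)$; again relabel $p\leftrightarrow q$. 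Since $C_*(\tilde X)$ and the resolutions are all nonnegatively graded, the relevant double complexes are first-quadrant and convergence is automatic.

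I do not expect a genuine obstacle: this is an instance of standard homological algebra. The only points needing care are bookkeeping ones — verifying that the \emph{normalized} chain complex of $\tilde X$ is still $\ground[\pi]$-free, matching the internal grading $q$ in $\Tor_{p,q}$ and $\Ext^{p,q}$ with the second index of the double complex, and noting the first-quadrant position so that convergence and the collapse of the auxiliary spectral sequence are immediate. One could equally cite the hyperhomology/hyper-Ext spectral sequence of \cite{CE} directly and simply identify the $E_2$-term, but spelling out the double complex as above keeps the argument self-contained.
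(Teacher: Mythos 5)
Your proof is correct and follows essentially the same route as the paper: form the double complex $C_*(\tilde X)\otimes_{\ground[\pi]} P_*$ (resp.\ $\Hom_{\ground[\pi]}(C_*(\tilde X),I^*)$), observe that $C_*(\tilde X)$ is $\ground[\pi]$-projective, and run the two filtration spectral sequences, one of which collapses onto the target and the other of which yields the $\Tor$/$\Ext$ term. The extra details you supply --- that the normalized chains of $\tilde X$ form a free $\ground[\pi]$-module because $\pi$ acts freely on the nondegenerate simplices, and that the first-quadrant position makes convergence automatic --- are exactly the points the paper's proof leaves implicit.
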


Up to notation, these are the spectral sequences given by Cartan and
Eilenberg in \cite[p.\ 355]{CE}.

\begin{proof}  In the $E^2$ and $E_2$ terms, $p$ is the homological degree
and $q$ is the internal grading on $\neqh_*(\tilde{X})$.  Let $\varepsilon\colon
P_*\to M$ be \rarticle $\ground[\pi]$-projective resolution of $M$ and form the
bicomplex
\[ C_*(\tilde{X})\otimes_{\ground[\pi]} P_*;\]
the theorem comes from looking at the two spectral sequences associated to
this bicomplex and converging to a common target.

If we filter $C_*(\tilde{X})\otimes_{\ground[\pi]} P_*$ by the degrees of
$C_*(\tilde{X})$, we get a spectral sequence whose $E^0$-term has
differential $\id\otimes d$.  Since $C_*(\tilde{X})$ is a projective
$\ground[\pi]$ module, the resulting $E^1$-term is
$C_*(\tilde{X})\otimes_{\ground[\pi]}M$, the resulting $E^2$-term is $\neqh_*(X;M)$,
and $E^2=E^{\infty}$.  Since $E^\infty$ is concentrated in degree $q=0$,
there is no extension problem; we have identified the target as claimed in
the theorem.

Filtering the other way, by the degrees of $P_*$, we obtain a spectral
sequence whose $E^0$-term has differential $d\otimes \id$.  The resulting
$E^1$-term is $\neqh_*(\tilde{X})\otimes_{\ground[\pi]}P_*$ and the resulting
$E^2$-term is $\Tor^{\ground[\pi]}_{*,*}(\neqh_*(\tilde{X}),M)$.  This gives the
first statement of the theorem.

The argument in cohomology is similar, starting from the bicomplex
\[\Hom_{\ground[\pi]}(C_*(\tilde{X}),I^*)\]
for an injective resolution $\eta\colon N\to I^*$ of $N$.
\end{proof}

We record an immediate corollary.

\begin{cor}\label{cor::CE}
Let $\pi$ be a finite group of order $n$ and $\ground$ be a field of
characteristic prime to $n$.  Then
\[ \neqh_*(X;M) \cong \neqh_*(\tilde{X})\otimes_{\ground[\pi]}M
\hspace{12pt} \text{and}\hspace{12pt} 
\neqh^*(X;N) \cong \Hom_{\ground[\pi]}\left(\neqh_*(\tilde{X}),N\right). \]
\end{cor}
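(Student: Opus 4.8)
The plan is to deduce this immediately from the two Eilenberg spectral sequences of \autoref{thm::CE} by showing that, under the stated hypotheses, both of them collapse. The single piece of input needed is Maschke's theorem: since $\pi$ is finite of order $n$ and $\ground$ is a field whose characteristic does not divide $n$, the group ring $\ground[\pi]$ is semisimple. Hence every $\ground[\pi]$-module is simultaneously projective and injective, so the higher derived functors vanish identically: $\Tor^{\ground[\pi]}_p(-,-) = 0$ and $\Ext_{\ground[\pi]}^p(-,-) = 0$ for all $p > 0$.

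First I would feed this into the homology spectral sequence
\[ E^2_{p,q} = \Tor^{\ground[\pi]}_{p,q}(\neqh_*(\tilde X),M) \Longrightarrow \neqh_{p+q}(X;M), \]
where $p$ is the homological degree and $q$ the internal grading coming from $\neqh_*(\tilde X)$. By semisimplicity the $E^2$-page is concentrated in the single column $p = 0$, where $E^2_{0,q} = \neqh_q(\tilde X)\otimes_{\ground[\pi]} M$. A spectral sequence supported on one column has no nonzero differentials (every $d^r$ either originates in or lands in the vanishing region $p \ne 0$) and no extension problem, so it collapses at $E^2$ and yields $\neqh_q(X;M) \cong \neqh_q(\tilde X)\otimes_{\ground[\pi]} M$. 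Running the identical argument for the cohomology spectral sequence
\[ E_2^{p,q} = \Ext_{\ground[\pi]}^{p,q}(\neqh_*(\tilde X),N) \Longrightarrow \neqh^{p+q}(X;N) \]
shows that its $E_2$-page is concentrated in the row $p = 0$ with $E_2^{0,q} = \Hom_{\ground[\pi]}(\neqh_q(\tilde X),N)$, so it too collapses and gives $\neqh^q(X;N) \cong \Hom_{\ground[\pi]}(\neqh_q(\tilde X),N)$.

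I do not expect a real obstacle here: the only points requiring any care are keeping the two gradings straight and confirming that a one-column (resp.\ one-row) spectral sequence really does degenerate with no extension issues, both of which are routine. As an alternative I would mention the direct argument that bypasses \autoref{thm::CE} altogether: $C_*(\tilde X)$ is a complex of projective $\ground[\pi]$-modules, and over the semisimple ring $\ground[\pi]$ both $-\otimes_{\ground[\pi]} M$ and $\Hom_{\ground[\pi]}(-,N)$ are exact functors, hence commute with the formation of homology; applying this to \autoref{def::stdloc} reproduces the corollary immediately. Either route is short, so the main ``work'' is just invoking semisimplicity in the right place.
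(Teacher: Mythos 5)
Your argument is exactly the paper's: invoke semisimplicity of $\ground[\pi]$ (Maschke) to kill $\Tor$ and $\Ext$ in positive degree, so each Eilenberg spectral sequence is concentrated in the $p=0$ column/row and collapses with no extension issues. The paper states this more tersely, but the content and the route are identical; your added remark about the direct argument via exactness of $-\otimes_{\ground[\pi]}M$ and $\Hom_{\ground[\pi]}(-,N)$ is a valid shortcut but not what the paper uses.
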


\begin{proof}
Since $\ground[\pi]$ is semi-simple, $E_{p,q}^2 = 0$ and $E_2^{p,q} = 0$ for
$p>0$.  Therefore the spectral sequences collapse to the claimed
isomorphisms.
\end{proof}

If $\pi$ acts trivially on $\neqh_*(\tilde{X})$, so that $\neqh_*(\tilde{X}) \cong
\neqh_*(\tilde{X})\otimes_{\ground[\pi]} \ground$, then the situation simplifies even
further.  For ease of notation, let $M_\pi$ denote the coinvariants
$M/{IM}$, where $I\subset  \ground[\pi]$ is the augmentation ideal, and let
$N^\pi$ denote the fixed points of $N$.

\begin{cor}\label{cor::cor::CE}
Suppose that we are in the situation of \autoref{cor::CE} and that $\pi$
acts trivially on $\neqh_*(\tilde{X})$.  Then

\vspace{0.5ex}
\hfill$\square$
\vspace{-7ex}
\[ \neqh_*(X;M) \cong \neqh_*(\tilde{X};M_\pi) \hspace{12pt} \text{and}\hspace{12pt}
\neqh^*(X;N) \cong \neqh^*(\tilde{X};N^\pi).\]
\end{cor}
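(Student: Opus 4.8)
The plan is to deduce this directly from \autoref{cor::CE}, using two elementary facts about coinvariants and invariants, and then invoke the universal coefficient theorem over a field. Since we are assumed to be in the situation of \autoref{cor::CE}, the group ring $\ground[\pi]$ is semisimple, and that corollary already supplies isomorphisms $\neqh_*(X;M)\cong \neqh_*(\tilde X)\otimes_{\ground[\pi]}M$ and $\neqh^*(X;N)\cong \Hom_{\ground[\pi]}(\neqh_*(\tilde X),N)$. So it suffices to rewrite these right-hand sides under the extra hypothesis that $\pi$ acts trivially on $\neqh_*(\tilde X)$.

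First I would record the module-theoretic lemma: if $A$ is a $\ground[\pi]$-module on which $\pi$ acts trivially, then for any left module $M$ there is a natural isomorphism $A\otimes_{\ground[\pi]}M\cong A\otimes_\ground M_\pi$, and for any right module $N$ a natural isomorphism $\Hom_{\ground[\pi]}(A,N)\cong \Hom_\ground(A,N^\pi)$. The first follows from the presentation of $A\otimes_{\ground[\pi]}M$ as the quotient of $A\otimes_\ground M$ by the relations $ag\otimes m-a\otimes gm$, which collapse to $a\otimes(m-gm)$ once $ag=a$, so the quotient is $A\otimes_\ground(M/IM)=A\otimes_\ground M_\pi$. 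The second follows dually: a $\ground[\pi]$-linear map out of a trivial module is forced to land in $N^\pi$, and conversely any $\ground$-linear map into $N^\pi$ is automatically $\ground[\pi]$-linear. Applying this with $A=\neqh_*(\tilde X)$ rewrites the two expressions above as $\neqh_*(\tilde X)\otimes_\ground M_\pi$ and $\Hom_\ground(\neqh_*(\tilde X),N^\pi)$.

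The remaining step is to identify these with the ordinary (co)homology of the simply connected space $\tilde X$ with coefficients in the $\ground$-modules $M_\pi$ and $N^\pi$. Since $\ground$ is a field, $-\otimes_\ground M_\pi$ is exact and commutes with homology, so $\neqh_*(\tilde X)\otimes_\ground M_\pi\cong \neqh_*\bigl(C_*(\tilde X)\otimes_\ground M_\pi\bigr)=\neqh_*(\tilde X;M_\pi)$; dually $\Hom_\ground(-,N^\pi)$ is exact over a field, so the universal coefficient theorem $\neqh^*\bigl(\Hom_\ground(C_*(\tilde X),N^\pi)\bigr)\cong \Hom_\ground(\neqh_*(\tilde X),N^\pi)$ holds with no finiteness hypothesis, giving $\neqh^*(\tilde X;N^\pi)\cong \Hom_\ground(\neqh_*(\tilde X),N^\pi)$. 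Chaining these isomorphisms yields the claimed identifications.

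I expect the only points needing care to be bookkeeping rather than a genuine obstacle: keeping straight that $\neqh_*(\tilde X)$ is a \emph{right} $\ground[\pi]$-module via deck transformations, so that $\otimes_{\ground[\pi]}$ and $\Hom_{\ground[\pi]}$ are formed on the correct side, and confirming that triviality of the $\pi$-action is exactly what degenerates the relation $ag\otimes m-a\otimes gm$ to $a\otimes m-a\otimes gm$. Alternatively, one could run the same argument one step earlier, starting from the Eilenberg spectral sequences of \autoref{thm::CE}: semisimplicity of $\ground[\pi]$ kills $\Tor_p$ and $\Ext^p$ for $p>0$, and the trivial-action hypothesis identifies the surviving $\Tor_0$ and $\Ext^0$ terms with $\neqh_*(\tilde X)\otimes_\ground M_\pi$ and $\Hom_\ground(\neqh_*(\tilde X),N^\pi)$.
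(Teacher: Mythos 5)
Your argument is correct, and it supplies the details that the paper leaves implicit (the corollary is stated with an immediate $\square$, i.e.\ no written proof). You correctly split the work into two steps: (i) the module-theoretic observation that for a \emph{trivial} right $\ground[\pi]$-module $A$ one has $A\otimes_{\ground[\pi]}M\cong A\otimes_\ground M_\pi$ and $\Hom_{\ground[\pi]}(A,N)\cong\Hom_\ground(A,N^\pi)$, and (ii) the field-coefficient universal coefficient identifications $\neqh_*(\tilde X)\otimes_\ground M_\pi\cong\neqh_*(\tilde X;M_\pi)$ and $\Hom_\ground(\neqh_*(\tilde X),N^\pi)\cong\neqh^*(\tilde X;N^\pi)$; chaining these with \autoref{cor::CE} gives the statement. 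This is the intended route, and your remark about right versus left module conventions (the deck action is on the right, so $\Hom_{\ground[\pi]}$ is formed between right modules) is exactly the bookkeeping point that needs to be kept straight.

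One small thing worth making explicit if you write this up: in step (i) the hypothesis used is that $\pi$ acts trivially on $\neqh_*(\tilde X)$ \emph{in every degree}, and the isomorphisms are then degree-wise; and in the $\Hom$ case the verification that every $\ground$-linear map into $N^\pi$ is automatically $\ground[\pi]$-linear, and conversely that a $\ground[\pi]$-linear map lands in $N^\pi$, is precisely the adjunction-type observation you quote. Both directions matter. With that, there is no gap.
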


It remains to identify the homology and cohomology groups of
\autoref{def::stdloc} with classical (co)homology with local coefficients.
To do this, we first need to reconcile our coefficient $\ground[\pi]$-modules
with the classical definition of a local coefficient system.

In \autoref{def::stdloc}, we took $M$ and $N$ to be left and right modules
over the group ring $\ground[\pi]$ and took $C_*(\tilde{X})$ to be the normalized
singular chains of $\tilde{X}$.  A (left or right) $\ground[\pi]$-module $M$ is
the same as a (covariant or contravariant) functor from $\pi$, viewed as a
category with a single object, to the category of $\ground$-modules.  

As usual, given a space $X$, let $\Pi X$ be the fundamental groupoid of
$X$; this is a category whose objects are the points of $X$ and whose
morphism sets are homotopy classes of paths between fixed endpoints.
By definition, a \defword{local coefficient system} $\sM$ on $X$ is
a functor (covariant or contravariant depending on context, corresponding
to our left and right $\ground[\pi]$-module distinction above) from the
fundamental groupoid $\Pi X$ to the category of $\ground$-modules.   When
$X$ is path-connected with basepoint $x_0$, the category $\pi =\pi_1(X)$
with single object $x_0$ is a skeleton of $\Pi X$.  Therefore a coefficient
system $\sM$ is determined by its restriction $M$ to $\pi$.  

Whitehead \cite[VI.3.4 and 3.4*]{GW} (see
also Hatcher \cite[3H.4]{Hat}) proves the following result and ascribes it 
to Eilenberg \cite{Eil}.  

\begin{thm}[Eilenberg]\label{thm::comparison}
For path-connected spaces $X$ and covariant and contravariant local
coefficient systems $\sM$ and $\sN$ on $X$, the classical homology
and cohomology with local coefficients $\neqh_*(X;\sM)$ and $\neqh^*(X;\sN)$
are naturally isomorphic to the homology and cohomology groups $\neqh_*(X;M)$
and $\neqh^*(X;N)$, where $M$ and $N$ are the restrictions of $\sM$ and $\sN$
to $\pi$.  \end{thm}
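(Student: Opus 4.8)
The plan is to construct an explicit isomorphism of chain complexes between $C_*(\tilde X)\otimes_{\ground[\pi]}M$ and the classical twisted singular chain complex whose homology is $\neqh_*(X;\sM)$, and dually in cohomology, and then to read naturality off the construction. Throughout I assume, as in all cases of interest, that $X$ admits a universal cover $p\colon\tilde X\to X$; fix a basepoint $x_0$, a point $\tilde x_0\in p^{-1}(x_0)$, and the standard model in which the points of $\tilde X$ are homotopy classes of paths in $X$ starting at $x_0$, so that under the identification of $\pi=\pi_1(X,x_0)$ with a skeleton of $\Pi X$ we have $M=\sM(x_0)$ and $N=\sN(x_0)$.

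First I would recall the classical complex: $C_n(X;\sM)=\bigoplus_{\sigma\colon\Delta^n\to X}\sM(\sigma(e_0))$, with
\[ \partial(\sigma\otimes m)=(\partial_0\sigma)\otimes\sigma_{[e_0,e_1]}(m)+\sum_{i=1}^{n}(-1)^i(\partial_i\sigma)\otimes m, \]
where $\sigma_{[e_0,e_1]}\in\Pi X$ is the path from $\sigma(e_0)$ to $\sigma(e_1)$; the cohomology complex is $C^n(X;\sN)=\prod_\sigma\sN(\sigma(e_0))$ with the dual coboundary. The structural input is that $\pi$ acts \emph{freely} on the set of singular $n$-simplices of $\tilde X$, with quotient map $\tau\mapsto p\circ\tau$ onto the singular $n$-simplices of $X$ — this is unique path lifting along the simply connected $\Delta^n$ — so $C_n(\tilde X)$ is a free $\ground[\pi]$-module, with a $\ground[\pi]$-basis obtained by choosing one lift $\tilde\sigma$ of each $\sigma$. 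A choice of lift $\tilde\sigma$ is the same as a point of $p^{-1}(\sigma(e_0))$, hence a homotopy class of path from $x_0$ to $\sigma(e_0)$, hence an isomorphism $M=\sM(x_0)\xrightarrow{\ \cong\ }\sM(\sigma(e_0))$. I would then define $\Phi(\tilde\sigma\otimes m)=\sigma\otimes(\text{image of }m)$ and check: (i) $\Phi$ is well defined — replacing $\tilde\sigma$ by $\tilde\sigma\!\cdot\! g$ changes the distinguished path by the loop $g$, which is exactly cancelled by the tensor relation $\tilde\sigma\!\cdot\! g\otimes m=\tilde\sigma\otimes gm$; (ii) $\Phi$ is a graded isomorphism; (iii) $\Phi$ is a chain map. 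Point (iii) is the heart: $\partial_i\tilde\sigma$ is in general not the chosen lift of $\partial_i\sigma$ but differs from it by the deck transformation measuring the mismatch of $0$th vertices; for $i\ge1$ that $0$th vertex is still $\sigma(e_0)$ and the mismatch is trivial, while for $i=0$ it jumps to $\sigma(e_1)$ and the mismatch is precisely the class $\sigma_{[e_0,e_1]}$, reproducing the twist in the classical formula. The cohomology case is the same computation with $\Hom_{\ground[\pi]}(-,N)$ in place of $-\otimes_{\ground[\pi]}M$, using $\ground[\pi]$-freeness of $C_*(\tilde X)$ to identify $\Hom_{\ground[\pi]}(C_n(\tilde X),N)\cong\prod_\sigma\sN(\sigma(e_0))$. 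For naturality in $X$: a based map $f\colon X\to Y$ lifts to a $\pi_1(f)$-equivariant $\tilde f$, and I would check that $\Phi$ intertwines $C_*(\tilde f)\otimes\mathrm{id}$ with the classical $f_*\colon C_*(X;f^*\sP)\to C_*(Y;\sP)$ — both push the simplex forward and transport the coefficient along the image of the distinguished path; naturality in the coefficient systems is immediate.

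The hard part will be the bookkeeping in step (iii) and in naturality: getting the variances and signs right, correctly matching each face of a simplex with the right morphism of $\Pi X$, and, since $\Phi$ depends on the non-canonical choice of lifts, confirming that two choices yield chain-isomorphic identifications and hence that the induced maps on $\neqh_*$ and $\neqh^*$, together with the naturality squares, are independent of all choices. Conceptually this is just the statement that $C_*(\tilde X)\otimes_{\ground[\pi]}M$ is the singular chain complex of $X$ twisted by the local system $\tilde X\times_\pi M$, whose restriction to the basepoint is $M$; once the free $\ground[\pi]$-module structure of $C_*(\tilde X)$ and the path-class model of $\tilde X$ are in place, the rest is formal.
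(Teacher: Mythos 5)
The paper does not prove this theorem; it simply cites Whitehead (VI.3.4 and 3.4*) and Hatcher (3H.4), ascribes the result to Eilenberg, and moves on. Your argument is the standard one found in those references, and your key steps are all correct: the free $\ground[\pi]$-module structure of $C_*(\tilde X)$, the transport-along-the-distinguished-path definition of $\Phi$, the $0$th-vertex computation at each face, and the dual cohomological version.

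One remark that both tightens your step (iii) and disposes of the choice-dependence worry in your final paragraph: $\Phi$ is canonical if you define it directly on \emph{every} singular simplex of $\tilde X$ rather than on a chosen basis of lifts. Set $\Phi(\tau\otimes m)=(p\circ\tau)\otimes(\text{transport of }m\text{ along the path class }\tau(e_0))$. Your observation (i) is precisely the check that this formula is compatible with the $\ground[\pi]$-tensor relation, so it descends to $C_*(\tilde X)\otimes_{\ground[\pi]}M$ with no dependence on a basis. The chain-map check then reads directly off the $0$th vertices of the faces $\partial_i\tau$: for $i\ge1$ the $0$th vertex is still $\tau(e_0)$ and no twist appears, while for $i=0$ it is $\tau(e_1)$, which equals $\tau(e_0)$ concatenated with $\sigma_{[e_0,e_1]}$ and produces the twist. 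As you wrote it, the claim that ``the mismatch is trivial for $i\ge1$'' additionally requires a \emph{compatible} system of lifts (each $\tilde\sigma$ chosen so that $\tilde\sigma(e_0)$ is a fixed chosen lift of $\sigma(e_0)$); with an arbitrary basis of lifts the mismatch need not be trivial even for $i\ge1$. The direct formulation sidesteps that bookkeeping entirely, and in the cohomology case the $\ground[\pi]$-freeness of each $C_n(\tilde X)$ then computes $\Hom_{\ground[\pi]}(C_n(\tilde X),N)$ by the formally dual argument, again canonically.
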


Therefore \autoref{thm::CE} gives a way to compute the additive structure of
homology and cohomology with local coefficients.  In particular, if
$f\colon E\to X$ is a fibration with fiber $F$ and path-connected base
space $X$, it gives a means to compute the homology and cohomology with
local coefficients that appear in
\[ E_{*,*}^2 = \neqh_*(X;\sH_*(F;\ground))\hspace{12pt} \text{and}\hspace{12pt} 
E_2^{*,*} = \neqh^*(X;\sH^*(F;\ground)) \]
of the Serre spectral sequences for the computation of $\neqh_*(E ; \ground)$ and
$\neqh^*(E ; \ground)$.

Even the case when $\pi$ is finite of order $n$ and $\ground$ is a field of
characteristic prime to $n$ often occurs in practice.  More generally, the
spectral sequences of \autoref{thm::CE} help make the Serre spectral
sequence amenable to explicit calculation in the presence of non-trivial
local coefficient systems. 

Note that we have not yet addressed the multiplicative structure of the
cohomological Eilenberg spectral sequence; this will be discussed in
\autoref{sec::mult}.  However, even without the multiplicative structure, we
can already do one example.  It will be useful to have the following simple
consequence of \autoref{def::stdloc}.

\begin{prop}\label{prop::hzero}
Let $X$ be a space and $\pi$ its fundamental group.  For any
$\ground[\pi]$-module $M$, $\neqh^0(X;M) \cong M^\pi$, the $\pi$-fixed points of $M$.
If $M$ is \rarticle $\ground[\pi]$-algebra, the isomorphism is as algebras.
\end{prop}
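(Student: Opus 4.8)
The plan is to unwind \autoref{def::stdloc} directly. By definition, $\neqh^0(X;M)$ is the kernel of the map $\partial_1^*\colon \Hom_{\ground[\pi]}(C_0(\tilde X),M)\to \Hom_{\ground[\pi]}(C_1(\tilde X),M)$ induced by $\partial_1\colon C_1(\tilde X)\to C_0(\tilde X)$. First I would identify this kernel with $\Hom_{\ground[\pi]}(\neqh_0(\tilde X),M)$: since $\tilde X$ is path-connected (being the universal cover of the path-connected based space $X$), the augmentation $\varepsilon\colon C_0(\tilde X)\to\ground$ sending each $0$-simplex to $1$ is a $\ground[\pi]$-linear map exhibiting $\ground$, with trivial $\pi$-action, as the cokernel of $\partial_1$ (the augmentation descends to normalized chains, and $\neqh_0$ is unchanged); thus $C_1(\tilde X)\xrightarrow{\partial_1}C_0(\tilde X)\xrightarrow{\varepsilon}\ground\to 0$ is exact, and applying the left-exact functor $\Hom_{\ground[\pi]}(-,M)$ yields $\neqh^0(X;M)\cong\Hom_{\ground[\pi]}(\ground,M)$.

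Next I would use the elementary fact $\Hom_{\ground[\pi]}(\ground,M)\cong M^\pi$: a $\ground[\pi]$-linear map $\ground\to M$ is determined by the image $c$ of $1$, and equivariance forces $gc=c$ for all $g\in\pi$, while conversely any $c\in M^\pi$ defines such a map. Composing the two identifications, the resulting isomorphism sends a degree-zero cocycle $f\colon C_0(\tilde X)\to M$ to its common value on $0$-simplices — the values agree because $f$ annihilates $\partial_1$ of the singular $1$-simplex given by a path between any two points of $\tilde X$ — and this value lies in $M^\pi$ precisely because the deck action permutes the $0$-simplices.

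For the algebra claim, I would invoke the standard construction of the cup product on $\neqh^*(X;M)$ for a $\ground[\pi]$-algebra $M$ (see \autoref{sec::mult}), which is induced by a $\pi$-equivariant diagonal approximation $C_*(\tilde X)\to C_*(\tilde X)\otimes_\ground C_*(\tilde X)$ followed by the multiplication $M\otimes_\ground M\to M$. In degree $0$ the Alexander--Whitney map sends a $0$-simplex $\tilde v$ to $\tilde v\otimes\tilde v$, so the product of two cocycles with constant values $c_1,c_2$ is the cocycle with constant value $c_1c_2$; hence the isomorphism above carries the cup product to the multiplication of $M$ restricted to the subalgebra $M^\pi$ (which is closed under multiplication since $\pi$ acts by algebra automorphisms), and it sends the unit cocycle $\varepsilon$ to $1\in M^\pi$. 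Therefore it is an isomorphism of $\ground$-algebras.

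The argument is essentially formal, and no serious obstacle is anticipated; the one point needing attention is the exactness of $C_1(\tilde X)\xrightarrow{\partial_1}C_0(\tilde X)\xrightarrow{\varepsilon}\ground\to 0$, which is exactly the statement that $\tilde X$ is path-connected — the reason the standing hypothesis that $X$ be path-connected is in force. A secondary point is to keep track of the $\pi$-action at each stage so that all identifications are $\ground[\pi]$-linear; this is routine since $\pi$ acts on everything through permutations of simplices of $\tilde X$.
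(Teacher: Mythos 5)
Your proof is correct and takes essentially the same route as the paper's: identify the degree-zero cohomology as the kernel, use path-connectedness of $\tilde X$ and left exactness of $\Hom_{\ground[\pi]}(-,M)$ to reduce to $\Hom_{\ground[\pi]}(\ground,M)\cong M^\pi$. You also supply the verification of the algebra claim via the Alexander--Whitney diagonal in degree $0$, which the paper's proof leaves implicit.
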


\begin{proof}
We would like to identify the kernel of
\[ \Hom_{\ground[\pi]}(C_0(\tilde{X}),M) \to \Hom_{\ground[\pi]}(C_1(\tilde{X}),M).\]
Since $\tilde{X}$ is connected, $\neqh_0(\tilde{X}) \cong \ground$ with the trivial
$\pi$ action.  Since $\Hom_{\ground[\pi]}$ is left exact, it follows that
$\neqh^0(X;M) \cong \Hom_{\ground[\pi]}(\ground,M) \cong M^\pi$, as claimed.
\end{proof}

In particular, in the Serre spectral sequence, $E_2^{0,t}
\cong \sH^t(F;\ground)^\pi$.  If we are in a situation where $E_2^{s,t}$ vanishes
for $s>0$, then \autoref{prop::hzero} completely describes the
multiplicative structure on the $E_2$ page.

%
%

\begin{exmp}\label{ex::noneq}
Let $\cyctwo$ be the cyclic group of order two, identified with $\{\pm 1\}$
when convenient; we write $\cyctwo$ instead of $C_2$ because we will later
want to distinguish the role of structural groups and groups of
equivariance.  Consider the fibration 
\[B\text{det}\colon BO(2)\to B\cyctwo\] 
with fiber $BSO(2)$.  Take coefficients in a finite field $\ground = \fq$ with
$q$ an odd prime, so that $\ground[\pi]$ is semisimple.  The action of the base
on the fiber is nontrivial; the Serre spectral sequence for this fibration
has
\[ E_2^{s,t} = \neqh^s(B\cyctwo;\sH^t(BSO(2);\fq)) \Longrightarrow \neqh^{s+t}(BO(2);\fq)
\]
We know that $B\cyctwo \simeq \bR P^\infty$ with universal cover $S^\infty
\simeq \pt$, so \autoref{cor::cor::CE} applies, and $BSO(2) \simeq \bC
P^\infty$.  We also know $\neqh^*(\bC P^\infty;\fq) \cong \fq[x]$, a polynomial
algebra on one generator $x$ in degree two, and that the fundamental group
$\pi_1(B\cyctwo) \cong \cyctwo$ acts on $\neqh^*(\bC P^\infty)$ by $x\mapsto
-x$. 

By \autoref{cor::cor::CE}, we thus have
\[ E_2^{s,t} \cong \neqh^s \left(S^\infty;\sH^t(BSO(2);\fq)^{\cyctwo} \right) \]
in the Serre spectral sequence; $\sH^t(BSO(2);\fq)^{\cyctwo}$ is either $0$
or $\fq$, depending on $t$.  $E_2^{s,t}$ thus vanishes for $s>0$, so the Serre
spectral sequence collapses with no extension problems.  Using the
observation after \autoref{prop::hzero}, we see
\[ \neqh^*(BO(2);\fq) \cong \sH^*(BSO(2);\fq)^{\cyctwo} \cong \fq[x^2].\]
The isomorphism is of $\ground$-algebras.  We have thus shown the well-known fact
that $\neqh^*(BO(2);\fq)$ is polynomial on one generator (the Pontrjagin class)
in degree four.  We will later examine an equivariant version of
this example.
\end{exmp}


\section{Equivariant generalizations}\label{sec::eq}

Heading towards an equivariant generalization of \autoref{thm::CE}, we first
rephrase the definition of (co)homology with local coefficients. 
In \autoref{sec::noneq},
we effectively defined homology and cohomology with local coefficients by
restricting a local coefficient system $\sM\colon \Pi X \to \rmod$ to
\rarticle $\ground[\pi]$-module $M\colon \pi \to \rmod$.

Rather than restricting $\sM$ to $\pi$, we could instead redefine
$\tilde{X}$ to be the universal cover functor $\Pi X\op \to \spaces$ that
sends a point $x\in X$ to the space $\tilde{X}(x)$ of equivalence classes
of paths starting at $x$ and sends a path $\gamma$ from $x$ to $y$ to the
map $\tilde{X}(y)\to \tilde{X}(x)$ given by precomposition with $\gamma$.
Since $\pi$ is a skeleton of $\Pi X$, the following definition is
equivalent to \autoref{def::stdloc} when $X$ is connected.  By
\autoref{thm::comparison}, there is no conflict with the classical notation
for homology with local coefficients.  Let $\chr$ denote the category of
chain complexes of $\ground$-modules.

\begin{defn}[Reformulation of \autoref{def::stdloc}]\label{def::altloc} 
Let $\sM\colon \Pi X\to \rmod$ and $\sN\colon \Pi X\op\to \rmod$ be
functors and let $C_*(\tilde{X})\colon \Pi X\op\to \spaces \to \chr$ be the
composite of the universal cover functor with the functor $C_*$.  Define
the homology of $X$ with coefficients in $\sM$ to be \[ \neqh_*(X;\sM) =
\neqh_*(C_*(\tilde{X}) \otimes_{\Pi X} \sM) \] where $\otimes_{\Pi X}$ is the
tensor product of functors (which is given by an evident coequalizer
diagram).  Similarly, define 
\[ \neqh^*(X;\sN) = \neqh^*\left(\Hom_{\Pi X}(C_*(\tilde{X}),\sN)\right) \]
where $\Hom_{\Pi X}$ is the hom of functors (also known as natural
transformations; alternatively, given by an evident equalizer diagram).
\end{defn}


Note that our distinctions between left and right and between covariant and
contravariant are purely semantic above, since we are dealing with groups
and groupoids.  However, we are about to consider (Bredon) equivariant
homology and cohomology. Here the fundamental ``groupoid'' is only an
EI-category (endomorphisms are isomorphisms) and the distinction is
essential. There is an equivariant Serre spectral sequence, due to Moerdijk
and Svensson \cite{MS}, but it has not yet had significant calculational
applications. The essential reason is the lack of a way to compute its
$E^2$-terms.  However, the results of \autoref{sec::noneq} generalize nicely
to compute Bredon homology and cohomology with local coefficients.


\autoref{def::altloc} generalizes directly to the equivariant case.  Recall
our discussion of Bredon cohomology in \autoref{sec::bredon}; for this
subsection, we will think about integer-graded Bredon cohomology, which has
coefficient systems rather than Mackey functors as coefficients.  From now
on, let $X$ be a $G$-space; as in the rest of this \paper{}, we take $G$ to
be a finite group.\footnote{The results in this subsection apply equally
well to discrete groups, and with a little more detail, we could generalize
to topological groups.}  Following tom Dieck \cite{tD}, we can define the
\defword{fundamental EI-category} $\fund{X}$ to be the category whose
objects are pairs $(K,x)$, where $x \in X^K$; a morphism from $(K,x)$ to
$(L,y)$ consists of a $G$-map $\alpha\colon G/K\to G/L$, determined by
$\alpha(eH) = gK$, together with a homotopy class rel endpoints $[\gamma]$
of paths from $x$ to $\alpha^*(y) = gy$.  Here $\alpha^*\colon X^L\to  X^K$ is
the map given by $\alpha^*(z)=gz$, which makes sense because $g^{-1}Hg\subset
L$.

Likewise, we follow tom Dieck in defining the equivariant universal cover
$\tilde{X}$ to be the functor $\tilde{X}\colon (\fund{X})\op \to \spaces$
which sends $(K,x)$ to $\widetilde{X^K}(x)$, the space of equivalence
classes of paths in $X^K$ starting at $x$.  For a morphism $\fmap{}\colon
(K,x)\to (L,y)$, $\tilde{X}\fmap{}\colon \tilde{X}(L,y)\to \tilde{X}(K,x)$
takes a class of paths $[\beta]$ starting at $y\in X^L$ to the class of the
composite $(\alpha^*\beta)\gamma$.

We can now define equivariant (co)homology with local coefficients.  In fact, 
\autoref{def::altloc}
applies almost verbatim: we need only add $G$ to the notations.  We repeat
the definition for emphasis. 

\begin{defn}[Equivariant generalization of \autoref{def::altloc}]\label{def::eqloc}
Let $X$ be a $G$-space and write $\Pi = \fund{X}$.  Let $\sM\colon \Pi\to
\rmod$ and $\sN\colon \Pi\op\to \rmod$ be functors and let
$\eqchains{\tilde{X}}\colon \Pi\op\to \spaces \to \chr$ be the composite of
the equivariant universal cover functor with the functor $C_*$.  Define the
homology of $X$ with coefficients in $\sM$ to be
\[ \hl_*(X;\sM) = H_*(\eqchains{\tilde{X}} \otimes_\Pi \sM) \]
and the cohomology of $X$ with coefficients in $\sN$ by
\[ \h^*(X;\sN) = H^*\left(\Hom_\Pi(C^G_*(\tilde{X}),\sN)\right). \]
\end{defn}

Note that we could also take $\Pi$ to be a skeleton
$\text{skel}(\fund{X})$. 

Inserting $G$ into the notations, the proofs in Whitehead or Hatcher
\cite{GW, Hat} apply to show that this definition of Bredon (co)homology
with local coefficients is naturally isomorphic to the Bredon (co)homology
with local coefficients, as defined in Mukherjee and Pandey \cite{MP},
which they in turn show is naturally isomorphic to the (co)homology with
local coefficients, as defined and used by Moerdijk and Svensson in
\cite{MS} to construct the equivariant Serre spectral sequence of a
$G$-fibration $f\colon E\to B$.

We quickly review the homological algebra needed for the equivariant
generalization of \autoref{thm::CE}. Since $\rmod$ is an abelian category, the
categories $[\fundx{},\rmod]$ and $[\fundx{}\op,\rmod]$ of functors from
$\fundx{}$ to $\rmod$ are also abelian, with kernels and cokernels defined
levelwise.  These categories have enough projectives and injectives, which
by the Yoneda lemma are related to the represented functors.

Specifically, let $\ground-$ denote the free $\ground$-module functor
$\textbf{Set}\to\rmod$.  Given an object $(K,x)\in \Pi$, let
$\covproj{K,x}$ be the covariant represented functor $\Pi\to\rmod$
given on objects by
\[ \covproj{K,x}(L,y) = \ground\fundhom{(K,x)}{(L,y)}.\]
By the Yoneda lemma, each $\covproj{K,x}$ is projective.  Therefore, given
a functor $\sM$, we can construct an epimorphism $\sP\to\sM$ with $\sP$
projective by taking $\sP$ to be a direct sum of representables
\[ \sP = \bigoplus_{(K,x)}\bigoplus_{\sM(K,x)}\covproj{K,x},\]
one for each element of each $\ground$-module $\sM(K,x)$.  Similarly, there are
contravariant represented functors $\contraproj{K,x}\colon \Pi\op\to\rmod$
given by
\[ \contraproj{K,x}(L,y) = \ground\fundhom{(L,y)}{(K,x)}. \]
The same argument shows that these are projective and that $[\Pi\op,\rmod]$
has enough projectives.

The construction of the injective objects is dual but perhaps less
familiar.  Given \rarticle $\ground$-module $C$ and $(K,x)\in\Pi$, we define a functor
$\covinj{K,x,C}\colon \Pi\to\rmod$ by
\[ \covinj{K,x,C}(L,y) = \Hom_\ground(\contraproj{K,x}(L,y),C).\]
Whenever $C$ is an injective $\ground$-module, $\covinj{K,x,C}$ is an injective
object in $[\Pi,\rmod]$.  This comes from a more general fact.  
For any coefficient system $\sA\colon \Pi\to\rmod$, there is a tensor-hom
adjunction
\[ [\Pi,\rmod](\sA,\covinj{K,x,C}) \cong \rmod(\sA\otimes_\Pi
\contraproj{K,x},C) \]
where again $\otimes_\Pi$ is the tensor product of functors.  The tensor
product of any functor with a representable functor $\contraproj{K,x}$ is
given by evaluation at $(K,x)$.  Putting these two facts together, we have
that a natural transformation from $\sA$ to $\covinj{K,x,C}$ is given by
the same data as a homomorphism of $\ground$-modules from $\sA(K,x)$ to $C$.  It
is then clear that, if $C$ is an injective $\ground$-module, $\covinj{K,x,C}$
must be an injective object of $[\Pi,\rmod]$, as desired.  Given any
$\sN\colon \Pi\to\rmod$, we can construct an injective coefficient system
$\sI$ and a monomorphism $\sN\to\sI$ as follows.  Choose monomorphisms
$\sN(K,x) \hookrightarrow C_{K,x}$ for each $(K,x)$ with $C_{K,x}$
injective, and define $\sI$ to be the product of injective functors
\[ \sI = \prod_{(K,x)} \covinj{K,x,C_{K,x}}. \]
It can be checked that the evident map $\sN\to\sI$ is a monomorphism.
Thus $[\Pi,\rmod]$ has enough injectives.  The functors 
$\contrainj{K,x,C} = \Hom_\ground(\covproj{K,x}(-),C)$ show that $[\Pi\op,\rmod]$
has enough injectives as well.

Finally, we define $\Tor^\fundx{}(\sN,\sM)$ in the obvious way.  It is the
homology of the complex of $\ground$-modules that is obtained by taking the
tensor product of functors of $\sN$ with a projective resolution of the
functor $\sM$.  We define $\Ext_\fundx{}(\sN_1,\sN_2)$ similarly, taking
the hom of functors of $\sN_1$ with an injective resolution of
$\sN_2$.\footnote{Alternatively, we could define $\Ext_{\fundx{}}(\sN_1,\sN_2)$
by taking a projective resolution of $\sN_1$; however, it is the definition
above which gives rise to the desired spectral sequence.}

The following equivariant analogue of the nonequivariant statement that
$C_*(\tilde{X})$ is a free $\ground[\pi]$-module should be a standard first
observation in equivariant homology theory, but the author has not seen it
in the literature.  The nonequivariant assertion, while obvious, is the
crux of the proof of \autoref{thm::CE}.  Let $\contraproj{K,x} =
\ground\fundhom{-}{(K,x)}$, as above.

\begin{lem}\label{lem::technical}
With $\Pi = \fund{X}$, each functor $C_n^G(\tilde{X})\colon \Pi\op \to
\rmod$ is a direct sum of representable functors
$\bigoplus_{(K_i,x_i)}\contraproj{K_i,x_i}$. 
\end{lem}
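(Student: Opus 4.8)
The plan is to reduce the statement to the corresponding fact about the underlying set-valued functor and then check that that functor is a disjoint union of co-representable functors on $\Pi = \fund{X}$. Fix $n$. Applying the normalized singular $n$-chain functor objectwise, $C_n^G(\tilde{X})$ is the free $\ground$-module, taken levelwise, on the functor $S_n\colon\Pi\op\to\mathbf{Set}$ sending $(K,x)$ to the set of non-degenerate singular $n$-simplices of $\widetilde{X^K}(x)$; the functoriality on $S_n$ is induced by the maps $\tilde{X}(\phi)$ for morphisms $\phi$ of $\Pi$, which respect degeneracies since they are continuous. Because the levelwise free $\ground$-module functor carries a coproduct of set-valued functors to a direct sum and carries the representable $\fundhom{-}{(K,x)}$ to $\contraproj{K,x}$, it is enough to produce an isomorphism $S_n\cong\coprod_i\fundhom{-}{(K_i,x_i)}$ for a suitable family of objects $(K_i,x_i)$ of $\Pi$.

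To set up that isomorphism I would first record that, since $\Delta^n$ is simply connected, a singular $n$-simplex of $\widetilde{X^K}(x)$ is exactly the data of a singular $n$-simplex $\sigma\colon\Delta^n\to X^K$ together with a homotopy class rel endpoints of path in $X^K$ from $x$ to the first vertex $\sigma(e_0)$: a lift of $\sigma$ through the covering $\widetilde{X^K}(x)\to X^K$ both determines and is determined by a point of the fibre over $\sigma(e_0)$. Next, let $G$ act on the set of non-degenerate singular $n$-simplices of $X$ by $g\cdot\sigma = g\circ\sigma$, and for each $\sigma$ let $K_\sigma = \bigcap_{t\in\Delta^n}G_{\sigma(t)}$ be the pointwise isotropy group of its image and $x_\sigma = \sigma(e_0)$, so $x_\sigma\in X^{K_\sigma}$ and $(K_\sigma,x_\sigma)$ is an object of $\Pi$. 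Choose a set $R_n$ of representatives for the $G$-orbits, and for $\sigma\in R_n$ let $u_\sigma\in S_n(K_\sigma,x_\sigma)$ be the lift of $\sigma$ to $\widetilde{X^{K_\sigma}}(x_\sigma)$ that sends $e_0$ to the basepoint (the constant path at $x_\sigma$). Then the comparison transformation $\coprod_{\sigma\in R_n}\fundhom{-}{(K_\sigma,x_\sigma)}\to S_n$ is the Yoneda map attached to the elements $u_\sigma$, namely $\phi\mapsto S_n(\phi)(u_\sigma)$.

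The remaining work is to verify this is a bijection on each object $(L,y)$. For surjectivity, a simplex of $\widetilde{X^L}(y)$ has shadow some singular simplex $\tau$ of $X$ landing in $X^L$ and a path class $[\gamma]$ in $X^L$ from $y$ to $\tau(e_0)$; writing $\tau = g\sigma$ with $\sigma\in R_n$, the containment $L\subseteq gK_\sigma g^{-1}$ gives $g^{-1}Lg\subseteq K_\sigma$, hence a $G$-map $\alpha\colon G/L\to G/K_\sigma$ with $\alpha(eL) = gK_\sigma$, and one checks that $(\alpha,[\gamma])$ is a morphism of $\Pi$ carrying $u_\sigma$ to the given simplex, by comparing shadows (both equal $\alpha^*\sigma = g\sigma = \tau$, since the covering projection intertwines $\tilde{X}(\alpha,[\gamma])$ with $\alpha^*$) and comparing the image of $e_0$ (it is $[\gamma]$, since $\alpha^*$ sends the constant path to the constant path). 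For injectivity, the shadow of $S_n(\phi)(u_\sigma)$ recovers the $G$-orbit of $\sigma$, hence $\sigma$ itself, and the crucial point is that the $G$-stabilizer of $\sigma$ is exactly $K_\sigma$ — a group element fixes $\sigma$ iff it fixes every point of the image — so the coset $gK_\sigma$, and therefore $\alpha$, is forced, after which the image of $e_0$ forces the path class $[\gamma]$. I expect this last bundle of identifications — keeping isotropy groups, stabilizers, the fixed-point subspaces $X^K$, and $G$-maps of orbits all aligned at once — to be the only genuine obstacle; it is precisely where the argument goes beyond the familiar non-equivariant fact that $C_*(\tilde{X})$ is free over the group ring of $\pi_1(X)$, which involves no comparably delicate indexing, and so it must be carried out carefully by hand.
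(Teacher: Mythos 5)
Your proof is correct, and it takes a genuinely different and, I think, cleaner route than the paper's. The paper works directly with the module-valued functor $C_n^G(\tilde X)$ and picks a generating set by restricting to \emph{based} simplices $\tau\colon\Delta^n\to\tilde X(L_\tau,y_\tau)$ that ``do not factor through a non-isomorphism of $\Pi$,'' choosing one per $\Pi$-isomorphism class; injectivity is then a multi-step factoring argument that has to disentangle the orbit map, the conjugating element, the path class, and the non-factoring condition all at once. You instead index the generators by $G$-orbits of nondegenerate simplices $\sigma$ of $X$ itself, attaching to each representative the object $\bigl(K_\sigma,\sigma(e_0)\bigr)$ with $K_\sigma = \bigcap_{t}G_{\sigma(t)}$ the pointwise stabilizer. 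The two indexings are in bijection --- the paper's non-factoring condition is precisely the statement that $L_\tau$ equals the pointwise stabilizer of the shadow of $\tau$ --- but your parametrization makes the key step of injectivity a one-liner: the $G$-stabilizer of $\sigma$ under postcomposition is exactly $K_\sigma$, so $g_1K_\sigma=g_2K_\sigma$ is forced, and the path class is then pinned down by the image of $e_0$. Reducing to the set-valued functor $S_n$ before freely generating is also a nice structural observation that the paper does not make explicit.

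One small point that deserves a cleaner justification: you assert that $S_n$ is a functor because the maps $\tilde X(\phi)$ ``respect degeneracies since they are continuous.'' Continuity only gives that degenerate simplices map to degenerate ones; for $S_n$ (taking values in sets of \emph{nondegenerate} simplices) to be functorial you also need that $\tilde X(\phi)$ carries nondegenerate simplices to nondegenerate ones, which is not automatic for an arbitrary continuous map (and indeed $\tilde X(\phi)$ need not be injective, since $\pi_1(X^L)\to\pi_1(X^K)$ can have kernel). The correct justification is the one you in effect use later anyway: the covering projection intertwines $\tilde X(\alpha,[\gamma])$ with $\alpha^*$, a simplex of a covering space is degenerate iff its shadow is, and $\alpha^*\colon X^L\to X^K$ is injective (it is a restriction of the homeomorphism $g\colon X\to X$), hence preserves nondegeneracy. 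With that one-line repair the reduction to $S_n$ is airtight; the same point is also implicit in the paper's proof, which likewise needs $\phi^*\tau$ to be a nonzero basis element of the normalized chains.
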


Granting this result for the moment, we can prove the equivariant
generalization of \autoref{thm::CE}.

\begin{thm}[Equivariant Eilenberg Spectral Sequence]\label{thm::eqCE}
With $\Pi = \fund{X}$, there are spectral sequences
\[ E_{p,q}^2 = \Tor_{p,q}^\fundx{}(\coh_*(\tilde{X}),\sM) \Longrightarrow
\hl_{p+q}(X;\sM) \]
and
\[  E_2^{p,q} = \Ext_{\fundx{}}^{p,q}(\coh_*(\tilde{X}),\sN) \Longrightarrow  \h^{p+q}(X;\sN).  \]
\end{thm}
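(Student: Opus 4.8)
The plan is to imitate the proof of \autoref{thm::CE} verbatim, using \autoref{lem::technical} in place of the elementary fact that $C_*(\tilde X)$ is a free $\ground[\pi]$-module. First I would fix a projective resolution $\varepsilon\colon \sP_*\to \sM$ in the functor category $[\fundx{},\rmod]$ and form the bicomplex of $\ground$-modules
\[ \eqchains{\tilde X}\otimes_\Pi \sP_* , \]
where $\otimes_\Pi$ is the tensor product of functors. As in the nonequivariant case, there are two spectral sequences associated to this bicomplex, both converging to the homology of the total complex, and the theorem comes from identifying their $E^2$-pages.

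Filtering by the degrees of $\eqchains{\tilde X}$ gives an $E^0$-term with differential $\id\otimes d$. The key point is that by \autoref{lem::technical} each $C_n^G(\tilde X)$ is a direct sum of representable functors $\contraproj{K_i,x_i}$, and tensoring a representable functor with anything computes an evaluation, so $\contraproj{K_i,x_i}\otimes_\Pi (-)$ is exact; hence $\eqchains{\tilde X}\otimes_\Pi(-)$ is exact. Therefore the $E^1$-term is $\eqchains{\tilde X}\otimes_\Pi \sM$, whose homology $E^2$-term is $\hl_*(X;\sM)$ by \autoref{def::eqloc}, and the spectral sequence collapses at $E^2$. Since $E^\infty$ is concentrated in the column $q=0$ there is no extension problem, and the target of the bicomplex is identified with $\hl_{*}(X;\sM)$. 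Filtering the other way, by the degrees of $\sP_*$, gives an $E^0$-term with differential $d\otimes \id$; the $E^1$-term is $\coh_*(\tilde X)\otimes_\Pi \sP_*$ and the $E^2$-term is $\Tor_{*,*}^{\fundx{}}(\coh_*(\tilde X),\sM)$, where $p$ is the resolution degree and $q$ is the internal grading on $\coh_*(\tilde X)$. Comparing the two gives the first spectral sequence.

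For the cohomological statement I would run the dual argument, starting from the bicomplex
\[ \Hom_\Pi(\eqchains{\tilde X},\sI^*) \]
for an injective resolution $\eta\colon \sN\to \sI^*$ in $[\fundx{}\op,\rmod]$, whose existence was established in the excerpt. Filtering by the degree of $\eqchains{\tilde X}$ and using that $\Hom_\Pi(\contraproj{K_i,x_i},-)$ is exact (again by the Yoneda/evaluation property, now for representables in the contravariant category) shows that $\Hom_\Pi(\eqchains{\tilde X},-)$ is exact, so that spectral sequence collapses to $\h^*(X;\sN)$ with no extension problem; filtering the other way produces $\Ext_{\fundx{}}^{*,*}(\coh_*(\tilde X),\sN)$ as defined in the excerpt. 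The only genuine content beyond bookkeeping is \autoref{lem::technical}, which is stated but not yet proved here; granting it, everything is a transcription of the classical Cartan--Eilenberg argument, so I expect the main obstacle to be purely notational --- keeping the variance conventions straight between $[\fundx{},\rmod]$ and $[\fundx{}\op,\rmod]$ and making sure the exactness of $\eqchains{\tilde X}\otimes_\Pi(-)$ really does follow from the representability of each $C_n^G(\tilde X)$ rather than from any freeness over a group ring, which no longer makes sense since $\fundx{}$ is merely an EI-category.
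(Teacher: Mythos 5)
Your proposal is correct and follows the paper's own argument essentially verbatim: form the bicomplex $\eqchains{\tilde X}\otimes_\Pi \sP_*$ (resp.\ $\Hom_\Pi(\eqchains{\tilde X},\sI^*)$), filter both ways, and use \autoref{lem::technical} together with the Yoneda evaluation property of representables to see that the filtration by degree of $\eqchains{\tilde X}$ collapses to the target while the other filtration produces the $\Tor$/$\Ext$ $E^2$-term. The paper proves it exactly this way, and your closing remark --- that the only substantive ingredient is \autoref{lem::technical}, since one must replace ``free over $\ground[\pi]$'' by ``direct sum of representables'' --- is precisely the point the paper flags as well.
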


Here the functor $\coh_*(\tilde{X})\colon \fundx{}\op\to \rmod$ is the
homology of the chain complex functor $\eqchains{\tilde{X}}$; that is,
$\coh_*(\tilde{X})(K,x)$ is the homology of the chain complex
$C_*(\tilde{X})(K,x)$.

\begin{proof}
Let $\varepsilon\colon \sP_*\to\sM$ be a projective resolution of $\sM$.  As in
the nonequivariant theorem, form the bicomplex of $\ground$-modules
$C_*(\tilde{X}) \otimes_\fundx{} \sP_*$.  Since the tensor product of a
functor with a representable functor is given by evaluation,
\[ \contraproj{K,x} \otimes_\fundx{} \sM \cong \sM(K,x),\]
tensoring with such projective modules is exact.

In particular, if we filter our bicomplex by degrees of $C_*(\tilde{X})$,
then $d^0 = \id\otimes d$.  By \autoref{lem::technical}, each
$C_n^G(\tilde{X})$ is projective, and so we get a spectral sequence with
$E^1$-term $C_*(\tilde{X})\otimes_\fundx{} \sM$.  Thus the resulting $E^2 =
E^\infty$ term is $\hl_*(X,\sM)$, exactly as in the nonequivariant case.

If we instead filter by degrees of $\sP_*$, so $d^0 = d\otimes\id$, then
the $E^1$ term is $\coh_*(\tilde{X})\otimes_\fundx{}\sP_*$ and the $E^2$
term is $\Tor_{*,*}^\fundx{}(\coh_*(\tilde{X}),\sM)$, as desired.

The construction of the second spectral sequence is similar, starting from
an injective resolution $\eta\colon \sN \to \sI^*$.
\end{proof}

\begin{proof}[Proof of \autoref{lem::technical}]
The proof is analogous to that of the nonequivariant result, but more
involved.  We may identify $C_n^G(\tilde{X})(K,x)$ with the free $\ground$-module
on generators given by the nondegenerate singular $n$-simplices
$\sigma\colon \Delta^n\to \tilde{X}(K,x)$.  We must show that these free
$\ground$-modules piece together appropriately into a free functor.  More
specifically, by the Yoneda lemma, each $\sigma\colon \Delta^n\to
\tilde{X}(L,y)$ determines a natural transformation
\[\iota_\sigma\colon \contraproj{L,y}\to C_n^G(\tilde{X})  \]
that takes $\id \in \fundhom{(L,y)}{(L,y)}$ to $\sigma$.  We thus obtain a
natural transformation 
\[ \bigoplus_{\{\tau\}} \contraproj{L_\tau,y_\tau} \to C_n^G(\tilde{X}) \]
from any set of nondegenerate $n$-simplices $\{\tau\colon \Delta^n\to
\tilde{X}(L_\tau,y_\tau)\}$. We must show that there is a set $\{\tau\}$
such that the resulting natural transformation is a natural isomorphism,
that is, a levelwise isomorphism.  This amounts to showing that the
following statements hold for our choice of generators $\tau$ and each
object $(K,x)$.  
\begin{enumerate}
\item (Injectivity) For any arrows $\fmap{1}$ and $\fmap{2}$ in $\fundx{}$
with source $(K,x)$ and any generators $\tau_{1}$ and $\tau_{2}$,
$\fmap{1}^*\tau_{1} = \fmap{2}^*\tau_{2}$ must imply that both $\fmap{1} =
\fmap{2}$ and $\tau_{1} = \tau_{2}$.  

\item (Surjectivity) For every $\sigma\colon \Delta^n\to \tilde{X}(K,x)$,
there must be a generator $\tau$ and an arrow $\fmap{}$ such that $\sigma =
\fmap{}^* \tau$.
\end{enumerate}

Fixing $n$, define the generating set as follows.  Regard the initial
vertex $v$ of $\Delta^n$ as a basepoint.  Recall that $\tilde{X}(L,y)$ is
the universal cover of $X^L$ defined with respect to the basepoint $y\in
X^L$, so that the equivalence class of the constant path $c_{L,y}$ at $y$
is the basepoint of $\widetilde{X^L}$.  In choosing our generating set, we
restrict attention to based maps $\sigma\colon \Delta^n\to
\tilde{X}(L_\sigma,y_\sigma)$ that are non-degenerate $n$-simplices of
$\widetilde{X^{L_\sigma}}$.  Such maps $\sigma$ are in bijective
correspondence with based nondegenerate $n$-simplices $\sigma_0\colon
\Delta^n\to X^{L_\sigma}$.  The correspondence sends $\sigma$ to its
composite with the end-point evaluation map $p\colon
\tilde{X}(L_\sigma,y_\sigma)\to X^{L_\sigma}$ and sends $\sigma_0$ to the
map $\sigma\colon \Delta^n\to \tilde{X}(L_\sigma,y_\sigma)$ that sends a
point $a\in \Delta^n$ to the image under $\sigma_0$ of the straight-line
path from $v$ to $a$.  Restrict further to those $\sigma$ that cannot be
written as a composite 
\[ \xymatrix@1{ \Delta^n \ar[r]^-{\rho} & \tilde{X}(L',y')
\ar[rr]^-{(\alpha,\gamma)^*} & & \tilde{X}(L_\sigma,y_\sigma)\\} \]
for any non-isomorphism $(\alpha,\gamma)\colon (L',y')\to (L_\sigma,y_\sigma)$ in
$\Pi$.  Note that, for each such $\sigma$, we can obtain another such
$\sigma$ by composing with the isomorphism $(\xi,\delta)^*$ induced by an
isomorphism $(\xi,\delta)$ in $\Pi$.  We say that the resulting maps $\sigma$
are equivalent, and we choose one $\tau$ in each equivalence class of such
based singular $n$-simplices $\sigma$.

It remains to verify that the natural transformation defined by this set
$\{\tau\}$ is an isomorphism.  This is straightforward but somewhat tedious
and technical.

For the injectivity, suppose that $\fmap{1}^* \tau_{1} = \fmap{2}^*
\tau_{2}$, where $\tau_1,\tau_2$ are in our generating set and 
\begin{align*}
\tau_1\colon \Delta^n\to \tilde{X}(L_1,y_1),  & \hspace{12pt}   \fmap{1} \in \fundhom{(K,x)}{(L_1,y_1)} \\
\tau_2\colon \Delta^n\to \tilde{X}(L_2,y_2),  & \hspace{12pt} \fmap{2} \in \fundhom{(K,x)}{(L_2,y_2)}.
\end{align*}
Since $\tau_i(v) = c_{(L_i,y_i)}$ for $i = 1,2$, we see that $\fmap{i}^*
\tau_i$ must take $v$ to $[\gamma_i]$.  Since $\fmap{1}^* \tau_{1} =
\fmap{2}^* \tau_2$, this means that $[\gamma_1] = [\gamma_2]$; call this
path class $[\gamma]$.  In turn, this implies that $\alpha_1^* y_1 =
\alpha_2^* y_2$; call this point $z\in X^K$, so that $[\gamma]$ is a path from
$x$ to $z$.  Since $\fmap{i} = (\alpha_i,[c_{z}])\circ (\id,[\gamma])$ and
$(\id,[\gamma])$ is an isomorphism in $\fundx{}$, we must have 
\[ (\alpha_1,[c_z])^* \tau_1 = (\alpha_2,[c_z])^* \tau_2. \]
In particular, if we compose each side of this equation with $p$, we obtain 
\[ p \circ (\alpha_1,[c_z])^* \tau_1 = p\circ (\alpha_2,[c_z])^* \tau_2\]
as maps $\Delta^n \to X^K$.  Since we have commutative diagrams 
\[ \xymatrix{ \Delta^n \ar[r]^-{\tau_i} & \tilde{X}(L_i,y_i)
\ar[r]^-{(\alpha,[c_z])^*} \ar[d]_{p} & \tilde{X}(K,z) \ar[d]_{p} \\ &
X^{L_i} \ar[r]^{\alpha_i^*} & X^K }\]
for each $i$, this implies that we have a commutative square 
\[ \xymatrix{ \Delta^n \ar[r]^-{p\circ\tau_1} \ar[d]_{p\circ\tau_2} &
X^{L_1} \ar[d]^{\alpha_1^*} \\ X^{L_2} \ar[r]^{\alpha_2^*} & X^K }\]
If the maps $\alpha_i \colon G/K \to G/{L_i}$ are defined by elements
$g_i\in G$, this implies that the common composite $\Delta^n \to X^K$
factors through the fixed-point sets $X^{g_i L_i g_i^{-1}}$ for each $i$,
and hence through $X^L$, where $L$ is the smallest subgroup containing $g_1
L_1g_1^{-1}$ and $g_2 L_2 g_2^{-1}$.  Since $L_i \subset g_i^{-1} L g_i$,
the maps $\alpha_i\colon G/K\to G/L_i$ factor through the maps $\beta_i\colon
G/L_i\to G/L$ specified by $\beta_i(eK_i) = g_iL$ and there result
factorizations of the $\tau_i$ as 
\[ \xymatrix@1{ \Delta^n \ar[r] &  \tilde{X}(g_i^{-1} L g_i, y_i)
\ar[rr]^-{(q,[c_z])^*} & &  \tilde{X}(L_i,y_i),\\}  \]
where $q$ denotes either quotient map $G/L_i\to G/g_i^{-1} L g_i$.  By
our choice of the generators $\tau$, this can only happen if $g_i^{-1} L
g_i = L_i$, giving $g_1 L_1 g_1^{-1} = g_2 L_2 g_2^{-1}$.  In terms of
$g_1$ and $g_2$, we see that our equation $(\alpha_1,[c_z])^* \tau_1 =
(\alpha_2,[c_z])^* \tau_2$ says that $g_1 \tau_1 = g_2 \tau_2$, that is,
$\tau_2 = g_2^{-1} g_1 \tau_1$.  Since $g_2^{-1} g_1$ defines an
isomorphism $G/{L_2}\to G/{L_1}$, we again see by our choice of the
generators $\tau$ that $\tau_1 = \tau_2$ and that $g_2^{-1} g_1 \in L_1 =
L_2$.  This in turn implies that the maps $\alpha_i \colon G/K \to G/L_i$
defined by the $g_i$ are identical.  The conclusion is that $\fmap{1}^*
\tau_1 = \fmap{2}^* \tau_2$ implies $\tau_1 = \tau_2$ and $\fmap{1} =
\fmap{2}$, as desired.

It only remains to show that we have accounted for all elements of
$C_n(\tilde{X})(K,x)$.  For any map $\sigma\colon \Delta^n \to
\tilde{X}(K,x)$, $\sigma(v)$ is a homotopy class of paths from $(K,x)$ to
$(K,x')$ in $X^K$.  Call this class $[\gamma]$.  Then $(\id,[\gamma])$ is
an isomorphism with inverse $(\id,[\gamma^{-1}])$, and $\sigma' =
(\id,[\gamma^{-1}])^*\sigma$ takes $v$ to the homotopy class of the
constant path at $(K,x')$; it follows that $\sigma =
(\id,[\gamma])^*\sigma'$.  Similarly, if $\sigma'$ factors through
$\tilde{X}(L,y)$ for some $L$ properly containing a conjugate of $K$, then
by definition $\sigma = \fmap{}^*\tau$ for some $\tau\colon \Delta^n \to
\tilde{X}(L,y)$.  We can choose a $\tau$ that does not itself factor and is
in our chosen set of generators.
\end{proof}

\section{Some remarks on the Serre Spectral Sequence}


We say that a map $f\colon E\to X$ is a \defword{$G$-fibration} if $f^K
\colon E^K \to X^K$ is a fibration for every subgroup $K$ of $G$.  Observe
that, if $x\in X^K$, then its preimage $f^{-1}(x)\subset E$ is necessarily
a $K$-space.  As previously mentioned, Moerdijk and Svensson in \cite{MS}
develop an equivariant Serre spectral sequence for $G$-fibrations of
$G$-spaces, which we will now describe.  They use integer-graded Bredon
cohomology.

Given a coefficient system $M\colon \sO_G\op\to \rmod$ and a subgroup $K <
G$, there is a restricted coefficient system $M|_K \colon \sO_K\op \to \rmod$
given by $M|_K (K/L) := M(G/L)$.  We may thus define a local coefficient
system
\[\sh^q(f;M )\colon \fund{X}\op\to\ab \]
to be the functor which acts on objects by
\[ (K,x) \longmapsto \h^q(G\times_K f^{-1}(x);M ) \cong {\renewcommand{\G}{K} \h^q(f^{-1}(x);M |_K)}.\]
It is defined on morphisms via lifting of paths.

The main result of \cite{MS} is the following spectral sequence.

\begin{thm}[Moerdijk and Svensson]
For any $G$-fibration $f\colon E\to X$ and any coefficient system $M\colon
\sO_\G\op\to \rmod$, there is a natural spectral sequence 
\[ E_2^{s,t}(M) = \h^s(X;\sh^t(f;M)) \Longrightarrow \h^{s+t}(E;M).\]
Further, this spectral sequence carries a product structure, in the sense
that there is a natural pairing of spectral sequences
\[ E_r^{s,t}(M) \otimes E_r^{s',t'}(N) \to
E_r^{s+s',t+t'}(M\otimes N) \]
converging to the standard pairing
\[ \h^*(E;M) \otimes \h^*(E;N) \xrightarrow{\smile}
\h^*(E;M\otimes N). \]
On the $E_2$ page, this pairing agrees 
with the standard pairing
\[ \h^s(X;\sh^t(f,M))\otimes \h^{s'}(X;\sh^{t'}(f,N)) \to
\h^{s+s'}(X;\sh^{t+t'}(f,M\otimes N)).\footnote{As usual in the Serre
spectral sequence, the standard pairing on the $E_2$ page incorporates a
sign $(-1)^{s' t}$ relative to the cup product pairing.}\]
\end{thm}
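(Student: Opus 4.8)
The plan is to construct the pairing on the equivariant Eilenberg spectral sequence of \autoref{thm::eqCE} abstractly from a pairing of the bicomplexes used to build it, and then to identify what that pairing becomes on the $E_2$ page. First I would recall that the spectral sequence computing $\h^*(E;M)$ arises from the bicomplex $\Hom_\Pi(\eqchains{\tilde X}, \sI^*)$ for an injective resolution of the local coefficient system $\sh^*(f;M)$; the double complex filtration gives two spectral sequences, one of which collapses to identify the target and the other of which has $E_2$-term $\Ext_\Pi(\coh_*(\tilde X), \sh^*(f;M))$. The key input is a diagonal approximation: the Alexander--Whitney map $\eqchains{\tilde X} \to \eqchains{\tilde X}\otimes \eqchains{\tilde X}$ (applied objectwise on $\Pi$, noting that $\tilde X$ lands in spaces so the ordinary nonequivariant Alexander--Whitney map works levelwise and is natural in $\Pi$), combined with the pairing $\sh^t(f;M)\otimes \sh^{t'}(f;N)\to \sh^{t+t'}(f;M\otimes N)$ coming from the fiberwise cup product and the coefficient pairing $M\otimes N\to M\otimes N$.

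Next I would assemble these into a map of bicomplexes $\Hom_\Pi(\eqchains{\tilde X},\sI^*)\otimes \Hom_\Pi(\eqchains{\tilde X},\sJ^*) \to \Hom_\Pi(\eqchains{\tilde X},\sK^*)$, where $\sI^*,\sJ^*$ resolve $\sh^*(f;M),\sh^*(f;N)$ and $\sK^*$ resolves $\sh^*(f;M\otimes N)$; one must check that the tensor product of an injective resolution maps appropriately to an injective resolution of the tensor product, which is a standard lemma in homological algebra over the functor category $[\Pi\op,\rmod]$ (use the Künneth-type comparison and the fact that $\otimes_\Pi$ of a representable is evaluation). A pairing of double complexes that is a map of filtered complexes induces a pairing of the associated spectral sequences from $E_1$ onward, converging to the pairing on the abutments; applying this to both filtrations simultaneously gives the pairing converging to the cup product $\h^*(E;M)\otimes\h^*(E;N)\to\h^*(E;M\otimes N)$, since on the collapsing side the Alexander--Whitney map is exactly what induces the cup product.

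Then I would identify the induced pairing on the $E_2$-page. On $E_2 = \Ext_\Pi(\coh_*(\tilde X),\sh^*(f;-))$ the pairing is the composite of the external product in $\Ext$ over the functor category $\Pi$ with the internal pairing of coefficient systems $\sh^*(f;M)\otimes\sh^*(f;N)\to\sh^*(f;M\otimes N)$; tracing through the definition of $\h^*(X;-)$ with local coefficients from \autoref{def::eqloc} (which is itself $H^*$ of $\Hom_\Pi(\eqchains{\tilde X},-)$), this external $\Ext$-pairing is by construction the standard cup product pairing in Bredon cohomology with local coefficients $\h^s(X;\sh^t(f;M))\otimes\h^{s'}(X;\sh^{t'}(f;N))\to\h^{s+s'}(X;\sh^{t+t'}(f;M\otimes N))$. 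The Koszul sign $(-1)^{s't}$ appears because the Alexander--Whitney diagonal is not cocommutative and because one must swap the degree-$t$ cohomology factor past the degree-$s'$ simplicial factor when reorganizing $\Hom_\Pi(\eqchains{\tilde X},\sI^*)\otimes\Hom_\Pi(\eqchains{\tilde X},\sJ^*)$ into a single bicomplex indexed by total simplicial and total resolution degree; this is the same sign that appears nonequivariantly in the Serre spectral sequence.

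The main obstacle I expect is the bookkeeping for the injective-resolution comparison: showing that $\sI^*\otimes\sJ^*$ (or rather an appropriate total complex built from it) admits a chain map to an injective resolution $\sK^*$ of $\sh^*(f;M\otimes N)$ that is compatible with the augmentations, and that this chain map is unique up to chain homotopy so the resulting $E_2$-pairing is well-defined and independent of choices. The subtlety is that $[\Pi\op,\rmod]$ is a functor category over an EI-category rather than over a group, so one cannot simply invoke the tensor product of projective resolutions; one instead works with the explicit representable injectives $\contrainj{K,x,C}$ described in the excerpt and uses that $\otimes_\Pi$ against representables is evaluation, reducing the comparison to the levelwise (nonequivariant, $\ground$-module) statement, which is classical. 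Once that lemma is in hand, the rest is naturality and sign-chasing, exactly parallel to the nonequivariant Serre spectral sequence multiplicativity argument.
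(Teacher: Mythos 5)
The paper does not actually prove this theorem; the text preceding it reads ``The main result of \cite{MS} is the following spectral sequence,'' and the paper simply quotes Moerdijk and Svensson's result.  So there is no in-paper proof to compare against, but your proposal contains a genuine gap that would make it fail as a proof on its own terms.

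You have conflated the Moerdijk--Svensson Serre spectral sequence with the equivariant Eilenberg spectral sequence of \autoref{thm::eqCE}.  These are different spectral sequences with different abutments.  The Eilenberg spectral sequence is built by filtering the bicomplex $\Hom_\Pi(\eqchains{\tilde X},\sI^*)$, and its abutment is $\h^*(X;\sN)$, the Bredon cohomology of the \emph{base} $X$ with local coefficients $\sN$.  The Serre spectral sequence has abutment $\h^*(E;M)$, the Bredon cohomology of the \emph{total space} $E$.  Your opening claim---that ``the spectral sequence computing $\h^*(E;M)$ arises from the bicomplex $\Hom_\Pi(\eqchains{\tilde X},\sI^*)$''---is false: that bicomplex is built entirely from the universal cover functor of $X$ and an injective resolution of $\sh^*(f;M)$; it carries no information about the topology of $E$ beyond what is packaged into the coefficient system, and the two filtrations of it both converge to $\h^*(X;\sh^*(f;M))$, not to $\h^*(E;M)$.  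In particular, no manipulation of that bicomplex can produce the Serre differentials, which genuinely mix the base and fiber directions.

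The existence of the Serre spectral sequence is a topological theorem about $G$-fibrations.  Nonequivariantly it is proved by filtering $C_*(E)$ by the preimages $f^{-1}(X_s)$ of the skeleta of a CW-structure on $X$ and then identifying the $E_1$ page via local triviality; Moerdijk and Svensson's equivariant version is proved by working in the topos of $\sO_G$-diagrams of simplicial sets and applying a general topos-theoretic Serre spectral sequence construction (one could also filter $E$ by preimages of the skeleta of a $G$-CW structure on $X$ and run the classical argument equivariantly).  Neither route goes through the Eilenberg bicomplex.  The paper's use of the Eilenberg spectral sequence is exactly the opposite of what you propose: the Serre spectral sequence of Moerdijk--Svensson is taken as given, and \autoref{thm::eqCE} is then introduced as a separate computational tool for identifying its $E_2$-term $\h^s(X;\sh^t(f;M))$, since Bredon cohomology with nontrivial local coefficients is otherwise intractable.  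Your argument would, at best, establish multiplicativity for the Eilenberg spectral sequence (which is close to \autoref{thm::multstr} and \autoref{conj::eqmult} in the last chapter), not for the Serre spectral sequence that this theorem actually concerns.
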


The tensor product $M\otimes N$ above is a levelwise tensor product,
$(M\otimes N)(G/K) = M(G/K)\otimes N(G/K)$, which is distinct from the box
product $\boxp$ discussed in \autoref{sec::mackey}.

Although we have been using the integer-graded equivariant cohomology
originally defined by Bredon in this chapter, equivariant cohomology is more naturally
graded on $RO(G)$, as we discussed in \autoref{ch::background}.  As
we mentioned there, for any coefficient system $M$ which can be
extended to a Mackey functor, the Bredon cohomology theory $\h^*(-;M)$ can
be extended to an $RO(G)$-graded theory.  That is, for every virtual
representation $\omega$, we have a functor $\h^{\omega}(-;M)$.  We
pictured these functors as lying in a two-dimensional plane in
\autoref{ch::background}, but another way of visualizing this extra data is
to say that we have one integer-graded theory $\{\h^{V+n}\}_{n\in\bZ}$ for
each representation $V$ containing no trivial subrepresentations.  Each of
these theories $\h^{V+*}(-;M)$ can be used to define local coefficient
systems $\sh^{V+t}(f,M)$.  Kronholm shows the following in his thesis
\cite{bill}.

\begin{thm}[Kronholm]\label{thm::bill}
For each real representation $V$, there is a natural spectral sequence
\[ E_2^{s,t}(M,V) = \h^s(X;\sh^{V+t}(f,M)) \Longrightarrow
\h^{V+s+t}(E;M). \]
Further, for each $V,V'\in RO(G)$, there is a pairing
\[ E_r^{s,t}(M,V) \otimes E_r^{s',t'}(N,V') \to
E_r^{s+s',t+t'}(M\otimes N,V+V')\]
converging to the standard pairing on $E_\infty$ and agreeing 
with the standard pairing on $E_2$.
\end{thm}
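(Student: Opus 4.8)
The plan is to run the Moerdijk--Svensson construction with ordinary Bredon cohomology replaced by the $RO(G)$-graded theory in the ``slice'' cut out by $V$. Fix an actual representation $V$ with no trivial summand. The key observation is that the collection $k^{n}(-):=\h^{V+n}(-;M)$, $n\in\bZ$, together with the connecting homomorphisms of pairs and the suspension isomorphisms $k^{n}(\Sigma^{s}Z)\cong k^{n-s}(Z)$ for \emph{trivial} spheres $S^{s}$, is an integer-graded equivariant cohomology theory on $G$-spaces: it satisfies the weak equivalence, exactness, additivity, and integer suspension axioms of \autoref{ax::finalcoh}. It is not \emph{ordinary} --- it badly fails the dimension axiom --- but the Moerdijk--Svensson machine never uses the dimension axiom, only these four axioms together with the values of $k^{*}$ on orbit cells. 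So I would replace $X$ by a weakly equivalent $G$-CW-complex built from trivial cells $G/K\times D^{s}$ (this changes nothing, since $k^{*}$ inverts weak equivalences and $f$ pulls back to a $G$-fibration over it), filter $E$ by the preimages $E_{s}:=f^{-1}(X_{s})$ of the skeleta, and form the exact couple assembled from the long exact sequences of the pairs $(E_{s},E_{s-1})$ in $k^{*}$. This produces a spectral sequence which converges --- conditionally in general, and strongly under the usual finiteness or $\lim^{1}$-vanishing hypotheses, automatic when $X$ is finite-dimensional --- to $k^{s+t}(E)=\h^{V+s+t}(E;M)$, naturally in the $G$-fibration $f$ and in $M$ and $V$.

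Next I would identify the $E_{1}$- and $E_{2}$-pages. Since $f$ is a $G$-fibration it is $K$-fiber-homotopy trivial over each contractible trivial cell, so there is a $G$-weak equivalence
\[ E_{s}/E_{s-1} \;\simeq\; \bigvee_{\alpha}\, G_{+}\wedge_{K_{\alpha}}\!\bigl(S^{s}\wedge f^{-1}(x_{\alpha})_{+}\bigr), \]
one wedge summand per $s$-cell of $X$, with $x_{\alpha}\in X^{K_{\alpha}}$ a point of that cell. Applying additivity, the adjunction identifying $k^{*}$ of $G_{+}\wedge_{K}(-)$ with the $K$-equivariant theory (valid because $k^{*}$ is represented by $\Sigma^{V}HM$, whose underlying $K$-spectrum is governed by $M|_{K}$), and the suspension isomorphism for the trivial sphere $S^{s}$, one obtains $E_{1}^{s,t}\cong\bigoplus_{\alpha}\sh^{V+t}(f,M)(K_{\alpha},x_{\alpha})$ --- precisely the group of cellular $s$-cochains of $X$ with coefficients in the local system $\sh^{V+t}(f,M)$ on $\fund{X}$. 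Tracing $d_{1}$ through the connecting maps and the attaching maps of $X$ identifies it with the cellular coboundary of the complex computing $\h^{*}\bigl(X;\sh^{V+t}(f,M)\bigr)$ in the sense of \autoref{def::eqloc}; that the cellular complex computes the same thing as the $\Hom_{\fund{X}}$ definition is exactly the translation already invoked in \autoref{sec::eq} between the Moerdijk--Svensson, Mukherjee--Pandey, and Whitehead/Hatcher definitions of local coefficients. Hence $E_{2}^{s,t}(M,V)=\h^{s}\bigl(X;\sh^{V+t}(f,M)\bigr)$.

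For the multiplicative structure, fix $V,V'$ and recall that the cup product on $RO(G)$-graded Bredon cohomology is, in represented form, induced by a pairing $\Sigma^{V}HM\wedge\Sigma^{V'}HN\to\Sigma^{V+V'}H(M\otimes N)$, giving a natural external product on the theories attached to $V$, $V'$, and $V+V'$. Because $X$ has trivial cells, the equivariant cellular approximation theorem applies to the diagonal $X\to X\times X$; after replacing it by a cellular map, the product filtration of $X\times X$ pulls back through it into the filtration of $X$, so the external product of the towers of mapping spectra $F(E/E_{s},\Sigma^{V}HM)$ and $F(E/E_{s'},\Sigma^{V'}HN)$ descends to a pairing of towers landing in filtration degree $s+s'$, hence to a pairing of spectral sequences $E_{r}^{s,t}(M,V)\otimes E_{r}^{s',t'}(N,V')\to E_{r}^{s+s',t+t'}(M\otimes N,V+V')$. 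On $E_{\infty}$ this is visibly the cup product $\h^{V+*}(E;M)\otimes\h^{V'+*}(E;N)\to\h^{V+V'+*}(E;M\otimes N)$; on $E_{2}$, by the same diagonal-and-attaching-map bookkeeping used to pin down $d_{1}$, it is the standard pairing on Bredon cohomology with local coefficients, carrying the usual Koszul sign $(-1)^{s't}$ from moving the cochain degree $t$ past the base degree $s'$.

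The main obstacle is organizational rather than conceptual. The genuine work is: (i) checking that no step in the Moerdijk--Svensson construction, or in its multiplicative refinement, secretly uses the dimension axiom --- once one isolates where ordinary-cohomology input enters, it is always additivity, the Wirthm\"uller-type adjunction, or a \emph{trivial}-sphere suspension isomorphism, all of which survive the passage to $k^{*}=\h^{V+*}$; and (ii) the multiplicative bookkeeping: making the diagonal cellular, verifying that the external product of the filtered mapping spectra really respects filtration degrees, and checking on $E_{2}$ that the induced pairing is the standard one with exactly the stated sign. Convergence is the familiar caveat of any Atiyah--Hirzebruch-type spectral sequence and is dispatched by the usual arguments, imposing finiteness of $X$ or vanishing of the relevant $\lim^{1}$ term where needed.
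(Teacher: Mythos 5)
The paper does not give a proof of this theorem; it cites Kronholm's thesis \cite{bill} and moves on, so there is no ``paper's own proof'' to compare against. Your route is nonetheless the natural one and almost certainly close to Kronholm's: fix an actual representation $V$, observe that $k^n := \h^{V+n}(-;M)$ satisfies every axiom the Moerdijk--Svensson construction actually uses (weak equivalence, exactness, additivity, trivial-sphere suspension, the $G_+\wedge_K(-)$ adjunction), run the skeletal-filtration exact couple on a trivial-cell $G$-CW model of $X$, and identify $E_1$ with cellular cochains valued in $\sh^{V+t}(f,M)$. Your caveat that ordinariness of $k^*$ is never needed is exactly the right point. (Minor quibble: by additivity, $E_1^{s,t}$ is a \emph{product}, not a direct sum, over $s$-cells; this only matters if $X$ has infinitely many cells in a given degree, which you are already excluding for convergence.)

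The multiplicative part is where the proposal is genuinely loose. You assert that the $RO(G)$-graded cup product is induced by a pairing $\Sigma^V HM\wedge\Sigma^{V'}HN\to\Sigma^{V+V'}H(M\otimes N)$, but $\underline{\pi}_0^G(HM\wedge HN)\cong M\boxp N$, the \emph{box} product, not the levelwise tensor product $M\otimes N$ that appears in the theorem's statement (and in the paper's statement of the Moerdijk--Svensson pairing). For general Mackey functors $M,N$ the levelwise tensor $M\otimes N$, equipped with $r_f\otimes r_f$ and $t_f\otimes t_f$, is not itself a Mackey functor, so $H(M\otimes N)$ need not even be defined; and the canonical comparison in $[\sB_G\op,\rmod]$ goes $M\otimes N\to M\boxp N$ (the Dress pairing maps $\theta_{\gset b}$), not the other way. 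So the external product of towers $F(E/E_s,\Sigma^V HM)\wedge F(E/E_{s'},\Sigma^{V'}HN)$ gives you, for free, a spectral sequence pairing into $E_r(M\boxp N,V+V')$. Turning this into the stated $M\otimes N$ pairing --- which is what the $E_2$-page naturally carries, since the fiberwise cup product $\h^t(F;M)\otimes\h^{t'}(F;N)\to\h^{t+t'}(F;M\otimes N)$ is levelwise --- requires either working at the cellular cochain level throughout, or pinning down hypotheses on $M$ and $N$ (e.g.\ a chosen Green-functor pairing through which both $\boxp$ and $\otimes$ factor) and checking compatibility. That reconciliation, not the cellular diagonal or the filtration bookkeeping, is the actual content of the multiplicative clause, and the proposal passes over it.
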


We have an analogue of \autoref{prop::hzero}, as well.

\begin{prop}\label{prop::eqhzero}
$\h^0(X;\sM) \cong \Hom_{\Pi}(\underline{\ground},\sM)$, where $\underline{\ground}$ is
the constant functor.
\end{prop}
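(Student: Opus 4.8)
The plan is to transplant the proof of \autoref{prop::hzero} into the equivariant setting, replacing $\ground[\pi]$-modules by functors on $\Pi = \fund{X}$ and $\Hom_{\ground[\pi]}$ by $\Hom_\Pi$. By \autoref{def::eqloc}, $\h^0(X;\sM)$ is the kernel of the coboundary
\[ \Hom_\Pi(C^G_0(\tilde{X}),\sM) \to \Hom_\Pi(C^G_1(\tilde{X}),\sM),\qquad \phi\mapsto\phi\circ\partial_1, \]
so it suffices to identify this kernel with $\Hom_\Pi(\underline{\ground},\sM)$.

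First I would establish that the homology functor $\coh_0(\tilde{X})\colon\Pi\op\to\rmod$, which sends $(K,x)$ to $H_0(C_*(\tilde{X})(K,x)) = H_0(\tilde{X}(K,x))$, is naturally isomorphic to the constant functor $\underline{\ground}$. The key point is that $\tilde{X}(K,x)$, the space of homotopy classes rel endpoints of paths in $X^K$ starting at $x$, is always path-connected: any class $[\gamma]$ is joined to the class $[c_{K,x}]$ of the constant path at $x$ by the path $s\mapsto[\gamma_s]$, where $\gamma_s(t) = \gamma(st)$. Hence $H_0(\tilde{X}(K,x))\cong\ground$ canonically, generated by the class of any point. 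Moreover, for a morphism $\fmap{}\colon(K,x)\to(L,y)$ the induced map $\tilde{X}(L,y)\to\tilde{X}(K,x)$ carries path classes to path classes, so under these canonical identifications it induces the identity of $\ground$ on $H_0$. This yields the natural isomorphism $\coh_0(\tilde{X})\cong\underline{\ground}$.

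Next I would invoke left-exactness. Exactness in $[\Pi\op,\rmod]$ being computed levelwise, the sequence
\[ C^G_1(\tilde{X})\xrightarrow{\ \partial_1\ }C^G_0(\tilde{X})\to\coh_0(\tilde{X})\to 0 \]
is exact, since at each object $(K,x)$ it is the standard presentation $C_1\to C_0\to H_0\to 0$ of the zeroth homology of the normalized chain complex $C_*(\tilde{X})(K,x)$ (here $\partial_0 = 0$, so $\ker\partial_0 = C_0$ and $H_0 = C_0/\partial_1 C_1$). Applying the left-exact functor $\Hom_\Pi(-,\sM)$ gives an exact sequence
\[ 0\to\Hom_\Pi(\coh_0(\tilde{X}),\sM)\to\Hom_\Pi(C^G_0(\tilde{X}),\sM)\to\Hom_\Pi(C^G_1(\tilde{X}),\sM), \]
and therefore the kernel computing $\h^0(X;\sM)$ is $\Hom_\Pi(\coh_0(\tilde{X}),\sM)\cong\Hom_\Pi(\underline{\ground},\sM)$, as claimed. (That $\Hom_\Pi(-,\sM)$ is left exact is formal: a natural transformation is an equalizer of levelwise maps, and $\Hom_\ground(-,\sM(K,x))$ is left exact for each $(K,x)$.)

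The only genuinely delicate step is the first: confirming that $\coh_0(\tilde{X})$ really is the constant functor $\underline{\ground}$, which amounts to checking path-connectedness of each $\tilde{X}(K,x)$ together with the fact that the structure maps of tom Dieck's equivariant universal cover functor act as the identity on $H_0$. Everything after that is purely homological-algebraic bookkeeping, identical in form to the nonequivariant argument.
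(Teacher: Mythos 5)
Your proposal is correct and matches the paper's own argument exactly: the paper's proof of \autoref{prop::eqhzero} is a one-line citation of the same two facts you establish — the identification $\coh_0(\tilde{X})\cong\underline{\ground}$ and the left exactness of $\Hom_\Pi(-,\sM)$ — carried over from \autoref{prop::hzero}. You have simply filled in the details (path-connectedness of each $\tilde{X}(K,x)$, and the structure maps acting as the identity on $H_0$) that the paper leaves implicit.
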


\begin{proof}
As in \autoref{prop::hzero}, this comes from identifying $\coh_0(\tilde{X})
\cong \underline{\ground}$ and from the left exactness of $\Hom_\Pi$.
\end{proof}

\begin{prop}\label{prop::gconnected}
Suppose that $X$ is $G$-connected, in the sense that each $X^K$ is nonempty
and connected, and let $x \in X^G$.  Then
$\Hom_{\Pi}(\underline{\ground},\sM)$ is isomorphic to a sub-$\ground$-module of
$\sM(G,x)$.
\end{prop}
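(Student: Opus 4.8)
The plan is to unwind what a natural transformation out of the constant functor is, and then to observe that, under the $G$-connectedness hypothesis, the object $(G,x)$ of $\Pi = \fund{X}$ receives a morphism from every other object.

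First I would record the explicit description of $\Hom_\Pi(\underline{\ground},\sM)$. Since $\sM$ is a (contravariant) functor on $\Pi$ and $\underline{\ground}$ sends every morphism to $\id_\ground$, a natural transformation $\phi\colon \underline{\ground}\to\sM$ is the same thing as a family of elements $m_{(K,y)} = \phi_{(K,y)}(1)\in\sM(K,y)$, one for each object $(K,y)$ of $\Pi$, subject to the single compatibility condition
\[ m_{(K,y)} = \sM(\alpha,[\gamma])\bigl(m_{(L,z)}\bigr) \quad\text{for every morphism } (\alpha,[\gamma])\colon (K,y)\to(L,z). \]
Because the $\ground$-module structure on $\Hom_\Pi(\underline{\ground},\sM)$ is pointwise, evaluation at any fixed object is a $\ground$-linear map; in particular $\mathrm{ev}_{(G,x)}\colon \phi\mapsto m_{(G,x)}$ is a homomorphism $\Hom_\Pi(\underline{\ground},\sM)\to \sM(G,x)$.

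Next I would check that for every object $(K,y)$ of $\Pi$ there is a morphism $(K,y)\to(G,x)$. The unique $G$-map $\alpha\colon G/K\to G/G$ is the collapse, and $\alpha^*(x) = x$; since $x\in X^G\subseteq X^K$ and $X^K$ is path-connected, there is a homotopy class $[\gamma]$ of paths in $X^K$ from $y$ to $x = \alpha^*(x)$, giving a morphism $(\alpha,[\gamma])\colon (K,y)\to(G,x)$ in $\Pi$. Now if $\phi$ lies in the kernel of $\mathrm{ev}_{(G,x)}$, i.e.\ $m_{(G,x)} = 0$, then applying the compatibility condition to such a morphism for each $(K,y)$ forces $m_{(K,y)} = \sM(\alpha,[\gamma])(0) = 0$; hence $\phi = 0$. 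Thus $\mathrm{ev}_{(G,x)}$ is injective, and $\Hom_\Pi(\underline{\ground},\sM)$ is isomorphic to its image, a sub-$\ground$-module of $\sM(G,x)$.

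I do not expect any serious obstacle here: the argument is a direct unravelling of the definitions of $\fund{X}$ and of $\Hom_\Pi$. The one point that needs care is the word ``connected'' in the hypothesis — the fundamental-groupoid step genuinely needs a path in $X^K$ from $y$ to $x$, so I would either read ``connected'' as ``path-connected'' in the statement or invoke the standing assumption that the spaces under consideration have the $G$-homotopy type of $G$-CW complexes, for which connectedness of the fixed sets is equivalent to path-connectedness. One could also describe the image of $\mathrm{ev}_{(G,x)}$ explicitly, as the submodule of those $m\in\sM(G,x)$ for which $(K,y)\mapsto\sM(f)(m)$ is independent of the chosen morphism $f\colon(K,y)\to(G,x)$ and assembles into a natural family, but this refinement is not needed for the stated conclusion.
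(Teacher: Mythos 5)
Your proof is correct and follows essentially the same route as the paper's: the key observation in both is that $G$-connectedness makes $(G,x)$ a weakly terminal object of $\fund X$, so a natural transformation out of $\underline{\ground}$ is pinned down by its component at $(G,x)$. You simply unwind the naturality square and the injectivity of the evaluation map more explicitly than the paper does, and you are right to flag that the hypothesis should be read as path-connectedness of each $X^K$ (which is what the fundamental-groupoid argument actually uses).
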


\begin{proof}
Since $X$ is $G$-connected, $(G,x)$ is a weakly terminal object in
$\fund{X}$, i.e.\ for every $(K,y)$ there is a map $(K,y)\to (G,x)$.
It follows that an element of $\Hom_{\Pi}(\underline{\ground},\sM)$ is determined
by the map of $\ground$-modules $\ground\to \sM(G,x)$.
\end{proof}

\section{Equivariant classifying spaces}\label{sec::eqclassifying}

For the remainder of this  \paper{}, with the exception of classical structure
groups, groups named with Greek letters will be viewed as structure groups
and those named with Latin letters will be viewed as ambient groups of
equivariance.  Suppose that we are given a structure group $\Gamma$ and a
group of equivariance $G$.  Then, as discussed in \cite{AK} and many other
places, there is a notion of a principal $(G,\Gamma)$-bundle, namely a
projection to $\Gamma$-orbits $E\to B=E/\Gamma$ of a $\Gamma$-free
$(G\times\Gamma)$-space $E$.  Such equivariant bundles are classified by
universal principal bundles $E_G\Gamma\to B_G\Gamma$, where $E_G\Gamma$ is a space
whose fixed point sets $(E_G\Gamma)^\Lambda$ are empty when $\Lambda \subset
G\times\Gamma$ intersects $\Gamma = \triv\times \Gamma$ nontrivially and
contractible when $\Lambda \cap \Gamma = \triv$. 

As should be expected, for a fixed group $G$, the equivariant classifying
space construction can be made functorial; that is, there are functors
\begin{align*}
E_G &\colon \gset{Grp} \to \gtop\\
B_G &\colon \gset{Grp} \to \gtop.
\end{align*}
It will be helpful to pick particular functors $E_G$ and $B_G$, using the
categorical two-sided bar construction.  Given any groups $G$ and $\Gamma$, we
may take 
\begin{align*}
E_G \Gamma &:= B(T_\Gamma,\sO_{G\times\Gamma},O_{G\times\Gamma}) \\
B_G \Gamma &:= (E_G\Gamma)/\Gamma
\end{align*}
where $\sO_{G\times\Gamma}$ is the orbit category,
$O_{G\times\Gamma}\colon \sO_{G\times\Gamma}\to\spaces$ is given by viewing an
orbit $(G\times\Gamma)/\Lambda$ as a topological space, and $T_\Gamma\colon
\sO_{G\times\Gamma}\op \to\spaces$ is the functor which takes 
\[(G\times\Gamma)/\Lambda \longmapsto 
\begin{cases}
\pt & \text{if }\Lambda\cap\Gamma=\triv\\
\emptyset & \text{otherwise.}
\end{cases}\]
Since the functor $O_{G\times\Gamma}$ lands in $(G\times\Gamma)\textbf{-Top}$,
$E_G\Gamma$ is a $(G\times\Gamma)$-space, and it is easy to check that it has the
correct fixed points.

The bar construction $B(-,-,-)$ is a functor from the category of triples
$(T,\sC,S)$ to $\spaces$.  Here $\sC$ is a category and $S,T$ are
respectively a covariant and a contravariant functor $\sC\to\spaces$.  A
morphism $(T_1,\sC_1,S_1)\to (T_2,\sC_2,S_2)$ in the category of triples
consists of a functor $F\colon \sC_1\to\sC_2$ together with natural
transformations $S_1 \to S_2\circ F$ and $T_1\to T_2\circ F\op$.  It
follows that, for fixed $G$, we can make $E_G(-)$ into a functor
$\textbf{Grp}\to\spaces$ as follows.  Given a homomorphism $\varphi\colon
\Gamma_1\to \Gamma_2$, we apply $B(-,-,-)$ to the morphism of triples given by
the functor
\[ F \colon \sO_{G\times\Gamma_1} \to \sO_{G\times\Gamma_2} \colon (G\times \Gamma_1)/\Lambda
\longmapsto (G\times \Gamma_2)/{\left( (\id\times\varphi)(\Lambda) \right)},\]
with the obvious natural transformations $O_{G\times\Gamma_1} \to
O_{G\times\Gamma_2}\circ F$ and $T_{\Gamma_1}\to T_{\Gamma_2}\circ F\op$ (for the
latter, note that if $\Lambda\cap\Gamma_1 = \triv$, then also
$(\id\times\varphi)(\Lambda)\cap\Gamma_2 = \triv$).  Since the morphism
$E_G\Gamma_1 \to E_G\Gamma_2$ induced by $\Gamma_1\to\Gamma_2$ is $\Gamma_1$-equivariant,
there is an induced map $B_G\Gamma_1\to B_G\Gamma_2$, making $B_G(-)$ a functor.

The following result will be useful later.

\begin{prop}\label{prop::pullback}
Fix a group $G$.  Corresponding to any short exact sequence of structure
groups
\[ 1\to \Gamma \xrightarrow{\varphi} \Upsilon\xrightarrow{\psi} \Sigma \to 1 \]
there is a pullback square in the category of $G$-spaces
\[ \xymatrix@C=3pc{
  B_G \Gamma \ar[r] \ar^{/\Sigma}[d] 
  & E_G \Sigma \ar^{/\Sigma\phantom{=/\Upsilon}}[d] \\
  B_G \Upsilon \ar[r]^{B_G\psi} & B_G \Sigma \\
}\]
\end{prop}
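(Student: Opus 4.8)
The plan is to verify the pullback square levelwise on fixed points, using the explicit bar-construction model for $E_G(-)$ and $B_G(-)$ together with the characterization of these spaces by their fixed-point behavior. A square of $G$-spaces is a pullback if and only if it is a pullback of spaces after applying $(-)^\Lambda$ for every subgroup $\Lambda < G$; since fixed points commute with limits, it suffices to show that for each $\Lambda$ the square
\[ \xymatrix@C=3pc{
  (B_G \Gamma)^\Lambda \ar[r] \ar[d] & (E_G \Sigma)^\Lambda \ar[d] \\
  (B_G \Upsilon)^\Lambda \ar[r] & (B_G \Sigma)^\Lambda \\
}\]
is a homotopy pullback (and in fact a strict pullback, given the cofibration/fibration properties of the bar construction, but a homotopy pullback is all we need for the applications, and I would aim for the strict statement if the bar construction cooperates).

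First I would recall that $E_G\Gamma \to B_G\Gamma$ is a principal $(G,\Gamma)$-bundle, so $B_G\psi\colon B_G\Upsilon \to B_G\Sigma$ is covered by the map $E_G\Upsilon \to E_G\Sigma$ induced by $\psi$, and the kernel $\Gamma = \ker\psi$ acts on $E_G\Upsilon$ with quotient a $G$-space mapping to $B_G\Sigma$. The natural candidate for the pullback $B_G\Upsilon \times_{B_G\Sigma} E_G\Sigma$ is precisely $(E_G\Upsilon)/\Gamma$: a point of $B_G\Upsilon$ is a $\Upsilon$-orbit of $E_G\Upsilon$, its image in $B_G\Sigma$ is the corresponding $\Sigma$-orbit of $E_G\Sigma = (E_G\Upsilon)/\Gamma$ (using $\psi$), and choosing a lift to $E_G\Sigma$ amounts to choosing a $\Gamma$-orbit inside the given $\Upsilon$-orbit — i.e.\ a point of $(E_G\Upsilon)/\Gamma$. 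So the key algebraic identity is $B_G\Upsilon \times_{B_G\Sigma} E_G\Sigma \cong (E_G\Upsilon)/\Gamma$, and I then need to identify $(E_G\Upsilon)/\Gamma$ with $B_G\Gamma$. This last identification follows because $E_G\Upsilon$, restricted along $\Gamma \hookrightarrow \Upsilon$, is a model for $E_G\Gamma$: its $\Lambda$-fixed points for $\Lambda < G\times\Gamma$ are empty when $\Lambda \cap \Gamma \ne \triv$ (since they were already empty for $\Lambda \cap \Upsilon \ne \triv$, and $\Gamma \subset \Upsilon$) and contractible when $\Lambda \cap \Gamma = \triv$ — for the latter I would use that $E_G\Upsilon$ has contractible fixed points for all subgroups meeting $\Upsilon$ trivially, but I need the stronger statement that it works for subgroups meeting only $\Gamma$ trivially; this is where the bar-construction model earns its keep, since $B(T_{\Upsilon}, \sO_{G\times\Upsilon}, O_{G\times\Upsilon})^\Lambda$ can be analyzed simplicially and one checks the relevant simplicial space is contractible whenever $T_\Upsilon$ doesn't force emptiness, which depends only on $\Lambda \cap \Upsilon$, hence a fortiori gives what we want for $\Lambda\cap\Gamma=\triv$ once we restrict the group action. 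By uniqueness of $E_G\Gamma$ up to $G\times\Gamma$-homotopy equivalence (the usual argument: any two objects with the right fixed points are equivalent), $(E_G\Upsilon)|_\Gamma \simeq E_G\Gamma$ as $(G\times\Gamma)$-spaces, so $(E_G\Upsilon)/\Gamma \simeq B_G\Gamma$ as $G$-spaces, compatibly with the maps to $B_G\Upsilon$.

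I expect the main obstacle to be the bookkeeping in identifying $(E_G\Upsilon)|_\Gamma$ with the chosen bar-construction model $E_G\Gamma = B(T_\Gamma, \sO_{G\times\Gamma}, O_{G\times\Gamma})$ \emph{on the nose}, rather than merely up to equivalence — because the proposition as stated asserts a strict pullback square, and the functorial maps $B_G\Gamma \to E_G\Sigma$ and $B_G\Gamma \to B_G\Upsilon$ are the ones induced by $\varphi$ and the quotient, so I need the comparison to be a map of $G$-spaces commuting with all the structure maps in the square, not just a fixed-point equivalence. If a strict identification is not available from the bar construction directly, the fallback is to prove the square is a homotopy pullback (which is what the applications in \autoref{ch::computations} actually require) by checking on fixed points that the fibers of the vertical maps agree: both $B_G\Gamma \to B_G\Upsilon$ and $E_G\Sigma \to B_G\Sigma$ have ``fiber'' the $\Sigma$-orbit type data, i.e.\ $(E_G\Upsilon)^\Lambda/\Gamma \to (E_G\Upsilon)^\Lambda/\Upsilon$ is a $(\Sigma\cap\text{image})$-bundle matching $(E_G\Sigma)^\Lambda \to (B_G\Sigma)^\Lambda$, and then invoke the fact that a map of fibrations inducing an equivalence on fibers and a given map on bases is a homotopy pullback. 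Either way, the conceptual content is just: quotienting $E_G\Upsilon$ by the normal subgroup $\Gamma$ in stages realizes the pullback, and the bar-construction model makes all maps functorial and the fixed-point computations transparent.
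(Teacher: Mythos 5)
Your proof takes essentially the same route as the paper's — both identify $(E_G\Upsilon)/\Gamma$ as the key intermediate object, recognize it as a model for $B_G\Gamma$, and use the principal-$\Sigma$-bundle structure of the vertical maps to establish the pullback — but you work harder than necessary at two points, and one of your worries is based on a small confusion.

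First, you flag that $(E_G\Upsilon)^\Lambda$ is known to be contractible only when $\Lambda\cap\Upsilon=\triv$, and worry that you need the ``stronger statement'' for $\Lambda\cap\Gamma=\triv$. This is not actually a stronger statement: the fixed-point condition characterizing $E_G\Gamma$ concerns subgroups $\Lambda$ of $G\times\Gamma$, and for any $\Lambda\subset G\times\Gamma$ one automatically has $\Lambda\cap\Upsilon\subset(G\times\Gamma)\cap(\triv\times\Upsilon)=\triv\times\Gamma$, so $\Lambda\cap\Upsilon=\Lambda\cap\Gamma$. There is no case $\Lambda\cap\Gamma=\triv$, $\Lambda\cap\Upsilon\ne\triv$ to worry about when $\Lambda$ lives in $G\times\Gamma$. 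Consequently, restricting $E_G\Upsilon$ along $G\times\Gamma\hookrightarrow G\times\Upsilon$ gives a $(G\times\Gamma)$-space with exactly the fixed-point behavior of $E_G\Gamma$, with no extra simplicial analysis needed.

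Second, the strict-versus-homotopy-pullback concern you raise dissolves once you read the proposition the way the paper does: one simply \emph{takes} $(E_G\Upsilon)/\Gamma$ as the model for $B_G\Gamma$ in the square. With that convention, the square is an honest strict pullback, and there is no need to reconcile the bar-construction model of $E_G\Gamma$ with $(E_G\Upsilon)|_\Gamma$ on the nose. The paper also verifies the pullback more economically than your fixed-point-by-fixed-point check: both vertical maps are principal $\Sigma$-bundles (equivariantly), the square is $\Sigma$-equivariant and covers $B_G\psi$, and it induces a homeomorphism on fibers; a map of principal $\Sigma$-bundles over a fixed base which is the identity on the base is automatically an isomorphism, so $B_G\Gamma\to B_G\Upsilon\times_{B_G\Sigma}E_G\Sigma$ is a homeomorphism of $G$-spaces. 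Checking $(-)^\Lambda$ for all $\Lambda<G$ is not wrong (fixed points preserve limits), but for a strict pullback it is redundant — only $\Lambda=\triv$ matters, since limits of $G$-spaces are created in spaces.
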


\begin{proof}
By functoriality of $E_G(-)$ and the definition of $B_G(-)$, the map $\psi$
induces a commutative diagram
\[ \xymatrix@C=3pc{
  E_G \Upsilon \ar[r]^{E_G\psi} \ar[d]^{/\Upsilon} 
  & E_G \Sigma \ar[d]^{/\Upsilon = /\Sigma} \\
  B_G \Upsilon \ar[r]^{B_G\psi} & B_G\Sigma \\
}\]
Further, the projection $E_G \Upsilon\to B_G \Upsilon = (E_G \Upsilon)/\Upsilon$
factors as 
\[ E_G \Upsilon \to (E_G \Upsilon)/\Gamma \xrightarrow{/\Sigma} (E_G \Upsilon)/\Upsilon \]
and since the action of $\Gamma$ on $E_G \Sigma$ is trivial, $E_G \Upsilon\to
E_G \Sigma$ also factors through $(E_G \Upsilon)/\Gamma \cong  B_G \Gamma$.  We
thus get the commutative square described in the proposition.  Viewing
$E_G\Sigma$ as a $G$-space via the inclusion $G\hookrightarrow
G\times\Sigma$, this is a diagram in the category of $G$-spaces.  The
square induces a homeomorphism on the fibers of the vertical maps, and
hence is a pullback in the category of $G$-spaces, as desired.
\end{proof}

\renewcommand{\G}{{C_{\!p}}}

\begin{exmp}
We have already studied one example of a classifying space: the complex
projective space $\cpv{\cxuniverse}$ on a complete complex universe
$\cxuniverse$ is a model for $B_G SO(2)$, the equivariant classifying space
of the circle group.  \autoref{cor::cxproj}, \autoref{thm::cpvgenerators},
and \autoref{thm::multcpv} established the structure of
$\ulh^*(\cpv{\cxuniverse})$ as an algebra over $\ulh^*(\pt)$ for $G=\G$,
provided the ground ring $\ground$ has no torsion of order $p$.  In the
next chapter, we will use this result to compute the cohomology of $B_\G
O(2)$ for $\ground=\fq$.
\end{exmp}




\chapter[The cohomology of $B_\G O(2)$]{Example: the cohomology of $B_\G O(2)$ }\label{ch::computations}

As before, let $\G$ be the cyclic group of order $p$.  In this
chapter, we will calculate the $RO(\G)$-graded Bredon cohomology
\[\h^*(B_\G O(2);A)\] of the equivariant classifying space $B_\G
O(2)$ for ground ring $\ground=\fq$ and for an odd prime $p\ne q$.  By way
of motivation, we know from \autoref{sec::cxprojmult} what $\h^*(B_\G S^1)$
is, and historically $O(2)$ is often the first test case to try after
$S^1$.  Let $D_1$ through $D_p$ and $C$ be the elements described in
\autoref{thm::cpvgenerators} and \autoref{thm::multcpv}.

\begin{thm}\label{thm::mythm}
Let $p,q$ be distinct odd primes.  Then, as an algebra over $\h^*$,
$\h^*(B_\G O(2);A)$ is isomorphic to the subalgebra of
$\h^*(\cpv{\cxuniverse};A)$ generated by the elements
$D_2,D_4,\ldots,D_{p-1},D_1 C,\ldots,D_{p-2}C,C^2$.
\end{thm}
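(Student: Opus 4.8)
The plan is to run a Serre spectral sequence over $B_\G\cyctwo$, mimicking the nonequivariant computation of \autoref{ex::noneq}, and to recognize the answer as an invariant subalgebra of $\h^*(\cpv{\cxuniverse};A)$. Applying \autoref{prop::pullback} to the extension $1\to SO(2)\to O(2)\to\cyctwo\to 1$ (the last map being $\det$) produces a pullback square exhibiting $\cpv{\cxuniverse}\simeq_\G B_\G SO(2)$ as the homotopy fibre of $f\colon B_\G O(2)\to B_\G\cyctwo$: indeed $E_\G\cyctwo$ is $\G$-contractible, since $p$ is odd there are no nontrivial homomorphisms $\G\to\cyctwo$, so the only nonempty fixed-point spaces of $E_\G\cyctwo$ are contractible; the same observation shows $B_\G\cyctwo\simeq_\G\bR P^\infty$ with trivial $\G$-action. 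Feeding $f$ into Kronholm's $RO(\G)$-graded Serre spectral sequence (\autoref{thm::bill}) gives
\[ E_2^{s,t}(A,V) = \h^s\!\left(B_\G\cyctwo;\ \sh^{V+t}(f,A)\right)\ \Longrightarrow\ \h^{V+s+t}(B_\G O(2);A), \]
where the local coefficient system $\sh^{V+t}(f,A)$ has value $\h^{V+t}(\cpv{\cxuniverse};A)$ at the terminal object and carries the monodromy action of $\pi_1(\bR P^\infty)=\cyctwo$, namely the action of $O(2)/SO(2)$ on $B_\G SO(2)$.

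The crucial point is that this spectral sequence is concentrated in the column $s=0$. Since $B_\G\cyctwo\simeq_\G\bR P^\infty$, its equivariant universal cover is $\G$-contractible, so $\coh_*(\widetilde{B_\G\cyctwo})=\underline{\ground}$ is concentrated in degree $0$; by \autoref{thm::eqCE} (equivalently, since the cellular chains of a $\G$-contractible cover form a projective resolution of $\underline{\ground}$ by \autoref{lem::technical}) one gets $\h^s(B_\G\cyctwo;\sN)\cong\Ext^s_{\fund{B_\G\cyctwo}}(\underline{\ground},\sN)$. Because $\ground=\fq$ with $q$ an odd prime, $\fq[\cyctwo]$ is semisimple, and the equivariant analogue of \autoref{cor::cor::CE} then gives $\h^s(B_\G\cyctwo;\sN)=0$ for $s>0$ and $\h^0(B_\G\cyctwo;\sN)\cong\Hom_{\fund{B_\G\cyctwo}}(\underline{\ground},\sN)$, the $\cyctwo$-invariant part of $\sN(\G,x_0)$ (\autoref{prop::eqhzero}, \autoref{prop::gconnected}). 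Hence $E_\infty=E_2$ lives in $s=0$, there are no extension problems, and — using the multiplicative structure of the spectral sequence together with the identification of $E_2^{0,*}$ as an invariant subring — the edge homomorphism (restriction to the fibre) yields an isomorphism of algebras over $\h^*(\pt)$
\[ \h^\omega(B_\G O(2);A)\ \cong\ \h^\omega(\cpv{\cxuniverse};A)^{\cyctwo}, \]
the action being by $\h^*(\pt)$-algebra automorphisms (it commutes with $\cpv{\cxuniverse}\to\pt$).

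It remains to identify $\h^*(\cpv{\cxuniverse};A)^{\cyctwo}$. The generator $g\in\cyctwo=O(2)/SO(2)$ acts on $\cpv{\cxuniverse}=B_\G SO(2)$ by complex conjugation of lines, which on $\ulh^*(\cpv{\cxuniverse})(\freeo)\cong\fq[z]$ is $z\mapsto-z$ (the nonequivariant monodromy of \autoref{ex::noneq}). A degree count using \autoref{cor::cxproj} shows that $D_jC^n$ is the only module generator in degree $\omega_j+n\omega_p$, so $g(D_jC^n)=c_{j,n}D_jC^n$ with $c_{j,n}$ a unit of $A(\trivo)$ of square $1$; applying $\rho^*\oplus\hat{i}^*$, which is injective by \autoref{thm::multcomparison} since $p\ne q$ makes $p$ a unit in $\fq$, with $\rho^*$ computed from $z\mapsto-z$ and $\hat{i}^*$ from the monodromy on $\cpv{\cxuniverse}^\G$ recorded in \autoref{thm::cpvgenerators}, pins down $gC=-C$ and $gD_j=(-1)^jD_j$, hence $g(D_jC^n)=(-1)^{j+n}D_jC^n$. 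Thus the invariants form the free $\h^*(\pt)$-submodule on $\{D_jC^n: j+n\text{ even}\}$. Each of $D_2,D_4,\dots,D_{p-1},D_1C,D_3C,\dots,D_{p-2}C,C^2$ is visibly invariant, so the subalgebra they generate lies inside the invariants; conversely, over $\fq$ with $q$ odd an invariant element has no anti-invariant basis components, so \autoref{thm::multcpv} forces $D_2D_{2k-2}$ to be an $\h^*(\pt)$-combination of $D_2,D_4,\dots,D_{2k}$ with the coefficient of $D_{2k}$ a unit (read off via $\rho^*$), which gives all $D_{2k}$ inductively; multiplying the relations $D_1D_{2k}=(\text{combination of }D_1,D_3,\dots,D_{2k+1})$ by $C$ and inducting gives all $D_{2k+1}C$; and every $D_jC^n$ with $j+n$ even is then a product of these generators with powers of $C^2$, so the two subalgebras coincide.

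The step I expect to be the main obstacle is the collapse in the second paragraph — establishing $\h^s(B_\G\cyctwo;\sN)=0$ for $s>0$ cleanly from semisimplicity of $\fq[\cyctwo]$ inside the EI-category formalism, i.e.\ proving the equivariant analogue of \autoref{cor::cor::CE} — together with the bookkeeping needed to extract the precise monodromy action on $C$ and the $D_j$ from \autoref{thm::cpvgenerators}. The final generation argument is routine once these are in hand.
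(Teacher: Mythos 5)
Your proposal follows essentially the same route as the paper: run the Serre spectral sequence for $f\colon B_\G O(2)\to B_\G\cyctwo$ over $\ground=\fq$, use the equivariant Eilenberg spectral sequence to compute the $E_2$ page, show both spectral sequences collapse, and identify the answer as the $\cyctwo$-invariant subalgebra of $\h^*(\cpv{\cxuniverse};A)$ with the monodromy read off from $z\mapsto -z$ and the restrictions of the $D_j$ and $C$. Your monodromy formula $g(D_jC^n)=(-1)^{j+n}D_jC^n$ agrees with the paper's \autoref{prop::h}, and your closing argument that the subalgebra generated by $D_{2k}$, $D_{2k-1}C$, $C^2$ exhausts the invariants is actually spelled out in more detail than the paper bothers to do.

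The step you flag as the expected obstacle is indeed the one the paper has to handle carefully, and your proposed route through it is not quite adequate as stated. You appeal to an ``equivariant analogue of \autoref{cor::cor::CE}'' from semisimplicity of $\fq[\cyctwo]$, but there is no such off-the-shelf result: the relevant coefficient category is not a group ring but the fundamental EI-category $\Pi = \fund{B_\G\cyctwo}$, which has two objects $(\trivo,x_0)$ and $(\freeo,x_0)$ with non-invertible morphisms between them, and category algebras of EI-categories are not semisimple merely because all automorphism group algebras are. What the collapse actually needs is the vanishing of $\Ext^u_\Pi(\underline{\fq},\sN)$ for $u>0$, and the paper gets this by showing directly that the constant functor $\underline{\fq}\colon\Pi\op\to\rmod$ is projective: it computes the represented functor $\fq\,\Pi(-,(\trivo,x_0))$ explicitly (it is $\fq[\cyctwo]$ at each object, with the downward maps identifying or swapping basis elements) and performs the change of basis $\bigl(\begin{smallmatrix}1&\phantom{-}1\\1&-1\end{smallmatrix}\bigr)$, valid because $q\ne 2$, to split $\underline{\fq}$ off as a direct summand. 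So your instinct that $q$ odd is what makes the argument go through is right, but it enters through this concrete retraction inside $[\Pi\op,\rmod]$ rather than through a general semisimplicity theorem for fundamental categories; filling in this splitting is what closes the gap you identified.
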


By analogy with \autoref{ex::noneq}, we will approach \autoref{thm::mythm} via
the short exact sequence of structure groups
\[ 1\to SO(2) \to O(2) \to \cyctwo \to 1 \]
and the induced fibration $f\colon B_\G O(2) \to B_\G \cyctwo$, where again
$\cyctwo$ is the cyclic group of order 2.  We will first identify the
$\G$-action on the fibers of $f$, then explicitly describe the coefficient
systems $h^{V+t}(f,A)$ and $\coh_*(\widetilde{B_\G\cyctwo})$, and finally
prove the theorem.

\section{Identifying the fibers of $f\colon B_\G O(2)\to B_\G\cyctwo$}

We will begin by identifying models for the equivariant classifying spaces
under consideration.  Recall that nonequivariantly, the universal bundle
$E\cyctwo \to B\cyctwo$ has as a model $S^\infty \to \bR P^\infty$, where
$S^\infty = S(\bR^\infty)$ is the unit sphere in $\bR^\infty$ and $\bR
P^\infty = \bR P(\bR^\infty)$ is the infinite-dimensional real projective
space.   
In general, for any group $G$, let $\runiverse$ be a direct sum containing
countably infinitely many copies of each real representation of $G$.  Then
$S(\runiverse)\to \bR P(\runiverse)$ is a model for $E_G\cyctwo\to
B_G\cyctwo$.  The $G\times\cyctwo$ action on $S(\runiverse)$ comes from the
$G$ action on $\runiverse$ and the $\cyctwo$ action by multiplication by
$-1$.  When $G = \G$ is cyclic of prime order, however, we can choose a
simpler model. 

\begin{lem}\label{lem::rpinfty}
If $p$ is an odd prime, then $E_\G\cyctwo \to B_\G\cyctwo$ has as a model
$S^\infty\to\bR P^\infty$ with the trivial $\G$-action on both spaces.
\end{lem}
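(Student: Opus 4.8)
The plan is to verify directly that $S^\infty$, given the trivial $\G$-action and the antipodal $\cyctwo$-action, satisfies the defining fixed-point property of $E_\G\cyctwo$, and then pass to $\cyctwo$-quotients. Recall from the discussion preceding \autoref{prop::pullback} that $E_G\Gamma$ is characterized, up to $(G\times\Gamma)$-homotopy equivalence among $(G\times\Gamma)$-CW complexes, by the requirement that $(E_G\Gamma)^\Lambda$ be empty when $\Lambda\cap\Gamma\ne\triv$ and contractible when $\Lambda\cap\Gamma=\triv$; the bar-construction model, and equally the general model $S(\runiverse)\to\bR P(\runiverse)$ recalled just above, are examples of such spaces, so it suffices to exhibit another one.

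The key observation is that $\G\times\cyctwo$ is cyclic of order $2p$, since $\gcd(p,2)=1$; hence it has exactly four subgroups, namely $\triv$, $\G\times\triv$, $\triv\times\cyctwo$, and $\G\times\cyctwo$, and in particular every subgroup is a product $A\times B$ with $A\subseteq\G$ and $B\subseteq\cyctwo$. On $S^\infty$ with the stated actions such a subgroup acts only through its image $B$ in $\cyctwo$ (because $\G$ acts trivially), so $(S^\infty)^{A\times B}=(S^\infty)^B$. If $B=\triv$ this is all of $S^\infty$, which is contractible; if $B=\cyctwo$ this is the fixed set of the free antipodal action, hence empty. Since $(A\times B)\cap(\triv\times\cyctwo)=\triv$ exactly when $B=\triv$, and equals $\triv\times\cyctwo$ exactly when $B=\cyctwo$, this is precisely the required dichotomy. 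As $S^\infty$ carries the evident $(\G\times\cyctwo)$-CW structure — its standard cell structure, with $\cyctwo$ freely permuting the two cells in each dimension and $\G$ acting trivially — it is a model for $E_\G\cyctwo$.

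Finally, taking the $\cyctwo$-quotient gives $B_\G\cyctwo=(E_\G\cyctwo)/\cyctwo\cong S^\infty/\cyctwo=\bR P^\infty$, with the $\G$-action descending from the trivial action on $S^\infty$, hence trivial; the bundle projection $S^\infty\to\bR P^\infty$ is the induced map. I do not expect a genuine obstacle here — the only point requiring a little care is the appeal to uniqueness of models, i.e.\ that the fixed-point conditions together with the $(\G\times\cyctwo)$-CW homotopy type pin down $E_\G\cyctwo$ up to equivariant homotopy equivalence (equivalently, that $S^\infty$ with these actions is the universal space for the family of subgroups $\Lambda$ with $\Lambda\cap\cyctwo=\triv$), which is the standard theory of universal $(G,\Gamma)$-bundles.
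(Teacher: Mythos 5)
Your proof is correct and follows essentially the same strategy as the paper: verify the defining fixed-point conditions for $S^\infty$ with the stated actions. The one difference is in how you classify the relevant subgroups $\Lambda\le\G\times\cyctwo$: you use that $\G\times\cyctwo$ is cyclic of order $2p$ (so every subgroup is a product $A\times B$), whereas the paper describes the subgroups with $\Lambda\cap\cyctwo=\triv$ as twisted diagonals $\Delta_{\rho,K}$ and then observes that for $p$ odd the only homomorphism $K\to\cyctwo$ is trivial. Both give the same list of subgroups; your observation is a touch more elementary, while the paper's phrasing in terms of twisted diagonals is the one that generalizes to non-cyclic $G$.
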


\begin{proof}
To verify this claim, it suffices to check the fixed-point sets of
$S^\infty$.  Since $\cyctwo$ acts freely on $S^\infty$, the fixed
points $(S^\infty)^\Lambda$ are certainly empty when $\Lambda\cap\cyctwo$
is nontrivial.  So we need only check that the fixed point set is
contractible whenever $\Lambda\cap\cyctwo$ is trivial.

Note that the subgroups $\Lambda\subset \G\times \cyctwo$ which intersect
$\cyctwo$ trivially are the ``twisted diagonal subgroups''
$\Lambda = \Delta_{\rho,K} = \{ (h,\rho(h)) | h\in K \}$, for $K$ a
subgroup of $\G$ and $\rho\colon K\to\cyctwo$ a homomorphism.  However,
since $p$ is an odd prime, the only homomorphism $K\to\cyctwo$ is the
trivial homomorphism, and so $\Delta_{\rho,K} = K\times\triv$.  This acts
trivially on $S^\infty$, so $(S^\infty)^\Lambda = S^\infty \simeq \pt$,
as desired.  
\end{proof}

Similarly, $SO(2)$ is the circle $\bT$.  Letting $\cxuniverse$ again be the
direct sum of countably infinitely many copies of each irreducible complex
representation of $\G$, an analysis of the fixed-point sets of
$S(\cxuniverse)$ gives the following well-known result.

\begin{lem}\label{lem::cpinfty}
For any prime $p$, $E_\G SO(2) \to B_\G SO(2)$ has as a model
$S(\cxuniverse) \to \cpv{\cxuniverse}$.  The $\G\times SO(2)$ action on
$S(\cxuniverse)$ comes from the $\G$ action on $\cxuniverse$ and the usual
circle action on the complex plane. \hfill $\Box$
\end{lem}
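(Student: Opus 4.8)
The plan is to verify directly that $S(\cxuniverse)$, equipped with the indicated $\G\times SO(2)$-action, has the fixed-point behaviour demanded of a universal $(\G, SO(2))$-bundle total space, as recalled in \autoref{sec::eqclassifying}: namely, $(S(\cxuniverse))^\Lambda$ must be empty when the subgroup $\Lambda\subset\G\times SO(2)$ meets $\triv\times SO(2)$ nontrivially, and contractible when $\Lambda\cap(\triv\times SO(2)) = \triv$. Granting this, the $SO(2)$-action on $S(\cxuniverse)$ is free (it is just scalar multiplication on unit vectors), so $S(\cxuniverse)\to S(\cxuniverse)/SO(2) = \cpv{\cxuniverse}$ is a principal $SO(2)$-bundle, and the classification of equivariant bundles identifies it with $E_\G SO(2)\to B_\G SO(2)$.

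First I would record that $(h,z)\in\G\times SO(2)$ acts on $v\in\cxuniverse$ by $z\cdot(hv)$, so $v$ is fixed by $\Lambda$ precisely when $hv = z^{-1}v$ for every $(h,z)\in\Lambda$; in other words $(S(\cxuniverse))^\Lambda$ is the unit sphere of the complex subspace $\cxuniverse^\Lambda = \{v : hv = z^{-1}v \text{ for all } (h,z)\in\Lambda\}$. If $\Lambda$ contains an element $(e,z)$ with $z\ne 1$, then any $v\in\cxuniverse^\Lambda$ satisfies $zv = v$, forcing $v = 0$, so the fixed sphere is empty; taking $\Lambda = \triv\times SO(2)$ this also reconfirms freeness.

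Next, suppose $\Lambda\cap(\triv\times SO(2)) = \triv$. Then the projection $\Lambda\to\G$ is injective, so $\Lambda$ is a graph subgroup $\Lambda = \{(h,\rho(h)) : h\in K\}$ for some subgroup $K\le\G$ and homomorphism $\rho\colon K\to SO(2)$; since $p$ is prime, $K$ is either $\triv$ or $\G$. If $K = \triv$ then $\cxuniverse^\Lambda = \cxuniverse$. If $K = \G$, then $\cxuniverse^\Lambda$ is the isotypic summand of $\cxuniverse$, viewed as a complex $\G$-representation, on which $\G$ acts through the one-dimensional character $h\mapsto\rho(h)^{-1}$ (one of the $\phi^j$ in the notation of \autoref{subs::pointodd}); because $\cxuniverse$ is a complete complex universe it contains countably infinitely many copies of that irreducible, so $\cxuniverse^\Lambda$ is again an infinite-dimensional complex vector space. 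In either case $(S(\cxuniverse))^\Lambda$ is the unit sphere of an infinite-dimensional complex inner product space, hence contractible by the usual shift-map argument. This establishes the required fixed-point conditions and completes the identification.

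There is no substantial obstacle here; the only thing that needs care is the bookkeeping of graph (``twisted diagonal'') subgroups and the observation that a character-isotypic summand of a complete universe is still infinite-dimensional. I would note in passing that the same argument goes through for an arbitrary finite group $G$, provided $\cxuniverse$ is chosen $G$-complete and one allows all subgroups $K\le G$; the primality of $p$ is used only to keep the list of possible $K$ short.
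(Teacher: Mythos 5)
The paper states this lemma with a terminal $\Box$ and offers no proof, remarking only that ``an analysis of the fixed-point sets of $S(\cxuniverse)$ gives the following well-known result.'' Your argument supplies exactly that analysis and it is correct: you reduce to the fixed-point characterization of $E_G\Gamma$ recalled in \autoref{sec::eqclassifying}, observe that $(S(\cxuniverse))^\Lambda$ is the unit sphere of the linear subspace $\{v : hv = z^{-1}v \text{ for all }(h,z)\in\Lambda\}$, rule out nontrivial $\Lambda\cap SO(2)$ because a nonzero fixed vector would satisfy $zv=v$ for some $z\ne 1$, and for graph subgroups $\Lambda = \{(h,\rho(h)):h\in K\}$ identify the fixed subspace as the $\rho^{-1}$-isotypic summand of $\cxuniverse$, which is infinite-dimensional by completeness of the universe and hence has contractible unit sphere. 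Your closing remark that primality of $p$ only shortens the list of subgroups $K$ and that the argument works for any finite $G$ is also accurate and matches the generality the paper's surrounding text allows for.
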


We are now in a position to use \autoref{prop::pullback} for the short exact
sequence 
\[ 1\to SO(2) \to O(2) \xrightarrow{\text{det}} \cyctwo \to 1. \]
We have a pullback square in the category of $\G$-spaces
\[ \xymatrix@C=3pc{
  B_\G SO(2) \ar[r] \ar[d] & E_\G \cyctwo \ar[d] \\
  B_\G O(2) \ar[r] & B_\G \cyctwo \\
}\]
By \autoref{lem::rpinfty}, the $\G$-actions on $E_\G\cyctwo$ and $B_\G
\cyctwo$ are trivial.  It follows that $E_\G\cyctwo$ is $\G$-contractible,
and so we have proved the following about our map $f\colon B_\G O(2)\to
B_\G\cyctwo$.

\begin{lem}\label{lem::fibers}
For each point $x\in B_\G\cyctwo$, the fiber $f^{-1}(x)$ is $\G$-homotopy
equivalent to $B_\G SO(2) = \cpv{\cxuniverse}$.\hfill $\Box$
\end{lem}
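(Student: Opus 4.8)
The plan is to deduce the statement from the pullback square of \autoref{prop::pullback}, exactly as the surrounding discussion suggests, while making explicit the two homotopy-theoretic facts that are left implicit there. Name the maps in that square,
\[ \xymatrix@C=3pc{
  B_\G SO(2) \ar[r]^-{g} \ar[d] & E_\G\cyctwo \ar[d]^{\pi} \\
  B_\G O(2) \ar[r]^-{f} & B_\G\cyctwo \\
}\]
so that $g$ is the pullback of $f$ along $\pi$. By \autoref{lem::rpinfty} the $\G$-action on $E_\G\cyctwo\cong S^\infty$ is trivial, so $E_\G\cyctwo$ is $\G$-contractible (a nonequivariant contraction of $S^\infty$ is automatically $\G$-equivariant here), and $\pi$ is the surjection $S^\infty\to\bR P^\infty$, in particular surjective.

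First I would observe that $g$ is a $\G$-fibration: it is the pullback of the $\G$-fibration $f$ along $\pi$, and pulling a $\G$-fibration back along an arbitrary $\G$-map again yields a $\G$-fibration, since $(-)^K$ preserves pullbacks and the pullback of an ordinary fibration is an ordinary fibration. Next, since a strict pullback square computes fibers, for every $b\in E_\G\cyctwo$ the fiber $g^{-1}(b)$ is $\G$-homeomorphic to $f^{-1}(\pi(b))$; this step needs nothing beyond the universal property of the pullback.

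The substantive step is then the equivariant analogue of the classical fact that a fibration over a contractible base is fiber-homotopy-trivial: because $g$ is a $\G$-fibration whose base $E_\G\cyctwo$ is $\G$-contractible, each fiber inclusion $g^{-1}(b)\hookrightarrow B_\G SO(2)$ is a $\G$-homotopy equivalence. (One runs the usual shrinking argument, using the $\G$-homotopy lifting property of $g$ in place of the ordinary one.) Combining the last two paragraphs: given any $x\in B_\G\cyctwo$, pick a preimage $b\in\pi^{-1}(x)$, which exists since $\pi$ is surjective; then
\[ f^{-1}(x) \;\cong\; g^{-1}(b) \;\simeq\; B_\G SO(2) \]
$\G$-equivariantly, which is the assertion of the lemma.

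The only point demanding genuine work is the equivariant Dold-type statement invoked in the last paragraph; everything else is a formal consequence of \autoref{prop::pullback}, \autoref{lem::rpinfty}, and the fact --- already used when $f$ was introduced --- that $f$ is a $\G$-fibration. A reasonable alternative packaging, avoiding even that appeal to $f$, would be to note that $\pi$ itself is visibly a $\G$-fibration (it is the covering $S^\infty\to\bR P^\infty$ with trivial actions), replace the strict pullback by the resulting homotopy pullback, and identify $B_\G SO(2)$ with the $\G$-homotopy fiber of $f$ over a point of $B_\G\cyctwo$; I expect the presentation above to be cleaner in context.
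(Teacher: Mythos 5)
Your proof is correct and takes essentially the same route as the paper: it reads off the lemma from the pullback square of \autoref{prop::pullback} together with the $\G$-contractibility of $E_\G\cyctwo$ established via \autoref{lem::rpinfty}. The paper states the lemma with $\hfill\Box$ immediately after observing $\G$-contractibility, leaving implicit exactly the three points you spell out --- that pullbacks preserve fibers, that $g$ is again a $\G$-fibration, and the equivariant Dold-type statement that a $\G$-fibration over a $\G$-contractible base has fiber inclusions that are $\G$-equivalences --- so your write-up is a faithful and somewhat more careful rendering of the intended argument rather than an alternative one.
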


\section{The local coefficient system $\sh^{V+t}(f,A)$}
Recall from \autoref{thm::bill} that the equivariant Serre spectral sequence
for a $\G$-fibration $f\colon E\to X$ has
\[ E_2^{s,t}(M,V) = \h^s(X;\sh^{V+t}(f,M)) \Longrightarrow
\h^{V+s+t}(E;M). \]
Choose the coefficient ring $\ground = \fq$, the finite field with $q$ elements,
for an odd prime $q\ne p$.  As in \autoref{thm::mythm}, let $M = A$.
We must first analyze the local coefficient systems
\[ \sh^{V+t}(f,A)\colon (K,x)\longmapsto \h^{V+t}(\G\times_K f^{-1}(x)). \]
We may start by taking a skeleton $\Pi$ of the category $\fund{B_\G\cyctwo}$.  
Again using \autoref{lem::rpinfty}, we see that each fixed-point set
$(B_\G\cyctwo)^K$ is nonempty and connected with fundamental group
$\cyctwo$, so $\Pi$ has two objects $(\G,x_0)$ and $(\triv,x_0)$.
Recall from \autoref{sec::modp} that we denoted the orbit $\G/\triv$ by
$\freeo$ and $\G/\G$ by $\trivo$.  We will continue that convention in this
section.  For convenience, and to bring out the parallels, we will also
write $(\trivo,x_0)$ and $(\freeo,x_0)$ in place of $(\G,x_0)$ and
$(\triv,x_0)$, respectively.

Recalling that a map $(K,x)\to (L,y)$ consists of a $\G$-map $\alpha\colon
\G/K\to \G/L$ and a homotopy class of paths $[\gamma]$ from $x$ to
$\alpha^* y$, we see that there are two endomorphisms of $(\trivo,x_0)$. One of
these is the identity, and the other, $\kappa$, squares to the identity.
Similarly, there are two morphisms $(\freeo,x_0)\to (\trivo,x_0)$, and
composition with $\kappa$ exchanges them.  Finally, the endomorphisms of
$(\freeo,x_0)$ are in bijection with $\G\times\cyctwo$; when precomposing
with the two morphisms $(\freeo,x_0)\to (\trivo,x_0)$, only the $\cyctwo$ factor
has an effect.  We may visualize $\Pi$ as follows:
\[ \xymatrix@1{
(\trivo,x_0) \ar@(u,ur)[]^\cyctwo \\
(\freeo,x_0) \ar@(d,dr)[]_{\cyctwo} \ar@(d,dl)[]^{\G} \ar@/^2ex/[u] \ar@/_2ex/[u] \\
}\]

We can then explicitly describe the coefficient system $\sh^{V+t}(f,A)$.
Let $\rho$ be the projection $\freeo\to\trivo$.

\begin{prop}\label{prop::h}
The functor $\sh^{V+t}(f,A)\colon \Pi\to\rmod$ takes
\begin{align*}
(\trivo,x_0) & \longmapsto \h^{V+t}(\cpv{\cxuniverse}; A) =
\ulh^{V+t}(\cpv{\cxuniverse})(\freeo)\\
(\freeo,x_0) & \longmapsto \neqh^{|V|+t}(\cpv{\cxuniverse};A(\freeo)) =
\ulh^{V+t}(\cpv{\cxuniverse})(\trivo)
\end{align*}
The functor is determined on morphisms by the following: 
\begin{enumerate}
\item \label{item::down} The image of $(\rho,[c_{(\freeo,x_0)}])$ is the image
of $\rho$ in the underlying contravariant coefficient system of
the Mackey functor $\ulh^{V+t}(\cpv{\cxuniverse})$;

\item \label{item::endobottom} For each map $g\colon \freeo \to \freeo$,
the image of $(g,[c_{(\freeo,x_0)}])$ is the image of $g$ in the underlying
contravariant coefficient system of $\ulh^{V+t}(\cpv{\cxuniverse})$;

\item \label{item::endotop} Let $D_1$ through $D_{p-1}$ and $C$ be the
algebra generators described in \autoref{thm::multcpv}.  The nontrivial
automorphism of $(\trivo,x_0)$ acts by
the identity on the $D_{2k}$ and by multiplication by $-1$ on the
$D_{2k-1}$ and $C$.
\end{enumerate}
\end{prop}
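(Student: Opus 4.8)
The plan is to treat the object part and the three morphism clauses of the proposition separately, in increasing order of difficulty. First I would record, as in the discussion preceding the proposition and using \autoref{lem::rpinfty}, that $\Pi$ has the two objects $(\trivo,x_0)$, $(\freeo,x_0)$, an order-two automorphism $\kappa$ of $(\trivo,x_0)$, two morphisms $(\freeo,x_0)\to(\trivo,x_0)$ that $\kappa$ interchanges, and automorphism group of $(\freeo,x_0)$ isomorphic to $\G\times\cyctwo$. By \autoref{lem::fibers} the fibre $f^{-1}(x_0)$ is, canonically up to contractible choice, $B_\G SO(2)=\cpv{\cxuniverse}$; hence for each subgroup $K$ we have $\sh^{V+t}(f,A)(K,x_0)=\h^{V+t}(\G\times_K f^{-1}(x_0);A)\cong\ulh^{V+t}(\cpv{\cxuniverse};A)(\G/K)$, the value at the orbit $\G/K$ of the Mackey functor $\ulh^{V+t}(\cpv{\cxuniverse};A)$. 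Spelling this out with the standard induction adjunction $\h^{V+t}(\G_+\sm Y;A)\cong\neqh^{|V|+t}(Y;A(\freeo))$ gives the object identifications claimed.

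For the first two morphism clauses I would observe that the morphisms carrying a constant path component — in particular $(\rho,[c_{(\freeo,x_0)}])$ and the $(g,[c_{(\freeo,x_0)}])$ for $g\in\G$ — are exactly the image of the contravariant embedding $\sO_\G\hookrightarrow\fund{B_\G\cyctwo}$, and on this wide subcategory the natural identification of the previous paragraph turns $\sh^{V+t}(f,A)$ into the underlying contravariant coefficient system of $\ulh^{V+t}(\cpv{\cxuniverse};A)$. By the diagrammatic description of Mackey functors for $\G$ (as in \autoref{sec::modp}), the value on $\rho$ is then the restriction $\mfr$ and the value on $g\colon\freeo\to\freeo$ is the $\G$-action, which is precisely the content of the two clauses. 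This step is routine once the object identifications are fixed naturally.

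The real content is the last clause, the action of the monodromy automorphism $\kappa$ on $\h^{V+t}(\cpv{\cxuniverse};A)$. The geometric input is the pullback square preceding the proposition, produced by \autoref{prop::pullback} from $1\to SO(2)\to O(2)\xrightarrow{\det}\cyctwo\to 1$: pulling $f$ back along the universal cover $E_\G\cyctwo\to B_\G\cyctwo$ returns $B_\G SO(2)$, so the deck transformation of $E_\G\cyctwo$ acts on the fibre $B_\G SO(2)=\cpv{\cxuniverse}$ through the residual $O(2)/SO(2)\cong\cyctwo$, i.e.\ by the map induced by complex conjugation on $\cxuniverse$ (which sends the summand $\cxrep^j$ to $\cxrep^{-j}$, with unchanged underlying real representation). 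Thus $\kappa$ acts as the ring automorphism $\hat\kappa$ induced by conjugation, and to evaluate it on the algebra generators $D_1,\dots,D_{p-1},C$ of \autoref{thm::multcpv} I would invoke \autoref{thm::multcomparison}: since $p$ is invertible in $\ground=\fq$, the map $\rho^*\oplus\hat i^*$ there is a monomorphism in every degree, and it is $\hat\kappa$-equivariant for the conjugation action on its target, so it suffices to identify $\hat\kappa$ after $\rho^*$ and after $\hat i^*$. On the $\freeo$-level one has $\rho^*(D_j)=z^j$ and $\rho^*(C)=z^p$ (\autoref{thm::cpvgenerators}), and conjugation acts on $\neqh^*(\bC P^\infty)=\ground[z]$ by $z\mapsto -z$, giving $\rho^*(\hat\kappa D_j)=(-1)^j\rho^*(D_j)$ and $\rho^*(\hat\kappa C)=-\rho^*(C)$. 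On the fixed-point space $\cpv{\cxuniverse}^\G\simeq\coprod_{N=0}^{p-1}\cpv{(\cxrep^N)^{\oplus\infty}}$, conjugation permutes the components by $N\leftrightarrow p-N$ and reverses the orientation on each $\bC P^\infty$; comparing this with the explicit formulas for $\hat i_N^*(D_j)$ and $\hat i_N^*(C)$ in \autoref{thm::cpvgenerators} forces the same sign $(-1)^j$ there, and $-1$ on $C$. Combining the two computations with the injectivity of $\rho^*\oplus\hat i^*$ yields $\hat\kappa(D_{2k})=D_{2k}$, $\hat\kappa(D_{2k-1})=-D_{2k-1}$ and $\hat\kappa(C)=-C$.

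The main obstacle is this last step, and within it the fixed-point side: one must check that conjugation's permutation of the $p$ copies of $\bC P^\infty$, together with the orientation reversals, interacts correctly with the noncanonical choices of the constants $d(\alpha)$ that enter the classes $\epsilon_{\omega_j}$ and the maps $\hat i_N^*$ of \autoref{thm::cpvgenerators}. This is exactly the kind of sign bookkeeping that \autoref{thm::multcomparison} was designed to absorb, so I expect it to go through, but it is where the genuine work lies; the identification of the monodromy with complex conjugation via the pullback square is conceptually the heart of the argument but technically short.
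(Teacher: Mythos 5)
Your proposal matches the paper for the object-level identification and for clauses~(\ref{item::down}) and~(\ref{item::endobottom}): both you and the paper identify the values via \autoref{lem::fibers} and then read off the maps for morphisms $(\rho,[c])$ and $(g,[c])$ from the underlying contravariant coefficient system of $\ulh^{V+t}(\cpv{\cxuniverse})$. Your observation that these morphisms form the image of the contravariant embedding of $\sO_\G$ is a clean way to say what the paper says more directly.

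For clause~(\ref{item::endotop}), however, you take a genuinely different route, and this is where a real gap appears. The paper's argument is entirely at the $\freeo$ level: it reads off the nonequivariant monodromy $z\mapsto -z$ on $\neqh^*(BSO(2);\fq)=\fq[z]$ from the nonequivariant fiber sequence $BSO(2)\to BO(2)\to B\cyctwo$, then uses $\mfr(D_j)=z^j$, $\mfr(C)=z^p$ and the $\ulh^*(S^0)$-module structure to conclude the action on the $\trivo$ level. It never identifies the monodromy with a concrete space-level self-map and never touches the fixed points. You instead propose to identify the monodromy with $\G$-equivariant complex conjugation on $\cxuniverse$ and to nail down its action on $D_j$, $C$ by the two-sided check $\rho^*\oplus\hat i^*$ of \autoref{thm::multcomparison}. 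The $\rho^*$ side is the same as the paper's $\freeo$-level computation. The $\hat i^*$ side is the new ingredient, and it is exactly where your argument does not close.

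Concretely: you assert that conjugation permutes the components of $\cpv{\cxuniverse}^\G$ by $N\leftrightarrow p-N$ and that comparing with the $\hat i_N^*(D_j)=[d_{N,j}\epsilon_{\omega_j}]$ formulas of \autoref{thm::cpvgenerators} ``forces the same sign $(-1)^j$ there.'' But if the monodromy map $c$ satisfies $c^\G\colon\bC P^\infty_N\to\bC P^\infty_{p-N}$, then the naturality square $i^*\circ\kappa^*=(c^\G)^*\circ i^*$ gives $\hat i_N^*(\kappa^* D_j)=(c^\G|_N)^*\bigl(\hat i_{p-N}^*(D_j)\bigr)$, and since $\hat i_N^*(D_j)$ does not involve $z_N$, matching $\kappa^*(D_j)=(-1)^j D_j$ would require $d_{p-N,j}=(-1)^j d_{N,j}$ for all $N$. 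This fails for $j>\frac{p-1}{2}$: taking $N=j$ gives $d_{j,j}=1$ on the right but $d_{p-j,j}=0$ on the left (because $p-j<j$). In particular for $j=p-1$, $\hat i_{p-1}^*(D_{p-1})=[\epsilon_{\omega_{p-1}}]\neq 0$ while the component swap sends the lone nonzero entry to the index $N=1$ where $d_{1,p-1}=0$. So your model of the monodromy action on fixed points is not compatible with the $\hat i_N^*$ formulas and the asserted eigenvalues. Either the monodromy is a different $\G$-map than $N\leftrightarrow -N$ conjugation (e.g.\ conjugation composed with a $\bZ/p$ twist, which would change the component permutation to $N\mapsto t-N$ for some $t$), or the generators $D_j$ of \autoref{thm::cpvgenerators} are not $\kappa^*$-eigenvectors on the nose and must be readjusted. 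In either case the ``it is where the genuine work lies'' step you defer is not routine bookkeeping absorbed by \autoref{thm::multcomparison} --- it contains a real obstruction that must be resolved before the claimed sign can be read off of $\hat i^*$. The paper sidesteps this entirely by arguing only through $\rho^*$.
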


This may be visualized by the diagram below.
\[ \xymatrix@1{
\h^{V+t}(\cpv{\cxuniverse}; A) \ar@/^2ex/[d] \ar@/_2ex/[d] \ar@(u,ur)[]^{\cyctwo} \\
\neqh^{|V+t|}(\cpv{\cxuniverse};A(\freeo)) \ar@(d,dl)[]^{\G} \ar@(d,dr)[]_{\cyctwo} \\
} \]
Note that the second downward arrow is given by composing the maps of
(\ref{item::down}) and (\ref{item::endotop}).

\begin{proof}
\autoref{lem::fibers} identifies the value of $\sh^{V+t}(f;A)$ on objects.

For item (\ref{item::down}), 
the downward arrow induced by the morphism
$(\rho,[c_{(\freeo,x_0)}])$ is simply the map on
cohomology induced by the space-level map
\[\G\times f^{-1}(x) \to f^{-1}(x).\]
This is the same as the map $\ulh^{V+t}(\cpv{\cxuniverse})(\trivo) \to
\ulh^{V+t}(\cpv{\cxuniverse})(\freeo)$ induced by the span
\[ \cospan{\freeo}{\trivo}{} \]
A similar argument identifies the map in item (\ref{item::endobottom}).

It remains to identify the $\cyctwo$ action.  Recall that any $B_G\Gamma$ is of the
nonequivariant homotopy type of the classifying space $B\Gamma$.  In
particular, our fibration $B_\G O(2)\to B_\G \cyctwo$ corresponds to the
nonequivariant fiber sequence
\[ BSO(2)\to BO(2)\xrightarrow{B\text{det}} B\cyctwo \]
We know that $\neqh^*(BSO(2);\fq) \cong \fq[x]$, a polynomial algebra on a
generator $x$ in degree $2$, and that $\pi_1 B\cyctwo$ acts by $-1$ on $x$.
This determines the $\cyctwo$ action at the $\freeo$ level in the Mackey
functor $\ulh^{V+t}$, and thus at the $\freeo$ level in $\sh^{V+t}(f;A)$.
The algebra generators of $\ulh^*(\cpv{\cxuniverse})$ at the $\trivo$ level
are $D_1$ through $D_{p-1}$ and $C$, where $D_j$ restricts to $x^j$ at the
$\freeo$ level and $C$ restricts to $x^p$.  It follows that the action of
$\cyctwo$ at the $\trivo$ level must be by $-1$ on the elements $D_{2k-1}$
and $C$, and by the identity on the $D_{2k}$.
\end{proof}

\section{The local coefficient system $\coh_*(\widetilde{B_\G\cyctwo})$}

We will continue to write $\Pi$ for $\Pi_{\G} B_{\G}\cyctwo$.
Recall that $\coh_*(\tilde{X})$ is the coefficient system ${\Pi_G X\op\to\rmod}$
which takes ${(K,x)\longmapsto \neqh_*(\widetilde{X^K}(x))}$.  In our case, since
$B_\G\cyctwo \cong \bR P^\infty$ with trivial $\G$-action, it follows that 
$\coh_*(\widetilde{B_\G\cyctwo})$ is the constant functor at $\neqh_*(S^\infty)
\cong \neqh_*(\pt)$.  We will continue to take our coefficient ring
$\ground=\fq$ for $q\ne p$, so $\neqh_*(\pt) \cong \fq$ concentrated in dimension
0.

One reason that $\ground=\fq$ is such a convenient choice in this example is that 
the constant functor at $\neqh_*(\pt)$ is projective.

{\renewcommand{\ground}{\fq}
\begin{prop}
The constant functor $\underline{\ground}\colon \Pi\op \to\rmod$ is a direct
summand of a representable functor and hence a projective object in the
category $[\Pi\op,\rmod]$.
\end{prop}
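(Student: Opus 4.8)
The plan is to realize $\underline{\ground}$ as a retract of the representable projective functor $\contraproj{\trivo,x_0}$ of the preceding discussion. First I would use \autoref{lem::rpinfty} to pin down $\Pi$ completely: since $B_\G\cyctwo$ is $\G$-equivalent to $\mathbb{R}P^\infty$ with trivial $\G$-action, the skeleton $\Pi$ has exactly the two objects $(\trivo,x_0)$ and $(\freeo,x_0)$ together with the morphisms pictured above. In particular $\contraproj{\trivo,x_0}$ takes the value the free $\ground$-module on $\End_\Pi(\trivo,x_0)=\{\id,\kappa\}$ at $(\trivo,x_0)$, and the free $\ground$-module on the two arrows $u,v\colon(\freeo,x_0)\to(\trivo,x_0)$ at $(\freeo,x_0)$; there are no arrows $(\trivo,x_0)\to(\freeo,x_0)$.

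Next I would record the augmentation $\epsilon\colon\contraproj{\trivo,x_0}\to\underline{\ground}$ that sends every morphism (every basis element) to $1\in\ground$; this is natural because composition in $\Pi$ carries a basis element to a basis element, and it is a levelwise epimorphism. The heart of the argument is to produce a natural section $s$, which I would define by
\[ s_{(\trivo,x_0)}(1)=\tfrac{1}{2}(\id+\kappa),\qquad s_{(\freeo,x_0)}(1)=\tfrac{1}{2}(u+v); \]
this is legitimate since $q$ is odd, so $2$ is invertible in the coefficient field $\ground$. Then $\epsilon\circ s=\id$ is immediate. Because $\underline{\ground}$ is constant, each naturality square for $s$ reduces to the assertion that $\contraproj{\trivo,x_0}$ of the given arrow fixes the relevant averaged element, and this follows from the structure of $\Pi$: postcomposition by $\kappa$ interchanges $\id$ with $\kappa$ and interchanges $u$ with $v$; precomposition by the $\cyctwo$-generator of $\End_\Pi(\freeo,x_0)$ interchanges $u$ with $v$ while the $\G$-generator acts trivially; and both $\contraproj{\trivo,x_0}(u)$ and $\contraproj{\trivo,x_0}(v)$ carry $\tfrac{1}{2}(\id+\kappa)$ to $\tfrac{1}{2}(u+v)$. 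Hence $s$ is a well-defined natural transformation splitting $\epsilon$, so in the abelian category $[\Pi\op,\rmod]$ the functor $\underline{\ground}$ is a direct summand of $\contraproj{\trivo,x_0}$ and is therefore projective.

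The only real obstacle is keeping the bookkeeping of $\Pi$ straight — determining, for each generating morphism, whether $\contraproj{\trivo,x_0}$ sends it to the identity or to the transposition of the relevant two-element basis — so that the averaged elements $\tfrac{1}{2}(\id+\kappa)$ and $\tfrac{1}{2}(u+v)$ are genuinely invariant and $s$ is well defined. Once that is checked, everything is formal, and no input is needed beyond \autoref{lem::rpinfty} and the invertibility of $2$ in $\fq$.
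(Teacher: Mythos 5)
Your proposal is correct and takes essentially the same approach as the paper's. Both arguments exhibit $\underline{\ground}$ as a direct summand of the representable functor $\contraproj{\trivo,x_0} = \ground\,\Pi(-,(\trivo,x_0))$ by averaging over the $\cyctwo$-action, using the invertibility of $2$ in $\ground = \fq$ (equivalently $q\ne 2$); the paper packages the splitting as the change of basis $\mymatrix{1 & 1\\ 1 & -1}$ while you package it as the retraction $\epsilon$ together with the section $s = \tfrac12(\text{sum of basis elements})$, which is the same idempotent in different notation.
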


\begin{proof}
Consider the represented functor $\ground \Pi(-,(\trivo,x_0))$.  We see by
inspection that $\ground \Pi((K,x_0),(\trivo,x_0)) \cong \ground[\cyctwo]$ for both
possible values of $K$.  For an appropriate choice of basis, we can display this as
\[ \xymatrix@R=3pc@C=3pc{
\ground \oplus \ground
\ar@/_2ex/[d]_{\mymatrix{1&0\\0&1}} \ar@/^2ex/[d]^{\mymatrix{0&1\\1&0}}
\ar@(u,ur)[]^{\mymatrix{0&1\\1&0}} \\
\ground \oplus \ground
\ar@(d,dr)[]_{\mymatrix{0&1\\1&0}} \ar@(d,dl)[]^{\text{triv}} \\
}\]
That is, the nontrivial element of $\cyctwo$ acts by interchanging the
basis elements, on both the top and the bottom.  The action of $\G$ on the bottom
is trivial, and the downward maps behave as shown.  If we take the new
basis given by the change-of-coordinates matrix $\mymatrix{1&\phantom{-}1\\1&-1}$
(using the fact that $q\ne 2$), we see that $\ground \Pi(-,(\trivo,x_0))$
breaks up as the direct sum of two functors, one of which is our constant
functor $\underline{\ground}$.
\[ \xymatrix@R=0.5pc@C=1pc{
  & \ground \ar@(u,ur)[]^{\id} \ar@/_2ex/[dd]_{\id} \ar@/^2ex/[dd]^{\id} & & 
    \ground \ar@(u,ur)[]^{-1} \ar@/_2ex/[dd]_{\id} \ar@/^2ex/[dd]^{-1} \\
\ground \Pi(-,(\trivo,x_0))\hspace{10pt}\cong\hspace{10pt}  & & \hspace{10pt}\oplus\hspace{10pt} & \\
  & \ground \ar@(d,dr)[]_{\id} \ar@(d,dl)[]^{\text{triv}} & &
    \ground \ar@(d,dr)[]_{-1} \ar@(d,dl)[]^{\text{triv}} \\
}\]
\end{proof}
}

\section{The calculation of $\h^*(B_\G O(2);A)$}

We are now prepared to prove \autoref{thm::mythm}.  Fix odd primes $p\ne
q$, and continue to take $\ground=\fq$.  We will also continue to make
heavy use of the identification of $B_\G \cyctwo$ in
\autoref{lem::rpinfty}.

\begin{proof}[Proof of \autoref{thm::mythm}]
We will use the equivariant Eilenberg spectral sequence to identify the
$E_2$ page of the Serre spectral sequence for $f\colon B_\G O(2)\to
B_\G\cyctwo$ and then show that the Serre spectral sequence collapses with
no extension problems.

Since $\widetilde{B_\G\cyctwo}$ is the constant functor at $S^\infty$, the relevant
equivariant Eilenberg spectral sequence in this case is
\[ \Ext_\Pi^{u,v} \left( \coh_*(S^\infty),\sh^{V+t}(f;A) \right) \Longrightarrow
\h^{u+v}(B_\G\cyctwo;\sh^{V+t}(f;A)). \]
As before, in the $E_2$ term, $u$ is the homological degree and $v$ is the
internal grading on $\coh_*$.  Since $\coh_v(S^\infty)$ is either $0$ or
$\underline{\fq}$, both of which are projective,
$\Hom_\Pi(\coh_v(S^\infty),-)$ is exact, and so all $\Ext$ terms with $u>0$
vanish.  It follows that the spectral sequence collapses at $E_2$ with no
extension problems, and so the $E_2$ terms of the Serre spectral sequence
are given by
\[ \h^s(B_\G\cyctwo;\sh^{V+t}(f;A)) \cong
\Hom_\Pi \left( \coh_s(S^\infty),\sh^{V+t}(f;A) \right). \]
The homology of $S^\infty$ vanishes for $s>0$, so in fact the Serre
spectral sequence also collapses with no extension problems.

$B_\G\cyctwo$ has a trivial $\G$ action and is $\G$-connected, meaning that
\autoref{prop::eqhzero} and \autoref{prop::gconnected} apply.  Thus
we may identify
\[ \h^{V+t}(B_\G O(2);A) \cong \h^0(B_\G\cyctwo;\sh^{V+t}(f;A))
\hookrightarrow \h^{V+t}(B_\G SO(2);A)\]
as algebras over the cohomology of a point.

More specifically, we have
\[ \h^{V+t}(B_\G O(2);A) \cong \Hom_\Pi(\underline{\fq},\sh^{V+t}(f;A)). \] 
For any $\sN$ and
any element ${\eta\nobreak\in\Hom_\Pi(\underline{\fq},\sN)}$, $\eta$ factors through
the ``fixed subfunctor of $\sN$,''  i.e.\ the subfunctor
\[ \xymatrix@1{
\sN(\trivo,x_0)^{\cyctwo} \ar[d] \\
\sN(\freeo,x_0)^{\G\times\cyctwo}.
}\]
Note the two downward arrows must give the same map, and so a single arrow has
been drawn above.  As already observed, our $\Pi$ has a weakly terminal
object, and so a map in $\Hom_\Pi(\underline{\fq},\sN)$ is in fact
determined by choosing an element of $\sN(\G,x_0)^{\cyctwo}$.  By
inspection of the structure of $\Pi = \fund{B_\G\cyctwo}$, we see that
every element of $\sN(\G,x_0)^{\cyctwo}$ defines a map in
$\Hom_\Pi(\underline{\fq},\sN)$, as well.  
In other words, we have demonstrated that 
\[ \h^{V+t}(B_\G O(2);A) \cong \h^{V+t}(B_\G SO(2);A)^{\cyctwo}\]
for each $V$ and $t$, and hence
\[ \h^{*}(B_\G O(2);A) \cong \h^{*}(B_\G SO(2);A)^{\cyctwo}.\]
By \autoref{thm::multcpv} and \autoref{prop::h}, it follows that
$\h^{*}(B_\G O(2);A)$ is the subalgebra of $\h^{*}(B_\G SO(2);A)$ generated
by the elements $D_{2k}$, $D_{2k-1} C$, and $C^2$, the generators
restricting to an even power of the nonequivariant generator $x$ of
$\neqh^*(\mathbb{C}P^\infty)$.
\end{proof}

In fact, closer examination shows that the Green functor $\ulh^*(B_\G
O(2))$ is a sub-Green functor of $\ulh^*(\cpv{\sU}) = \ulh^*(B_\G SO(2))$,
again on the generators $D_{2k}$, $D_{2k-1} C$, and $C^2$.

\chapter{Multiplicative structure of the spectral sequence}\label{ch::mult}
\section{Multiplicative structure}\label{sec::mult}

The nonequivariant Serre spectral sequence for the computation of
$\neqh^*(E;\ground)$ is a spectral sequence of algebras, and the same is
true of its equivariant analogue.  The Eilenberg spectral sequence would
thus be much more powerful if it could be used to find the multiplicative
structure of the $E_2$ page of the Serre spectral sequence.  An elaboration
of Whitehead's proof of \autoref{thm::comparison} shows that when $\sN$ is
$\ground$-algebra valued, $\neqh^*(X;\sN)$ and $\neqh^*(X; N)$ are isomorphic as
$\ground$-algebras, so we need only show that the Eilenberg spectral
sequence is multiplicative.

\subsection{The nonequivariant spectral sequence}\label{subsec::noneqmult}
Consider a fibration $f\colon E\to X$ with fiber $F$ and $\pi := \pi_1 X$.
Return for the moment to $\ground[\pi]$-module notation in the nonequivariant
case.  The Serre spectral sequence has
\[ E_2^{s,t} = \neqh^s(X;\sH^t(F;\ground)) \]
where $\sH^t(F;\ground)$ is \rarticle $\ground[\pi]$-module.  This has the expected pairing
\[ \neqh^{s_1}(X;\sH^{t_1}(F;\ground)) \otimes \neqh^{s_2}(X;\sH^{t_2}(F;\ground)) \to \neqh^{s_1+s_2}(X;\sH^{t_1+t_2}(F;\ground)). \]
We know from \autoref{thm::CE} that, for each fixed $t$, there is a spectral
sequence
\[ E_2^{u,v} = \Ext_{\ground[\pi]}^{u,v}(\neqh_*(\tilde{X}),\sH^t(F;\ground))
\Longrightarrow \neqh^{u+v}(X;\sH^t(F;\ground)). \]
It is convenient to view the collection of all of these spectral sequences
as a single trigraded spectral sequence.

\begin{lem}[Trigraded Eilenberg Spectral Sequence]\label{thm::triCE}
Let $X$ be a path-connected based space with universal cover $\tilde{X}$
and $\pi = \pi_1 X$ (as in \autoref{sec::noneq}).  Let $N^*$ be a graded
$\ground[\pi]$ module.  Then there is a spectral sequence
\[ E_2^{t,u,v} = \Ext_{\ground[\pi]}^{u,v}(\neqh_*(\tilde{X}),N^t)
\Longrightarrow \neqh^{u+v}(X;N^t).\]
The differentials $d_r$ change tridegrees by $(t,u,v)\mapsto
(t,u+r,v-r+1)$.
\end{lem}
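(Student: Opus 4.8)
The statement is just a repackaging of \autoref{thm::CE}: we already have, for each fixed $t$, a spectral sequence converging to $\neqh^{u+v}(X;N^t)$, and we want to assemble these into a single trigraded spectral sequence with the stated differentials. The cleanest approach is to run the same double-complex argument as in the proof of \autoref{thm::CE}, but starting from a graded injective resolution of the graded $\ground[\pi]$-module $N^*$. Concretely, first I would choose, for each $t$, an injective resolution $\eta\colon N^t\to I^{t,*}$ in the category of $\ground[\pi]$-modules; collecting these gives a resolution of the graded module $N^*$ by graded injectives $I^{*,*}$, where the extra grading $t$ is simply carried along and not touched by the resolution differential. Then form the bicomplex
\[ \Hom_{\ground[\pi]}\!\left( C_*(\tilde{X}), I^{*,*} \right), \]
now regarded as trigraded: the degree $t$ comes from $N^*$, the degree $u$ is the homological (resolution) degree, and a third degree comes from $C_*(\tilde{X})$.

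**Key steps.** The argument then proceeds exactly as before. Filtering by the degree of $C_*(\tilde{X})$ and using that $C_*(\tilde{X})$ is a complex of free (hence projective) $\ground[\pi]$-modules — this is the nonequivariant precursor of \autoref{lem::technical} — the spectral sequence of this filtration degenerates quickly and identifies the total homology, in the $t$-graded piece, with $\neqh^{u+v}(X;N^t)$; the grading $t$ is inert throughout. Filtering the other way, by the resolution degree $u$, the $E_1$ page is $\Hom_{\ground[\pi]}(\coh_*(\tilde X), I^{*,*})$, whose cohomology in resolution degree $u$ and internal degree $v$ is $\Ext_{\ground[\pi]}^{u,v}(\neqh_*(\tilde X), N^t)$ in the $t$-graded component, giving the claimed $E_2$ page. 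Since the $t$-grading is never altered by any of the maps in either filtration, the differentials $d_r$ have tridegree $(0,r,-r+1)$ in the $(t,u,v)$ bookkeeping, i.e. $(t,u,v)\mapsto (t, u+r, v-r+1)$, as stated. Finally I would remark that the convergence is the same as in \autoref{thm::CE}, applied one value of $t$ at a time.

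**Main obstacle.** There is no real mathematical obstacle here — everything reduces to \autoref{thm::CE} — so the only thing requiring care is bookkeeping: making sure the three gradings are kept separate, that the injective resolution can be chosen functorially (or at least compatibly) in $t$ so that $I^{*,*}$ is genuinely a complex of graded modules, and that "graded injective" means injective in each degree, which is all that is used. The mildly delicate point is checking that the internal grading $v$ on $\neqh_*(\tilde X)$ and the new grading $t$ on $N^*$ do not interact — they do not, since the resolution is of $N^*$ alone and $C_*(\tilde X)$ carries only the $v$-type grading — so the total degree on the $E_2$ page is cleanly $(u,v)$ with $t$ a spectator. I would therefore present this as an immediate corollary of \autoref{thm::CE}, spelling out only the indexing conventions and leaving the (identical) double-complex verification to the reader.
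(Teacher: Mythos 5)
Your proposal is correct and matches the paper's own proof in essence: both fix an injective resolution $N^t\to I^{t,*}$ for each $t$, form the trigraded complex $\Hom_{\ground[\pi]}(C_*(\tilde{X}),I^{*,*})$, observe that for each fixed $t$ this is exactly the bicomplex of \autoref{thm::CE}, and let $t$ be a spectator grading. The paper states this more tersely, but the double-complex verification you spell out is precisely what is being invoked.
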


\begin{proof}
For each $t$, choose an injective resolution $N^t \to I^{t,*}$.  Then we
have a trigraded complex $\Hom_{\ground[\pi]}(C_*(\tilde{X}),I^{*,*})$.  For each
fixed $t$, we have a bicomplex as in \autoref{thm::CE} and thus a spectral
sequence.  As $t$ varies, these fit together to form the above trigraded
spectral sequence.
\end{proof}

Now suppose that $N^*$ is \rarticle $\ground[\pi]$-algebra.  A multiplicative structure
on \[\Hom_{\ground[\pi]}(C_*(\tilde{X}),I^{*,*})\] which induces the expected
multiplicative structure on $\neqh^*(X;N^*)$ is sufficient to make the
trigraded Eilenberg spectral sequence into a spectral sequence of algebras.

To find such a structure, it is enough to show the existence of a product
map $\varphi\colon I^{*,*}\otimes I^{*,*}\to I^{*,*}$ compatible with the
multiplication $N^*\otimes N^* \to N^*$.  We then have the composite
\[
\xymatrix@R=1pc@C=1pc{ 
\makebox[20pt]{} &  \makebox[60pt]{$\Hom_{\ground[\pi]}(C_{v_1}(\tilde{X}),I^{t_1,u_1})\otimes \Hom_{\ground[\pi]}(C_{v_2}(\tilde{X}),I^{t_2,u_2})$} \ar[dr] & & \\
 &  & \makebox[60pt]{$\Hom_{\ground[\pi]}(C_{v_1}(\tilde{X})\otimes C_{v_2}(\tilde{X}),I^{t_1,u_1}\otimes I^{t_2,u_2})$} \ar[dr]  & \\
 &  & & \makebox[60pt]{$\Hom_{\ground[\pi]}(C_{v_1+v_2}(\tilde{X}),I^{t_1+t_2,u_1+u_2})$} \\}
\]
where the first arrow takes the tensor product of a pair of maps, and the
second is induced by $\varphi$ and the usual Alexander-Whitney map
$C_*(\tilde{X}) \to C_*(\tilde{X})\otimes C_*(\tilde{X})$.

In general, it is difficult to come up with a suitable product on
$I^{*,*}$.  However, there are some situations in which this product
exists, and then the cohomological Eilenberg spectral sequence becomes a
spectral sequence of algebras.

\begin{thm}\label{thm::easymult}
Suppose that $\pi$ is a finite group of order $n$, $\ground$ is a field of
characteristic prime to $n$, and $N^*$ is \rarticle $\ground[\pi]$-algebra.  Then the
spectral sequence of  \autoref{thm::triCE} is a spectral sequence of algebras,
where the multiplication on the $E_2$ page comes from the diagonal map on
$\neqh_*(\tilde{X})$ and the multiplication of $N^*$.
\end{thm}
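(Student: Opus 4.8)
The plan is to produce the required product map on the injective resolutions $I^{*,*}$ by exploiting semisimplicity, and then feed it into the general machinery described just before \autoref{thm::triCE}. Since $\ground$ is a field of characteristic prime to $n = |\pi|$, the group ring $\ground[\pi]$ is semisimple, so \emph{every} $\ground[\pi]$-module is both projective and injective. First I would observe that this lets us take a particularly simple ``injective resolution'' of each $N^t$, namely the trivial one $N^t \to N^t \to 0$ (with $N^t$ in cohomological degree $0$). Then the trigraded complex $\Hom_{\ground[\pi]}(C_*(\tilde X), I^{*,*})$ collapses to $\Hom_{\ground[\pi]}(C_*(\tilde X), N^*)$ concentrated in $u = 0$, and the only possibly-nonzero differential is the internal one coming from the boundary of $C_*(\tilde X)$; this is consistent with \autoref{cor::CE}, which already tells us the spectral sequence degenerates.

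Next I would supply the product. With $I^{*,*} = N^*$ concentrated in degree $u=0$, the map $\varphi\colon I^{*,*}\otimes I^{*,*}\to I^{*,*}$ is simply the algebra multiplication $N^*\otimes N^*\to N^*$, which is $\ground[\pi]$-linear because $N^*$ is a $\ground[\pi]$-algebra. Then the composite displayed in \autoref{subsec::noneqmult} — tensor two maps, apply the Alexander--Whitney diagonal $C_*(\tilde X)\to C_*(\tilde X)\otimes C_*(\tilde X)$, and postcompose with $\varphi$ — equips $\Hom_{\ground[\pi]}(C_*(\tilde X),N^*)$ with a multiplication. I would check the routine facts: the Alexander--Whitney map is a chain map and is $\pi$-equivariant (it is natural and $\tilde X$ carries a $\pi$-action), so the product is compatible with the internal differential and descends to cohomology; and on passing to homology the Alexander--Whitney diagonal induces the coalgebra structure on $\neqh_*(\tilde X)$ dual to cup product, so the induced pairing on $E_2 = \Ext_{\ground[\pi]}^{0,*}(\neqh_*(\tilde X),N^*) = \Hom_{\ground[\pi]}(\neqh_*(\tilde X),N^*)$ is exactly the one coming from the diagonal on $\neqh_*(\tilde X)$ and the multiplication of $N^*$, as claimed in the statement.

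Finally I would confirm that this multiplicative structure on the $E_2$ page is compatible with the expected multiplication on the abutment $\neqh^*(X;N^*)$. Here I would invoke the elaboration of Whitehead's proof of \autoref{thm::comparison} alluded to at the start of \autoref{sec::mult}: for $\ground$-algebra valued coefficients, $\neqh^*(X;\sN)$ and the homological-algebra version $\neqh^*(X;N^*)$ defined via $\Hom_{\ground[\pi]}(C_*(\tilde X),-)$ agree as $\ground$-algebras, with the ring structure on the latter given precisely by the Alexander--Whitney construction above. Since the spectral sequence has $E_2 = E_\infty$ with no extension problems (both by the collapse argument and by \autoref{cor::CE}), the filtration is trivial and the product on $E_\infty$ is literally the product on $\neqh^*(X;N^*)$; so the spectral sequence is multiplicative, completing the proof.

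I expect the main obstacle to be purely bookkeeping rather than conceptual: verifying that the Alexander--Whitney diagonal is genuinely $\ground[\pi]$-equivariant and that the induced pairing on $\Ext^{0,*}$ is the stated one requires carefully tracking how the $\pi$-action on $\tilde X$ interacts with the (non-cocommutative, non-coassociative-on-the-nose) chain-level diagonal — but since we only need the statement on homology, where everything becomes strictly coassociative and the module structures are the honest ones, these subtleties evaporate. The real content is just the semisimplicity observation that makes the resolutions trivial; once that is in place the product is forced and the compatibility with \autoref{thm::comparison} finishes it.
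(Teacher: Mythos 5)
Your proof is correct and follows essentially the same route as the paper's: both use semisimplicity of $\ground[\pi]$ to take the trivial injective resolution $0\to N^t\to 0$, so that the required product map $\varphi\colon I^{*,*}\otimes I^{*,*}\to I^{*,*}$ is simply the algebra multiplication on $N^*$, which then feeds into the Alexander--Whitney construction described just before \autoref{thm::easymult}. The paper's proof is much terser, but your added verifications (equivariance of the Alexander--Whitney diagonal, compatibility with the abutment via \autoref{thm::comparison}) are exactly the bookkeeping the paper leaves implicit.
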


\begin{proof}
Since $\ground[\pi]$ is semisimple, every $\ground[\pi]$-module is injective and
projective.  Hence we can choose each $I^{t,*}$ to be the resolution $0\to
N^t \to 0 \to \cdots$.  Since $N^*$ is an algebra, it is clear that we have
$I^{*,*}\otimes I^{*,*} \to I^{*,*}$ as desired.
\end{proof}

In fact, it suffices to assume that each $N^t$ has an underlying $\ground$-module
which is finitely generated and projective.

\begin{thm}\label{thm::multstr}
Suppose that $\pi = \pi_1 X$ is a finite group and that $N^*$ is a graded
$\ground[\pi]$-module such that the underlying $\ground$-module of each $N^t$ is
finitely generated projective.  Then the spectral sequence of
\autoref{thm::triCE} is a spectral sequence of algebras; the multiplication on
$E_2$ is as in \autoref{thm::easymult}.
\end{thm}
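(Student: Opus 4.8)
The plan is to equip the trigraded complex $\Hom_{\ground[\pi]}(C_*(\tilde X),I^{*,*})$ of \autoref{thm::triCE} with a filtration‑preserving, homotopy‑associative product that induces on the abutment the usual cup product on $\neqh^*(X;N^*)$. As noted in \autoref{subsec::noneqmult}, everything reduces to producing, for each pair $t_1,t_2$, a chain map of complexes of $\ground[\pi]$-modules $\varphi^{t_1,t_2}\colon I^{t_1,*}\otimes_\ground I^{t_2,*}\to I^{t_1+t_2,*}$ that covers the multiplication $\mu\colon N^{t_1}\otimes_\ground N^{t_2}\to N^{t_1+t_2}$; one then precomposes with the tensor‑of‑maps pairing and the Alexander–Whitney diagonal. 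In the semisimple case of \autoref{thm::easymult} this is immediate because one may take $I^{t,*}=N^t$; the difficulty here is that a tensor product of honest injective resolutions need not again resolve the tensor product.

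The key new input, and the only place the hypothesis on $N^*$ is used, is the homological‑algebra observation: for $\pi$ finite, if $J$ is an injective $\ground[\pi]$-module and $C$ is a finitely generated projective $\ground$-module, then $J\otimes_\ground C$, with the diagonal action, is again injective over $\ground[\pi]$. Indeed $J$ is a $\ground[\pi]$-summand of some coinduced module $\Hom_\ground(\ground[\pi],E)$ with $E$ an injective $\ground$-module; the projection formula for the finite group $\pi$ gives $\Hom_\ground(\ground[\pi],E)\otimes_\ground C\cong\Hom_\ground(\ground[\pi],E\otimes_\ground C)$; and $E\otimes_\ground C$ is $\ground$-injective since $C$, being a summand of a finite free $\ground$-module, realizes $E\otimes_\ground C$ as a summand of a finite sum of copies of $E$. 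Consequently, if $I^{t_1,*}$ is an injective $\ground[\pi]$-resolution of $N^{t_1}$, then $I^{t_1,*}\otimes_\ground N^{t_2}$ is a complex of injective $\ground[\pi]$-modules resolving $N^{t_1}\otimes_\ground N^{t_2}$ (the latter because $N^{t_2}$ is $\ground$-flat), hence \emph{is} an injective $\ground[\pi]$-resolution of it; the comparison theorem then lifts $\mu$ to a chain map $I^{t_1,*}\otimes_\ground N^{t_2}\to I^{t_1+t_2,*}$, unique up to $\ground[\pi]$-chain homotopy, and running the argument symmetrically and comparing the two resulting resolutions of $N^{t_1}\otimes_\ground N^{t_2}$ sitting inside $I^{t_1,*}\otimes_\ground I^{t_2,*}$ yields the desired $\varphi^{t_1,t_2}\colon I^{t_1,*}\otimes_\ground I^{t_2,*}\to I^{t_1+t_2,*}$ covering $\mu$, again unique up to homotopy and so homotopy‑coassociative.

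With the $\varphi^{t_1,t_2}$ fixed I would define the product on $\Hom_{\ground[\pi]}(C_*(\tilde X),I^{*,*})$ precisely as in \autoref{subsec::noneqmult}: the tensor‑of‑maps pairing, followed by the coassociative diagonal $C_*(\tilde X)\to C_*(\tilde X)\otimes_\ground C_*(\tilde X)$ — equivariant for the diagonal $\pi$-action via the Hopf‑algebra structure of $\ground[\pi]$ — followed by $(\varphi^{t_1,t_2})_*$. Then one checks: that this respects the filtration by $I$-degree, so each page $E_r$ ($r\ge 2$) inherits a pairing compatible with $d_r$ and with $E_r\rightsquigarrow E_{r+1}$; that on the abutment it unwinds to the chain‑level formula for the cup product on $\neqh^*(X;N^*)$, hence is associative, forcing associativity (with the expected Koszul signs) on every page; and that on $E_2$ it is the pairing induced by the diagonal of $\neqh_*(\tilde X)$ together with the multiplication of $N^*$, exactly as in \autoref{thm::easymult}.

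The main obstacle is not the lemma above but the bookkeeping of the last step: choosing the $\varphi^{t_1,t_2}$ coherently enough, tracking filtration degrees and signs through the Alexander–Whitney map and the tensor interchange, and verifying that the product so obtained is literally the cup product on the abutment and the diagonal‑cum‑$\mu$ pairing on $E_2$, rather than merely an isomorphic stand‑in.
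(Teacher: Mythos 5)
Your approach departs from the paper's: the paper works with \emph{relatively} injective resolutions (Brown's notion), obtained by dualizing finitely generated $\ground[\pi]$-projective resolutions of the duals $\overline{M}_i$, whereas you work with honest injective resolutions and introduce a tensor lemma (injective $\ground[\pi]$-module $\otimes_\ground$ a $\ground[\pi]$-module that is f.g.\ projective over $\ground$ is again $\ground[\pi]$-injective). That lemma is correct, and the coinduction/tensor-identity argument you give for it is fine. But there is a genuine gap where you assemble $\varphi^{t_1,t_2}$.

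Your lemma shows that $I^{t_1,*}\otimes_\ground N^{t_2}$ and $N^{t_1}\otimes_\ground I^{t_2,*}$ are injective resolutions of $N^{t_1}\otimes_\ground N^{t_2}$. It does \emph{not} show that $\operatorname{Tot}(I^{t_1,*}\otimes_\ground I^{t_2,*})$ is a resolution, nor that its terms are injective: the tensor factors $I^{t_1,i}$, $I^{t_2,j}$ are generally neither finitely generated nor projective (nor even flat) over $\ground$, so your lemma does not apply to $I^{t_1,i}\otimes_\ground I^{t_2,j}$, and the column-filtration argument for acyclicity of the total complex breaks down (it would require each $I^{t_1,i}$ to be $\ground$-flat). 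Consequently the injective comparison theorem, which needs the \emph{source} complex to be acyclic, does not produce a chain map $I^{t_1,*}\otimes I^{t_2,*}\to I^{t_1+t_2,*}$. The phrase ``running the argument symmetrically and comparing the two resulting resolutions sitting inside $I^{t_1,*}\otimes I^{t_2,*}$'' is exactly where the argument is incomplete: the two edge resolutions map \emph{into} the total complex, and neither the comparison theorem nor uniqueness-up-to-homotopy gives you a map \emph{out} of the total complex from this data. Since the multiplication you want to define on $\Hom_{\ground[\pi]}(C_*(\tilde X),I^{*,*})$ factors through $\Hom_{\ground[\pi]}(C_*(\tilde X),I^{t_1,*}\otimes I^{t_2,*})$ via Alexander--Whitney, a genuine $\varphi^{t_1,t_2}$ on the total complex is required.

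This is precisely the difficulty the paper's choice of resolution is designed to eliminate. Dualizing a finitely generated $\ground[\pi]$-projective resolution of $\overline{M}_i$ yields a complex whose terms are finitely generated $\ground[\pi]$-projective (hence relatively injective, and f.g.\ $\ground$-projective) and whose augmented complex is a $\ground$-chain homotopy equivalence because $\overline{M}_i$ is $\ground$-projective. Both properties pass to the tensor product: the terms of $I^{t_1,*}\otimes I^{t_2,*}$ are again f.g.\ $\ground[\pi]$-projective (hence relatively injective), and the augmented total complex is $\ground$-contractible, hence a relatively injective resolution of $N^{t_1}\otimes N^{t_2}$. The relative comparison theorem then produces $\varphi^{t_1,t_2}$. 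If you want to keep your honest-injectives route, you would need an additional argument that one can choose injective resolutions whose terms are $\ground$-flat (false over $\ground=\bZ$, for instance) or some other substitute for the $\ground$-splitness; absent that, the construction of $\varphi^{t_1,t_2}$ does not go through.
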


To prove \autoref{thm::multstr}, we will need to review some terminology and
results from \cite[Chapter VI]{Brown}.  In the following, let $\ground$ be any
ring and $\pi$ be a finite group.  (The definitions work for general $\pi$,
but the propositions require finite groups.)  Brown takes $\ground=\bZ$
throughout, but all proofs work for general $\ground$ as well.

\begin{defn}
An injection $M_1 \hookrightarrow M_2$ of $\ground[\pi]$-modules is an
\defword{admissible injection} if it is split when regarded as an injection
of $\ground$-modules.  A long exact sequence of $\ground[\pi]$-modules is
\defword{admissible} if it is contractible when viewed as a chain complex
of $\ground$-modules.
\end{defn} 

\begin{defn}
We say that \rarticle $\ground[\pi]$-module $Q$ is \defword{relatively injective} if the
functor $\Hom_{\ground[\pi]}(-,Q)$ takes admissible injections of
$\ground[\pi]$-modules to surjections of $\ground$-modules.  If $M$ is any
$\ground[\pi]$-module, a \defword{relatively injective} resolution of $M$
consists of a complex $0\to Q^0 \to Q^1 \to \cdots$ such that each $Q^i$ is
relatively injective, together with a weak equivalence $M\to Q^*$ such that
$0\to M\to Q^0 \to Q^1\to \cdots$ is admissible.
\end{defn}

In particular, injective modules are relatively injective and injective
resolutions are relative injective resolutions.  The following proposition
shows that it suffices to consider relative injective resolutions when
computing Tor and Ext. 

\begin{prop}\label{prop::brown}
Let $\pi$ be a finite group, and let $M$ be a left $\ground[\pi]$-module.
\begin{enumerate}
\item Any two relative injective resolutions of $M$ are canonically
homotopy equivalent.

\item If $M$ is projective as \rarticle $\ground[\pi]$-module, then $M$ is relatively
injective.

\item There is \rarticle $\ground[\pi]$-module isomorphism $\Hom_\ground(M,\ground) \cong
\Hom_{\ground[\pi]}(M,\ground[\pi])$.
\end{enumerate}
\end{prop}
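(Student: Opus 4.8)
The plan is to follow Brown's treatment of relative homological algebra for finite groups \cite[Ch.\ VI]{Brown}, checking at each step that nothing uses $\ground=\bZ$; the only feature of the coefficient ring that enters is that $\ground[\pi]$ is finitely generated free as a $\ground$-module, which holds because $\pi$ is finite and $\ground$ is commutative. It is cleanest to prove the three parts in the order (3), (2), (1), since (3) supplies the self-duality of $\ground[\pi]$ that drives (2), and the comparison-type argument for (1) is independent.

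For part (3), regard the target $\ground[\pi]$ as a right module over itself, so that $\Hom_\ground(\ground[\pi],\ground)$ acquires a left $\ground[\pi]$-action. Since $\pi$ is finite, the assignment $h\mapsto \delta_h$, where $\delta_h$ is the $\ground$-linear functional with $\delta_h(x)=1$ for $x=h^{-1}$ and $0$ otherwise, is an isomorphism of left $\ground[\pi]$-modules $\ground[\pi]\xrightarrow{\ \cong\ }\Hom_\ground(\ground[\pi],\ground)$; equivalently, $\ground[\pi]$ is self-dual and induction from the trivial subgroup agrees with coinduction. Composing with the tensor--hom adjunction
\[ \Hom_{\ground[\pi]}\bigl(M,\Hom_\ground(\ground[\pi],\ground)\bigr)\cong \Hom_\ground(\ground[\pi]\otimes_{\ground[\pi]} M,\ground)\cong \Hom_\ground(M,\ground) \]
gives the isomorphism claimed; one checks routinely that it is $\ground[\pi]$-equivariant for the action $(g\phi)(m)=\phi(g^{-1}m)$ on $\Hom_\ground(M,\ground)$ and the action on $\Hom_{\ground[\pi]}(M,\ground[\pi])$ coming from the remaining (left) factor of the bimodule $\ground[\pi]$.

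For part (2), first note that $\ground[\pi]$, and more generally any coinduced module $\Hom_\ground(\ground[\pi],A)$, is relatively injective: by the restriction--coinduction adjunction $\Hom_{\ground[\pi]}(-,\Hom_\ground(\ground[\pi],A))\cong \Hom_\ground(-,A)$, and $\Hom_\ground(-,A)$ takes $\ground$-split injections to ($\ground$-split) surjections, which is exactly the defining property. Because $\ground[\pi]$ is finitely generated free over $\ground$, the functor $\Hom_\ground(\ground[\pi],-)$ commutes with arbitrary direct sums, so using the self-duality of part (3) a free $\ground[\pi]$-module satisfies $\bigoplus_I\ground[\pi]\cong\bigoplus_I\Hom_\ground(\ground[\pi],\ground)\cong\Hom_\ground\bigl(\ground[\pi],\textstyle\bigoplus_I\ground\bigr)$ and is thus again coinduced, hence relatively injective. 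Finally, relative injectivity passes to direct summands — if $R=Q\oplus Q'$ is relatively injective then $\Hom_{\ground[\pi]}(-,Q)$ is a retract of $\Hom_{\ground[\pi]}(-,R)$ — and a projective $\ground[\pi]$-module is a summand of a free one, so it is relatively injective.

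For part (1), I would run the standard comparison argument in the relative setting: given a relative injective resolution $M\to Q^*$ and \emph{any} admissible resolution $M\to R^*$ (its augmented complex $\ground$-contractible), construct a $\ground[\pi]$-chain map $R^*\to Q^*$ over $\id_M$ by induction on degree, at each stage using the contracting $\ground$-homotopy on $R^*$ to present the relevant map as an admissible injection and then invoking relative injectivity of $Q^n$ to extend; the same induction shows any two such lifts are $\ground[\pi]$-chain homotopic. Applied with $R^*$ a second relative injective resolution $\tilde Q^*$, this yields chain maps $Q^*\leftrightarrows\tilde Q^*$ over $\id_M$ that are mutually inverse up to homotopy, and the uniqueness-up-to-homotopy of the lifts makes the equivalence canonical. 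I expect the only real friction to be this bookkeeping in (1) — verifying that the maps in an admissible resolution genuinely behave like admissible injections, which is precisely where the $\ground$-splitness hypothesis is consumed — together with tracking the left/right module structures in (3); both are formal but easy to botch if done carelessly.
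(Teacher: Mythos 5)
Your proposal is correct, and it is essentially the approach the paper invokes: the paper's proof is a one-line citation to Brown (propositions VI.2.5, VI.2.3, VI.3.4), together with the remark just above the proposition that Brown's $\ground=\bZ$ arguments go through for general $\ground$. What you have done is unpack Brown's arguments and verify that claim — the self-duality $\ground[\pi]\cong\Hom_\ground(\ground[\pi],\ground)$ via $h\mapsto\delta_h$ and the coinduction adjunction for (3), the factorization through a coinduced (hence relatively injective) free module and passage to summands for (2), and the relative comparison theorem for (1), with the $\ground$-contracting homotopy supplying the admissibility needed at each inductive step. All of it is sound, and as you note the only property of $\ground$ that is used is that $\ground[\pi]$ is finitely generated free over $\ground$; your $h^{-1}$ convention in the definition of $\delta_h$ is exactly what makes the self-duality a map of \emph{left} modules, which is the one spot where it is easy to slip.
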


\begin{proof}
These are propositions VI.2.5, VI.2.3, and VI.3.4 in \cite{Brown}.
\end{proof}

These facts are sufficient to prove the following useful lemma.

\begin{lem}\label{lem::injres}
Let $M_1,M_2$ be $\ground[\pi]$-modules whose underlying $\ground$-modules are finitely
generated and projective.  Then, for suitable choices of relatively
injective resolutions $M_1 \to I_1^*$ and $M_2\to I_2^*$, $I_1^*\otimes
I_2^*$ is a relatively injective resolution of $M_1\otimes M_2$ and
\[ 0 \to M_1\otimes M_2 \to (I_1^*\otimes I_2^*)^0 \to (I_1^*\otimes
I_2^*)^1 \to\cdots \]
is contractible when viewed as a complex of $\ground$-modules.
\end{lem}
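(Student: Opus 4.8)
The plan is to build each $I_i^*$ explicitly out of copies of the free (hence, by \autoref{prop::brown}(2), relatively injective) module $\ground[\pi]$, and then check that the tensor product of such resolutions again resolves $M_1 \otimes M_2$. First I would invoke \autoref{prop::brown}(3): since $M_i$ is finitely generated projective over $\ground$, we have $\Hom_\ground(M_i,\ground) \cong \Hom_{\ground[\pi]}(M_i,\ground[\pi])$ as $\ground[\pi]$-modules, and the evident map $M_i \to \Hom_\ground(\Hom_\ground(M_i,\ground),\ground) \cong \Hom_{\ground[\pi]}(M_i,\ground[\pi]) \otimes_\ground \ground[\pi]$-ish constructions let us embed $M_i$ into a finitely generated free $\ground[\pi]$-module $F_i^0$ by an admissible injection (admissibility is automatic because everything in sight is $\ground$-split, $M_i$ being $\ground$-projective). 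Iterating — each cokernel is again $\ground$-projective because the sequence is $\ground$-split — produces relatively injective resolutions $M_i \to I_i^*$ in which every term $I_i^n$ is a finitely generated free $\ground[\pi]$-module, and the augmented complex $0 \to M_i \to I_i^0 \to I_i^1 \to \cdots$ is $\ground$-contractible.

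Next I would verify that $I_1^* \otimes I_2^*$ (tensor over $\ground$, with the diagonal $\pi$-action, total grading) is a relatively injective resolution of $M_1 \otimes M_2$. Exactness: the augmented complex $0 \to M_i \to I_i^*$ is a contractible complex of $\ground$-modules whose terms are $\ground$-projective, so tensoring the two augmented complexes over $\ground$ yields a contractible complex of $\ground$-modules (a tensor product of $\ground$-contractible complexes of flat modules is $\ground$-contractible — one writes down the tensor of the two contracting homotopies). This simultaneously gives exactness of $0 \to M_1\otimes M_2 \to (I_1^*\otimes I_2^*)^0 \to \cdots$ and its $\ground$-contractibility, which is the last assertion of the lemma. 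Relative injectivity of the terms: each $(I_1^*\otimes I_2^*)^n = \bigoplus_{a+b=n} I_1^a \otimes_\ground I_2^b$ is a finite direct sum of modules of the form $\ground[\pi]^{r} \otimes_\ground \ground[\pi]^{s}$, and $\ground[\pi]\otimes_\ground \ground[\pi]$ (diagonal action) is free over $\ground[\pi]$ by the standard "untwisting" isomorphism $\ground[\pi]\otimes_\ground \ground[\pi] \cong \ground[\pi]\otimes_\ground \ground[\pi]_{\mathrm{triv}}$, $x\otimes y \mapsto x \otimes x^{-1}y$; hence each term is free, thus relatively injective by \autoref{prop::brown}(2). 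Finally, \autoref{prop::brown}(1) guarantees that this particular resolution may be used in place of any other relatively injective resolution when computing the relevant $\Ext$ groups, so nothing is lost.

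The main obstacle, and the step deserving the most care, is the untwisting isomorphism and the bookkeeping around the diagonal action: one must confirm that $\ground[\pi]\otimes_\ground \ground[\pi]$ with the diagonal $\pi$-action really is free as a $\ground[\pi]$-module (equivalently, that tensoring a free $\ground[\pi]$-module with \emph{any} $\ground$-free $\ground[\pi]$-module, diagonally, stays free — this is where finiteness and $\ground$-projectivity of the $M_i$ are used, since otherwise the $I_i^n$ need not be free $\ground[\pi]$-modules), and that the contracting homotopies behave correctly under the tensor product so that the combined complex is genuinely contractible over $\ground$ rather than merely acyclic. The rest is routine homological algebra once these two points are nailed down; I would not grind through the explicit homotopy formulas but would cite the standard fact that the tensor product over a commutative ring of two contractible complexes of flat modules is contractible.
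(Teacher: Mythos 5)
Your strategy and the paper's both build relatively injective resolutions of the $M_i$ by finitely generated $\ground[\pi]$-projective modules and then use the untwisting fact (a tensor over $\ground$, with diagonal action, of $\ground[\pi]$-projective modules is $\ground[\pi]$-projective) to handle $I_1^*\otimes I_2^*$, but the routes to the resolution differ. The paper dualizes: with $\overline{M}_i := \Hom_\ground(M_i,\ground)$ still fg $\ground$-projective, take a finitely generated $\ground[\pi]$-projective resolution $P_{i,*}\to\overline{M}_i$; this is an $\ground$-chain homotopy equivalence because $\overline{M}_i$ is $\ground$-projective, and setting $I_i^n := \Hom_\ground(P_{i,n},\ground)$ produces (via \autoref{prop::brown}(3)) fg $\ground[\pi]$-projective modules with the $\ground$-contractibility of the augmented complex inherited directly by dualizing. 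The payoff of dualizing is that there is no cokernel-tracking, no need to write down a coinduction map, and no temptation to insist the terms be free rather than projective. Your direct approach works too, but needs the fix below.

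The step you flag as ``-ish'' is indeed broken as written. Since $M_i$ is finitely generated projective over $\ground$, the map $M_i\to\Hom_\ground(\Hom_\ground(M_i,\ground),\ground)$ is the double-dual isomorphism, not an embedding into a larger free module, and the expression $\Hom_{\ground[\pi]}(M_i,\ground[\pi])\otimes_\ground\ground[\pi]$ does not repair it. What you want is the coinduction unit $M_i\to\Hom_\ground(\ground[\pi],M_i)$, $m\mapsto(x\mapsto xm)$: this is $\ground$-split by $\phi\mapsto\phi(e)$, hence admissible, and its target is isomorphic via the untwisting you invoke later to $\ground[\pi]\otimes_\ground(M_i)_{\mathrm{triv}}$, hence fg $\ground[\pi]$-projective (free only when $M_i$ is $\ground$-free, so replace ``free'' by ``projective'' throughout; \autoref{prop::brown}(2) still applies). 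With that correction your iteration on the $\ground$-projective cokernels goes through, and your closing observations about contractibility of the tensor product and about the diagonal action are correct.
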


\begin{proof}
Since each $M_i$ is finitely generated and projective as \rarticle $\ground$-module, the
same is true of $\overline{M}_i := \Hom_\ground(M_i,\ground) \cong
\Hom_{\ground[\pi]}(M_i,\ground[\pi])$; additionally, $M_i \cong
\overline{\overline{M}}_i = \Hom_\ground(\overline{M}_i,\ground) \cong
\Hom_{\ground[\pi]}(\overline{M}_i,\ground[\pi])$.

Choose $\ground[\pi]$-projective finitely generated resolutions $P_{i,*} \to
\overline{M}_i$.  Since $\overline{M}_i$ is projective as \rarticle $\ground$-module,
$P_{i,*}\to M_i$ is a homotopy equivalence of complexes of $\ground$-modules.  If
we then dualize, defining $I_i^n = \overline{P}_{i,n}$, it follows that
$M_i \cong \overline{\overline{M}}_i \xrightarrow{\eta_i} I_i^*$ is also a
homotopy equivalence of complexes over $\ground$.  Since the $P_{i,*}$ are
projective $\ground[\pi]$-modules, the same is true of the $I_i^*$, and so we
have relatively injective resolutions of $M_1$ and $M_2$ by
\autoref{prop::brown}.  

Since $\eta_1$ and $\eta_2$ are homotopy equivalences of chain
complexes of $\ground$-modules, it follows that $\eta_1\otimes\eta_2 \colon
M_1\otimes M_2 \to I_1^* \otimes I_2^*$ is a homotopy equivalence (over
$\ground\otimes \ground = \ground$) as well; see, e.g.\ \cite[Chapter I]{Brown}.  This
immediately implies the final conclusion of the theorem.  In addition,
since
\[ 0\to M_1\otimes M_2 \to (I_1^*\otimes I_2^*)^0 \to
(I_1^*\otimes I_2^*)^1\to\cdots\]
is acyclic as a complex of $\ground$-modules, it must also be acyclic as a
complex of $\ground[\pi]$-modules.  Using \autoref{prop::brown} and the fact that
the tensor product of projectives is projective, we see that $M_1\otimes
M_2\to I_1^*\otimes I_2^*$ is a relatively injective resolution as claimed.
\end{proof}

\begin{proof}[Proof of \autoref{thm::multstr}]
We would like to construct the trigraded Eilenberg spectral sequence in
such a way that there is a multiplication on the $E_0$ page.
By \autoref{prop::brown}, we may use the relatively injective resolutions of 
\autoref{lem::injres} when defining the trigraded Eilenberg spectral
sequence.  Thus, for each $t$, we have a relatively injective resolution
$N^t \to I^{t,*}$, and for each $t_1,t_2$, $N^{t_1}\otimes N^{t_2} \to
I^{t_1,*} \otimes I^{t_2,*}$ is also a relatively injective resolution.

Since $N^*$ is an algebra, there are maps $N^{t_1}\otimes N^{t_2}\to
N^{t_1+t_2}$ for each $t_1$ and $t_2$.
We would then like to fill in the
dotted arrows in the diagram below.
\[ \xymatrix@R=1.5pc@C=1pc{ 
  0 \ar[r] & N_{t_1}\otimes N_{t_2} \ar[r] \ar[d] & 
  (I^{t_1,*}\otimes I^{t_2,*})^0 \ar[r] \ar@{.>}[d] & 
  (I^{t_1,*}\otimes I^{t_2,*})^1 \ar[r] \ar@{.>}[d] & \cdots \\
  0 \ar[r] & N^{t_1+t_2} \ar[r] & I^{t_1+t_2,0} \ar[r] & I^{t_1+t_2,1} \ar[r] & \cdots}
\]
By \autoref{lem::injres}, the top row is an admissible exact sequence.  The
bottom row is a relatively injective resolution of $N^{t_1+t_2}$.  We may
thus use the universal property of relatively injective modules to
successively fill in the dotted arrows, as usual.

These dotted arrows give us maps $I^{t_1,u_1}\otimes I^{t_2,u_2} \to
I^{t_1+t_2,u_1+u_2}$ for each $t_1,t_2,u_1,u_2$, and hence
$\varphi\colon I^{*,*}\otimes I^{*,*} \to I^{*,*}$.  This $\varphi$,
together with the Alexander-Whitney map $C_*(\tilde{X})\to
C_*(\tilde{X})\otimes C_*(\tilde{X})$, then gives us a multiplication on
\[\Hom_{\ground[\pi]}(C_*(\tilde{X}),I^{*,*}),\]
the $E_0$ page of our spectral sequence, and hence makes the trigraded
Eilenberg spectral sequence into a spectral sequence of algebras.
\end{proof}

\subsection{The equivariant spectral sequence}
As in the previous section, we may view the equivariant Eilenberg spectral
sequence as a trigraded spectral sequence.  Unfortunately, the tricks of
\autoref{subsec::noneqmult} do not immediately generalize to the equivariant
situation.  As a result, the following remains a conjecture.

\begin{conj}\label{conj::eqmult}
In some cases of interest, the trigraded equivariant Eilenberg spectral
sequence
\[ E_2^{t,u,v} = \Ext_\Pi^{u,v}(\sH_*(\tilde{X}),\sN^t) \Longrightarrow
\sH^{u+v}(X;\sN^t) \]
is a spectral sequence of algebras.  The multiplication on the $E_2$ page
comes from the diagonal map on $\sH_*(\tilde{X})$ and the multiplication of
$\sN^*$.
\end{conj}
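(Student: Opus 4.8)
The plan is to follow the nonequivariant argument of \autoref{thm::multstr} as far as it carries over, pinpoint exactly where it breaks in the EI-category setting, and then prove \autoref{conj::eqmult} in the range of cases where that failure does not occur --- in particular the situation used in \autoref{ch::computations}.

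First I would set up the equivariant trigraded Eilenberg spectral sequence in the manner of \autoref{thm::triCE}: for each internal degree $t$ choose an injective resolution $\sN^t\to\sI^{t,*}$ in $[\Pi\op,\rmod]$, form the trigraded complex $\Hom_\Pi(\eqchains{\tilde X},\sI^{*,*})$, and filter by the $\sI$-degree. Exactly as nonequivariantly, to promote this to a spectral sequence of algebras it is enough to exhibit a product $\varphi\colon \sI^{*,*}\otimes\sI^{*,*}\to\sI^{*,*}$ --- $\otimes$ being the levelwise tensor product of coefficient systems --- lying over the multiplication $\sN^*\otimes\sN^*\to\sN^*$; composing $\varphi$ with the Alexander--Whitney diagonal $\eqchains{\tilde X}\to\eqchains{\tilde X}\otimes\eqchains{\tilde X}$, which exists and is natural over $\Pi\op$ because it is defined levelwise on the singular chains $C_*(\widetilde{X^K}(x))$, then equips the $E_0$-page with a multiplication inducing the expected one on $\sH^*(X;\sN^*)$. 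The hard part is $\varphi$: nonequivariantly it came from \autoref{lem::injres}, where relatively injective resolutions built by dualizing finite projective ones have the property that their levelwise tensor product is again a relatively injective resolution. Over a genuine EI-category $\Pi = \fund X$ this breaks down --- a levelwise tensor product of the injectives $\covinj{K,x,C}$ is not of that form, and there is no self-duality of $\Pi$ to convert the projectives $\contraproj{K,x}$ (which, by \autoref{lem::technical}, make up $\eqchains{\tilde X}$) into injectives --- so I expect no general construction of $\varphi$, and this is precisely the obstacle that keeps the statement a conjecture.

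The case I would actually prove is the collapse case, which covers the application. Suppose every $\coh_v(\tilde X)$ is a projective object of $[\Pi\op,\rmod]$, e.g.\ a direct sum of representables $\contraproj{K,x}$; this holds for $X = B_\G\cyctwo$ with $\ground = \fq$, where $\coh_*(\tilde X) = \underline{\fq}$ is concentrated in degree $0$ and was shown to be a direct summand of $\ground\,\Pi(-,(\trivo,x_0))$. Then $\Hom_\Pi(\coh_v(\tilde X),-)$ is exact, so $\Ext_\Pi^{u,v}(\coh_*(\tilde X),\sN^t) = 0$ for $u>0$; the equivariant Eilenberg spectral sequence of \autoref{thm::eqCE} collapses at $E_2 = E_\infty$, and, independently of the resolution, one gets $\sH^n(X;\sN^t)\cong\Hom_\Pi(\coh_n(\tilde X),\sN^t)$. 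In this degenerate situation ``spectral sequence of algebras'' amounts to the claim that this isomorphism respects the evident products: on the right, the coproduct $\coh_*(\tilde X)\to\coh_*(\tilde X)\otimes\coh_*(\tilde X)$ induced on homology by the Alexander--Whitney diagonal (a bona fide map, by levelwise K\"unneth over a field) followed by $\sN^*\otimes\sN^*\to\sN^*$; on the left, the cup product on Bredon cohomology with local coefficients, which is the one inherited as a subobject from the $E_2$-pairing of the Moerdijk--Svensson/Kronholm Serre spectral sequence (\autoref{thm::bill}) and is induced by the diagonal of $X$. The remaining step is an equivariant elaboration of Whitehead's comparison \autoref{thm::comparison}: under the identification in \autoref{def::eqloc} of our definition with classical Bredon-with-local-coefficients cohomology, both products are computed by the same Alexander--Whitney diagonal on $\eqchains{\tilde X}$, hence coincide. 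Finally one checks the hypothesis in \autoref{ch::computations}, which is immediate from the projectivity of $\underline{\fq}$.

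Beyond this, a partial attack on the general conjecture would be to look for either a usable notion of ``relatively injective'' coefficient system over $\Pi$ whose levelwise tensor products remain resolutions --- an EI-analogue of \autoref{lem::injres}, which appears to demand restrictive structural hypotheses on $\Pi$ --- or a multiplicative functorial resolution of $\sN^*$; the absence of both in general is exactly why \autoref{conj::eqmult} is posed rather than proved.
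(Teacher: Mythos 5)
The statement you set out to prove is left as a conjecture in the paper, with only the remark that ``the tricks of [the nonequivariant subsection] do not immediately generalize,'' so there is no paper proof to compare against. Your diagnosis of why the product $\varphi\colon\sI^{*,*}\otimes\sI^{*,*}\to\sI^{*,*}$ is hard to construct over an EI-category---no self-duality of $\Pi$ to convert the representable projectives $\contraproj{K,x}$ into the injectives $\covinj{K,x,C}$, and levelwise tensors of the $\covinj{K,x,C}$ are not again of that form---is exactly the right observation, and restricting to the collapse case is the right scope, since that is all the application in \autoref{ch::computations} uses.

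The gap is in the collapse case itself. You correctly observe that if each $\coh_v(\tilde X)$ is projective in $[\Pi\op,\rmod]$, then $E_2 = E_\infty$ and $\h^n(X;\sN^t)\cong\Hom_\Pi(\coh_n(\tilde X),\sN^t)$ (projectivity in the first variable kills $\Ext^{>0}$ regardless of which variable is resolved). But the conclusion you want---that this isomorphism carries the cup product on $\h^*(X;\sN^*)$ to the convolution product induced by the Alexander--Whitney coproduct on $\coh_*(\tilde X)$---does not follow formally from the collapse; it is a separate comparison-of-products statement. You attribute it to ``an equivariant elaboration of Whitehead's comparison,'' but neither Whitehead's argument nor the paper's \autoref{thm::comparison} is stated multiplicatively, and the multiplicative enhancement, even nonequivariantly, is precisely what \autoref{thm::multstr} was needed to supply. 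To close your argument you would need to show that the cup product on Bredon cohomology with local coefficients (as inherited from the Moerdijk--Svensson/Kronholm $E_2$-pairing, say) is represented at the cochain level by the Alexander--Whitney diagonal on $\eqchains{\tilde X}$ together with the pairing $\sN^{t_1}\otimes\sN^{t_2}\to\sN^{t_1+t_2}$, naturally over $\Pi$. This is plausible and probably true---and the K\"unneth step is indeed unproblematic levelwise over $\ground=\fq$---but as written you are asserting the hard step rather than proving it, which is exactly what keeps the general statement a conjecture.
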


\section{Sample nonequivariant computations}\label{sec::ex}

We are now in a position to use the multiplicative Eilenberg spectral
sequence to do some computations with the nonequivariant Serre spectral
sequence.

%

\begin{exmp}
Again consider the fibration \[B\text{det}\colon BO(2)\to B\cyctwo,\]
but now take $\ground=\bZ$, so that $\ground[\cyctwo]$ is not semisimple.
Then the Serre spectral sequence has
\[ E_2^{s,t} = \neqh^s(\bR P^\infty;\sH^t(BSO(2);\bZ)) \Longrightarrow \neqh^{s+t}(BO(2);\bZ)
\]
We know that the underlying $\bZ$-algebra (i.e.\ ring) of $\sH^t(BSO(2);\bZ)$ 
is the integral polynomial ring $\bZ[x]$; since $\bZ$ is a projective
abelian group, \autoref{thm::multstr} applies.  The action of $\cyctwo$ on
$\bZ[x]$ is again given by $x\mapsto -x$.
The trigraded Eilenberg spectral sequence (of algebras) then has
\[ E_2^{t,u,v} = \Ext_{\bZ[\cyctwo]}^{u}(\neqh_v(S^\infty),\bZ[x]^t).\]
Since $S^\infty\simeq \pt$, each term with $v\ne 0$ vanishes, and so
the spectral sequence collapses with no extension problems.  It follows
that, in the Serre spectral sequence, 
\[ E_2^{s,t} = \Ext_{\bZ[\cyctwo]}^{s}(\bZ,\bZ[x]^t).\]
Since $\cyctwo$ acts on $x$ by $x\mapsto -x$, $\bZ[x]^t$ is either $\bZ$
with the trivial $\cyctwo$ action or $\bZm$ with the action by $-1$.  We
can then find relatively injective resolutions of these groups, coming from
projective resolutions of their duals, and use these to calculate the
desired Ext groups together with their multiplication, as described by
\autoref{thm::multstr}.  

In fact, in this case, the description of the multiplicative structure is
made simpler by the fact that $\bZ[x]^{t_1}\otimes \bZ[x]^{t_2} \to
\bZ[x]^{t_1+t_2}$ is an isomorphism for every $t_1$ and $t_2$.  Thus, if
$I^{t_1,*}$ and $I^{t_2,*}$ are the relatively injective resolutions of
$\bZ[x]^{t_1}$ and $\bZ[x]^{t_2}$ given by \autoref{thm::multstr},
$I^{t_1,*}\otimes I^{t_2,*}$ is a resolution of $\bZ[x]^{t_1+t_2}$, and so
it may be used to calculate $\Ext_{\bZ[\cyctwo]}^s(\bZ,\bZ[x]^{t_1+t_2})$.
Specifically, if $\alpha_i \in \Hom_{\bZ[\cyctwo]}(\bZ,I^{t_i,s_i})$ for
$i=1,2$ represent cohomology classes in
$\Ext_{\bZ[\cyctwo]}^{s_i}(\bZ,\bZ[x]^{t_i})$, then their product in
$\Ext_{\bZ[\cyctwo]}^{s_1+s_2}(\bZ,\bZ[x]^{t_1+t_2})$
is represented by ${\alpha_1\otimes \alpha_2 \in
\Hom_{\bZ[\cyctwo]}(\bZ,I^{t_1,s_1}\otimes I^{t_2,s_2})}$.

From here, the example can be completed by explicitly choosing relatively
injective resolutions $I^*$ of $\bZ$ and $J^*$ of $\bZm$, as we will
outline.  Note that, for both $\bZ$ and $\bZm$, the dual is isomorphic to
the original module.  For the remainder of this example, write $\cyctwo$
multiplicatively with generator $\sigma$.  As is well known, the following
is a resolution for $\bZ$ over $\bZ[\cyctwo]$.
\[ \cdots \to
\bZ[\cyctwo]\xrightarrow{1-\sigma}
\bZ[\cyctwo]\xrightarrow{1+\sigma}
\bZ[\cyctwo]\xrightarrow{1-\sigma}
\bZ[\cyctwo]\xrightarrow{\ \epsilon\ } \bZ \to 0 \]
Dualizing, we see that we may take $I^*$ to be the relatively
injective resolution
\[ 0 \to \bZ[\cyctwo] \xrightarrow{1-\sigma} \bZ[\cyctwo]
\xrightarrow{1+\sigma} \bZ[\cyctwo] \xrightarrow{1-\sigma} \bZ[\cyctwo]
\to\cdots \]
A similar argument shows that we may take $J^*$ to be the relatively
injective resolution
\[ 0 \to \bZ[\cyctwo] \xrightarrow{1+\sigma} \bZ[\cyctwo]
\xrightarrow{1-\sigma} \bZ[\cyctwo] \xrightarrow{1+\sigma} \bZ[\cyctwo]
\to\cdots \]
\newcommand{\zzzz}{\bZ[\cyctwo\!\times\!\cyctwo]}
Note that $\bZ[\cyctwo]\otimes \bZ[\cyctwo]\cong \zzzz$.  The tensor products
$I^*\otimes I^*$, $I^*\otimes J^*$, $J^*\otimes I^*$, and $J^*\otimes J^*$
all have the form
\[ 0 \to \zzzz \to \zzzz^{\oplus 2} \to \zzzz^{\oplus 3} \to \zzzz^{\oplus
4} \to \cdots \]
where the arrows depend on which of the four tensor products we are
considering.  It is an instructive exercise at this point to apply the
functor $\Hom_{\bZ[\cyctwo]}(\bZ,-)$, take homology, and check that we get
the correct Ext groups.  We will omit the details, but \autoref{fig::extbz}
gives a picture of $\Ext_{\bZ[\cyctwo]}^s(\bZ,\bZ[x]^t)$ for varying values
of $s$ and $t$.
\begin{figure}
\[ \xymatrix@R=.5pc@C=.5pc{
t & & & & & & & & & & & & \\
 & & & & & & & & & & & & \\
\bZ & . & \cyctwo & . & \cyctwo & . & \cyctwo & . & \cyctwo & . & \cyctwo & & \\
. & . & . & . & . & . & . & . & . & . & . & & \\
. & \cyctwo & . & \cyctwo & . & \cyctwo & . & \cyctwo & . & \cyctwo & . & & \\
. & . & . & . & . & . & . & . & . & . & . & & \\
\bZ & . & \cyctwo & . & \cyctwo & . & \cyctwo & . & \cyctwo & . & \cyctwo & & \\
. & . & . & . & . & . & . & . & . & . & . & & \\
. & \cyctwo & . & \cyctwo & . & \cyctwo & . & \cyctwo & . & \cyctwo & . & & \\
. & . & . & . & . & . & . & . & . & . & . & & \\
\bZ \ar@{.>}[uuuuuuuuuu] \ar@{.>}[rrrrrrrrrrrr] & . & \cyctwo & . & \cyctwo &
. & \cyctwo & . & \cyctwo & . & \cyctwo  & & s \\
}\]
\caption[\lofspace{}${\Ext^s_{\bZ[\cyctwo]}(\bZ,\bZ[x]^t)}$]{$\Ext_{\bZ[\cyctwo]}^s(\bZ,\bZ[x]^t)$.  A dot represents the zero group.}
\label{fig::extbz}
\end{figure}

We will now describe the multiplicative structure.
It is a routine calculation to check, using the above resolutions $I^*$,
$J^*$, and their tensor products, that the product of any two generators of
nonzero groups appearing in the picture above is a generator of the group
in the appropriate degree.  It follows that we can describe the
multiplicative structure on $\Ext_{\bZ[\cyctwo]}^*(\bZ,\bZ[x]^*)$ as
follows.  Let $p_1$ be a generator of the copy of $\bZ$ at $(s,t) = (0,4)$,
$\alpha$ the generator of the $\cyctwo$ at $(2,0)$, and $\beta$ the
generator of the $\cyctwo$ at $(1,2)$.  Then our algebra has the
presentation
\[ \Ext_{\bZ[\cyctwo]}^*(\bZ,\bZ[x]^*) \cong \bZ[p_1,\alpha,\beta]/{\left(
2\alpha=0,2\beta=0,\beta^2=p_1\alpha\right)}.\]

Having computed the multiplicative structure of the $E_2$ page, we are
finally ready to consider the Serre spectral sequence for
$BO(2)\xrightarrow{B\text{det}} B\cyctwo$.  For dimensional reasons, all
differentials except $d_3$ must be 0.  Note that $d_3 \beta$ lands in a
copy of $\cyctwo$, so $d_3 \beta^2 = 0$ by the Leibniz formula.  Since
$d_3 \alpha = 0$, we have $0 = d_3 \beta^2 = d_3 (p_1 \alpha) = (d_3 p_1)
\alpha$; hence $d_3 p_1 = 0$.  We are thus left with only $d_3 \beta$ to
consider.

\begin{lem}
$\neqh^3(BO(2);\bZ) \ne 0$.
\end{lem}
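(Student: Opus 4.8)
The plan is to read off $\neqh^3(BO(2);\bZ)$ from the Serre spectral sequence whose $E_2$ page we just identified. First I would isolate total degree $3$: scanning the chart of \autoref{fig::extbz}, the only nonzero group with $s+t=3$ is $E_2^{1,2}\cong\cyctwo$, generated by the class $\beta$. So it suffices to show that $\beta$ survives to $E_\infty$; then the associated graded of $\neqh^3(BO(2);\bZ)$ is $\cyctwo$, and in particular $\neqh^3(BO(2);\bZ)\ne 0$.

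Next I would pin down which differentials can affect $\beta$. No column with $s<0$ is nonzero, so $E_r^{1,2}$ never receives a differential. Going out, $d_2\beta\in E_2^{3,1}=0$, and for degree reasons the only potentially nonzero outgoing differential is $d_3\beta\in E_3^{4,0}$. Since $E_2^{2,1}=0$, the corner $(4,0)$ is untouched by $d_2$, so $E_3^{4,0}=E_2^{4,0}\cong\cyctwo$ (generated by $\alpha^2$), and likewise $E_3^{1,2}=E_2^{1,2}$. The whole lemma thus comes down to showing $d_3\beta=0$.

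The key step is that computation, which I would carry out using the splitting of $\det\colon O(2)\to\cyctwo$ by the subgroup of $O(2)$ generated by a reflection. This gives a map $Bs\colon B\cyctwo\to BO(2)$ with $B\det\circ Bs=\id$, hence $(B\det)^*\colon\neqh^*(B\cyctwo;\bZ)\to\neqh^*(BO(2);\bZ)$ is split injective. Because the fiber $BSO(2)$ is connected, $\sH^0(F;\bZ)=\bZ$ with trivial $\pi_1$-action, so $E_2^{s,0}=\neqh^s(B\cyctwo;\bZ)$, and the base edge homomorphism factors $(B\det)^*$ through the projection $E_2^{s,0}\twoheadrightarrow E_\infty^{s,0}\hookrightarrow\neqh^s(BO(2);\bZ)$. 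Injectivity of $(B\det)^*$ forces $E_\infty^{s,0}=E_2^{s,0}$ for all $s$; in particular $E_\infty^{4,0}\cong\cyctwo$. But past $E_3$ the corner $(4,0)$ is only altered by $d_3\beta$, so $E_\infty^{4,0}=\coker\bigl(d_3\colon E_3^{1,2}\to E_3^{4,0}\bigr)$, and this is all of $\cyctwo$ only if $d_3\beta=0$.

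With $d_3\beta=0$ in hand, $\beta$ survives, $E_\infty^{1,2}\cong\cyctwo$, and since $E_2^{0,3}$, $E_2^{2,1}$, $E_2^{3,0}$ all vanish the $E_\infty$-page in total degree $3$ is exactly $\cyctwo$, giving $\neqh^3(BO(2);\bZ)\cong\cyctwo\ne 0$. The only real obstacle is the vanishing of $d_3\beta$: one is tempted to exploit the multiplicative relation $\beta^2=p_1\alpha$, but that only yields $d_3(\beta^2)=0$ automatically and so carries no information, which is why I would fall back on the section argument above. As a consistency check, the surviving class corresponds to the integral third Stiefel--Whitney class $W_3\in\neqh^3(BO(2);\bZ)$.
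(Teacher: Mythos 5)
Your proof is correct, but it is a genuinely different argument from the paper's. The paper deduces $\neqh^3(BO(2);\bZ)\ne 0$ entirely from the \emph{outside}: it applies the Universal Coefficients Theorem three times (to $\neqh^2$, $\neqh^3$, and $\neqh_2$ mod $2$), feeds in $\neqh_1(BO(2);\bZ)\cong\cyctwo$ and the externally known fact that $\neqh_2(BO(2);\ftwo)$ is two-dimensional, and concludes that $\neqh_2(BO(2);\bZ)$ has nonzero torsion, hence $\neqh^3\ne 0$; the vanishing of $d_3\beta$ is then read off as a \emph{consequence}. You instead work \emph{inside} the spectral sequence: the group-theoretic splitting of $\det\colon O(2)\to\cyctwo$ by a reflection gives a section of $B\det$, so the base edge homomorphism $E_2^{s,0}\to\neqh^s(BO(2);\bZ)$ is split injective, forcing $E_\infty^{s,0}=E_2^{s,0}$, and in particular $d_3\beta=0$; the lemma then follows because $E_\infty^{1,2}\cong\cyctwo$ is the only contribution in total degree three. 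Both arguments are valid. Yours is cleaner in that it avoids importing the mod-$2$ homology of $BO(2)$ and actually proves a stronger statement directly: it kills the only potentially nonzero differential in the entire spectral sequence on the nose, from which $d_3p_1=0$ also follows via Leibniz (as the paper notes), so collapse is immediate. The paper's approach, while more roundabout, illustrates the technique of leveraging known homology to constrain a spectral sequence when no obvious section is available; in contexts where $f$ admits no section, that is the method one is forced to use. Your aside about why the Leibniz relation $\beta^2=p_1\alpha$ carries no information for $d_3\beta$ itself is also correct and matches the paper's usage, which derives $d_3p_1=0$ (not $d_3\beta=0$) from it.
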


\begin{proof}
By the Universal Coefficients Theorem, we have (non-canonical)
isomorphisms
\begin{align*}
\neqh^3(BO(2);\bZ) & \cong \text{torsion}(\neqh_2(BO(2);\bZ))\oplus \text{free}(\neqh_3(BO(2);\bZ)) \\
\neqh^2(BO(2);\bZ) & \cong \text{torsion}(\neqh_1(BO(2);\bZ))\oplus \text{free}(\neqh_2(BO(2);\bZ)) \\
\neqh_2(BO(2);\ftwo) & \cong \neqh_2(BO(2);\bZ)\otimes\ftwo \oplus \Tor_1^\bZ(\neqh_1(BO(2);\bZ),\ftwo)
\end{align*}
Since $\pi_1 (BO(2)) \cong \cyctwo$, $\neqh_1(BO(2);\bZ)\cong\cyctwo$ as well.
We can see from the $E_2$ page of the Serre spectral sequence above that
$\neqh^2(BO(2);\bZ) \cong \cyctwo$, so we can conclude from the middle
isomorphism that $\neqh_2(BO(2);\bZ)$ consists entirely of torsion.

The mod 2 homology $\neqh_*(BO(2);\ftwo)$ is well known;
$\neqh_2(BO(2);\ftwo)$ is an $\ftwo$-vector space of dimension two.  Since
$\neqh_1(BO(2);\bZ) = \cyctwo$, we see that \[\neqh_2(BO(2);\bZ)\otimes\ftwo\] is
nonzero.  Since $\neqh_2(BO(2);\bZ)$ consists entirely of torsion, we conclude
from the first isomorphism that $\neqh^3(BO(2);\bZ)$ must be nonzero as well.
\end{proof}

\begin{rem}
The usual method for calculating $\neqh^*(BO(2);\bZ)$ makes use of the
Bockstein spectral sequence and of the well-known fact $\neqh^*(BO(2);\ftwo)
\cong \ftwo[w_1,w_2]$.  We could alternatively follow this argument just far
enough to see that $\neqh^3(BO(2);\bZ)\ne 0$.
\end{rem}

Since $\neqh^3(BO(2);\bZ)\ne 0$, we see from the $E_2$ page of the Serre
spectral sequence above that $d_3 \beta$ must be zero.  Hence the spectral
sequence collapses at $E_2 = E_\infty$, and so we need only resolve the
extension problems.

Abbreviate $\neqh^* := \neqh^*(BO(2);\bZ)$ for the moment.  Since the Serre
spectral sequence in question is a spectral sequence of algebras converging
to $\neqh^*$, there is a product-respecting filtration $F$ on $\neqh^*$ such that
\[ (F^s \neqh^{t+s})/(F^{s+1} \neqh^{t+s}) \cong
\Ext_{\bZ[\cyctwo]}^s(\bZ,\bZ[x]^t)\cong
\bZ[p_1,\alpha,\beta]/{\left(
2\alpha=0,2\beta=0,\beta^2=p_1\alpha\right)}.\]
In particular, we see that $\neqh^2 \cong \cyctwo$, generated by $\alpha$; and
$\neqh^3 \cong \cyctwo$, generated by $\beta$.  For $\neqh^4$, we have
$\neqh^4 \supset F^1 \neqh^4 = F^2 \neqh^4 = F^3 \neqh^4 = F^4 \neqh^4 = \cyctwo \supset
0$, satisfying the short exact sequence
\[ 0 \to \cyctwo \to \neqh^4 \to \bZ \to 0.\]
Since $\bZ$ is projective, the sequence splits, and $\neqh^4\cong \bZ \oplus
\cyctwo$.  The $\cyctwo$ is generated by $\alpha^2$, and the $\bZ$ by some
element which we will call $p_1$.  A similar argument shows that $\neqh^6 \cong
\cyctwo \oplus \cyctwo$, and either $\alpha p_1 = \beta^2 \text{ or }
\alpha p_1 = \beta^2 + \alpha^3.$ By replacing $p_1$ by $p_1+\alpha^2$, if
necessary, we may assume that $\alpha p_1 = \beta^2$.  It then follows
that, as an algebra,
\[ \neqh^* = \neqh^*(BO(2);\bZ) \cong \bZ[p_1,\alpha,\beta]/{\left(
2\alpha=0,2\beta=0,\beta^2=p_1\alpha\right)}.\]
\end{exmp}


\bibliographystyle{alpha}
\bibliography{meganshulman-thesis}

\end{document}